\documentclass[11pt,reqno]{amsart}
\usepackage{amsmath,amsfonts,amssymb,amscd,amsthm,amsbsy,bbm, epsf,calc,  comment, caption, enumerate}
\usepackage{color}
\usepackage[usenames,dvipsnames]{xcolor}
\usepackage{datetime}
\usepackage{soul}

\usepackage{thmtools, thm-restate}
\usepackage{thm-patch, thm-kv, thm-autoref}
\usepackage[colorlinks=true, urlcolor=blue, linkcolor=blue, citecolor=black]{hyperref}

\usepackage[pdftex]{graphicx}
\usepackage{wrapfig}
\usepackage{geometry}
\geometry{ margin=0.9in}


\numberwithin{equation}{section}
\setcounter{secnumdepth}{2}
\setcounter{tocdepth}{2}

\newcounter{hours}\newcounter{minutes}


\theoremstyle{plain}
\declaretheorem[title=Theorem, parent=section]{theorem}
\declaretheorem[title=Lemma,sibling=theorem]{lemma}
\declaretheorem[title=Corollary,sibling=theorem]{corollary}
\declaretheorem[title=Proposition,sibling=theorem]{proposition}
\declaretheorem[title=Definition,sibling=theorem]{definition}
\declaretheorem[title=Remark,sibling=theorem]{rem}

\newtheorem*{prop*}{Proposition}


\makeatletter
\newcommand\RedeclareMathOperator{%
  \@ifstar{\def\rmo@s{m}\rmo@redeclare}{\def\rmo@s{o}\rmo@redeclare}%
}
\newcommand\rmo@redeclare[2]{%
  \begingroup \escapechar\m@ne\xdef\@gtempa{{\string#1}}\endgroup
  \expandafter\@ifundefined\@gtempa
     {\@latex@error{\noexpand#1undefined}\@ehc}%
     \relax
  \expandafter\rmo@declmathop\rmo@s{#1}{#2}}
\newcommand\rmo@declmathop[3]{%
  \DeclareRobustCommand{#2}{\qopname\newmcodes@#1{#3}}%
}
\@onlypreamble\RedeclareMathOperator
\makeatother


\def\e{\varepsilon}
\def\ep{\varepsilon}
\def\al{\alpha}
\def\del{\delta}

\def\om{\omega}
\def\O{\Omega}
\def\Om{\Omega}
\def\gam{\gamma} 
\def\Gam{\Gamma}
\def\lam{\lambda}
\def\Lam{\Lambda}

\def\grad{\nabla}
\def\integer{\mathbb Z}
\def\Natural{\mathbb N}
\def\real{\mathbb R}
\def\A{\mathcal A}
\def\D{\mathcal D}
\def\K{\mathcal K}
\def\L{\mathcal L}
\def\M{\mathcal M}

\def\R{\mathcal R}

\def\diam{\textnormal{diam}}

\def\Id{\textnormal{Id}}

\def\intersect{\cap}

\def\Indicator{{\mathbbm{1}}}
\def\graph{\textnormal{graph}}

\def\supp{\textnormal{supp}}

\def\Tr{\textnormal{tr}}
\def\Id{\textnormal{Id}}
\def\dini{\textnormal{Dini}}

\def\O{\Omega}
\def\Rn{\mathbb{R}^{n+1}}
\def\RN{\mathbb{R}^n}
\def\la{\left\langle}
\def\ra{\right\rangle}

\RedeclareMathOperator{\div}{\textnormal{div}}

\def\polhk#1{\setbox0=\hbox{#1}{\ooalign{\hidewidth
	    \lower1.5ex\hbox{`}\hidewidth\crcr\unhbox0}}}

\newcommand{\abs}[1]{\left| #1 \right|}

\newcommand{\norm}[1]{\lVert#1\rVert}


\begin{document}

\title{Regularity for a special case of two-phase Hele-Shaw flow via parabolic integro-differential equations}

\author{Farhan Abedin}
\author{Russell W. Schwab}

\address{Department of Mathematics\\
Michigan State University\\
619 Red Cedar Road \\
East Lansing, MI 48824}
\email{rschwab@math.msu.edu, abedinf1@msu.edu}

\begin{abstract}
  We establish that the $C^{1,\gam}$ regularity theory for translation invariant fractional order parabolic integro-differential equations (via Krylov-Safonov estimates) gives an improvement of regularity mechanism for solutions to a special case of a two-phase free boundary flow related to Hele-Shaw.  The special case is due to both a graph assumption on the free boundary of the flow and an assumption that the free boundary is $C^{1,\dini}$ in space.  The free boundary then must immediately become $C^{1,\gam}$ for a universal $\gam$ depending upon the Dini modulus of the gradient of the graph. These results also apply to one-phase problems of the same type.
 
\end{abstract}

\date{\today,\  arXiv ver 2}

\thanks{R. Schwab acknowledges partial support from the NSF with DMS-1665285. F. Abedin acknowledges support from the AMS and the Simons Foundation with an AMS--Simons Travel Grant. }
\keywords{Global Comparison Property, Integro-differential Operators, Dirichlet-to-Neumann, Free Boundaries, Hele-Shaw, Fully Nonlinear Equations, Viscosity Solutiuons, Krylov-Safonov}
\subjclass[2010]{
35B51, 
35R09,  	
35R35, 
45K05,  	
47G20,      
49L25,  	
60J75,      
76D27, 
76S05 
}

\maketitle

\markboth{Hele-Shaw Parabolic Regularization}{Hele-Shaw Parabolic Regularization}


\section{Introduction}\label{sec:introduction}
\setcounter{equation}{0}

This paper has two goals.  The first is to give a precise characterization of the integro-differential operators that can be used to represent the solution of some free boundary flows with both one and two phases, of what we call Hele-Shaw type.  We give a characterization that is precise enough to determine whether or not existing integro-differential results apply to this setting.  The second goal is to show that, indeed, a new regularization mechanism resulting from parabolic integro-differential theory is applicable.  This will show that solutions that are $C^{1,\dini}$ must immediately become $C^{1,\gam}$ regular.  We note that there is an earlier and stronger regularization mechanism for the one-phase Hele-Shaw flow by Choi-Jerison-Kim \cite{ChoiJerisonKim-2007RegHSLipInitialAJM} which shows that Lipschitz solutions with a dimensionally small Lipschitz norm must be $C^{1}$ regular and hence classical.   We want to emphasize that in our context, both one and two phase problems are treated under the exact same methods.  For simplicity and technical reasons, we focus on the case in which the free boundary is the graph of a time dependent function on $\real^n$, $n\geq 2$.

These free boundary problems are the time dependent evolution of the zero level set of a function $U: \real^{n+1} \times [0,T] \to \real$ that satisfies the following equation, with $V$ representing the normal velocity on $\partial\{U(\cdot,t)>0\}$, and $G$ a prescribed balance law.  Here $A_1$ and $A_2$ are two (possibly different) elliptic constant coefficient diffusion matrices that dictate the equations:
\begin{align}\label{eqIN:HSMain}
	\begin{cases}
	\Tr(A_1 D^2U)=0\ &\text{in}\ \{U(\cdot,t)>0\}\\
	\Tr(A_2 D^2U)=0\ &\text{in}\ \{U(\cdot,t)<0\}\\
	U(\cdot,t)=1\ &\text{on}\ \{x_{n+1}=0\}\\
	U(\cdot,t)=-1\ &\text{on}\ \{x_{n+1}=L\}\\
	V=G(\partial^+_\nu U,\partial^-_\nu U)\ &\text{on}\ \partial\{U(\cdot,t)>0\}.
	\end{cases}
\end{align}
Without loss of generality, we take $A_1=\Id$ (which can be obtained by an orthogonal change of coordinates).  The prescribed values for $U$ at $x_{n+1}=0$ and $x_{n+1}=L$ can be thought of as an ambient background pressure for $U$, and the free boundary, $\{U=0\}$, will be located in between.

As mentioned above, this work treats the special case of the free boundary problem in which the boundary of the positivity set can be given as the graph of a function over $\real^n$.  To this end, 
we will use the notation, $D_f$, as
\begin{align*}
	D_f=\{(x,x_{n+1})\in\real^{n+1}\ :\ 0<x_{n+1}<f(x)\},
\end{align*}
and in our context, we will assume that for some $f:\real^n\times[0,T]\to\real$,
\begin{align*}
	\{ U(\cdot,t)>0 \} = D_{f(\cdot,t)}
\end{align*}
and
\begin{align*}
	\partial\{ U(\cdot,t)>0\} = \graph(f(\cdot, t)).
\end{align*}

The main technical part of our work is centered on the properties of the (fully nonlinear) operator we call $I$, which is defined for the one-phase problem as
\begin{align}\label{eqIN:BulkEqForHSOperator}
	\begin{cases}
		\Delta U_f=0\ &\text{in}\ D_f\\
		U_f=1\ &\text{on}\ \real^n\times\{0\}\\
		U_f=0\ &\text{on}\ \Gam_f=\graph(f),
	\end{cases}
\end{align}
and $I$ is the map,
\begin{align}\label{eqIN:defHSOperator}
	I(f,x) = \partial_\nu U_f(x,f(x)).
\end{align}
We note, the map $I$ does not depend on $t$ and it is a fully nonlinear function of $f$ (in the sense that it does not have a divergence structure, and it fails linearity in the highest order terms acting on $f$ -- in fact it is fails linearity for all terms).  Here, $I$, can be thought of as a nonlinear Dirichlet-to-Neumann operator, but one that tracks how a particular solution depends on the boundary.  This type of operator is not at all new, and we will briefly comment on its rather long history later on, in Section \ref{sec:BackgroundLiterature}.

It turns out (probably not surprisingly) that the key features of (\ref{eqIN:HSMain}) are entirely determined by the properties of the mapping, $I$.  To this end, we will define a two phase version of this operator via the positive and negative sets,
\begin{align}\label{eqIN:DefOfSetsDfPlusMinus}
	&D_f^+ = \{ (x,x_{n+1}) \ :\ 0<x_{n+1}<f(x) \},\\
	& D_f^- = \{  (x,x_{n+1})\ :\ f(x)<x_{n+1}<L  \},
\end{align}
with the equation, (recall we take $A_1=\Id$)
\begin{align}\label{eqIN:TwoPhaseBulk}
	\begin{cases}
		\Delta U_f = 0\ &\text{in}\ D_f^+\\
		\Tr(A_2 D^2 U_f) = 0\ &\text{in}\ D_f^-\\
		U_f = 0\ &\text{on}\ \Gam_f\\
		U_f=1\ &\text{on}\ \{x_{n+1}=0\}\\
		U_f=-1\ &\text{on}\ \{x_{n+1}=L\}.
	\end{cases}
\end{align}
We define the respective normal derivatives to the positive and negative sets:
\begin{align}
	&\text{for}\ X_0\in\Gamma_f,\ \text{and}\ \nu(X_0)\ \text{the unit normal derivative to $\Gam_f$, pointing into the set}\ D^+f,\nonumber\\ 
	&\partial^+_\nu U(X_0):=\lim_{t\to0}\frac{U(X_0+t\nu(X_0))-U(X_0)}{t}\ \ \text{and}\ \  \partial^-_\nu U(X_0)=-\lim_{t\to0}\frac{U(X_0-t\nu(X_0))-U(X_0)}{t}.\label{eqIN:DefOfPosNegNormalDeriv}
\end{align}
With these, we can define the operator, $H$, as 
\begin{align}\label{eqIN:DefOfH}
	H(f,x):= G(I^+(f,x),I^-(f,x))\cdot \sqrt{1+\abs{\grad f}^2},
\end{align}
where
\begin{align}\label{eqIN:DefIPLusAndMinus}
	I^+(f,x):= \partial_\nu^+ U_f(x,f(x)),\ \ \text{and}\ \ 
	I^-(f,x):= \partial_\nu^- U_f(x,f(x)).
\end{align}
The standard ellipticity assumption on $G$ is the following:
\begin{align}\label{eqIN:GEllipticity}
	G\ \text{is Lipschitz and}\ \ \lam\leq \frac{\partial}{\partial a} G(a,b)\leq \Lam,\ \ 
	\lam\leq -\frac{\partial}{\partial b} G(a,b)\leq \Lam.
\end{align}
A canonical example of $G$ for the two-phase problem is $G(a,b)=a-b$, whereas a one-phase problem will simply be given by $G(a,b)=\tilde G(a)$, and the problem often referred to as one-phase Hele-Shaw flow is $G(a,b)=a$ (we note that the name ``Hele-Shaw'' has multiple meanings, depending upon the literature involved; both instances can be seen in Saffman-Taylor \cite{SaffamnTaylor-1958PorousMedAndHeleShaw}).

In a previous work, \cite{ChangLaraGuillenSchwab-2019SomeFBAsNonlocalParaboic-NonlinAnal}, it was shown that under the graph assumption, the flow (\ref{eqIN:HSMain}) is equivalent in the sense of viscosity solutions for free boundary problems to viscosity solutions of the nonlinear, nonlocal, parabolic equation for $f$
\begin{align}\label{eqIN:HeleShawIntDiffParabolic}
	\begin{cases}
		\partial_t f = G(I^+(f), I^-(f))\cdot\sqrt{1+\abs{\grad f}^2}\ &\text{in}\ \real^n\times [0,T],\\
		f(\cdot,0) = f_0\ &\text{on}\ \real^n\times\{0\}.
	\end{cases}
\end{align}
We remark that a viscosity solution for the respective equations (\ref{eqIN:HSMain}) and (\ref{eqIN:HeleShawIntDiffParabolic}) (they are different objects) will exist whenever the free boundary (or in this case, $f$) is uniformly continuous, i.e. in very low regularity conditions.

In this paper, we explore a higher regularity regime, already assuming the existence of a classical solution of (\ref{eqIN:HSMain}). Whenever $f$ remains in a particular convex set of $C^{1,\dini}$ (the set of $C^1$ functions whose gradients enjoy a Dini modulus), we will show that the operator $H$ takes a precise form as an integro-differential operator.  This convex set is denoted as, $\K(\del,L,m,\rho)$, and is made up as

\begin{align*}
	C^{1,\dini}_\rho(\real^n) = \{ f:\real^n\to\real\ |\ \grad f\in L^\infty\  \text{and is Dini continuous with modulus}\ \rho  \},
\end{align*}
\begin{align}\label{eqIN:DefOfSetK}
	\K(\del,L,m,\rho) = \{ f\in C^{1,\dini}_\rho\ :\ \del<f<L-\del,\ \abs{\grad f}\leq m \}.
\end{align}
We note that the extra requirement $\del<f<L-\del$ is simply that the free boundary remains away from the fixed boundary where the pressure conditions are imposed.

The first theorem gives the integro-differential structure of $H$, and the details of which ellipticity class it falls into.

\begin{theorem}\label{thm:StructureOfHMain}
	Assume that $G$ satisfies (\ref{eqIN:GEllipticity}) and $H$ is the operator defined by (\ref{eqIN:DefOfH}), using the equation, (\ref{eqIN:TwoPhaseBulk}).
	
	\begin{enumerate}[(i)]
		\item
	For each fixed $\del$, $L$, $m$, $\rho$, that define the set $\K$ in (\ref{eqIN:DefOfSetK}) there exists a collection $\{a^{ij}, c^{ij}, b^{ij}, K^{ij}\}\subset{\real\times\real\times\real^n\times \textnormal{Borel}(\real^n\setminus\{0\})}$ (depending upon $\del$, $L$, $m$, $\rho$), so that
	\begin{align*}
		\forall\ f\in \K(\del,L,m,\rho),\ \ \  
		H(f,x) = \min_i\max_j\left( 
		a^{ij}+c^{ij}f(x) + b^{ij}\cdot\grad f(x) + \int_{\real^n}\del_y f(x)K^{ij}(y)dy
		\right),
	\end{align*}
	where for an $r_0$ depending upon $\del$, $L$, $m$, we use the notation,
	\begin{align}\label{eqIN:DelhFNotation}
		\del_yf(x)= f(x+y)-f(x)-\Indicator_{B_{r_0}}(y)\grad f(x)\cdot y.
	\end{align}
	
	\item
	Furthermore, there exists $R_0$ and $C$, depending on $\del$, $L$, $m$, $\rho$, so that for all $i,j$,
	\begin{align*}
		\forall\ y\in\real^n,\ \ \ 
		C^{-1}\abs{y}^{-n-1}\Indicator_{B_{R_0}}(y)\leq K^{ij}(y)\leq C\abs{y}^{-n-1}, 
	\end{align*}
	and
	\begin{align*}
		\sup_{0<r<r_0}\abs{b^{ij} - \int_{B_{r_0}\setminus B_r}yK^{ij}(y)dy}\leq C.
	\end{align*}
	The value of $r_0$ in (\ref{eqIN:DelhFNotation}) depends on $R_0$.

	\end{enumerate}
	
\end{theorem}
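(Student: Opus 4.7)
The plan is to reduce Theorem \ref{thm:StructureOfHMain} to two ingredients: first, the Global Comparison Property (GCP) for $H$ on the convex set $\K(\del,L,m,\rho)$; and second, Lipschitz continuity of the map $f \mapsto H(f,\cdot)$ on $C^{1,\dini}_\rho$. Once these are in place, a nonlinear Courr\`ege-type min-max representation theorem for operators with GCP (as developed in the authors' previous work and in related papers of Guillen-Schwab) supplies the integro-differential form claimed in (i). The monotonicity assumption (\ref{eqIN:GEllipticity}) on $G$ is exactly what is needed to transfer comparison from the harmonic functions $U_f$ to the composite $H$, while the one-sided nondegeneracy of $G$ in (\ref{eqIN:GEllipticity}) is what will eventually drive the lower bound on $K^{ij}$.

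First I would verify GCP. Suppose $f_1, f_2 \in \K$ satisfy $f_1 \leq f_2$ and $f_1(x_0) = f_2(x_0)$. The domain inclusions are $D_{f_1}^+ \subset D_{f_2}^+$ and $D_{f_2}^- \subset D_{f_1}^-$. In the positive phase both $U_{f_i}$ equal $1$ on $\{x_{n+1}=0\}$, while $U_{f_1}=0$ on $\Gam_{f_1}$ and $U_{f_2} > 0$ there, since $\Gam_{f_1} \subset D_{f_2}^+$. Comparison for $\Delta U = 0$ in $D_{f_1}^+$ gives $U_{f_1} \leq U_{f_2}$, and since both vanish at the contact point $X_0=(x_0,f_1(x_0))$, differentiating in the inward normal direction yields $I^+(f_1,x_0) \leq I^+(f_2,x_0)$. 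A symmetric argument in the negative phase using $\Tr(A_2 D^2 U)=0$ on $D_{f_2}^-$ yields $I^-(f_1,x_0) \geq I^-(f_2,x_0)$. Since $f_2-f_1$ attains a minimum at $x_0$, we have $\grad f_1(x_0) = \grad f_2(x_0)$, so the $\sqrt{1+|\grad f|^2}$ factor is common at $x_0$; combined with the monotonicity of $G$, this gives $H(f_1,x_0) \leq H(f_2,x_0)$, which is the GCP.

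With GCP and Lipschitz stability of the $C^{1,\dini}$ Dirichlet-to-Neumann map in hand, I would invoke the nonlinear Courr\`ege theorem. Linearizing $H$ at each $g \in \K$ produces a L\'evy-type operator $L_g$ whose nonlocal part is built from the Poisson kernel of $D_g^\pm$; the envelope $H(f,x) = \min_i \max_j L_{g^{ij}}(f,x)$ then furnishes the representation in (i). The upper bound $K^{ij}(y) \leq C|y|^{-n-1}$ in (ii) inherits directly from the standard $|y|^{-n-1}$ blow-up of the Poisson kernel near a $C^{1,\dini}$ boundary, uniform over $\K$. The tail-balance inequality for $b^{ij}$ is then an accounting identity in the L\'evy-It\^o decomposition, recording that the antisymmetric part of $y K^{ij}(y)$ on small scales can be absorbed into the drift coefficient up to a bounded error.

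The hardest step will be the uniform lower bound $K^{ij}(y) \geq C^{-1}|y|^{-n-1}\Indicator_{B_{R_0}}(y)$. This is a quantitative nondegeneracy statement for the linearization of $H$: a localized upward bump of the graph near $x+y$ must produce an increase in $I^\pm(\cdot,x)$ of order $|y|^{-n-1}$ at short scales. I expect this to follow from Hopf-type lower bounds on $\partial_\nu^\pm U_g$ combined with quantitative Poisson-kernel lower bounds in $D_g^\pm$, made uniform over $g \in \K$ by the structural constraints $\del < g < L-\del$, $|\grad g| \leq m$, and the fixed Dini modulus $\rho$. The necessity of cutting off at a scale $R_0$ is then explained by the fixed Dirichlet data at $x_{n+1}=0$ and $x_{n+1}=L$, which screen the kernel at long range and prevent the pure power-law lower bound from persisting past $R_0$.
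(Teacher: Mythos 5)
Your outline has the right skeleton --- GCP for $H$, Lipschitz continuity on $C^{1,\dini}_\rho$, a Courr\`ege-type min-max theorem, and Poisson-kernel/Hopf estimates driving part (ii) --- and the GCP argument you sketch matches the paper's. But there is a genuine gap in the step ``invoke the nonlinear Courr\`ege theorem.'' A generic min-max representation for a Lipschitz operator with GCP (e.g.\ via Clarke's generalized gradient and Lebourg's mean value theorem) produces a family of supporting linear functionals that need \emph{not} satisfy the lower bound $K^{ij}(y)\gtrsim |y|^{-n-1}\Indicator_{B_{R_0}}$: degenerate supporting linear operators can appear in the representation even though $H$ itself is nondegenerate. (Think of $A(x)=|x|$: the set of supporting hyperplanes at $0$ contains the zero functional, while the a.e.\ derivative always has norm $1$.) The paper's entire reason for running the argument through the finite-dimensional Whitney-grid approximations $J^m$ --- Section~\ref{sec:FiniteDim}, Lemma~\ref{lemFD:RemainderForNonNegPi}, Lemma~\ref{lemPfThm1:FDApproxCompareToFracLaplace}, Corollary~\ref{corPfThm1:LComparableToHalfLaplace} --- is precisely so that the differentials entering the min-max are \emph{honest a.e.\ derivatives} of finite-rank Lipschitz maps; these inherit the two-sided estimate of Lemma~\ref{lemLIP:LittleILipEstPart1}/Corollary~\ref{corLIP:CorOfLipEstPart1}, and the lower bound then survives passage to the limit (modulo the $h_m^\beta$ error controlled by Lemma~\ref{lemFD:RemainderForNonNegPi}). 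Your Hopf/Poisson-kernel input is the right source of nondegeneracy, but as written you have not explained how to transport it into the linearizations that actually appear in the representation.

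Two further points. First, you describe the drift bound as ``an accounting identity in the L\'evy--It\^o decomposition,'' but it is substantive: one must test $\ell\in\D_H(0)$ against the functions $\phi_{\tau,r}(y)=\tau r\,(e\cdot y/r)\eta(y/r)$ and establish $|\ell(\phi_{\tau,r})|\leq C\tau$ (Lemma~\ref{lemPfThm1:BoundsOnEllPhi}), which in turn relies on the quantitative rotation/translation stability estimate Lemma~\ref{lemLIP:LittleILipEstPart2} (and its finite-dimensional version Lemma~\ref{lemPfThm1:FiniteDimVersionOfLittleIEst2}); the already-established kernel bounds then control the near-origin remainder. Second, to invoke Theorem~\ref{thmFD:StructureOfJFromGSEucSpace} at all one must also verify the long-range decay hypothesis \eqref{eqFD:ExtraModulusConditionOutsideBR} for $H$; this is Lemma~\ref{lemPfThm1:ExtraDecayCondition}, proved via the decay of the Poisson mass of $P_f$ outside a large ball (Lemma~\ref{lemGF:DecayAtInfinityForTechnicalReasons}), and it is not mentioned in your outline.
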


The second result of this paper is to use the above result, plus recent results for parabolic integro-differential equations that include (\ref{eqIN:HeleShawIntDiffParabolic}), thanks to part (ii) of Theorem \ref{thm:StructureOfHMain}, to deduce regularity for the resulting free boundary (in this case, the set $\Gam_f=\graph(f(\cdot,t))$).  This is the content of our second main result.

\begin{theorem}\label{thm:FBRegularity}
	There exist universal constants, $C>0$ and $\gam\in(0,1)$, depending upon $\del$, $L$, $m$, and $\rho$, which define $\K$ in (\ref{eqIN:DefOfSetK}) so that 
	if $f$ solves (\ref{eqIN:HeleShawIntDiffParabolic}) and for all $t\in[0,T]$, $f(\cdot,t)\in\K(\del,L, m, \rho)$, then $f\in C^{1,\gam}(\real^n\times[\frac{T}{2},T])$, and
	\begin{align*}
		\norm{f}_{C^{1,\gam}(\real^n\times[\frac{T}{2},T])}\leq \frac{C(\del,L,m,\rho)(1 + T)}{T^\gam}\norm{f(\cdot,0)}_{C^{0,1}}.
	\end{align*}
	
	In particular, under the \emph{assumption} that for all $t\in[0,T]$, $\partial\{U(\cdot,t)>0\}=\graph(f(\cdot,t))$, \emph{and} for all $t\in[0,T]$, $f\in\K(\del,L,m,\rho)$, we conclude that $\partial\{U>0\}$ is a $C^{1,\gam}$ hypersurface in space and time. 
		
\end{theorem}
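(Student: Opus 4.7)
The plan is to use Theorem \ref{thm:StructureOfHMain} to recast (\ref{eqIN:HeleShawIntDiffParabolic}) as a translation invariant parabolic equation with a min--max structure over a fixed family of linear integro-differential operators, and then to invoke Krylov--Safonov type Hölder estimates for such equations, applied to space and time increments of $f$. The key observation is that part (i) of Theorem \ref{thm:StructureOfHMain} produces a single family $\{a^{ij}, c^{ij}, b^{ij}, K^{ij}\}$, independent of $t$ and valid for every element of $\K(\del,L,m,\rho)$, such that
\begin{align*}
    \partial_t f(x,t) = \min_i\max_j \Big( a^{ij} + c^{ij}f(x,t) + b^{ij}\cdot \grad f(x,t) + \int_{\real^n} \del_y f(x,t)\, K^{ij}(y)\,dy \Big),
\end{align*}
and by part (ii) the linear operators $L^{ij}(g) := c^{ij}g + b^{ij}\cdot\grad g + \int \del_y g\, K^{ij}(y)\,dy$ all lie in a uniformly elliptic class of nonlocal order one with compensated drift.

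Because the family $\{L^{ij}\}$ does not depend on $f$, the above equation is translation invariant in both $x$ and $t$, so for every $h\in\real^n$ and $\tau>0$ the increments
\begin{align*}
    w_h(x,t) := f(x+h,t) - f(x,t), \qquad v_\tau(x,t) := f(x,t+\tau) - f(x,t),
\end{align*}
satisfy two sided Pucci extremal inequalities
\begin{align*}
    \M^-(w_h) \leq \partial_t w_h \leq \M^+(w_h), \qquad \M^-(v_\tau) \leq \partial_t v_\tau \leq \M^+(v_\tau),
\end{align*}
where $\M^\pm$ denote the extremal operators over the class $\{L^{ij}\}$. The comparison principle for (\ref{eqIN:HeleShawIntDiffParabolic}) (a standard consequence of the min--max representation and the viscosity framework of the preceding work) then gives the $L^\infty$ contractions
\begin{align*}
    \|w_h(\cdot,t)\|_\infty \leq \|w_h(\cdot,0)\|_\infty \leq |h|\,\|\grad f(\cdot,0)\|_\infty, \qquad \|v_\tau(\cdot,t)\|_\infty \leq C(\del,L,m,\rho)\,\tau,
\end{align*}
where the bound on $v_\tau$ follows from $\partial_t f = H(f)$ together with the uniform boundedness of $H$ on $\K$, itself a direct consequence of part (ii) and the Dini property built into $\K$.

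At this point, a parabolic Hölder estimate for solutions of the extremal inequalities $\M^- u \leq \partial_t u \leq \M^+ u$ in the order one regime with compensated drift---the ellipticity class encompassed by recent Krylov--Safonov type results---produces universal $\gam\in(0,1)$ and $C>0$ depending only on $\del,L,m,\rho$ with
\begin{align*}
    [w_h]_{C^{\gam}(\real^n\times[T/2,T])} \leq \frac{C\,(1+T)}{T^{\gam}}\,\|w_h\|_{L^\infty(\real^n\times[0,T])},
\end{align*}
and the analogous estimate for $v_\tau$. Dividing by $|h|$ and $\tau$ respectively and passing to the limit $h,\tau\to 0$ upgrades these to $[\grad f]_{C^{\gam}}$ and $[\partial_t f]_{C^{\gam}}$ bounds on $\real^n\times[T/2,T]$; combined with the a priori $L^\infty$ bounds on $f$, $\grad f$, and $\partial_t f$, this delivers the desired $C^{1,\gam}$ estimate with the claimed time factor.

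The main obstacle is that the kernels $K^{ij}$ satisfy $K^{ij}(y)\asymp |y|^{-n-1}$, i.e.\ are of order exactly one, which is the critical scaling at which the nonlocal diffusion and the drift $b^{ij}\cdot\grad f$ compete on the same footing; classical Krylov--Safonov Hölder estimates for nonlocal operators of subcritical or supercritical order do not directly apply. One must instead appeal to a Hölder theorem specifically adapted to this critical order with a compensated drift obeying the uniform bound in part (ii) of Theorem \ref{thm:StructureOfHMain}. Verifying that the precise ellipticity class produced by Theorem \ref{thm:StructureOfHMain} lies within the scope of such a theorem, and tracking the dependence of the universal constants on $\del, L, m, \rho$ so as to match the form stated in Theorem \ref{thm:FBRegularity}, constitutes the main technical content of the argument.
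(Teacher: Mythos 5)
Your overall strategy—translation invariance plus the min--max from Theorem \ref{thm:StructureOfHMain}, extremal inequalities for difference quotients, then Krylov--Safonov Hölder regularity, with passage to the limit in the increment—is the same skeleton as the paper's proof. However, there is a concrete gap in the step where you pass from the family $\{L^{ij}\}$ to extremal operators for which a Hölder theorem is known. Theorem \ref{thm:StructureOfHMain}(ii) only gives the lower bound $K^{ij}(y)\geq C^{-1}|y|^{-n-1}\Indicator_{B_{R_0}}(y)$; the kernels may vanish outside the fixed ball $B_{R_0}$. The class $\L_\Lam$ of Definition \ref{defPara:scaleinvariantclass}, which is what the Chang Lara--Dávila regularity machinery (Proposition \ref{propPara:holderestimate}) actually requires, demands the two-sided bound $\Lam^{-1}|y|^{-n-1}\leq K\leq \Lam|y|^{-n-1}$ on all of $\real^n$, together with its scale invariance. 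So when you write $\M^-(w_h)\leq\partial_t w_h\leq\M^+(w_h)$ with $\M^{\pm}$ taken over $\{L^{ij}\}$, you have produced the "natural" extremal operators for $H$, but you have not shown there is a Hölder estimate for them. You diagnose the "main obstacle" as the critical order-one scaling, but that is handled directly by the references for $\L_\Lam$; the real obstruction is that $\{L^{ij}\}$ does not sit inside $\L_\Lam$.

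The paper's resolution, which is missing from your proposal, is the truncated-kernel trick of \cite[Section 14]{CaSi-09RegularityIntegroDiff}: replace each $K^{ij}$ by $\tilde K^{ij}:=K^{ij}+\Lam^{-1}\Indicator_{\real^n\setminus B_{r_0}}|y|^{-n-1}$, which does belong to $\L_\Lam$, and absorb the correction $-\Lam^{-1}\int_{\real^n\setminus B_{r_0}}(f(x+y)-f(x))|y|^{-n-1}\,dy$ (which is an $L^1$-bounded operator) plus the $c^{ij}f$ term into zero-order errors $\pm C\|f\|_{L^\infty}$. This yields the two-sided bound \eqref{eqPfThm2:ellipticityofH} with extremal operators $\M^{\pm}_{\L_\Lam}$ and zero-order errors $C_{1,2}|f_1-f_2|$. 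Those errors are not cosmetic: they are exactly what produces the nonzero right-hand side $A=\|u\|_{C^{0,1}}$ in Proposition \ref{propPara:holderestimate} and hence the $(1+T)$ factor in the final estimate. As written, your clean extremal inequalities would yield $\|w_h\|_{C^\gam}\leq CT^{-\gam}\|w_h\|_\infty$ with no $(1+T)$, so the form you quote does not follow logically from your premises. Once the class substitution and error terms are in place, the rest of your argument (difference quotients in $x$ and $t$, $L^\infty$ contraction from the comparison principle in \cite{ChangLaraGuillenSchwab-2019SomeFBAsNonlocalParaboic-NonlinAnal}, sending $|h|,\tau\to0$) matches the paper.
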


\begin{rem}
	It is important to note the strange presentation of the $C^{1,\gam}$ estimate in Theorem \ref{thm:FBRegularity} with only $\norm{f(\cdot,0)}_{C^{0,1}}$ on the right hand side.  We emphasize that we have \emph{not} proved that Lipschitz free boundaries become $C^{1,\gam}$, due to the constant, $C(\del,L,m,\rho)$.  As the reader will see in Section \ref{sec:KrylovSafonovForHS}, the constant $C$ depends in a complicated way on the parameters, $\del$, $L$, $m$, $\rho$, as all of these impact the boundary behavior of the Green's function for the elliptic equations in (\ref{eqIN:HSMain}), which in turn changes the estimates in Theorem \ref{thm:StructureOfHMain}, and hence the resulting parabolic estimates in Section \ref{sec:KrylovSafonovForHS}.  Nevertheless, once one knows that $f\in\K(\del,L,m,\rho)$ for some fixed choice of $\del$, $L$, $m$, $\rho$, subsequently  decreasing the Lipschitz norm of $f(\cdot,0)$ would decrease the $C^{1,\gam}$ norm of the solution at later times.  However, since the parameters $\del$, $L$, $m$, $\rho$ give an upper bound on the quantity, for each $t$, $\norm{f(\cdot,t)}_{C^{1,\dini}_\rho}$, a reasonable interpretation of the result is rather given as
	\begin{align*}
		\norm{f}_{C^{1,\gam}(\real^n\times[\frac{T}{2},T])}\leq \frac{C(\del,L,m,\rho)(1+T)}{T^\gam}\sup_{t\in[0,T]}\norm{f(\cdot,t)}_{C^{1,\dini}_\rho}.
	\end{align*}
\end{rem}

	We note that the work \cite{ChangLaraGuillenSchwab-2019SomeFBAsNonlocalParaboic-NonlinAnal} established the equivalence between free boundary viscosity solutions of some Hele-Shaw type evolutions, like (\ref{eqIN:HSMain}), and viscosity solutions of fractional integro-differential parabolic equations in (\ref{eqIN:HeleShawIntDiffParabolic}).  However, the results in \cite{ChangLaraGuillenSchwab-2019SomeFBAsNonlocalParaboic-NonlinAnal} focused on this equivalence at the level of viscosity solutions and low regularity properties, and they stopped short of addressing the question of a \emph{regularization} phenomenon that may occur in a slightly higher regularity regime.  As shown in the current paper, one needs to obtain much more precise information about the integro-differential operators appearing in, for instance, Theorem \ref{thm:StructureOfHMain} in order to utilize recent tools from the realm of integro-differential equations to investigate how this equation regularizes.  Furthermore, obtaining the estimate as in Theorem \ref{thm:StructureOfHMain} required a slightly different approach than the one pursued in \cite{ChangLaraGuillenSchwab-2019SomeFBAsNonlocalParaboic-NonlinAnal}, instead invoking a finite dimensional approximation technique from \cite{GuSc-2019MinMaxEuclideanNATMA}.  This can be seen in Sections \ref{sec:LevyMeasureEstimate} and \ref{sec:DriftEstimates}.


\section{Some Historical Background and Related Results}\label{sec:BackgroundLiterature}

Basically, (\ref{eqIN:HSMain}) is a two-phase Hele-Shaw type problem without surface tension and neglecting the effects of gravity.  For our purposes, we are interested in (\ref{eqIN:HSMain}) for mathematical reasons to uncover some of its structural properties and to explore the possibility of regularizing effects.  Thus, we do not comment much on the model's physical origins.  The fact that (\ref{eqIN:HSMain}) governs a two-phase situation is important for us to demonstrate that these techniques work for both one and two phase problems of a certain type.

In the following discussion, we attempt to focus on results most closely related to (\ref{eqIN:HSMain}), and we note that a more extended discussion can be found in the works \cite{ChangLaraGuillen-2016FreeBondaryHeleShawNonlocalEqsArXiv} and \cite{ChangLaraGuillenSchwab-2019SomeFBAsNonlocalParaboic-NonlinAnal}.

\subsection{Hele-Shaw type free boundary problems without gravity.}

In most of the existing literature, (\ref{eqIN:HSMain}) is studied in its one-phase form, where the set $\{U<0\}$ is ignored by simply dictating that the velocity condition is $V=G(\partial^+_\nu U^+)$.

Some of the earliest works for short time existence and uniqueness are \cite{ElliottJanovsky-1981VariationalApproachHeleShaw} and \cite{EscherSimonett-1997ClassicalSolutionsHeleShaw-SIAM}, where a type of variational problem is studied in \cite{ElliottJanovsky-1981VariationalApproachHeleShaw} and a classical solution (for short time) is produced in \cite{EscherSimonett-1997ClassicalSolutionsHeleShaw-SIAM}.  For the one-phase problem, under a smoothness and convexity assumption, \cite{DaskaLee-2005AllTimeSmoothSolHeleShawStefan-CPDE} gives global in time smooth solutions.  Viscosity solutions for the one-phase version of (\ref{eqIN:HSMain}) are defined and shown to exist and be unique in \cite{Kim-2003UniquenessAndExistenceHeleShawStefanARMA}, which follows the approach first developed in \cite{Caffarelli-1988HarnackApproachFBPart3PISA} for the stationary two-phase problem and subsequently used in \cite{AthanaCaffarelliSalsa-1996RegFBParabolicPhaseTransitionACTA} for the two-phase Stefan problem.  A follow-up modification of the definition of viscosity solutions for (\ref{eqIN:HSMain}) was given in \cite[Section 9]{ChangLaraGuillenSchwab-2019SomeFBAsNonlocalParaboic-NonlinAnal}.  Of course, for our results, we are assuming already the existence of a classical solution, and so none of the definitions of viscosity solutions for (\ref{eqIN:HSMain}) are invoked here.  (However, we do invoke viscosity solutions for the function $f$, as they are useful even when studying smooth solutions, such as in investigating the equation for discrete spatial derivatives of solutions.  But the notion of solution for $f$ is entirely different from that of $U_f$.)

Moving on to issues of regularity, beyond the smooth initial data case in \cite{EscherSimonett-1997ClassicalSolutionsHeleShaw-SIAM}, and the convex case in \cite{DaskaLee-2005AllTimeSmoothSolHeleShawStefan-CPDE}, there are a number of works.  All of the following works apply to the one-phase problem.  With some assumptions on the quantity $\abs{U_t}/\abs{DU}$, \cite{Kim-2006RegularityFBOnePhaseHeleShaw-JDE} showed a Lipschitz free boundary becomes $C^1$ in space-time with a modulus, and long time regularity, involving propagation of a Lipschitz modulus, was obtained in \cite{Kim-2006LongTimeRegularitySolutionsHeleShaw-NonlinAnalysis}.  Subsequently, the extra condition on the space-time non-degeneracy in \cite{Kim-2006RegularityFBOnePhaseHeleShaw-JDE} was removed in the work of \cite{ChoiJerisonKim-2007RegHSLipInitialAJM}, where under a dimensional small Lipschitz condition on the initial free boundary, Lipschitz free boundaries must be $C^1$ in space-time and hence classical.  This was then followed up by the work \cite{ChoiJerisonKim-2009LocalRegularizationOnePhaseINDIANA} where more precise results can be proved when the solution starts from a global Lipschitz graph.   In this context, it is fair to say that our results are the extension of \cite{ChoiJerisonKim-2009LocalRegularizationOnePhaseINDIANA} to the two-phase case, but with paying the extra price of requiring $C^{1,\dini}$ regularity of the initial graph instead of being only Lipschitz.  There is another regularity result for the one-phase Hele-Shaw problem in \cite{ChangLaraGuillen-2016FreeBondaryHeleShawNonlocalEqsArXiv} that follows more the strategy of \cite{DeSilva-2011FBProblemWithRHS} and \cite{Savin-2007SmallPerturbationCPDE},  instead of \cite{Caffarelli-1987HarnackInqualityApproachFBPart1RevMatIbero}, \cite{Caffarelli-1989HarnackForFBFlatAreLipCPAM},  \cite{ChoiJerisonKim-2007RegHSLipInitialAJM}, \cite{Kim-2006RegularityFBOnePhaseHeleShaw-JDE}.  In \cite{ChangLaraGuillen-2016FreeBondaryHeleShawNonlocalEqsArXiv} the approach to regularity for the one-phase Hele-Shaw invoked parabolic regularity theory for fractional equations, but in that context the regularity theory applied to a blow-up limit of the solutions under a flatness condition in space-time, which resulted in a local $C^{1,\gam}$ space-time regularity for the solution.   Thus already \cite{ChangLaraGuillen-2016FreeBondaryHeleShawNonlocalEqsArXiv} foreshadowed the type of strategy that we have pursued in Theorem \ref{thm:FBRegularity}.

\subsection{The nonlinear Dirichlet-to-Neumann mapping}

In this paper, it is reasonable to call the operator, $I$, defined in (\ref{eqIN:BulkEqForHSOperator}) and (\ref{eqIN:defHSOperator}) a nonlinear version of the classical Dirichlet-to-Neumann mapping.  In this case it records the dependence on the shape of the domain of a particular harmonic function.  This operator $I$, and the resulting mapping defined as $H$ in (\ref{eqIN:DefOfH}) are  key components in our analysis, as well as were one of the main ingredients in the previous work \cite{ChangLaraGuillenSchwab-2019SomeFBAsNonlocalParaboic-NonlinAnal}.  Such operators are not new, and they have a relatively long study, particularly in some water wave equations (in fact, the authors in \cite{ChangLaraGuillenSchwab-2019SomeFBAsNonlocalParaboic-NonlinAnal} were unaware of this long history).  Although the map, $I$, appearing in (\ref{eqIN:defHSOperator}), is not exactly the operator appearing in earlier works, it is very similar.  Most of the earlier versions are a slight variant on the following: given two functions, $h:\real^n\to\real$ and $\psi:\real^n\to\real$, $U_{h,\psi}$ is the unique, bounded, harmonic function,
\begin{align*}
	\begin{cases}
		\Delta U_{h,\psi}=0\ &\text{in}\ \{(x,x_{n+1})\in\real^{n+1}\ :\ x_{n+1}<h(x)\}\\
		U_{h,\psi}(x,x_{n+1}) = \psi(x)\ &\text{on}\ \graph(h)
	\end{cases}
\end{align*}
and the Dirichlet-to-Nuemann operator is
\begin{align*}
	[\tilde G(h)\psi](x):= \partial_\nu U_{h,\psi}(x,h(x))\sqrt{1+\abs{\grad h(x)}^2}.
\end{align*}
We note that this operator is in fact in the literature usually denoted as $G(h)\psi$, but we use $\tilde G(h)\psi$ due to the conflicting notation with our use of ``$G$'' in (\ref{eqIN:HSMain}), which is entirely different.  The reader should note that in this context, it is very frequent that $\psi$ actually does not depend on $x_{n+1}$, which can be justified in that $\tilde G(h)$ often is used when acting on such vertically constant boundary data. Sometimes instead of taking $U$ to be defined in the subgraph of $h$, there may be other boundary conditions, such as, for example when $h>1$, a no flux condition $\partial_\nu U_{h,\psi}|_{\{x_{n+1}=0\}}=0$, or even there could be a fixed bottom boundary with a nontrivial shape.  For the purposes of discussion, the equation in the subgraph of $h$ will suffice.  The use of the map, $\tilde G(h)$, appears to go back to \cite{Zakharov-1968StabilityPeriodicWaves} and then \cite{CraigSulem-1993NumericalSimulationGravityWaves-JCompPhys}.  The operator, $\tilde G(h)$ was revisited in \cite{NichollsReitich-2001NewApproachDtoNAnalyticity} for the sake of improving computational tractability for various problems like (\ref{eqIN:HSMain}) that may involve interfaces moving via a normal derivative.  The work \cite{Lannes-2005WellPoseWaterWave-JAMS} investigates the mapping and boundedness properties of $\tilde G(h)$ on various Sobolev spaces for proving well-posedness of water wave equations, and also gives a very detailed description of the usage of $\tilde G$ in earlier works on water waves; we refer to \cite{Lannes-2005WellPoseWaterWave-JAMS} for more discussion on the history of $\tilde G$ in water wave results.  The subsequent article \cite{AlazardBurqZuily-2014CauchyProblemGravityWaterWaves-Inventiones} showed a more careful analysis of $\tilde G$ could give improved conditions on well-posedness in gravity water waves.  $\tilde G$ recently played a central role in \cite{NguyenPausader-2020ParadifferentialWellPoseMuskatARMA} for well-posedness of the Muskat problem and in  \cite{Alazard-2020ConvexityAndHeleShaw-ArXiv}, \cite{AlazardMeunierSmets-2019LyapounovCauchyProbHeleShaw-ArXiv} for well-posedness of the one-phase Hele-Shaw equation with gravity as well as to deduce results related to Lyapunov functionals for the solution.

\subsection{Hele-Shaw type free boundary problems with gravity-- Muskat type problems}

A pair of free boundary problems that is closely related to (\ref{eqIN:HSMain}), but pose their own set of additional challenges are those that are also called Hele-Shaw and Muskat problems.  They can be cast as both one and two phase problems, and they govern the free surface between two fluids of different density and possibly different viscosity.  We note that in both, gravity is taken into account, and this changes the nature of the equation a bit away from (\ref{eqIN:HSMain}); also the pressure is not required to be constant along the free boundary.  There is a large amount of literature on this class of problems, and we focus on the ones most closely related to (\ref{eqIN:HSMain}).  A feature that links the Muskat problem to that considered in this paper is to rewrite the solution for the original problem in $n+1$ space dimensions as a problem in $n$ dimensions that governs the free surface itself, directly, via a nonlinear equation that is inherently integro-differential in nature and which linearizes to the fractional heat equation of order $1/2$.  Already the reformulation of the problem in terms of integro-differential equations goes back to \cite{Ambrose-2004WellPoseHeleShawWithoutSurfaceTensionEurJAppMath},  \cite{CaflischOrellanaSiegel-1990LocalizedApproxMethodVorticalFLows-SIMA}, \cite{CaflischHowisonSiegel-2004GlobalExistenceSingularSolMuskatProblem-CPAM}, with gloabl existence of solutions with small data in \cite{CaflischHowisonSiegel-2004GlobalExistenceSingularSolMuskatProblem-CPAM} and short time existence of solutions with large data in an appropriate Sobolev space in \cite{Ambrose-2004WellPoseHeleShawWithoutSurfaceTensionEurJAppMath}.  This method of writing the Muskat problem as an equation for the free surface directly continues in \cite{CordobaGancedo-2007ContourDynamics-CMP}, and this is an integro-differential type equation \emph{for the gradient} of the free surface function, where for a 2-dimensional interface is
\begin{align}\label{eqLIT:MuskatFPrime}
	\partial_t f = \frac{\rho_2-\rho_1}{4\pi}\int_{\real^2}\frac{(\grad f(x,t)-\grad f(x-y,t))\cdot y}{(y^2+[f(x,t)-f(x-y,t)]^2)^{3/2}}dy.
\end{align}
This formulation was then used to show that near a stable solution that is sufficiently regular, the equation linearizes to the 1/2-heat equation, and \cite{CordobaGancedo-2007ContourDynamics-CMP} further showed existence of solutions in this region (see a few more comments about linearization in Section \ref{sec:Commentary}). It was subsequently used to produce many well-posedness and regularity results, both short time and global time, a few of which are:  \cite{ConstantinCordobaGancedoStrain-2013GlobalExistenceMuskat-JEMS}, \cite{ConstantinGancedoShvydkoyVicol-2017Global2DMuskatRegularity-AIHP}, \cite{CordobaCordobaGancedo-2011InterfaceHeleShawMuskat-AnnalsMath}.

There are (at least) two other variants on studying the Muskat problem as an equation for the free surface alone, and the ones that are very close in spirit to our work are, on the one hand,  \cite{Cameron-2019WellPosedTwoDimMuskat-APDE}, \cite{Cameron-2020WellPose3DMuskatMediumSlope-ArXiv}, \cite{CordobaGancedo-2009MaxPrincipleForMuskat-CMP}, and on the other hand, \cite{AlazardMeunierSmets-2019LyapounovCauchyProbHeleShaw-ArXiv}, \cite{NguyenPausader-2020ParadifferentialWellPoseMuskatARMA}.   In \cite{CordobaGancedo-2009MaxPrincipleForMuskat-CMP},  equation (\ref{eqLIT:MuskatFPrime}) was rewritten as a fully nonlinear integro-differential equation on $f$ itself, instead of $\partial_x f$, which is given in 1-d as
\begin{align}\label{eqLITMuskatForF}
	\partial_t f = \int_\real \frac{f(y,t)-f(x,t)-(y-x)\partial_x f(x,t)}{(y-x)^2 + (f(y,t)-f(x,t))^2}dy,
\end{align}
which is an equation of the form,
\begin{align*}
	\partial_t f = \int_\real \del_y f(x,t)K_f(y,t)dy,
\end{align*}
where $K_f\geq0$ is a kernel that depends on $f$ and has the same structure as what we provide in Theorem \ref{thm:StructureOfHMain} above.  The integro-differential equation for $f$ (as opposed to $\partial_x f$) played a role in \cite{CordobaGancedo-2009MaxPrincipleForMuskat-CMP} to show non-expansion of the Lipschitz norm of solutions with nice enough data.   The integro-differential nature of the Muskat problem was subsequently utilized in \cite{Cameron-2019WellPosedTwoDimMuskat-APDE}, \cite{Cameron-2020WellPose3DMuskatMediumSlope-ArXiv} to study well-posedness for Lipschitz data as well as establish regularizing effects from (\ref{eqLITMuskatForF}).  Thus, in spirit, our work combined with \cite{ChangLaraGuillenSchwab-2019SomeFBAsNonlocalParaboic-NonlinAnal} is very close to \cite{Cameron-2019WellPosedTwoDimMuskat-APDE}, \cite{Cameron-2020WellPose3DMuskatMediumSlope-ArXiv}.  The other variation closely related to our work is to utilize the equation for $f$ given by the operator, $\tilde G(f)$ shown above, and this is used in \cite{AlazardMeunierSmets-2019LyapounovCauchyProbHeleShaw-ArXiv} for one-phase Hele-Shaw with gravity and \cite{NguyenPausader-2020ParadifferentialWellPoseMuskatARMA} for both the one and two phase Muskat problem.  The analogy is easiest to see for the one phase problem, and in both \cite{AlazardMeunierSmets-2019LyapounovCauchyProbHeleShaw-ArXiv} and \cite{NguyenPausader-2020ParadifferentialWellPoseMuskatARMA} it is established that if the graph of $f$ gives the free surface, then $f$ can be completely characterized by the flow
\begin{align}\label{eqLIT:DtoNMuskat}
	\partial_t f = \tilde G(f)f  \ \ \ \text{on}\ \real^n\times[0,T].
\end{align}
At least for the Hele-Shaw type flow we study in (\ref{eqIN:HSMain}), it appears as though the first result to show that weak solutions (viscosity solutions) of (\ref{eqIN:HSMain}) are equivalent to the flow governed by the Dirichlet-to-Neumann operator acting on $f$, as above in (\ref{eqLIT:DtoNMuskat}) (in our context, this is $H$ in (\ref{eqIN:DefOfH}) and (\ref{eqIN:HeleShawIntDiffParabolic})), was proved in \cite{ChangLaraGuillenSchwab-2019SomeFBAsNonlocalParaboic-NonlinAnal}.  The reduction to the equation for the free surface is not surprising, as a similar (and more complicated) reduction to a system for the free surface in water waves was known since \cite{CraigSulem-1993NumericalSimulationGravityWaves-JCompPhys} (also appearing in  \cite{AlazardBurqZuily-2014CauchyProblemGravityWaterWaves-Inventiones}, \cite{Lannes-2005WellPoseWaterWave-JAMS}, among others)-- the novelty in \cite{ChangLaraGuillenSchwab-2019SomeFBAsNonlocalParaboic-NonlinAnal} was that the reduction holds for viscosity solutions, which may not be classical.  In \cite{ChangLaraGuillenSchwab-2019SomeFBAsNonlocalParaboic-NonlinAnal} it was shown that under the graph assumption, the notion of the viscosity free boundary solution for $U$ is equivalent to the viscosity solution of the equation for $f$, which is (\ref{eqIN:HeleShawIntDiffParabolic}).  Furthermore, global in time existence and uniqueness for (\ref{eqIN:HeleShawIntDiffParabolic})-- or well-posedness-- holds, and it can be used to construct solutions to (\ref{eqIN:HSMain}), as well as show that a modulus of continuity for the initial interface will be preserved for all time.  Subsequently, both \cite{AlazardMeunierSmets-2019LyapounovCauchyProbHeleShaw-ArXiv} and \cite{NguyenPausader-2020ParadifferentialWellPoseMuskatARMA} showed that for respectively the one-phase Hele-Shaw with gravity and the Muskat problem, the equation (\ref{eqLIT:DtoNMuskat}) is equivalent to solving the original free boundary problem, and that this equation is globally in time well posed in $H^s(\real^n)$ for $s>1+\frac{n}{2}$, regardless of the size of the initial data in $H^s$.  Thus, the work in \cite{ChangLaraGuillenSchwab-2019SomeFBAsNonlocalParaboic-NonlinAnal} and our work here is again, very closely related to \cite{AlazardMeunierSmets-2019LyapounovCauchyProbHeleShaw-ArXiv}, \cite{NguyenPausader-2020ParadifferentialWellPoseMuskatARMA}, by utilizing (\ref{eqLIT:DtoNMuskat}) directly.  There is an important difference to note, however, where the results in \cite{ChangLaraGuillenSchwab-2019SomeFBAsNonlocalParaboic-NonlinAnal} and our results here exploit the fact that $H$ enjoys the global comparison property (see Definition \ref{defFD:GCP}) and the structure provided by Theorem \ref{thm:StructureOfHMain}, contrasted with \cite{AlazardMeunierSmets-2019LyapounovCauchyProbHeleShaw-ArXiv}, \cite{NguyenPausader-2020ParadifferentialWellPoseMuskatARMA} for which the analysis is derived from the properties of $\tilde G$ as a mapping on $H^s$.

\subsection{Parabolic integro-differential equations}

For the sake of presentation, in the context of this paper, the parabolic integro-differential equations that we utilize are of the form
\begin{align}\label{eqLIT:ParabolicIntDiff}
	\partial_t f = b(x)\cdot \grad f + \int_{\real^n} \del_hf(x,t) K(x,h)dh,
\end{align} 
with $\del_h f(x) = f(x+h)-f(x)-\Indicator_{B_{r_0}}(h)\grad f(x)\cdot h$,
and their nonlinear counterparts given as those in Theorem \ref{thm:StructureOfHMain}. Here, $b$ is a bounded vector field, and $K\geq0$.  The main issue for our work is the possibility that solutions of (\ref{eqLIT:ParabolicIntDiff}) enjoy some sort of extra regularity when $K$ has better behavior than simply being non-negative.  Are solutions to (\ref{eqLIT:ParabolicIntDiff}) H\"older continuous in some way that still allows for rough coefficients?  Are they $C^{1,\al}$?  We note that as written, (\ref{eqLIT:ParabolicIntDiff}), is an equation in non-divergence form, and in the literature, the theory that addresses these questions commonly carries the name Krylov-Safonov results, which comes from the result for local, second order parabolic equations \cite{KrSa-1980PropertyParabolicEqMeasurable} (in the divergence case, they usually carry the name De Giorgi - Nash - Moser).  These questions pertaining to (\ref{eqLIT:ParabolicIntDiff}) have gathered considerable attention in the past 20 or so years, and most of the works relevant to our study find their origins in either \cite{BaLe-2002Harnack} or \cite{BaLe-2002TransitionProb}, followed by a combination of \cite{CaSi-09RegularityIntegroDiff} and \cite{Silv-2006Holder}.   Examples of the works on parabolic equations that are close to our needs include \cite{ChDa-2012RegNonlocalParabolicCalcVar}, \cite{ChangLaraDavila-2014RegNonlocalParabolicII-JDE}, \cite{ChangLaraDavila-2016HolderNonlocalParabolicDriftJDE}, \cite{SchwabSilvestre-2014RegularityIntDiffVeryIrregKernelsAPDE}, \cite{Serra-2015RegularityNonlocalParabolicRoughKernels-CalcVar}, \cite{Silvestre-2011RegularityHJE}, \cite{Silvestre-2014RegularityParabolicICM}.   We note there are many references for elliptic problems and problems involving existence and uniqueness of viscosity solutions which are not mentioned above.

A common feature of most of the parabolic works listed above is that they arose from the interest of studying the probabilistic implications and analytical properties of equations like (\ref{eqLIT:ParabolicIntDiff}) for their sake as fundamental mathematical objects in their own right.  A typical and frequently mentioned application among the nonlinear works is their relationship to optimal control and differential games.  There has also been interest in utilizing equations like (\ref{eqLIT:ParabolicIntDiff}) in situations which are not necessarily originally posed as an integro-differential equation, such as we do in this work as it pertains to (\ref{eqIN:HeleShawIntDiffParabolic}).  In our case, we find that (\ref{eqIN:HeleShawIntDiffParabolic}) coincidentally landed within the scope of existing results, as the reader may see in Sections \ref{sec:BackgroundParabolic} and \ref{sec:KrylovSafonovForHS}.   This is not always the case, and sometimes the intended application of the integro-differential theory has led to new advances in the integro-differential field.  One recent occurrence of this is the application of integro-differential techniques to the Boltzmann equation.  For the homogeneous Boltzmann equation, new integro-differential results were first produced in \cite{SchwabSilvestre-2014RegularityIntDiffVeryIrregKernelsAPDE} to be subsequently applied in \cite{Silvestre2016NewRegularizationBoltzCMP} (which is mentioned in \cite[Section 1B]{SchwabSilvestre-2014RegularityIntDiffVeryIrregKernelsAPDE}).  Even more advanced techniques were required for the inhomogenous Boltzmann equation, and one can see the evolution of the integro-differential theory in \cite{ImbertSilvestre-2020WeakHarnackBoltzmann-JEMS}, which was followed by \cite{ImbertSilvestre-2018SchauderForKineticIntegralEq-ArXiV}, \cite{ImbertSilvestre-2019GlobalRegularityBoltzmannWithoutCutOff-ArXiv}, \cite{ImbertSilvestre-2020RegularityBoltzmannConditionalMacro-ArXiv}.


\section{Notation and Assumptions}

We will collect some notation here.

\begin{itemize}
	\item $n$ is the dimension of the free boundary hypersurface, with $n\geq 2$.
	\item $X=(x,x_{n+1})\in\real^{n+1}$.
	\item $B_r(x)\subset\real^n$ and $B^{n+1}_r(X)\subset\real^{n+1}$.  When the context is clear, the superscript may be dropped.
	\item $d(x,y)$ is the distant between $x$ and $y$, $d(x,E)$ is the distance between $x$ and a set $E$, and may be abbreviated $d(x)$ when $d(x,E)$ is understood for a particular $E$.
	\item $\nu_f(X)$ is the unit normal vector to the boundary at $X\in\Gam_f$, often abbreviated without the subscript. 
	\item $I^+$ and $I^-$ are the respective normal derivatives from the positive and negative phases of $U_f$, defined using (\ref{eqIN:TwoPhaseBulk}), (\ref{eqIN:DefOfPosNegNormalDeriv}), and (\ref{eqIN:DefIPLusAndMinus}).
	\item $C^{1,\dini}_\rho$ is a Banach space, as in Stein \cite[Chapter VI, Cor 2.2.3 and Exercise 4.6]{Stei-71} (also see \eqref{eqIN:DefOfSetK} as well as $X_\rho$ in Definition \ref{defFD:XrhoSpace}).
	\item $X_\rho$, see Definition \ref{defFD:XrhoSpace} and Remark \ref{remFD:ModulusIsAlsoInXRho}, cf. \cite[Chapter VI, Cor 2.2.3 and Exercise 4.6]{Stei-71}.
	\item $C^0(\real^N)$ is the space of continuous functions on $\real^N$.
	\item $C^0_b(\real^N)$ is the Banach space of continuous bounded functions with the norm $\norm{\cdot}_{L^\infty}$.
	\item $C^{1,\al}_b(\real^N)$ is the space of functions that are bounded with bounded derivatives, with the derivatives $\al$-H\"older continuous.
	\item $D_f=\{(x,x_{n+1})\ :\ 0< x_{n+1}<f(x) \}=D^+_f$, $D^-_f=\{(x,x_{n+1})\ :\ f(x)<x_{n+1}<L \}$ 
	\item $\Gam_f=\graph(f)=\{ (x,x_{n+1})\ :\ x_{n+1}=f(x) \}$.
	\item $d\sigma_f$ the surface measure on $\Gam_f$, often abbreviated without the subscript as $d\sigma$.
	\item $G_f$ the Green's function in $D_f$ for the operator in \eqref{eqIN:TwoPhaseBulk}.
	\item $P_f$ the Poisson kernel for $D_f$ on $\Gam_f$.
	\item $\K(\del, L, m, \rho)$, see (\ref{eqIN:DefOfSetK}).
	\item $\del_h f(x) = f(x+h)-f(x)-\Indicator_{B_{r_0}}(h) \grad f(x)\cdot h$
\end{itemize}


\section{Background results on Green's Functions and Parabolic Equations}\label{sec:BackgroundTools}

This section has two subsections, collecting respectively background results related to Green's functions for equations in Dini domains and background results for fractional parabolic equations.


\subsection{Boundary behavior of Green's functions}\label{sec:GreenFunction}

We utilize results about the boundary behavior of Green's functions for equations with Dini coefficients in domains that have $C^{1,\dini}$ boundaries, for a Dini modulus, $\omega$. We will use the shorthand $d(x) := \text{dist}(x,\partial \O)$ for $x \in \O$.  The main way in which we use the boundary behavior of the Green's function is to deduce the boundary behavior of the Poisson kernel as well as that of solutions that may vanish on a portion of the boundary.  The study of the boundary behavior of Green's functions is a well developed topic, and none of what we present here is new.  The results in either Theorem \ref{thm:GreenBoundaryBehavior} or Proposition \ref{propGF:PoissonKernel} reside in the literature in various combinations of \cite{Bogdan-2000SharpEstGreenLipDomJMAA}, \cite{Cho-2006GreensFunctionPotentialAnal}, \cite{Zhao-1984UniformBoundednessConditionalGaugeCMP}, among other references.

\begin{theorem}\label{thm:GreenBoundaryBehavior} 
	If $G_f$ is the Green's function for the domain, $D_f$, then there exist positive constants $C_1$, $C_2$, and $R_0$, that depend upon the Dini modulus of $\grad f$ and other universal parameters so that for all $x,y\in D_f$ with $\abs{x-y}\leq R_0$
	\begin{equation}\label{GlobalGreenEstimate}
		C_1\min\left\{ \frac{d(x)d(y)}{\abs{x-y}^{n+1}}, \frac{1}{4\abs{x-y}^{n-1}}   \right\}
		\leq G_f(x,y)
		\leq C_2\min\left\{ \frac{d(x)d(y)}{\abs{x-y}^{n+1}}, \frac{1}{4\abs{x-y}^{n-1}}   \right\}.
	\end{equation}
\end{theorem}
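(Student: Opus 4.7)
The plan is to decompose into two regimes based on the ratio of $\min(d(x), d(y))$ to $|x-y|$ and combine interior Green's function bounds with boundary behavior coming from the $C^{1,\dini}$ regularity of $\Gam_f$ (and smoothness of the flat bottom face $\{x_{n+1}=0\}$). The two expressions inside the minimum in \eqref{GlobalGreenEstimate} are precisely the scalings one obtains in the interior regime and in the double-boundary regime, respectively; the minimum of the two captures both cases simultaneously.

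First I would treat the interior regime. Fix $R_0$ small relative to $\del$ (so that no ball of radius $R_0$ meets both the top and bottom faces of $\partial D_f$) and small enough that on any ball of radius $R_0$ centered on $\Gam_f$, the $C^{1,\dini}$ approximation of $\Gam_f$ by a tangent hyperplane provides uniform quantitative flatness depending only on $\rho$ and $m$. When $\min(d(x),d(y)) \geq |x-y|/8$, both points lie in a common interior ball of radius comparable to $|x-y|$ inside $D_f$, so by a standard Harnack-chain comparison with the fundamental solution of the constant-coefficient operator,
\begin{equation*}
c_1 |x-y|^{1-n} \leq G_f(x,y) \leq c_2 |x-y|^{1-n}.
\end{equation*}
In this regime $d(x)d(y)|x-y|^{-n-1} \gtrsim |x-y|^{1-n}$, so the right-hand side of \eqref{GlobalGreenEstimate} reduces (up to universal constants) to $|x-y|^{1-n}$ and the two sides match.

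Next I would handle the boundary regime, where the $C^{1,\dini}$ structure plays the essential role. The key tools are the Carleson estimate and boundary Harnack principle for nonnegative solutions vanishing on a $C^{1,\dini}$ piece of $\partial D_f$, together with a Hopf-type linear bound from above and below. Applied to $u(\cdot) = G_f(\cdot, y)$, which vanishes on $\partial D_f \setminus \{y\}$ and is a nonnegative solution away from $y$, one obtains, for $x$ with $d(x) \leq |x-y|/8$ and a non-tangential reference point $A(x)$ at scale $|x-y|$,
\begin{equation*}
G_f(x,y) \sim \frac{d(x)}{|x-y|} \, G_f(A(x), y).
\end{equation*}
By the symmetry of $G_f$, repeating the argument in the $y$ variable yields a factor $d(y)/|x-y|$ and reduces the problem to the reference value $G_f(A(x), A(y))$, where both arguments are interior at scale $|x-y|$ and at mutual distance comparable to $|x-y|$. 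The interior estimate from the previous step then gives $G_f(A(x), A(y)) \sim |x-y|^{1-n}$, so multiplying produces $G_f(x,y) \sim d(x) d(y) |x-y|^{-n-1}$. In this regime $d(x)d(y)|x-y|^{-n-1} \lesssim |x-y|^{1-n}$, so again the minimum in \eqref{GlobalGreenEstimate} selects the correct term.

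The main obstacle is ensuring that the boundary Harnack and Hopf estimates hold on a merely $C^{1,\dini}$ domain with constants depending only on $\del$, $L$, $m$, $\rho$ and the ellipticity data, rather than requiring $C^{1,\al}$ regularity. On $C^{1,\al}$ domains these are classical; on Dini domains one must invoke the refinements of Bogdan, Cho, and Zhao cited in the paper, which replace the power rate by the Dini modulus itself. I would track these constants carefully to certify that $C_1$, $C_2$, $R_0$ depend only on the parameters defining $\K(\del,L,m,\rho)$. A secondary technical point is that $D_f$ is unbounded in the tangential directions, so care is needed in defining $G_f$; however, the bound is local ($|x-y| \leq R_0$) and $D_f$ contains a uniform vertical strip around any boundary point, so a localization argument reduces matters to a bounded subdomain where the classical construction applies.
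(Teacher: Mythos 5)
Your proposal follows essentially the same approach as the paper: decompose according to the ratio of $d(x),d(y)$ to $|x-y|$, use interior Green's function bounds in the bulk regime, and use a Hopf-type linear lower bound plus the boundary Harnack principle (via harmonic measure at scale $|x-y|$, in the CFMS style) near the boundary, iterating in $x$ and $y$ by symmetry. The one place where the paper is more careful than your sketch is the Hopf lower bound itself: rather than citing it, the paper proves a quantitative linear-growth estimate (Lemma~\ref{lemGF:lineargrowth}) from scratch by comparing the solution of a Dini-coefficient annulus problem to the corresponding frozen-coefficient solution, with the error controlled by the Dini modulus; this is where the constants' dependence on $\rho$ (and $\delta, L, m$) is actually extracted. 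Your plan to ``track these constants carefully'' by importing Bogdan--Cho--Zhao-type results is a legitimate alternative route, but the paper opts for the self-contained frozen-coefficient argument precisely to keep that dependence explicit. Also note the paper only proves the lower bound of \eqref{GlobalGreenEstimate} directly and cites Gr\"uter--Widman for the upper bound; your ``$\sim$'' notation glosses over the fact that the upper bound requires a separate input.
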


The essential ingredient in the proof of Theorem \ref{thm:GreenBoundaryBehavior} is the following Lemma \ref{lemGF:lineargrowth} on the growth of solutions away from their zero set. Before stating this result, we need a few definitions. Denote by $[x_0,z_0]$ the closed line segment with endpoints $x_0,z_0 \in \O$, and denote by $\A_{2r}(x_0)$ the annulus $B_{2r}(x_0) \backslash B_r(x_0)$. 

\begin{definition} 
A domain $\O \subset \real^{n+1}$ satisfies the uniform interior ball condition with radius $\rho_0$ if for every $\xi \in \partial \O$, there exists an open ball $B$ of radius $\rho_0$ such that $B \subset \O$ and $\overline{B} \cap \partial \O = \{\xi\}$. 
\end{definition} Observe that since $\delta \leq f\leq L-\delta$ and $\nabla f$ has a Dini modulus of continuity $\rho$, there exists a $C^{1,\text{Dini}}$ map $T_f : \overline{D_f} \rightarrow \RN \times [0,L]$ satisfying
\begin{equation}\label{flatteningtransformation}
\begin{cases}
T_f(D_f) = \RN \times [0,L], \\
T_f(\Gamma_f) = \left\{x_{n+1} = L \right\}, \\
T_f(\left\{x_{n+1} = 0 \right\}) = \left\{x_{n+1} = 0 \right\}.
\end{cases}
\end{equation}
Consequently, the function $V_f := U_f\circ T_f^{-1}$ satisfies an equation of the form $L_A V_f = -\text{div}(A(y) \nabla V_f(y)) = 0$ on $\RN \times [0,L]$, where the coefficients $A(\cdot)$ satisfy $0 < \lambda \mathbb{I}_{n+1} \leq A \leq \Lambda \mathbb{I}_{n+1}$ with $\lambda, \Lambda$ depending on $\delta, L, m$, and are Dini continuous on $\RN \times [0,L]$ up to the boundary with a modulus of continuity $\omega$. Thus, for the purposes of the next lemma, we will only consider a domain $\O \subset \Rn$ which satisfies the uniform interior ball condition with radius $\rho_0$ and a solution to a uniformly elliptic equation in divergence form on $\O$ with coefficients having a Dini modulus of continuity $\omega$.

\begin{lemma}\label{lemGF:lineargrowth} Suppose $\O \subset \Rn$ satisfies the uniform interior ball condition with radius $\rho_0$. Let $u\in C^2(\O) \cap C(\overline{\Omega})$ be non-negative and satisfy
$$
\begin{cases}
L_A u = 0 \quad \text{in } \O,\\
u = 0 \quad \text{on } \Gamma \subset \partial \O,
\end{cases}
$$
Then there exist positive constants $C = C(n,\lambda,\Lambda)$ and $r_0 = r_0(n,\omega,\lambda,\Lambda) \leq \frac{\rho_0}{2}$ such that for all balls $B_{2r}(x_0) \subset \O$ with $\overline{B_{2r}(x_0)} \cap \Gamma \neq \emptyset$ and $r \leq r_0$, we have the estimate	
\begin{equation}\label{lineargrowthatboundary}
u(x) \geq \frac{C}{r} u(x_0) d(x) + o(d(x)) \qquad \text{ for all } x \in [x_0,z_0] \cap \A_{2r}(x_0), \ z_0 \in \overline{B_{2r}(x_0)} \cap \Gamma.
\end{equation}
\end{lemma}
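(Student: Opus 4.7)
The plan is to prove the lemma by a Hopf-type barrier argument adapted to the Dini-continuous coefficient setting. I would propagate positivity of $u$ from $x_0$ to the sphere $\partial B_r(x_0)$ by Harnack's inequality, then compare $u$ from below with the solution of an annular Dirichlet problem on $\A_{2r}(x_0)$ that touches $\Gamma$ tangentially at $z_0$, and finally quantify the deviation between that solution and its frozen-coefficient analogue using the Dini modulus $\omega$.

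First, since $u \geq 0$ solves $L_A u = 0$ in $B_{2r}(x_0) \subset \O$, the elliptic Harnack inequality for divergence-form operators with bounded measurable coefficients gives $\inf_{B_r(x_0)} u \geq c_0 u(x_0)$ for some $c_0 = c_0(n,\lam,\Lam) > 0$. Define $\Phi$ as the unique weak solution of the Dirichlet problem $L_A \Phi = 0$ in $\A_{2r}(x_0)$ with boundary data $c_0 u(x_0)$ on $\partial B_r(x_0)$ and $0$ on $\partial B_{2r}(x_0)$. Since $u \geq c_0 u(x_0) = \Phi$ on $\partial B_r(x_0)$ and $u \geq 0 = \Phi$ on $\partial B_{2r}(x_0)$, the comparison principle yields $u \geq \Phi$ throughout the annulus.

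Next I would compare $\Phi$ with the solution $\Phi_0$ of the same Dirichlet problem for the frozen operator $L_{A(z_0)} := -\div(A(z_0) \grad \cdot)$. After an affine change of variables diagonalizing $A(z_0)$, $\Phi_0$ is an explicit multiple of a truncated fundamental solution on a genuine annulus, and a direct computation gives
\[
\Phi_0(x) \geq \frac{C(n,\lam,\Lam)}{r}\, u(x_0)\, (2r - |x - x_0|), \qquad x \in [x_0, z_0] \cap \A_{2r}(x_0).
\]
Observe that $B_{2r}(x_0) \subset \O$ forces $d(x) \geq 2r - |x - x_0|$, while $z_0 \in \partial \O$ forces $d(x) \leq |x - z_0| = 2r - |x - x_0|$ along the segment, so $d(x) = 2r - |x-x_0|$ exactly for $x$ on the segment inside the annulus. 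Thus the right-hand side equals $\tfrac{C}{r} u(x_0) d(x)$, the linear principal term of \eqref{lineargrowthatboundary}.

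The remaining, and most delicate, step is to estimate $w := \Phi - \Phi_0$. It solves $L_A w = \div((A(z_0) - A)\, \grad \Phi_0)$ on $\A_{2r}(x_0)$ with zero Dirichlet data, and it obeys $|A(\cdot) - A(z_0)| \leq \omega(|\cdot - z_0|)$ and $|\grad \Phi_0| \leq C u(x_0)/r$. A dyadic iteration of frozen-coefficient comparisons centered at $z_0$---summing a geometric series whose coefficients are controlled by the Dini integral $\int_0^1 \omega(s)/s\, ds < \infty$---yields a pointwise bound $|w(x)| \leq \eta(d(x))\, u(x_0)/r$ with $\eta(s)/s \to 0$ as $s \to 0^+$, and this $\eta(d(x))$ is precisely the $o(d(x))$ correction in \eqref{lineargrowthatboundary}. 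Chaining everything,
\[
u(x) \geq \Phi(x) \geq \Phi_0(x) - |w(x)| \geq \frac{C}{r}\, u(x_0)\, d(x) + o(d(x)),
\]
as desired. The main obstacle is this Dini-Schauder boundary estimate on $w$; once it is in place, either via the sketched dyadic iteration or by invoking the classical $C^1$-Dini boundary regularity theory for divergence-form equations with Dini coefficients, the rest of the argument is routine.
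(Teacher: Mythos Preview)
Your strategy matches the paper's: Harnack to normalize, annular barrier $\Phi$ for $L_A$, frozen-coefficient comparison function $\Phi_0$ for $L_{A(z_0)}$, then control the defect $w = \Phi - \Phi_0$. The one technical difference is that the paper estimates $Dw(z_0)$ directly via the Green's function representation
\[
Dw(z_0) = -\int_{\A_{2r}(x_0)} \bigl\langle D^2_{x,y}G_0(z_0,y),\,(A(z_0)-A(y))\nabla \Phi(y)\bigr\rangle\,dy,
\]
together with the gradient bound $|\nabla \Phi| \leq K/r$ from Lemma~\ref{lemGF:3.2InGW}, rather than a dyadic iteration.

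There is one slip you should fix. Your claim $|w(x)| \leq \eta(d(x))\,u(x_0)/r$ with $\eta(s)/s \to 0$ is too strong: neither the dyadic argument nor $C^1$-Dini boundary theory forces $Dw(z_0)=0$; they only give $|Dw(z_0)| \leq \epsilon(r)\,u(x_0)/r$ with $\epsilon(r) \lesssim \int_0^{cr}\omega(t)/t\,dt + \omega(cr) \to 0$ as $r\to 0$. So the correct bound is $|w(x)| \leq \bigl[\epsilon(r)\,d(x) + o(d(x))\bigr]u(x_0)/r$, a small \emph{linear} term plus remainder. This is precisely why $r_0$ must depend on $\omega$: one chooses $r_0$ so that $\epsilon(r) < C/2$ for all $r \leq r_0$, absorbing the linear piece of $|w|$ into the principal term $\tfrac{C}{r}u(x_0)d(x)$ at the cost of halving $C$. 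With this correction your argument is complete; as written, the $o(d(x))$ claim for $w$ is false in general, and the mechanism by which $r_0$ depends on $\omega$ is hidden.
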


Let us state some useful consequences of Lemma \ref{lemGF:lineargrowth} and Theorem \ref{thm:GreenBoundaryBehavior}. First, notice that Lemma \ref{lemGF:lineargrowth} implies the following uniform linear growth of $U_f$ away from $\Gam_f$.

\begin{lemma}\label{lemGF:LinearGrowthFromGammaF}
	There exist a constant $C>0$ that depends on $\del$, $L$, $m$, $\rho$, so that for all $f\in\K(\del,L,m,\rho)$, for $U_f$ defined in (\ref{eqIN:BulkEqForHSOperator}), and for all $Y\in\Gam_f$,
\begin{align*}
	\frac{s}{C}\leq U_f(Y-sy_{n+1})\leq Cs,\ \ \text{and}\ \ 
	\frac{s}{C}\leq U_f(Y+s\nu_f(Y))\leq Cs.
\end{align*}
(Recall $\nu_f$ is the inward normal to $D_f$.)
\end{lemma}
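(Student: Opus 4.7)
The plan is to derive the lower bound from Lemma \ref{lemGF:lineargrowth} together with a Harnack-chain argument that pins $U_f$ down at an interior reference point, and to derive the upper bound from the upper half of Theorem \ref{thm:GreenBoundaryBehavior} via the Poisson representation (equivalently, from boundary $C^1$ regularity for harmonic functions vanishing on a $C^{1,\dini}$ boundary portion). The asserted range of $s$ will be an interval $(0,s_0)$ with $s_0=s_0(\delta,L,m,\rho)$. First I would reduce the two claims to a single one along the inward-normal direction: since $|\nabla f|\leq m$, the angle between $\nu_f(Y)$ and $-e_{n+1}$ is at most $\arctan m$, so for small $s$ both $Y-se_{n+1}$ and $Y+s\nu_f(Y)$ lie in $D_f$ at distance comparable to $s$ from $\Gam_f$. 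A short interior Harnack chain of balls of radius $\sim s$ contained in $D_f$ joins them, so two-sided linear bounds in one direction transfer to the other with constants depending only on $m$.

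For the lower bound, fix $Y\in\Gam_f$ and choose $r_0=r_0(\delta,L,m,\rho)$ small enough that $B_{2r_0}(x_0)\subset D_f$, where $x_0:=Y+2r_0\nu_f(Y)$; this uses the uniform interior ball radius at every $Y\in\Gam_f$, which is controlled by $|\nabla f|\leq m$, $\delta<f<L-\delta$, and the Dini modulus of $\nabla f$. Passing to the flattening $V_f=U_f\circ T_f^{-1}$ on $\RN\times[0,L]$ so that the hypotheses of Lemma \ref{lemGF:lineargrowth} apply (with $\Gam=\{x_{n+1}=L\}$ playing the role of the vanishing set), the lemma gives, after pulling back via the bi-$C^{1,\dini}$ map $T_f$,
\begin{align*}
U_f(Y+s\nu_f(Y))\ \geq\ \frac{C}{r_0}\,U_f(x_0)\,s \ + \ o(s) \qquad\text{as } s\to 0^+.
\end{align*}
To pin down $U_f(x_0)\geq c(\delta,L,m,\rho)>0$, observe that since $U_f=1$ on $\{x_{n+1}=0\}$, $0\leq U_f\leq 1$, and $U_f$ is harmonic, a bounded-maximum-principle comparison with the linear function $1-x_{n+1}/\delta$ on the sub-slab $\{0<x_{n+1}<\delta\}\subset D_f$ yields $U_f\geq 1/2$ on $\{x_{n+1}=\delta/2\}$. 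A Harnack chain of balls inside $D_f$ of length depending only on $(\delta,L,m)$ then transports this bound to $x_0$. Absorbing the $o(s)$ remainder for $s\in(0,s_1)$ with $s_1$ universal gives the lower bound there, and the complementary range $s\in[s_1,s_0)$ is handled directly by interior Harnack.

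For the upper bound, I would deduce $U_f(X)\leq Cd(X,\Gam_f)$ with $C=C(\delta,L,m,\rho)$ via the Poisson representation $U_f(X)=\int_{\{y_{n+1}=0\}}P_f(X,y)\,dy$ combined with the upper half of Theorem \ref{thm:GreenBoundaryBehavior}: differentiating that Green's function estimate in $y$ across the flat bottom yields $P_f(X,y)\leq Cd(X,\Gam_f)/|X-y|^{n+1}$, and integrating in $y\in\real^n$ (using that $X$ has height at least $\delta/2$ above $\{y_{n+1}=0\}$ once $s\leq\delta/2$) produces $U_f(X)\leq C'd(X,\Gam_f)$. Evaluating at $X=Y+s\nu_f(Y)$ then gives the upper bound. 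An equivalent route is boundary $C^1$ regularity for the flattened divergence-form equation $L_AV_f=0$ with $V_f=0$ on the top face, which is standard under Dini continuity of $A$, followed by pullback through $T_f$.

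The main obstacle I anticipate is the quantitative handling of the $o(s)$ error in Lemma \ref{lemGF:lineargrowth}: to obtain a lower bound that is genuinely uniform in $f\in\K$ and valid on an interval $(0,s_0)$ with explicit $s_0$, one must either quantify the remainder in terms of $(\delta,L,m,\rho)$ alone, or use the two-regime gluing above. A secondary, more bookkeeping, check is that the flattening $T_f$ preserves both estimates with universal constants; this reduces to $T_f$ and $T_f^{-1}$ being bi-Lipschitz with constants depending only on $(\delta,L,m,\rho)$ and to uniform control of the Dini modulus of the coefficients of the transformed equation — both of which are built into the definition of $\K(\delta,L,m,\rho)$.
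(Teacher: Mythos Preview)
Your proposal is correct and matches the paper's own (terse) indication that the lemma follows from Lemma \ref{lemGF:lineargrowth}: you obtain the lower bound from Lemma \ref{lemGF:lineargrowth} together with a Harnack-chain argument pinning down $U_f$ at an interior reference point, and the upper bound from boundary $C^1$ regularity of the flattened problem, which is indeed the standard route (and is what underlies the global Lipschitz bound on $U_f$ that the paper uses freely elsewhere, e.g.\ in the proof of Lemma \ref{lemLIP:LittleILipEstPart2}). One minor caveat: your first route to the upper bound via the Poisson representation on $\{x_{n+1}=0\}$ appeals to Theorem \ref{thm:GreenBoundaryBehavior}, which is stated only for $|x-y|\leq R_0$, so the far-field portion of the bottom integral would require a separate decay argument; your alternative via boundary $C^1$ regularity for the flattened divergence-form equation avoids this issue entirely and is the cleaner path.
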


Theorem \ref{thm:GreenBoundaryBehavior} also induces the following behavior on the Poisson kernel.

\begin{proposition}\label{propGF:PoissonKernel}
	If $f\in \K(\del, L, m, \rho)$ and $P_f$ is the Poisson kernel for the domain, $D_f$, then there exists constants $C_1$, $C_2$, $C_3$ and $R_0$, that depend upon  $\delta, L, m, \rho$ and other universal parameters so that for all $X\in D_f$, $Y\in\Gam_f$, with $\abs{X-Y}\leq R_0$,
	\begin{align*}
		C_1 \frac{d(X)}{\abs{X-Y}^{n+1}}
		\leq P_f(X,Y)
		\leq C_2 \frac{d(X)}{\abs{X-Y}^{n+1}}.
	\end{align*}
	Furthermore, there exists an exponent, $\al\in(0,1]$, depending on $\delta, L, m, \rho$ and universal parameters, so that for $X\in\Gam_f$ and with $R>R_0$,  
	\begin{equation}\label{eqGF:DecayOnMassOfPoissonKernel}
		\int_{\Gam_f\setminus B_{R}} P_f(X+s\nu(X),Y)d\sigma_f(Y)\leq \frac{Cs}{R^\al}.
	\end{equation}
\end{proposition}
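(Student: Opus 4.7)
The plan is to prove the two parts separately, using Theorem \ref{thm:GreenBoundaryBehavior} and Lemma \ref{lemGF:LinearGrowthFromGammaF} as the main inputs. Throughout, abbreviate $X_s := X + s\nu(X)$.

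For the two-sided pointwise bound, I would use the standard identification of the Poisson kernel as an inward normal derivative of the Green's function: for $Y \in \Gam_f$ with inward unit normal $\nu(Y)$ pointing into $D_f$,
$$P_f(X, Y) = \lim_{s \to 0^+} \frac{G_f(X, Y + s\nu(Y))}{s}.$$
The $C^{1,\dini}$ regularity of $\Gam_f$ gives a uniform interior ball condition via the flattening \eqref{flatteningtransformation}, so $d(Y + s\nu(Y)) = s(1 + o(1))$ as $s \to 0$. Inserting this into the bounds of Theorem \ref{thm:GreenBoundaryBehavior}, dividing by $s$, and noting that for small $s$ the first argument of the $\min$ is the active one, the limit $s \to 0^+$ recovers the claimed two-sided estimate on $|X - Y| \leq R_0$.

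For the tail decay, I would introduce the auxiliary function
$$w_R(Z) := \int_{\Gam_f \setminus B_R(X)} P_f(Z, Y)\, d\sigma_f(Y),$$
which is positive, harmonic in $D_f$, bounded above by $1$, and vanishes on $(\Gam_f \cap B_R(X)) \cup \{x_{n+1} = 0\}$. Both $w_R$ and $U_f$ are positive harmonic in $D_f$ and vanish on the common boundary portion $\Gam_f \cap B_R(X)$, placing them within the scope of the boundary Harnack principle. Comparing at a corkscrew reference point $Z^* := X + r^* \nu(X)$ with $r^* := \min(R/8, \delta/4)$, Lemma \ref{lemGF:LinearGrowthFromGammaF} provides $U_f(Z^*) \geq c\, r^*$, while trivially $w_R(Z^*) \leq 1$, so the boundary Harnack inequality yields
$$\frac{w_R(X_s)}{U_f(X_s)} \leq C \, \frac{w_R(Z^*)}{U_f(Z^*)} \leq \frac{C}{r^*}.$$
Combined with the upper bound $U_f(X_s) \leq Cs$ from Lemma \ref{lemGF:LinearGrowthFromGammaF}, this delivers $w_R(X_s) \leq Cs/r^*$, which is exactly the claim with $\alpha = 1$ whenever $R \leq 2\delta$.

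For $R$ larger than a fixed multiple of $\delta$, the reference point $Z^*$ is constrained to a fixed scale and the preceding estimate only yields $w_R(X_s) \leq Cs$, so additional decay in $R$ must be extracted by iterating the boundary Harnack comparison across dyadic shells $B_{2^k R_0}(X) \setminus B_{2^{k-1} R_0}(X)$ for $k$ with $2^k R_0 \leq R$. Since $w_R$ is harmonic on each ball $B_{2^k R_0}(X) \cap D_f$ with zero boundary trace on $\Gam_f \cap B_{2^k R_0}(X)$, each step contributes a fixed contraction factor $\theta \in (0,1)$ to the ratio $w_R/U_f$, yielding $w_R(X_s) \leq C s (R_0/R)^\alpha$ for a definite $\alpha > 0$ depending on $\theta$. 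The main obstacle is justifying boundary Harnack and its iteration with uniform constants in the present $C^{1,\dini}$ setting, rather than the more customary $C^{1,\alpha}$ or Lipschitz setting; this I would handle by pushing the problem forward under the flattening $T_f$ of \eqref{flatteningtransformation} to a uniformly elliptic divergence-form equation on $\real^n \times [0,L]$ with Dini continuous coefficients, for which the necessary boundary Harnack theory is available in the same body of literature that supports Theorem \ref{thm:GreenBoundaryBehavior}.
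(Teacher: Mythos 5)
Your argument for the pointwise two-sided bound matches the paper's: both identify $P_f$ as the inward normal derivative of $G_f$ and read off the bounds from Theorem~\ref{thm:GreenBoundaryBehavior}. For the tail estimate \eqref{eqGF:DecayOnMassOfPoissonKernel} you take a genuinely different route. The paper introduces the same auxiliary harmonic function (its $W$ equals your $w_R$), flattens via \eqref{flatteningtransformation}, and then dominates $\tilde W$ by a \emph{single} explicit barrier $\varphi$ on a large annulus of inner radius comparable to $R$, centered outside the slab; the product structure $s\cdot R^{-\al}$ is delivered in one stroke by Lemma~\ref{lemAppendix:BarrierLargeR}, which encodes how Dini control survives (as linear growth) near the inner sphere while degrading to a H\"older exponent $\al$ under rescaling to radius $R>1$. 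You instead decouple the two effects: a boundary Harnack comparison against $U_f$ at a fixed small scale yields the linear factor $s$ (via Lemma~\ref{lemGF:LinearGrowthFromGammaF}), while a dyadic oscillation-decay iteration from scale $R$ down to $R_0$ yields the $(R_0/R)^\al$ factor. Both routes are sound. The one imprecision in your write-up is the claim that each shell contracts the ratio $w_R/U_f$ by a factor $\theta$; that ratio need not be monotone across shells since $U_f$ itself grows with scale. What contracts geometrically is $\sup_{B_{2^kR_0}(X)\cap D_f} w_R$ (the harmonic measure of $\Gam_f\setminus B_R(X)$), and boundary Harnack should then be invoked once, at the innermost scale, to transfer the bound $w_R\lesssim (R_0/R)^{\al}$ on $B_{R_0}(X)\cap D_f$ into the linear-in-$s$ estimate at $X_s$. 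The paper's single-barrier argument is more elementary in that it uses only the comparison principle and the one scaling lemma; your version is more modular but requires the boundary Harnack inequality for Dini-coefficient divergence-form equations as an extra input.
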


For technical reasons, we also need a slight variation on Proposition \ref{propGF:PoissonKernel}, which is related to conditions necessary to invoke results from the earlier work \cite{GuSc-2019MinMaxEuclideanNATMA} that we state here in  Theorem \ref{thmFD:StructureOfJFromGSEucSpace}.  

\begin{lemma}\label{lemGF:DecayAtInfinityForTechnicalReasons}
	There exists constants $c_0$, $C>0$ and $\al\in(0,1]$, depending on $\del$, $L$, $m$, $\rho$, so that if $f\in C^{1,\dini}_\rho(B_{2R}(0))$, $\del\leq f\leq L-\del$, and $\abs{\grad f}\leq m$,  then for $X\in B_R\intersect \Gam_f$ and $0<s<c_0$,
	\begin{align*}
		\int_{\Gam_f\setminus B_{2R}}P_f(X+s\nu(X),Y)d\sigma_f(Y) \leq \frac{Cs}{R^\al}.
	\end{align*}
\end{lemma}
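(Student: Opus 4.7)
The plan is to deduce Lemma~\ref{lemGF:DecayAtInfinityForTechnicalReasons} from Proposition~\ref{propGF:PoissonKernel} by first extending $f$ globally to a Dini graph $\tilde f$ and then comparing the Poisson mass integrals for $f$ and $\tilde f$ in the common region $\{|x|<3R/2\}$. Throughout, I may take $R$ larger than a universal $R_0=R_0(\del,L,m,\rho)$: if $R<R_0$ then $Cs/R^\al\ge 1$ is above the trivial bound $1$ on the Poisson mass, so there is nothing to prove.

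For the extension, let $\eta$ be a smooth radial cutoff with $\eta\equiv 1$ on $B_{3R/2}$, $\eta\equiv 0$ outside $B_{7R/4}$, and $\abs{\grad\eta}\le C/R$, and set
\begin{align*}
\tilde f(x):=\eta(x)f(x)+(1-\eta(x))\tfrac{L}{2}.
\end{align*}
Then $\tilde f=f$ on $B_{3R/2}$, $\tilde f\equiv L/2$ outside $B_{7R/4}$, $\del\le \tilde f\le L-\del$, and $\abs{\grad \tilde f}\le 2m$ once $R\ge R_0$. Since $\grad\tilde f$ equals $\grad f$ on $B_{3R/2}$, is smooth on the transition annulus, and vanishes outside $B_{7R/4}$, it has a Dini modulus $\tilde\rho$ depending only on $\del,L,m,\rho$. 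Hence $\tilde f\in\K(\del,L,2m,\tilde\rho)$, and Proposition~\ref{propGF:PoissonKernel} applied to $\tilde f$ at $X\in B_R\cap\Gam_{\tilde f}=B_R\cap\Gam_f$ gives
\begin{align*}
\tilde v(X+s\nu(X)):=\int_{\Gam_{\tilde f}\setminus B_{2R}}P_{\tilde f}(X+s\nu(X),Y)\,d\sig_{\tilde f}(Y)\le \frac{Cs}{R^\al}.
\end{align*}

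Now set $v(Y):=\int_{\Gam_f\setminus B_{2R}}P_f(Y,Z)\,d\sig_f(Z)$, which is bounded by $1$, positive harmonic in $D_f$, and vanishes on $\Gam_f\cap B_{2R}$ and on $\{x_{n+1}=0\}$. Because $f=\tilde f$ on $B_{3R/2}$, the common region $\Om:=D_f\cap B_{3R/2}=D_{\tilde f}\cap B_{3R/2}$ is well-defined, and both $v,\tilde v$ are positive harmonic there with identical zero boundary values on $(\Gam_f\cup\{x_{n+1}=0\})\cap B_{3R/2}$. By the maximum principle, $v\le C\tilde v$ throughout $\Om$ follows once one has $v\le C\tilde v$ on the lateral side $S:=\partial B_{3R/2}\cap D_f$; evaluating at $X+s\nu(X)\in\Om$ (valid for $s<c_0$ small) then yields the lemma. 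To establish $v\le C\tilde v$ on $S$, I would split $S$ into interior and near-boundary pieces: on $\{Y\in S:d(Y,\Gam_f)\ge\del/4\}$, a direct Poisson-kernel comparison in the slab $\real^n\times(0,L)$ (which contains both $D_f$ and $D_{\tilde f}$) gives a uniform-in-$R$ bound on $v/\tilde v$; on $\{Y\in S:d(Y,\Gam_f)<\del/4\}$, boundary Harnack on the common Dini patch $\Gam_f\cap B_{3R/2}$ applied to the pairs $(v,U_f)$ and $(\tilde v,U_{\tilde f})$, together with the linear growth from Lemma~\ref{lemGF:LinearGrowthFromGammaF}, reduces matters to the interior case just handled.

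The main obstacle is the uniform-in-$R$ comparison of $v$ and $\tilde v$ at interior points of $S$: both quantities can decay rapidly (indeed exponentially) as $R$ grows, so a naive Harnack chain running horizontally across $B_{3R/2}$ is useless, as its length is proportional to $R$ and the Harnack constants would compound catastrophically. The right tool is a direct comparison of the two Poisson integrals at a bounded interior distance from $\Gam_f\cap B_{3R/2}$, exploiting that $\Gam_f$ and $\Gam_{\tilde f}$ agree on $B_{3R/2}$ and are both Lipschitz graphs with slope $\le m$ trapped between $x_{n+1}=\del$ and $x_{n+1}=L-\del$; this allows one to bound the ratio $v/\tilde v$ by a constant depending only on $\del,L,m$ and the local Dini modulus $\rho$ of $\Gam_f$ near $X$, and not on $R$.
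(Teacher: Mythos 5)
Your proposal takes a genuinely different route from the paper, but it has a real gap at the step you yourself flag as the ``main obstacle'': the uniform-in-$R$ bound $v\leq C\tilde v$ on the lateral side $S=\partial B_{3R/2}\cap D_f$. You assert that a ``direct Poisson-kernel comparison in the slab'' bounds $v/\tilde v$ independently of $R$ at interior points of $S$, but you give no argument, and I do not believe the claim holds for the extension you chose. Both $v$ and $\tilde v$ are harmonic measures of boundary sets at horizontal distance $\ge R/2$ from $S$, and their size at a point of $S$ is governed by the geometry of the respective domains in the annulus $B_{2R}\setminus B_{3R/2}$, where $D_f$ and $D_{\tilde f}$ disagree. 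In a Lipschitz slab the decay of harmonic measure across a horizontal distance $\sim R$ is exponential in $R$, with a rate set by the local slab height. Your extension flattens $\tilde f$ to the constant $L/2$ on $B_{2R}\setminus B_{7R/4}$, so $D_{\tilde f}$ has height $L/2$ there, while $D_f$ can have height close to $L-\del$. If $\del<L/2$, harmonic measure decays strictly faster across $D_{\tilde f}$ than across $D_f$, so $v/\tilde v$ on $S$ can grow like $e^{cR}$ and there is no constant $C=C(\del,L,m,\rho)$ making $v\le C\tilde v$ hold for all $R$. Comparing the Poisson kernels only at ``bounded interior distance from $\Gam_f\cap B_{3R/2}$'' does not fix this, because the discrepancy accumulates over the annulus where the two domains differ, not near the lateral circle. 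Choosing a taller extension does not obviously help either: the maximum principle does not give a one-sided bound between $v$ and $\tilde v$ because they vanish on \emph{different} surfaces in the transition annulus, and the decay rates depend on more than inclusion.

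The paper's proof avoids the two-domain comparison entirely. It observes that the quantity in question is $W(X+s\nu(X))$ for the harmonic function $W$ in $D_f$ with boundary data $\Indicator_{B_{2R}^c}$, flattens $D_f$, and then reuses the barrier construction of Lemma~\ref{lemAppendix:BarrierLargeR} exactly as in Proposition~\ref{propGF:PoissonKernel} --- with the one extra observation that, because $X\in B_R$, the annular region in which the barrier is built (centered at $Y_0=(x,L+R/\sqrt 3)$ over the flattened point $\tilde X=(x,L)$) fits inside $B_{2R}\times\real$, which is precisely the region where the pushed-forward coefficients retain their Dini modulus. The barrier therefore applies verbatim with constants depending only on $\del,L,m,\rho$, and the lemma follows with no comparison between $D_f$ and any extended domain. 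If you want to rescue your approach you would need to actually prove the ratio bound on $S$ --- for instance via sharp two-sided estimates for harmonic measure of far boundary sets in Lipschitz slab domains --- but it is substantially harder than the direct barrier argument and, with your choice of $\tilde f$, the claim as stated is false.
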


\begin{rem}
	It is worth noting that based on purely the Lipschitz constant of $f$, one would obtain this same estimate in Lemma \ref{lemGF:DecayAtInfinityForTechnicalReasons}, but with the upper bound of $C\frac{s^\al}{r^\al}$.  The Dini condition in $B_{2R}$ is what allows to obtain $s$, instead of $s^\al$ in the estimate.
\end{rem}

For the convenience of the reader, we have provided proofs of the above results in the Appendix. See \cite{Cho-2006GreensFunctionPotentialAnal} for a parabolic version of related results.


\subsection{Background results on regularity for integro-differential equations}\label{sec:BackgroundParabolic}
For our purposes, we will invoke results for parabolic integro-differential equations that originate mainly in Chang Lara - Davila \cite{ChangLaraDavila-2016HolderNonlocalParabolicDriftJDE} and Silvestre \cite{Silvestre-2011RegularityHJE}.

Following \cite{Chan-2012NonlocalDriftArxiv} and \cite{ChangLaraDavila-2016HolderNonlocalParabolicDriftJDE}, we consider fully nonlinear parabolic equations whose linear versions are $(\partial_t - L_{K,b} )u$, where for $u : \RN  \times \mathbb{R} \rightarrow \mathbb{R}$,
\begin{equation}\label{eqPara:linearoperators}
L_{K,b} u(x,t) := b(x,t)\cdot\grad u(x,t) + \int_{\real^n}\del_h u(x,t)K(x,t,h) \ dh, 
\end{equation}
 $b(x,t) \in \mathbb{R}^n$ is a bounded vector field and $\del_hu(x,t) := u(x+h,t)-u(x,t)- \Indicator_{B_{r_0}}(h)\grad u(x,t)\cdot h$. For any $r \in (0,r_0)$, consider the rescaled function
$$u_r(x,t) := \frac{1}{r} u(rx, rt).$$
A direct calculation shows that if $u$ satisfies the equation $(\partial_t - L_{K,b})u(x,t) = \varphi(x,t)$, then $u_r$ satisfies the equation $(\partial _t - L_{K_r,b_r})u_r(x,t) = \varphi_r(x,t)$, where
$$K_r(x,t,h) := r^{n+1} K(rx,rt, rh), \quad b_r(x,t) := b(rx,rt) - \int_{B_{r_0} \backslash B_r} h K(rx,rt,h) \ dh, \quad \varphi_r(x,t) = \varphi(rx,rt).$$

Based on this scaling behavior, we are led to consider the following class of linear operators.

\begin{definition}[cf. Section 2 of \cite{ChangLaraDavila-2016HolderNonlocalParabolicDriftJDE}]\label{defPara:scaleinvariantclass} Given a positive number $\Lam$, the class $\L_{\Lam}$ is the collection of linear operators of the form $L_{K,b}$ as in \eqref{eqPara:linearoperators} with $K$ and $b$ satisfying the properties
\begin{align*}
	&\mathrm{(i)} \ 
	\Lam^{-1} \abs{h}^{-n-1}\leq K(x,t,h)\leq \Lam\abs{h}^{-n-1} \qquad \text{for all } (x,t,h) \in \RN\times[0,T]\times\RN, \\
	&\mathrm{(ii)} \ \sup_{0 < \rho < 1, \ (x,t) \in \Rn} \abs{b(x,t)-\int_{B_{r_0} \setminus B_{\rho}} hK(x,t,h)dh}\leq \Lam.
\end{align*}
\end{definition}
Let us show that if $b,K \in \L_{\Lam}$ then $b_r,K_r\in\L_{\Lam}$ for all $r \in (0,1)$. We suppress the dependence on $t$. The bounds (i) on the kernels are immediate: for the upper bound, we have
$$K_r(x,h) = r^{n+1} K(rx, rh) \leq r^{n+1} \Lambda |rh|^{-n-1} = \Lambda |h|^{-n-1},$$
while for the lower bound, we have 
$$K_r(x,h) = r^{n+1} K(rx, rh) \geq \Lam^{-1} r^{n+1} |rh|^{-n-1} \geq  \Lam^{-1} |h|^{-n-1}.$$
To show that $b_r, K_r$ satisfy (ii), let $\rho \in (0,1)$ and $x \in \RN$ be arbitrary. Then
\begin{align*}
\abs{b_r(x)-\int_{B_1 \setminus B_{\rho}} hK_r(x,h)dh} & = \abs{b(rx) - \int_{B_1 \setminus B_r} hK(rx,h)dh - \int_{B_1 \setminus B_{\rho}} hr^{n+1}K(rx,rh)dh} \\
& = \abs{b(rx) - \int_{B_1 \setminus B_r} hK(rx,h)dh - \int_{B_r \setminus B_{\rho r}} h K(rx,h)dh} \\
& = \abs{ b(rx) - \int_{B_1 \setminus B_{\rho r}} hK(rx,h)dh} \leq \Lambda.
\end{align*}
Consequently, $b_r, K_r \in \L_{\Lam}$.

The class $\L_{\Lam}$ gives rise to the extremal operators 
\begin{align}\label{eqPara:ExtremalOperators}
	\M^+_{\L_{\Lam}}(u) = \sup_{L\in\L_{\Lam}}L(u), \quad \M^-_{\L_{\Lam}}(u) = \inf_{L\in \L_{\Lam}}L(u).
\end{align}
These operators are typically used to characterize differences of a given nonlocal operator, say $J$, in which one would require
\begin{align}\label{eqPara:ExtremalInequalities}
	\M^-_{\L_{\Lam}}(u-v)\leq J(u)-J(v)\leq M^+_{\L_\Lam}(u-v),
\end{align}
where one can change the operators by changing the set of functionals included in $\L_\Lam$.  This is what is known as determining an ``ellipticity'' class for $J$.
By the scale invariance of $\L_{\Lam}$ we know that $\M^{\pm}_{\L_{\Lam}}(u_r)(x) = \M^{\pm}_{\L_{\Lam}}(u)(rx)$.

The cylinders corresponding to the maximal operators $\M^{\pm}_{\L_{\Lam}}$ are
$$Q_r = (-r,0] \times B_r(0), \qquad Q_r(t_0,x_0) = (t_0 - r, t_0] \times B_r(x_0).$$

\begin{definition} The function $u$ is a viscosity supersolution of the equation
$$\partial_t u - \M^{-}_{\L_{\Lam}}u = \varphi$$
if for all $\e > 0$ and $\psi : (t,x) \in \real \times \RN \rightarrow \real$ left-differentiable in $t$, twice pointwise differentiable in $x$, and satisfying $\psi(t,x) \leq u(t,x)$ with equality at $(t_0,x_0)$, the function $v_{\e}$ defined as
$$
v_{\e}(t,x) =
\begin{cases}
\psi(t,x) \quad \text{if } (t,x) \in Q_{\e}(t_0,x_0), \\
u(t,x) \quad \text{otherwise}
\end{cases}
$$
satisfies the inequality
$$\partial_t v_{\e}(t_0,x_0) - \M^{-}_{\L_{\Lam}}v_{\e}(t_0,x_0) \geq \varphi(t_0,x_0).$$

The corresponding definition of a viscosity subsolution is obtained by considering a function $\psi$ satisfying $\psi(t,x) \geq u(t,x)$ with equality at $(t_0,x_0)$, and requiring
\begin{align*}
	\partial_t v_{\e}(t_0,x_0) - \M^{-}_{\L_{\Lam}}v_{\e}(t_0,x_0) \leq \varphi(t_0,x_0).
\end{align*}

The same definitions hold for $\partial_t u -\M^+_{\L_{\Lam}}=\varphi$. 
\end{definition}

The main regularity result that we need is stated below, and can be found in \cite{ChangLaraDavila-2016HolderNonlocalParabolicDriftJDE}; see also \cite{SchwabSilvestre-2014RegularityIntDiffVeryIrregKernelsAPDE,   Silvestre-2011RegularityHJE, Silvestre-2014RegularityParabolicICM}.

\begin{proposition}[H\"older Estimate, Section 7 of \cite{ChangLaraDavila-2016HolderNonlocalParabolicDriftJDE}]\label{propPara:holderestimate} Suppose $u$ is bounded in $\mathbb{R}^n \times [0,t_0]$ and satisfies in the viscosity sense
\begin{equation}\label{eqPara:subandsuper}
\begin{cases}
\partial_t u - \M_{\L_{\Lam}}^+u \leq A \\
\partial_t u - \M_{\L_{\Lam}}^-u \geq -A
\end{cases}
\end{equation}
in $Q_{t_0}(t_0,x_0)$ for some constant $A > 0$. Then there exist constants $C>0$ and $\gamma\in(0,1)$, depending only on $n$ and $\Lambda$, such that
$$
||u||_{C^{\gamma}(Q_{\frac{t_0}{2}}(t_0,x_0))} \leq \frac{C}{t_0^{\gamma}} \left(||u||_{L^{\infty}(\mathbb{R}^n \times [0,t_0])} + t_0 A \right).
$$
\end{proposition}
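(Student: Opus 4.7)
This is the standard Krylov--Safonov type H\"older estimate for the extremal class $\L_{\Lam}$ and the natural strategy is: (a) use scaling to normalize, (b) prove a parabolic point estimate (a.k.a.\ $L^\varepsilon$ estimate) for supersolutions of $\partial_t u - \M^-_{\L_\Lam} u \geq -A$, (c) combine with a bump barrier to get a diminish-of-oscillation lemma, (d) iterate. The statement to prove is scale-invariant once one rescales using the scaling already verified in the excerpt (the map $u\mapsto u_r$ sends the class $\L_\Lam$ into itself), so after rescaling by $t_0$ and dividing by $\|u\|_{L^\infty} + t_0 A$ one may reduce to the case $t_0=1$, $(t_0,x_0)=(0,0)$, $A\leq 1$, $\|u\|_{L^\infty(\real^n\times[-1,0])}\leq 1$.

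The heart of the argument is the point estimate: if $u\geq 0$ in $\real^n\times[-1,0]$ solves $\partial_t u - \M^-_{\L_\Lam} u \geq -1$ in $Q_1$ and $\inf_{Q_{1/2}} u \leq 1$, then there exists $M,\mu>0$ (universal) with
\begin{equation*}
\abs{\{(t,x)\in Q_1 : u(t,x)\leq M\}} \geq \mu.
\end{equation*}
One proves this following the Silvestre/Chang~Lara--D\'avila approach: slide a concave paraboloid (in space) of opening $\sim 1$ upward from below $u$ and look at the set of contact points in $Q_1$. At a contact point the vertex of the paraboloid lies above $u$, which bounds $u$ from above by a universal constant $M$. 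A covering/measure-theoretic argument then gives the lower bound on $|\{u\leq M\}|$, provided one shows that the contact set has positive measure. The latter step is the delicate one and is where the structural hypotheses (i) and (ii) of Definition~\ref{defPara:scaleinvariantclass} are used: the lower bound on $K$ makes $\M^-_{\L_\Lam}$ at a contact point control the opening of the paraboloid from above, while the renormalized drift condition on $b$ guarantees that, after each dyadic rescaling, the drift term does not blow up and can be absorbed into the nonlocal diffusion (this is precisely the critical-order-$1$ difficulty).

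With the point estimate in hand, the diminish of oscillation is classical: pick a smooth cutoff $\varphi\geq 0$ supported in $Q_1$ with $\varphi\equiv c_0>0$ on $Q_{1/2}$; for $u$ normalized with $\osc_{\real^n\times[-1,0]} u\leq 1$ and $|\{u\geq 1/2\}\cap Q_1|\geq \mu$, applying the point estimate to $w=1-u+\varphi$ (or a similar auxiliary function) combined with the comparison principle for $\M^\pm_{\L_\Lam}$ yields $\osc_{Q_{1/2}} u \leq (1-\theta)\osc_{\real^n\times[-1,0]} u + C\cdot\text{tail}(u)$ for universal $\theta\in(0,1)$. Iterating this estimate along the dyadic scales $Q_{2^{-k}}(0,0)$, using the scale-invariance of $\L_\Lam$ at each step and controlling the tail contributions via the $L^\infty$ bound on $u$, yields geometric decay of the oscillation on $Q_r$ and hence the $C^\gamma$ estimate with explicit dependence on $\|u\|_{L^\infty}+t_0 A$ and the $t_0^{-\gamma}$ prefactor.

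The main obstacle is the point estimate in the presence of a drift of the same order as the diffusion: for kernels $K\sim|h|^{-n-1}$ the operator has scaling order $1$, and a generic bounded $b$ would be a lower-order-but-critical term, which is not compatible with Krylov--Safonov iteration. Condition (ii) is what rescues the argument, since it ensures that the effective drift after rescaling (given by the subtracted tail integral) stays uniformly bounded. Tracking this through the ABP/paraboloid-touching argument is the technical crux; once done, the rest of the proof is the routine iteration sketched above.
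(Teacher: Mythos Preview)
Your sketch outlines a full Krylov--Safonov argument (point estimate, diminish of oscillation, iteration), which is essentially the content of the cited reference \cite{ChangLaraDavila-2016HolderNonlocalParabolicDriftJDE}. That is a correct strategy, but it is far more than what the paper does here. The paper does \emph{not} prove this proposition from scratch: it is stated as a citation to \cite{ChangLaraDavila-2016HolderNonlocalParabolicDriftJDE}, and the only argument the paper supplies is the rescaling step that passes from the unit cylinder $Q_1$ with right-hand side $A$ to the general cylinder $Q_{t_0}(t_0,x_0)$ with right-hand side $A$. Concretely, one sets $\tilde u(t,x) = u((t_0,x_0)+t_0(t,x))$, checks via the scale invariance of $\L_\Lam$ that $\tilde u$ satisfies \eqref{eqPara:subandsuper} in $Q_1$ with right-hand side $t_0 A$, applies the $Q_1$ version of the estimate to $\tilde u$, and then unwinds the rescaling to produce the factor $t_0^{-\gamma}$ and the term $t_0 A$.

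So your proposal is not wrong, but it reproves the cited theorem rather than the modest adaptation the paper actually performs. If you intend to match the paper, replace your full outline with the one-paragraph rescaling reduction to the $Q_1$ case and invoke \cite{ChangLaraDavila-2016HolderNonlocalParabolicDriftJDE} for that case.
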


\begin{rem}
	The equations in (\ref{eqPara:subandsuper}) simply say that $u$ is a subsolution of $\partial_t u - \M^+_{\L_\Lam} u = A$ and a supersolution of $\partial_t u - \M^-_{\L_\Lam} u = -A$.
\end{rem}

Since Proposition \ref{propPara:holderestimate} differs slightly from \cite{ChangLaraDavila-2016HolderNonlocalParabolicDriftJDE} in that it accommodates the cylinder $Q_{t_0}(t_0,x_0)$ other than the standard cylinder $Q_1$ and also from \cite{Silvestre-2011RegularityHJE} in that it includes a non-zero right hand side, $A$, we make a small comment here as to the appearance of the term $t_0A$ in the conclusion of the estimate.  Indeed, this is simply a result of rescaling the equation. As in \cite{ChangLaraDavila-2016HolderNonlocalParabolicDriftJDE}, we already know that Proposition \ref{propPara:holderestimate} holds for $u$ that are bounded in $\real^n \times [-1,0]$ and satisfy \eqref{eqPara:subandsuper} in $Q_1$; in this case, the $C^{\gamma}$ estimate holds on $Q_{\frac{1}{2}}$.  Let us now show what happens for arbitrary $t_0 > 0$ and $x_0 \in \real^n$.

Let $u$ be as in the statement of Propositon \ref{propPara:holderestimate} and define $\tilde{u}(t,x) := u((t_0, x_0) + t_0(t,x))$. Notice that if $(t,x) \in Q_r$, then $(t_0, x_0) + t_0(t,x) \in Q_{t_0 r}(t_0,x_0)$ for all $r \in [0,1]$. By the translation and scaling invariance properties of the operators $\partial_t - \M^{\pm}_{\L_\Lam}$, we thus have
$$\partial_t \tilde{u} - \M^+_{\L_\Lam} \tilde{u} = t_0 (\partial_t u - \M^+_{\L_\Lam}u) \leq t_0 A \quad \text{ and } \quad \partial_t \tilde{u} - \M^-_{\L_\Lam}\tilde{u} =  t_0(\partial_t u - \M^-_{\L_\Lam}u) \geq -t_0A \quad \text{ in } Q_1.$$
On the other hand, we also have $||\tilde{u}||_{L^{\infty}(\real^n \times [-1,0])} = ||u||_{L^{\infty}(\real^n \times [0,t_0])}$ and for all $(t,x) \in Q_{\frac{1}{2}}$,
$$\frac{|\tilde{u}(t,x) - \tilde{u}(0,0)|}{|(t,x)|^{\gamma}} = \frac{|u((t_0,x_0) + t_0(t,x)) - u(t_0,x_0)|}{|(t,x)|^{\gamma}} = \frac{t_0^{\gamma} |u((t_0,x_0) + t_0(t,x)) - u(t_0,x_0)|}{|(t_0,x_0) + t_0(t,x) - (t_0,x_0)|^{\gamma}}.$$
Consequently, $||\tilde{u}||_{C^{\gamma}(Q_{\frac{1}{2}})} = t_0^{\gamma} ||u||_{C^{\gamma}(Q_{\frac{t_0}{2}}(t_0,x_0))}$. The conclusion follows by applying to $\tilde{u}$ the version of Proposition \ref{propPara:holderestimate} for functions that are bounded in $\real^n \times [-1,0]$ and satisfy \eqref{eqPara:subandsuper} in $Q_1$, and then rewriting the resulting $C^{\gamma}$ estimate in terms of $u$.


\section{A Finite Dimensional Approximation For $I$}\label{sec:FiniteDim}

An important note for this section is we will take $N$ to be an arbitrary dimension, and we are looking generically at operators on $C^{1,\dini}(\real^N)$.  The application to equation (\ref{eqIN:HSMain}) will be for $N=n$ (as $f:\real^n\to\real$).

Here we will record some tools that were developed in  \cite{GuSc-2019MinMaxEuclideanNATMA} and \cite{GuSc-2019MinMaxNonlocalTOAPPEAR} to investigate the structure of operators that enjoy what we call a global comparison property (see Definition \ref{defFD:GCP}, below).  The point of these tools is to build linear mappings that can be used to ``linearize'' the nonlinear operator, $I$, through the min-max procedure apparent in Theorem \ref{thm:StructureOfHMain}, or more precisely, to reconstruct $I$ from a min-max of a special family of linear operators.

The linear mappings we build to achieve a min-max for $I$ are limits of linear mappings that are differentials of maps with similar properties for a family of simpler operators that can be used to approximate $I$.  The advantage of the approximations constructed in  \cite{GuSc-2019MinMaxEuclideanNATMA} and \cite{GuSc-2019MinMaxNonlocalTOAPPEAR} is that they are operators with the same domain as $I$ but enjoy the property of having finite rank (with the rank going to infinity as the approximates converge to the original).  In this regard, even though the original operator and approximating operators are nonlinear, the approximates behave as Lipschitz operators on a high, but finite dimensional space, and are hence differentiable almost everywhere.  This differentiability makes the min-max procedure straightforward, and it is then passed through the limit back to the original operator, $I$. The basis for our finite dimensional approximation to $I$ is the Whitney extension for a family of discrete and finite subsets of $\real^n$, whose union is dense in $\real^n$.  The reason for doing this is that we can restrict the functions to be identically zero outside of a finite set, and naturally, the collection of these functions is a finite dimensional vector space.  Thus, Lipschitz operators on those functions will be differentiable almost everywhere, and as mentioned this is one of the main points of \cite{GuSc-2019MinMaxEuclideanNATMA} to represent $I$ as a min-max over linear operators.

\subsection{The Whitney Extension}

Here we just list some of the main properties of the Whitney extension constructed in \cite{GuSc-2019MinMaxEuclideanNATMA}.  It is a variant of the construction in Stein \cite{Stei-71}, where in \cite{GuSc-2019MinMaxEuclideanNATMA} it is designed to preserve the grid structure of $2^{-m}\integer^N$.  We refer the reader to \cite[Section 4]{GuSc-2019MinMaxEuclideanNATMA} for complete details.

\begin{definition}\label{defFD:GridSetGm}
For each $m\in\Natural$, the finite set, $G_m$, is defined as
\begin{align*}
	G_m=2^{-m}\integer^N.
\end{align*}
We will call $h_m$ the grid size, defined as $h_m=2^{-m}$.
\end{definition}

We note that in \cite[Section 4]{GuSc-2019MinMaxEuclideanNATMA}, the sets for the Whitney extension were constructed as a particular disjoint cube decomposition that covers $\real^n\setminus h_m\integer^N$ and was shown to be invariant under translations of $G_m$ by any vector in $G_m$.  For each $m$, we will index these sets by $k\in\Natural$, and we will call them $Q_{m,k}$.   See \cite[Section 4]{GuSc-2019MinMaxEuclideanNATMA} for the precise details of $Q_{m,k}$ and $\phi_{m,k}$.  Here we record these results.

\begin{lemma}[Lemma 4.3 in \cite{GuSc-2019MinMaxEuclideanNATMA}]\label{LemFD:Cubesmk}
  For every $m\in\Natural$, there exists a collection of cubes $\{Q_{m,k}\}_k$ such that
  \begin{enumerate}
    \item The cubes $\{Q_{m,k}\}_k$ have pairwise disjoint interiors.
	
    \item The cubes $\{Q_{m,k}\}_k$ cover $\mathbb{R}^d \setminus G_m$.	
	
    \item There exist a universal pair of constants, $c_1$, $c_2$, so that 
	\begin{align*}
		c_1 \textnormal{diam}(Q_{m,k})\leq \textnormal{dist}(Q_{m,k},G_m) \leq c_2 \textnormal{diam}(Q_{m,k}).
	\end{align*}
	
    \item For every $h \in G_m$, there is a bijection $\sigma_h:\mathbb{N}\to\mathbb{N}$ such that $Q_{m,k}+h = Q_{m,\sigma_h k}$ for every $k\in\mathbb{N}$. 
   
  \end{enumerate}
\end{lemma}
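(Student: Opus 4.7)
The plan is a Whitney-type decomposition of the open set $\mathbb{R}^N \setminus G_m$, built from dyadic cubes whose scales are compatible with $G_m$ so that $G_m$-translation invariance is automatic. Specifically, I would draw the candidate cubes from the unions, over $j \geq 1$, of the dyadic grids whose cubes have side length $h_m 2^{-j}$ and vertices in $2^{-(m+j)}\mathbb{Z}^N$. Since $G_m \subset 2^{-(m+j)}\mathbb{Z}^N$ for every $j \geq 1$, any translation $\tau_h(x) = x + h$ with $h \in G_m$ maps each such dyadic grid bijectively to itself; this is the feature that will eventually pay for property (4).

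The selection rule is the classical Whitney one: include a dyadic cube $Q$ in the family $\{Q_{m,k}\}_k$ if
\[
  c_1 \diam(Q) \leq \dist(Q, G_m) \leq c_2 \diam(Q),
\]
for appropriate universal constants $c_1 < c_2$. Since every $x \in \mathbb{R}^N \setminus G_m$ has $\dist(x,G_m) > 0$, and since dyadic diameters range through $\{ \sqrt{N} h_m 2^{-j} \}_{j \geq 1}$, the cube of the correct scale containing $x$ will satisfy both bounds once $c_1$ and $c_2$ are chosen correctly. If two selected cubes have overlapping interiors then, being dyadic, one contains the other; retain only the larger in each such pair, which produces the pairwise-disjoint-interior family without losing the covering. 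This is essentially Stein's argument from Chapter VI of \cite{Stei-71} with $G_m$ playing the role of the closed obstacle, and it delivers properties (1), (2), and (3).

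For property (4), let $h \in G_m$ and let $Q$ be a selected dyadic cube at scale $j$. Because $h \in 2^{-(m+j)}\mathbb{Z}^N$, the translate $Q+h$ is itself a dyadic cube at scale $j$. Because $G_m + h = G_m$, one has $\dist(Q+h,G_m) = \dist(Q,G_m)$ and $\diam(Q+h) = \diam(Q)$, so $Q+h$ satisfies the selection inequalities exactly when $Q$ does; the tie-breaking step (keep the maximal dyadic cube) is also manifestly translation-equivariant. Hence $\tau_h$ permutes the selected family, and after enumerating this permutation one reads off the bijection $\sigma_h : \mathbb{N} \to \mathbb{N}$. I expect the main technical obstacle to be calibrating $c_1$ and $c_2$ so that the selection is both non-vacuous at every necessary scale and survives the pruning without creating gaps in the covering, but this is a routine and well-documented part of the Whitney construction.
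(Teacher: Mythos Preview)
The paper does not give its own proof of this lemma; it is quoted directly from \cite{GuSc-2019MinMaxEuclideanNATMA}, and the surrounding text only describes the cited construction as ``a particular disjoint cube decomposition that covers $\real^n\setminus h_m\integer^N$ and was shown to be invariant under translations of $G_m$ by any vector in $G_m$.'' Your proposal is exactly this: apply the standard Whitney decomposition (as in Stein, Chapter~VI) to the open set $\real^N\setminus G_m$ using dyadic cubes at scales $2^{-(m+j)}$, $j\geq 1$, and observe that because $G_m\subset 2^{-(m+j)}\integer^N$ for every $j\geq 1$, translation by any $h\in G_m$ permutes each dyadic grid and preserves $\dist(\cdot,G_m)$, so the selection and maximality steps are $G_m$-equivariant. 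This is correct and matches the approach the paper attributes to the reference.
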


\begin{rem}\label{remFD:CubeParametersmk}
	Just for clarity, we make explicit for the reader: the parameter, $m\in\Natural$, is used for the grid size, $2^{-m}\integer^N$, and the parameter, $k\in\Natural$, in $Q_{m,k}$, etc. is the index resulting from a cube decomposition of $\real^N\setminus G_m$.
\end{rem}

\begin{rem}\label{remark:maximum number of overlapping cubes}
  In what follows, given a cube $Q$, we shall denote by $Q^*$ the cube with the same center as $Q$ but whose sides are increased by a factor of $9/8$. Observe that for every $m$ and $k$, we have $Q_{m,k}^* \subset \mathbb{R}^n\setminus 2^{2-m}\mathbb{Z}^N$, and that any given $x$ lies in at most some number $C(N)$ of the cubes $Q_{m,k}^*$.
\end{rem}

\begin{proposition}[Proposition 4.6 in \cite{GuSc-2019MinMaxEuclideanNATMA}]\label{propFD:ParitionOfUnitymk}
  For every $m$, there is a family of functions $\phi_{m,k}(x)$, with $k\in\Natural$, such that
  \begin{enumerate}
    \item $0\leq \phi_{m,k}(x)\leq 1$ for every $k$ and $\phi_{m,k} \equiv 0$ outside $Q_{m,k}^*$ (using the notation in Remark \ref{remark:maximum number of overlapping cubes})
    \item $\sum_k \phi_{m,k}(x) =1 $ for every $x \in \mathbb{R}^n\setminus G_m$.
    \item There is a constant $C$, independent of $m$ and $k$, such that
    \begin{align*}
      |\nabla\phi_{m,k}(x)| \leq \frac{C}{\diam(Q_{m,k})}.
    \end{align*}		
    \item For every $z \in G_m$, we have
    \begin{align*}
      \phi_{m,k}(x-z) = \phi_{m,\sigma_zk}(x),\;\;\forall\;k,\;x,
    \end{align*}		
    where $\sigma_z$ are the bijections introduced in Lemma \ref{LemFD:Cubesmk}. 
  \end{enumerate}

\end{proposition}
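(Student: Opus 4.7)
\medskip
\noindent\textbf{Proof plan.} The plan is to follow the classical Whitney partition of unity construction, but with the model bump adapted to each cube $Q_{m,k}$ in a way that depends only on the center and side length of that cube. Because the cube decomposition in Lemma \ref{LemFD:Cubesmk} is equivariant under translations by $z\in G_m$ (via the bijections $\sigma_h$), a partition of unity defined purely from those geometric data will automatically inherit the translation equivariance demanded by item~(4).

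First, I would fix a single smooth cutoff $\eta:\real^N\to[0,1]$ with $\eta\equiv 1$ on $[-1/2,1/2]^N$ and $\supp(\eta)\subset[-9/16,9/16]^N$. Denoting by $c_{m,k}$ and $\ell_{m,k}$ the center and side length of $Q_{m,k}$, I would then define the unnormalized bumps
\[
\tilde\phi_{m,k}(x):=\eta\!\left(\frac{x-c_{m,k}}{\ell_{m,k}}\right).
\]
By construction $\tilde\phi_{m,k}$ is supported in $Q_{m,k}^*$, takes values in $[0,1]$, equals $1$ on $Q_{m,k}$, and satisfies $|\nabla\tilde\phi_{m,k}|\leq \|\nabla\eta\|_\infty/\ell_{m,k}\leq C/\diam(Q_{m,k})$. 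Because $\{Q_{m,k}\}_k$ covers $\real^N\setminus G_m$ (Lemma \ref{LemFD:Cubesmk}(2)) and $\tilde\phi_{m,k}=1$ on $Q_{m,k}$, the sum $\Sigma(x):=\sum_k\tilde\phi_{m,k}(x)$ satisfies $\Sigma\geq 1$ on $\real^N\setminus G_m$; by Remark \ref{remark:maximum number of overlapping cubes} it is locally finite with at most $C(N)$ nonzero terms at every point. I would then set
\[
\phi_{m,k}(x):=\frac{\tilde\phi_{m,k}(x)}{\Sigma(x)}\ \ \text{on}\ \real^N\setminus G_m,
\]
extended by $0$ on $G_m$ (consistent since Lemma \ref{LemFD:Cubesmk}(3) keeps the enlarged cubes $Q_{m,k}^*$ uniformly away from $G_m$, provided the universal constants are chosen so that $c_1>1/16$, which is built into the construction of the cubes).

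Items (1) and (2) then follow directly from the definition and the fact that $\Sigma\geq 1$ on $\real^N\setminus G_m$. For item (4), the cube identity $Q_{m,k}+z=Q_{m,\sigma_zk}$ of Lemma \ref{LemFD:Cubesmk}(4) gives $c_{m,\sigma_zk}=c_{m,k}+z$ and $\ell_{m,\sigma_zk}=\ell_{m,k}$, so
\[
\tilde\phi_{m,k}(x-z)=\eta\!\left(\tfrac{x-z-c_{m,k}}{\ell_{m,k}}\right)=\eta\!\left(\tfrac{x-c_{m,\sigma_zk}}{\ell_{m,\sigma_zk}}\right)=\tilde\phi_{m,\sigma_zk}(x);
\]
summing over $k$ and using that $\sigma_z$ is a bijection shows $\Sigma(x-z)=\Sigma(x)$, so the ratio also satisfies $\phi_{m,k}(x-z)=\phi_{m,\sigma_zk}(x)$.

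The one step that requires some care is item (3). I would prove it via the quotient rule: $|\nabla\phi_{m,k}|\leq |\nabla\tilde\phi_{m,k}|/\Sigma + \tilde\phi_{m,k}|\nabla\Sigma|/\Sigma^2$. The first term is bounded by $C/\diam(Q_{m,k})$ by the scaling of $\eta$. The second requires that only cubes $Q_{m,j}$ of \emph{comparable} diameter to $Q_{m,k}$ can contribute to $\nabla\Sigma$ in a neighborhood of $Q_{m,k}^*$; this is the standard Whitney comparability (any $Q_{m,j}^*$ meeting $Q_{m,k}^*$ has $\diam(Q_{m,j})\sim\diam(Q_{m,k})$, which follows from Lemma \ref{LemFD:Cubesmk}(3) since both diameters are comparable to their distances to $G_m$, which differ by at most $\diam(Q_{m,k})+\diam(Q_{m,j})$). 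Combined with the uniform overlap bound of Remark \ref{remark:maximum number of overlapping cubes} and $\Sigma\geq 1$, this gives $|\nabla\Sigma|\leq C/\diam(Q_{m,k})$ on $Q_{m,k}^*$, hence the desired bound. This comparability-and-finite-overlap argument is the main (and only) nontrivial step; everything else is bookkeeping against the properties already established in Lemma \ref{LemFD:Cubesmk}.
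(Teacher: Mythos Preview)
The paper does not give its own proof of this proposition; it is quoted verbatim from \cite{GuSc-2019MinMaxEuclideanNATMA} as background, so there is nothing in the present paper to compare against. Your construction is the standard Whitney partition-of-unity argument (as in Stein \cite[Chapter VI]{Stei-71}), carried out so that the bump on each cube depends only on its center and side length; the translation equivariance of the cube family from Lemma~\ref{LemFD:Cubesmk}(4) then forces item~(4) automatically. This is exactly the intended mechanism, and your handling of items~(1)--(3), including the comparability-of-neighboring-cubes step for the gradient bound, is correct.

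One small point: your remark that the enlarged cubes $Q_{m,k}^*$ stay away from $G_m$ is slightly stronger than what the paper asserts. Remark~\ref{remark:maximum number of overlapping cubes} only guarantees $Q_{m,k}^*\subset\real^N\setminus 2^{2-m}\integer^N$, i.e.\ avoidance of a \emph{coarser} sublattice of $G_m$, not of $G_m$ itself. This does not affect the proposition as stated, since item~(2) is only claimed on $\real^N\setminus G_m$ and nothing is asserted about smoothness across $G_m$; but your parenthetical ``$c_1>1/16$'' should not be presented as something already built in. It is harmless to simply define $\phi_{m,k}=\tilde\phi_{m,k}/\Sigma$ wherever $\Sigma>0$ and $0$ elsewhere, and note that the proposition's four items refer only to $x\in\real^N\setminus G_m$ (item~(4) for $x\in G_m$ is vacuous or follows by the same equivariance of $\Sigma$).
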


We will call $\{\phi_{m,k}\}$ the corresponding partition of unity for $\{Q_{m,k}\}$ that is appropriate for the Whitney extension.  As in \cite[Section 4]{GuSc-2019MinMaxEuclideanNATMA}, we use the following finite difference operator to construct approximate Taylor polynomials for the Whitney extension. Denote by $\grad_m^1u(x)$ the unique vector that satisfies for $x\in G_m$ and $j=1,\dots,n$
\begin{align}\label{eqFD:DiscreteGradient}
	\grad_m^1u(x)\cdot e_j = \frac{1}{2h_m}(u(x+h_me_j)-u(x-h_me_j)).
\end{align}
Note that this exploits the fact that $x\pm h_me_j\in G_m$ if $x \in G_m$.

In order to define the polynomials that will be used to build the Whitney extension, we need some notation for the centers of cubes and closest points in $G_m$.

\begin{definition}\label{defFD:CubeCentersAndClosestPoints}
	For each $m$ and $k$, we will call $y_{m,k}$ the center of the cube $Q_{m,k}$, and $\hat y_{m,k}$ will denote the unique element of $G_m$ so that
	\begin{align*}
		d(y_{m,k},G_m)=\abs{y_{m,k}-\hat y_{m,k}}.
	\end{align*}
\end{definition}

For $f:\real^N\to\real$, we can now define a polynomial used to approximate it:

\begin{definition}\label{defFD:PolymonialForWhitney}
	Using the discrete gradient, $\grad_m^1 f$ in (\ref{eqFD:DiscreteGradient}), we define a first order polynomial depending on $f$, $m$, $k$, as
\begin{align*}
	\text{for}\ x\in Q_{m,k},\ \   P^1_{f,k}(x)=f(\hat y_{m,k}) + \grad_m^1 f(\hat y_{mk})\cdot(x-\hat y_{m,k}).
\end{align*}
Given any $f$, we denote the $m$-level truncation, $\tilde f_m$ as
\begin{align*}
	\tilde f_m = f\Indicator_{B_{2^m}}.
\end{align*}
\end{definition}

With all of these ingredients in hand, we can define the Whitney extensions that we will use.

\begin{definition}\label{defFD:WhitneyExtension}
	Using the notation of Definition \ref{defFD:PolymonialForWhitney}, and partition of unity, $(\phi_{m,k})$, in Proposition \ref{propFD:ParitionOfUnitymk},
the zero order Whitney extension is
\begin{align*}
	E^0_m(f,x)= 
	\begin{cases}
		\tilde f_m(x)\ &\text{if}\ x\in G_m,\\
		\sum_{k\in\Natural}\tilde f_m(\hat y_{m,k})\phi_{m,k}(x)\ &\text{if}\ x\not\in G_m,
	\end{cases}
\end{align*}
and the first order Whitney extension is
\begin{align*}
	E^1_m(f,x)=
	\begin{cases}
		\tilde f_m(x)\ &\text{if}\ x\in G_m,\\
		\sum_{k\in\Natural} P^1_{\tilde f_{m,k}}(x)\phi_{m,k}(x)\ &\text{if}\ x\not\in G_m.
	\end{cases}
\end{align*}

\end{definition}

\subsection{The finite dimensional approximation}

As mentioned above, we give an approximation procedure and min-max formula for generic operators acting on convex subsets of $C^{1,\dini}(\real^N)$.  We will call these operators, $J:X_\rho\to C^0(\real^N)$, where the Banach space, $X_\rho$ appears below, in Definition \ref{defFD:XrhoSpace}.  Our particular interest is the eventual application of this material to the operator $I$ defined in (\ref{eqIN:BulkEqForHSOperator}) and (\ref{eqIN:defHSOperator}).

The spaces that are used for the domain of the operators, $J$, are given here.
\begin{definition}\label{defFD:XrhoSpace}
	\begin{align*}
		X_{\rho}= \left\{  f\in C^{1,\dini}(\real^N) \ :\ \exists\  C_f,\ \textnormal{s.t.}\ \abs{\grad f(x)-\grad f(y)}\leq C_f\rho(\abs{x-y})   \ \textnormal{ for all } x,y \in \real^N\right\}.
	\end{align*}
	\begin{align*}
		X_{\rho,x}=\left\{ f\in X_\rho \ :\ \exists\ C_f,\ \textnormal{s.t.}\  \abs{f(y)-f(x)}\leq C_f\abs{y-x}\rho(\abs{y-x}) \ \textnormal{ for all } y \in \real^N \right\}.
	\end{align*}
\end{definition}

We note that $X_\rho$ is a Banach space with the usual norm on $C^1$ combined with the additional $\dini$ semi-norm
\begin{align*}
	[\grad f]_\rho = \inf_{C}\{\sup_{x,y}\abs{\grad f(x)-\grad f(y)}\leq C\rho(\abs{x-y}\},
\end{align*}  
see \cite[Chapter VI, Cor 2.2.3 and Exercise 4.6]{Stei-71}.  Furthermore, $X_{\rho,x}$ is a subspace of $X_\rho$ consisting of those functions vanishing with a rate at $x$.

\begin{rem}\label{remFD:ModulusIsAlsoInXRho}
	We note that $f\in X_\rho$ if and only if
\begin{align*}
	\forall\ x,y\in\real^N,\ \ \abs{f(x+y)-f(x)-\grad f(x)\cdot y}\leq C_f\abs{y}\rho(\abs{y}).
\end{align*}
Without loss of generality, $\rho$ can be chosen so that  $\tilde \rho(y)=\norm{f}_{L^\infty}\abs{y}\rho(\abs{y})$ satisfies $\tilde \rho\in X_\rho$.  This means that whenever $f\in \K(\del,L,m,\rho)$, we have that $\psi(y)=\del+ \norm{f}_{L^\infty}\abs{y}\rho(\abs{y})$ satisfies $\psi\in\K(\del,L,m,\rho)$.
\end{rem}

The first step in making operators with finite rank is to first restrict input functions to the finite set, $G_m$.  So, we define the restriction operator,

\begin{align}\label{eqFD:DefOfTmRestrictionOperator}
	T_m: C^0(\real^N)\to \real^{G_m},\ \ \ T_m f := f|_{G_m}.
\end{align}

Thus, we can use the restriction operator to create a projection of $X_\rho$ onto a finite dimensional subspace of functions depending only on their values on $G_m$:
\begin{align}\label{eqFD:DefOfProjectionOperatorOntoGm}
	\pi_m = E^1_m\circ T_m : X_\rho \to X_\rho.
\end{align}
One of the reasons for using the Whitney extension to define $E^1$ is that operators such as $\pi_m$ will be Lipschitz, and with a norm that is independent from $G_m$.

\begin{theorem}[Stein Chapter VI result 4.6 \cite{Stei-71}]\label{thmFD:ProjectionIsLinearAndBounded}
	$E^0_m$ is linear, and if $g$ is Lipschitz on $G_m$, then $E^0_mg$ is Lipschitz on $\real^N$ with the same Lipschitz constant. Furthermore,
	$\pi_m$ is linear and, for a constant, $C>0$ that depends only on dimension, for all $f\in X_\rho$, 
	\begin{align*}
		\norm{\pi_m f}_{X_\rho}\leq C\norm{f}_{X_\rho}
	\end{align*}
\end{theorem}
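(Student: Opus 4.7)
The plan is to verify each claim directly from the definitions of $E^0_m$, $E^1_m$, and $\pi_m$ in Definition \ref{defFD:WhitneyExtension} and the structural properties of the Whitney cubes and partition of unity in Lemma \ref{LemFD:Cubesmk} and Proposition \ref{propFD:ParitionOfUnitymk}. The whole argument is essentially a bookkeeping exercise combined with classical Whitney-extension estimates from Stein \cite[Chapter VI]{Stei-71}, adapted to this grid-invariant construction.

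Linearity of $E^0_m$ reads off the formula: on $G_m$ it is $f\mapsto \tilde f_m = f\Indicator_{B_{2^m}}$, and off $G_m$ it is $\sum_k \tilde f_m(\hat y_{m,k})\phi_{m,k}(x)$, a linear combination of grid values with coefficients $\phi_{m,k}(x)$ independent of $f$. Linearity of $\pi_m=E^1_m\circ T_m$ follows in the same way, since $T_m$ is linear and the polynomial $P^1_{\tilde f_m,k}(x) = \tilde f_m(\hat y_{m,k}) + \grad_m^1 \tilde f_m(\hat y_{m,k})\cdot(x-\hat y_{m,k})$ depends linearly on the values of $\tilde f_m$ at $\hat y_{m,k}$ and at its nearest neighbors in $G_m$ through the linear discrete gradient $\grad_m^1$ in \eqref{eqFD:DiscreteGradient}. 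For the Lipschitz preservation of $E^0_m$, the case $x,y\in G_m$ is immediate; for off-grid points I would use $\sum_k\phi_{m,k}\equiv 1$ to rewrite
\begin{align*}
E^0_m g(x)-E^0_m g(y) = \sum_k \bigl(g(\hat y_{m,k})-g(\hat y_{m,k_0})\bigr)\bigl(\phi_{m,k}(x)-\phi_{m,k}(y)\bigr)
\end{align*}
for a convenient $k_0$ (the index of a cube containing $x$), and then exploit $|\grad\phi_{m,k}|\leq C/\diam(Q_{m,k})$, the finite overlap of $\{Q_{m,k}^*\}$ from Remark \ref{remark:maximum number of overlapping cubes}, and the comparability $|\hat y_{m,k}-\hat y_{m,k_0}|\leq C\diam(Q_{m,k})$ from item (3) of Lemma \ref{LemFD:Cubesmk} to collapse the sum into a bound of the form $L|x-y|$.

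The $X_\rho$-bound for $\pi_m$ I would split into three estimates: $\|\pi_m f\|_{L^\infty}$, $\|\grad \pi_m f\|_{L^\infty}$, and the Dini seminorm $[\grad \pi_m f]_\rho$. The first two are routine because $\pi_m f$ is a locally finite convex combination of first-order Taylor polynomials $P^1_{\tilde f_m,k}$, each controlled by $\|f\|_{L^\infty}+C\|\grad f\|_{L^\infty}\diam(Q_{m,k}^*)$; differentiating $\sum_k P^1_{\tilde f_m,k}\phi_{m,k}$ and subtracting off a base polynomial via $\sum_k\grad\phi_{m,k}\equiv 0$ handles the gradient bound. The main obstacle is the Dini seminorm estimate $[\grad \pi_m f]_\rho\leq C[\grad f]_\rho$ with a constant independent of $m$. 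Here I plan to follow the dyadic argument in \cite[Chapter VI]{Stei-71}: for $x,y\in\real^N$ with $r=|x-y|$, separate cases according to whether $\max(d(x,G_m),d(y,G_m))$ is much larger or much smaller than $r$. In each regime one compares the discrete gradients $\grad_m^1 \tilde f_m(\hat y_{m,k})$ entering the formulas for $\grad\pi_m f(x)$ and $\grad\pi_m f(y)$ to the continuous gradient $\grad f$, using the consequence of Remark \ref{remFD:ModulusIsAlsoInXRho} that $|\grad_m^1 \tilde f_m(\hat y)-\grad f(\hat y)|\leq C[\grad f]_\rho\,\rho(h_m)$ at each interior grid point. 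Summing the $O(1)$ contributions from the cubes relevant to $x$ and $y$, and invoking the Dini continuity of $\grad f$ itself, then yields $|\grad \pi_m f(x)-\grad \pi_m f(y)|\leq C[\grad f]_\rho\,\rho(r)$. The constant is independent of $m$ thanks to the grid-translation invariance in item (4) of Proposition \ref{propFD:ParitionOfUnitymk}.
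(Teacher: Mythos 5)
The paper does not prove Theorem \ref{thmFD:ProjectionIsLinearAndBounded}: it is cited as a known result from Stein \cite{Stei-71} (with the grid-adapted construction coming from \cite{GuSc-2019MinMaxEuclideanNATMA}) and used as a black box. Your reconstruction follows the standard Whitney extension argument, which is the right route, and the decomposition via $\sum_k\phi_{m,k}\equiv 1$ together with the case split on whether $|x-y|$ is small or large compared to the local cube size are precisely the ingredients the reference uses.

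Two points nonetheless need more care. Your telescoping identity for $E^0_m g$ yields the bound $|E^0_m g(x)-E^0_m g(y)|\leq C(N)\,L\,|x-y|$ with a dimensional constant coming from the finite overlap of the $Q^*_{m,k}$ and from $|\hat y_{m,k}-\hat y_{m,k_0}|\leq C\diam(Q_{m,k})$; getting literally the \emph{same} Lipschitz constant as stated, rather than a dimensional multiple, is not a consequence of what you wrote (though this is harmless downstream, since the $X_\rho$ bound for $\pi_m$ already allows a $C$). More importantly, the Dini-seminorm step is incomplete in the regime where $d(x,G_m),d(y,G_m)\gg|x-y|$: there the local cube diameter is large compared to $|x-y|$, and replacing $\grad_m^1\tilde f_m(\hat y_{m,k})$ by $\grad f(\hat y_{m,k})$ only produces an error of order $\rho(\diam(Q_{m,k}))$, which is not controlled by $\rho(|x-y|)$. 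In that regime one must instead bound $|\grad\pi_m f(x)-\grad\pi_m f(y)|$ via a second-derivative estimate for $\pi_m f$ inside a single cube: using $\sum_k\grad\phi_{m,k}\equiv 0$, the bound $|\grad\phi_{m,k}|\leq C/\diam(Q_{m,k})$ from Proposition \ref{propFD:ParitionOfUnitymk}, and the $O(\diam(Q_{m,k})\cdot\rho(\diam(Q_{m,k})))$ discrepancy between neighboring Taylor polynomials, one gets a Hessian bound of order $[\grad f]_\rho\,\rho(\diam(Q_{m,k}))/\diam(Q_{m,k})$, and the conclusion then follows because $s\mapsto\rho(s)/s$ is nonincreasing. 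Your proposal gestures at a dyadic split but only spells out the argument that works in the opposite regime; without the cube-interior estimate the proof has a gap.
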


On top of the boundedness of $\pi_m$, we have intentionally constructed the sets $G_m$, the cubes $\{Q_{m,k}\}_k$, and the partition functions $\phi_{m,k}$, to respect translations over $G_m$. 

\begin{definition}\label{defFD:TranslationOperator}
	For $f:\real^N\to \real$, and $z\in\real^N$, we define the translation operator $\tau_z$ as
	\begin{align*}
		\tau_zf(x) = f(x+z).
	\end{align*}
\end{definition}

 In particular, property (4) of Proposition \ref{propFD:ParitionOfUnitymk} gives the following translation invariance of $\pi_m$. 

\begin{lemma}[Proposition 4.14 of \cite{GuSc-2019MinMaxEuclideanNATMA}]\label{lemFD:Pi-mTranslationInvariance}
	If $f:\real^N\to\real$,  $z\in G_m$, fixed, and $\tau_z$ in Definition \ref{defFD:TranslationOperator}, then
	\begin{align*}
		\pi_m( \tau_z f) = \tau_z\left(\pi_m f\right),\ \ \text{and}\ \ E^0_m\circ T_m(\tau_z f) = \tau_z \left(E^0_m\circ T_m f\right).
	\end{align*}
\end{lemma}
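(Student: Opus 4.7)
The plan is to verify the two identities pointwise, splitting by whether the evaluation point lies in $G_m$ or not, and then exploiting the translation-covariance of every constituent piece of the Whitney construction. The key observation is that for $z\in G_m$, translation by $z$ is a bijection of $G_m$, and by Lemma \ref{LemFD:Cubesmk}(4) together with Proposition \ref{propFD:ParitionOfUnitymk}(4) it induces a matching bijection $\sigma_z:\Natural\to\Natural$ on the index set labelling the cubes $\{Q_{m,k}\}$ and the partition functions $\{\phi_{m,k}\}$. Once this covariance is unpacked, both identities reduce to careful reindexing with $\sigma_z$.

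For the first identity, at $x\in G_m$ one has $x+z\in G_m$, so both $E^0_m\circ T_m(\tau_z f)(x)$ and $\tau_z(E^0_m\circ T_m f)(x)$ equal $f(x+z)$ directly from the definition of $E^0_m$ on the grid. For $x\notin G_m$, I would expand
\[
E^0_m\circ T_m(\tau_z f)(x)=\sum_k f(\hat y_{m,k}+z)\,\phi_{m,k}(x),
\]
and then use $\phi_{m,k}(x)=\phi_{m,\sigma_z k}(x+z)$ (the identity of Proposition \ref{propFD:ParitionOfUnitymk}(4) rewritten at $y=x+z$) together with $\hat y_{m,k}+z=\hat y_{m,\sigma_z k}$ (which follows because $\sigma_z$ shifts cube centers by $z$, and because $z\in G_m$ the nearest grid point also translates by $z$). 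Reindexing with $\ell=\sigma_z k$ converts the sum into $\sum_\ell f(\hat y_{m,\ell})\,\phi_{m,\ell}(x+z)=\tau_z(E^0_m\circ T_m f)(x)$. For the second identity, $\pi_m=E^1_m\circ T_m$, the argument is the same with one additional input: the discrete gradient (\ref{eqFD:DiscreteGradient}) commutes with $\tau_z$ for $z\in G_m$, since $w\pm h_m e_j\in G_m$ implies $(\grad^1_m f)(w+z)\cdot e_j=\grad^1_m(\tau_z f)(w)\cdot e_j$ by a direct comparison of the central differences. Combined with $\hat y_{m,\sigma_z k}=\hat y_{m,k}+z$, this yields $P^1_{\tilde f_m,\sigma_z k}(x+z)=P^1_{\widetilde{\tau_z f}_m,k}(x)$, and the same $\ell=\sigma_z k$ reindexing closes the proof.

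The main bookkeeping subtlety is the truncation $\tilde f_m=f\Indicator_{B_{2^m}}$ in Definition \ref{defFD:WhitneyExtension}: the ball $B_{2^m}$ is not translation-invariant, so the identities can fail near $|\hat y_{m,k}|\approx 2^m$. This is a mild boundary effect which I would handle by restricting attention to $m$ large enough that, for the fixed $z\in G_m$, all grid points $\hat y_{m,k}$ appearing in the nonzero terms of both sides lie in $B_{2^m-|z|}$, so that the truncation acts identically on $f$ and $\tau_z f$ at those points. The essential content of the lemma is thus purely the covariance of the Whitney extension under grid shifts, and no analytic estimate is needed beyond the translation identities of $\phi_{m,k}$, $\hat y_{m,k}$, and $\grad^1_m$ already established in Section \ref{sec:FiniteDim}.
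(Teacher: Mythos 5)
The paper gives no proof of this lemma; it is cited wholesale as Proposition~4.14 of \cite{GuSc-2019MinMaxEuclideanNATMA}, so there is no in-paper argument to compare against. Your strategy---reindexing the Whitney sums via the bijection $\sigma_z$ from Lemma~\ref{LemFD:Cubesmk}(4), translating the partition functions via Proposition~\ref{propFD:ParitionOfUnitymk}(4), and verifying that $\hat y_{m,\sigma_z k}=\hat y_{m,k}+z$ and that $\grad^1_m$ commutes with $\tau_z$ whenever $z\in G_m$---is the natural one, is surely what the cited reference does, and every individual covariance identity you invoke checks out.

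The one place your argument does not fully close is the issue you yourself flag: the truncation $\tilde f_m=f\,\Indicator_{B_{2^m}}$ in Definition~\ref{defFD:WhitneyExtension} is not $\tau_z$-covariant, so with these definitions read literally the stated identity actually fails near $\partial B_{2^m}$. For $x\in G_m$ the two sides reduce to $f(x+z)\,\Indicator_{B_{2^m}}(x)$ versus $f(x+z)\,\Indicator_{B_{2^m}}(x+z)$, and for $x\notin G_m$ the same indicator mismatch appears inside $\widetilde{\tau_z f}_m(\hat y_{m,k})$ versus $\tilde f_m(\hat y_{m,k}+z)$. Your proposed fix---take $m$ so large that the relevant $\hat y_{m,k}$ lie in $B_{2^m-|z|}$---does not quite work as stated, because the set of $k$ with $\phi_{m,k}(x)\neq 0$ depends on $x$: for every fixed $m$ there remain points $x$ with $|x|\approx 2^m$ where the discrepancy persists, so the lemma as a pointwise identity on all of $\real^N$ cannot be recovered merely by enlarging $m$. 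What your argument does prove is that the two sides agree on $B_{2^m-|z|-Ch_m}$, which is all the downstream limiting arguments in Section~\ref{sec:FiniteDim} actually need; alternatively, the identity is exact if one drops the truncation from $E^0_m$ and $E^1_m$, which is presumably the convention under which the original Proposition~4.14 is stated. You should state explicitly which of these two readings you are adopting rather than leaving it as a ``mild boundary effect.''
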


With these nice facts about the projection operator, $\pi_m$, we can thus define our approximating operators to $J$, in which the approximates have finite rank.

\begin{definition}\label{defFD:DefOfFDApprox}
	Given $J$ that is a Lipschitz mapping of $X_\rho\to C^0_b(\real^N)$ the finite dimensional approximation, $J_m$, is defined as
\begin{align}\label{eqFD:DefOfFdApprox}
	J^m:= E^0_m\circ T_m\circ J \circ E^1_m\circ T_m= E^0_m\circ T_m\circ J\circ \pi_m,
\end{align}
where $E^0$ and $E^1$ appear in Definition \ref{defFD:WhitneyExtension}, $T_m$ is defined in (\ref{eqFD:DefOfTmRestrictionOperator}), and $\pi_m$ is defined in (\ref{eqFD:DefOfProjectionOperatorOntoGm}).
\end{definition}

Below, we will see $J^m$ are Lipschitz maps.  It will also matter in which way $J^m\to J$; for our purposes, it is enough that these approximate operators converge pointwise to $J$ over $X_\rho$, in the following sense.

\begin{proposition}[Corollary 5.20 of \cite{GuSc-2019MinMaxEuclideanNATMA}]\label{propFD:ConvergenceOFApprox}
	For all $f\in X_\rho$, for each $R>0$,
	\begin{align*}
		\lim_{m\to\infty}\norm{J^m(f)-J(f)}_{L^\infty(B_R)}=0.
	\end{align*}
\end{proposition}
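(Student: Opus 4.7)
The plan is to apply the triangle inequality to decompose
\begin{align*}
J^m(f)-J(f)= \bigl[E^0_m\circ T_m\circ J(\pi_m f)-E^0_m\circ T_m\circ J(f)\bigr] + \bigl[E^0_m\circ T_m\circ J(f)-J(f)\bigr],
\end{align*}
and then show each bracket converges to $0$ in $L^\infty(B_R)$. The two terms will be handled by very different mechanisms: the second uses approximation properties of the Whitney extension applied to a fixed continuous function, while the first uses the continuity of $J$ combined with the convergence $\pi_m f\to f$.

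First I would handle the second bracket. Since $J(f)\in C^0_b(\real^N)$, for each fixed $R>0$ the function $J(f)$ is uniformly continuous on $B_{R+1}$ with some modulus $\omega_{J(f),R}$. By construction (Definition \ref{defFD:WhitneyExtension} together with Proposition \ref{propFD:ParitionOfUnitymk}), for $x\in B_R$ and $m$ large enough that $2^m>R+1$, the value $(E^0_m\circ T_m J(f))(x)$ is a convex combination (via the partition of unity $\{\phi_{m,k}\}$) of the values of $J(f)$ at grid points $\hat y_{m,k}\in G_m$ lying within a distance $\lesssim h_m=2^{-m}$ of $x$. Therefore
\begin{align*}
\bigl|E^0_m\circ T_m J(f)(x)-J(f)(x)\bigr|\ \leq\ \omega_{J(f),R}(C\,h_m)\ \longrightarrow\ 0
\end{align*}
uniformly in $x\in B_R$.

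For the first bracket, the boundedness property of the Whitney extension (as recorded in Theorem \ref{thmFD:ProjectionIsLinearAndBounded}) gives $\|E^0_m\circ T_m g\|_{L^\infty(B_R)}\leq \|g\|_{L^\infty(B_{R+1})}$ for $m$ large, since $E^0_m\circ T_m$ only samples values of $g$ in a neighborhood of $B_R$. Hence it suffices to show
\begin{align*}
\bigl\|J(\pi_m f)-J(f)\bigr\|_{L^\infty(B_{R+1})}\ \longrightarrow\ 0.
\end{align*}
Since $J:X_\rho\to C^0_b(\real^N)$ is Lipschitz, this will follow once we verify that $\pi_m f\to f$ in the $X_\rho$ norm (or, more precisely, in a sense strong enough to allow a local Lipschitz bound to control $J(\pi_m f)-J(f)$ on $B_{R+1}$). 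For each $R$ and $m$ so large that $B_{R+2}\subset B_{2^m}$, the truncation $\tilde f_m=f\,\Indicator_{B_{2^m}}$ agrees with $f$ on a neighborhood of $B_{R+1}$, so $\pi_m f$ is precisely the Whitney first-order reconstruction of $f$ from its samples on the fine grid $G_m$ there; the standard estimates for first-order Whitney polynomials $P^1_{\tilde f_m,k}$, combined with the Dini modulus $\rho$ of $\nabla f$, yield
\begin{align*}
\|\pi_m f-f\|_{C^1(B_{R+2})}+[\nabla(\pi_m f-f)]_\rho\ \longrightarrow\ 0\qquad\text{as }m\to\infty.
\end{align*}

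The main obstacle, and the place where one must be careful, is precisely this last convergence of $\pi_m f$ to $f$: even though $\pi_m$ is uniformly bounded on $X_\rho$ by Theorem \ref{thmFD:ProjectionIsLinearAndBounded}, passing from uniform boundedness of the projectors to actual convergence requires one to quantitatively control the error of the first-order Taylor reconstruction by the Dini modulus $\rho$ of $\nabla f$, and to handle the interaction between the global truncation $\tilde f_m=f\Indicator_{B_{2^m}}$ and the (possibly) non-local evaluation of $J$. The non-locality issue is absorbed by working on the enlarged ball $B_{R+1}$ and invoking the assumed Lipschitz continuity of $J$ as a map $X_\rho\to C^0_b(\real^N)$; the Whitney reconstruction error is controlled by writing $\pi_m f-f = \sum_k (P^1_{\tilde f_m,k}-f)\phi_{m,k}$ on each cube $Q_{m,k}^*$ and using that $|P^1_{\tilde f_m,k}(x)-f(x)|+|\nabla(P^1_{\tilde f_m,k}-f)(x)|\lesssim \diam(Q_{m,k})\,\rho(\diam(Q_{m,k}))\to 0$, with a similar estimate for the Dini seminorm.
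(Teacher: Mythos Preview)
The paper does not supply its own proof of this proposition; it is quoted as Corollary~5.20 of \cite{GuSc-2019MinMaxEuclideanNATMA}, so there is no in-paper argument to compare against directly.

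Your decomposition and the treatment of the second bracket are sound. The difficulty is in the first bracket. You want to deduce $\|J(\pi_m f)-J(f)\|_{L^\infty(B_{R+1})}\to 0$ from the Lipschitz property of $J$, but the only Lipschitz estimate available is the \emph{global} one,
\[
\|J(u)-J(v)\|_{C^0_b(\real^N)}\le \mathrm{Lip}(J)\,\|u-v\|_{X_\rho},
\]
and this requires control of $\pi_m f-f$ in the full $X_\rho$ norm, not merely on $B_{R+2}$. Because of the hard truncation $\tilde f_m=f\,\Indicator_{B_{2^m}}$ in Definition~\ref{defFD:PolymonialForWhitney}, outside a ball of radius roughly $2^m$ the function $\pi_m f$ is identically zero while $\nabla f$ need not decay at infinity; hence $\|\nabla(\pi_m f-f)\|_{L^\infty(\real^N)}$ is bounded below by $\|\nabla f\|_{L^\infty(\real^N\setminus B_{2^{m+1}})}$, which in general does not tend to zero. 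So $\pi_m f\not\to f$ in $X_\rho$, and the global Lipschitz bound on $J$ cannot be invoked as you propose. Your parenthetical appeal to ``a local Lipschitz bound'' for $J$ is precisely what is missing from the hypotheses: $J$ may be genuinely nonlocal (as is the Hele--Shaw operator $H$ in the paper's application), and nothing in the statement of the proposition provides a localized estimate of the form $\|J(u)-J(v)\|_{L^\infty(B_R)}\le C\|u-v\|_{X_\rho(B_{R'})}$. Closing this gap requires either importing an additional decay hypothesis on $J$ (such as condition~(iv) in Theorem~\ref{thmFD:StructureOfJFromGSEucSpace}) or appealing to the specific argument in the cited reference, which handles the tail contribution separately.
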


A property that was observed in \cite{GuSc-2019MinMaxNonlocalTOAPPEAR} and also used in \cite{GuSc-2019MinMaxEuclideanNATMA} is the ``almost'' preservation of ordering by the projections, $\pi_m$.  Although ordering is, in general, not preserved, on functions that are regular enough, there is a quantifiable error term.  We record this here because it plays a fundamental role later on, in Section \ref{sec:ProofOfStructureThm}, to preserve certain estimates.  In particular, we eventually focus on the fact that our operators have an extra structure called the global comparison property (see Definition \ref{defFD:GCP}), and so whenever $J$ enjoys the global comparison property, then $J_m$ almost enjoys the global comparison property, up to a quantifiable error term over a large enough subspace of $X_\rho$.  The main ingredient to this end is the following lemma.

\begin{lemma}[Lemma 4.17 of \cite{GuSc-2019MinMaxEuclideanNATMA}]\label{lemFD:RemainderForNonNegPi}
	If $w\in C^{1,\al}(\real^N)$, $x_0\in G_m$, $w\geq0$, $w(x_0)=0$, then there exists a function, $R_{\al,m,w,x_0}\in C^{1,\al/2}(\real^N)$ with $R_{\al,m,w,x_0}(x_0)=0$,
	\begin{align*}
		\forall\ x\in\real^N,\ \pi_mw(x)+R_{\al,m,w,x_0}(x)\geq 0,
		\ \ \ \textnormal{and}\ \ \ 
		\norm{R_{\al,m,w,x_0}}_{C^{1,\al/2}}\leq C h_n^\beta\norm{w}_{C^{1,\al}},
	\end{align*}
	where $\beta\in(0,1)$ and depends upon $\al$.
\end{lemma}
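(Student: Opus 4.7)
The plan is to quantify the Whitney-interpolation error $\pi_m w-w$ on scale $h_m$ and then construct a remainder $R_{\al,m,w,x_0}$ combining (i) a local linear correction canceling the discrete-gradient drift at $x_0$ with (ii) a non-negative absorption bump. A crucial point is that $R_{\al,m,w,x_0}$ is \emph{not} required to be non-negative: since $w\geq 0$ with $w(x_0)=0$ and $w\in C^{1,\al}$ force $\grad w(x_0)=0$, a Taylor expansion at $x_0\pm h_m e_j$ gives $\abs{\grad_m^1 w(x_0)}\leq C h_m^\al\norm{w}_{C^{1,\al}}$, so $\pi_m w$ can develop a linear, sign-changing behavior of size $O(h_m^\al\norm{w}_{C^{1,\al}}\abs{x-x_0})$ near $x_0$. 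No non-negative $C^{1,\al/2}$ function vanishing at $x_0$ can dominate this, since such a function must have vanishing gradient at $x_0$ and hence grow at most like $\abs{x-x_0}^{1+\al/2}$.

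\emph{Step 1 (Interpolation error).} I would first prove
\begin{equation*}
\abs{\pi_m w(x)-w(x)}\leq C h_m^{1+\al}\norm{w}_{C^{1,\al}}\quad\text{on }B_{2^{m-1}}
\end{equation*}
(outside $B_{2^m}$ one has $\pi_m w\equiv 0$ trivially). By Proposition \ref{propFD:ParitionOfUnitymk}, $\pi_m w-w=\sum_k\phi_{m,k}\bigl(P^1_{w,k}-w\bigr)$, and on each cube $Q_{m,k}$ meeting $x$, $P^1_{w,k}(x)-w(x)$ splits as the Taylor remainder of $w$ at $\hat y_{m,k}$, of size $C[\grad w]_{C^\al}\abs{x-\hat y_{m,k}}^{1+\al}$, plus the symmetric-difference error $(\grad_m^1 w(\hat y_{m,k})-\grad w(\hat y_{m,k}))\cdot(x-\hat y_{m,k})$, of size $C[\grad w]_{C^\al}h_m^\al\abs{x-\hat y_{m,k}}$. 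Since $\abs{x-\hat y_{m,k}}\lesssim h_m$ by Lemma \ref{LemFD:Cubesmk} and only boundedly many cubes meet each $x$ (Remark \ref{remark:maximum number of overlapping cubes}), the sum is $O(h_m^{1+\al}\norm{w}_{C^{1,\al}})$.

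\emph{Step 2 (Construction and sign check).} Fix a smooth cutoff $\chi$ with $\chi\equiv 1$ on $B_{2h_m}$, $\chi\equiv 0$ outside $B_{3h_m}$, and $\abs{\grad\chi}\leq C/h_m$; and a fixed $\eta\in C^{1,\al/2}(\real^N)$ with $\eta\geq 0$, $\eta(0)=\grad\eta(0)=0$, $\eta(y)\geq\min(\abs{y}^{1+\al/2},1)$, and $\eta$ constant for $\abs{y}\geq 2$. Set $\beta:=\al/2$ and
\begin{equation*}
R_{\al,m,w,x_0}(x) := -\grad_m^1 w(x_0)\cdot(x-x_0)\,\chi(x-x_0) + A\,h_m^\beta\norm{w}_{C^{1,\al}}\,\eta(x-x_0),
\end{equation*}
with $A$ a universal constant. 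A seminorm calculation (accounting for $[\chi]_{C^{\al/2}}\lesssim h_m^{-\al/2}$, compensated by the $h_m^\al$ smallness of $\grad_m^1 w(x_0)$) gives $\norm{R_{\al,m,w,x_0}}_{C^{1,\al/2}}\leq C h_m^\beta\norm{w}_{C^{1,\al}}$, and $R_{\al,m,w,x_0}(x_0)=0$ is immediate. For $\pi_m w+R_{\al,m,w,x_0}\geq 0$ I would split into three regimes: on cubes $Q_{m,k}$ near $x_0$ with $\hat y_{m,k}=x_0$, the polynomial $P^1_{w,k}(x)=\grad_m^1 w(x_0)\cdot(x-x_0)$ is exactly canceled by the first term of $R_{\al,m,w,x_0}$, leaving only the non-negative bump; at intermediate scale $h_m\lesssim\abs{x-x_0}\leq 1$, the bump $A h_m^\beta\abs{x-x_0}^{1+\al/2}\norm{w}_{C^{1,\al}}$ dominates the uniform $Ch_m^{1+\al}\norm{w}_{C^{1,\al}}$ error from Step 1 (since $\beta+(1+\al/2)\geq 1+\al$ for $\beta=\al/2$); and for $\abs{x-x_0}\geq 1$, the constant tail of $\eta$ dominates. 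The main obstacle is the first regime: one must analyze how the polynomials $P^1_{w,k}$ vary between cubes with $\hat y_{m,k}=x_0$ and those with $\hat y_{m,k}$ adjacent to $x_0$, to verify that the single linear correction still cancels the partition-of-unity average $\sum_k\phi_{m,k}P^1_{w,k}$ to leading order, with the residual absorbed by the bump.
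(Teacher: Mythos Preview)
The paper does not actually prove this lemma; it is quoted verbatim as Lemma 4.17 of \cite{GuSc-2019MinMaxEuclideanNATMA} and used as a black box. So there is no in-paper argument to compare against, and your proposal should be read as an independent reconstruction of that cited result.

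Your strategy is sound and is essentially how such a remainder is built. The two key observations you isolate are exactly the right ones: (a) because $w\ge0$, $w(x_0)=0$, and $w\in C^{1,\al}$, one has $\grad w(x_0)=0$ and hence $\abs{\grad_m^1 w(x_0)}\le Ch_m^\al\norm{w}_{C^{1,\al}}$; and (b) near $x_0$ every Whitney cube has $\hat y_{m,k}=x_0$ (since all cubes in this decomposition have diameter $\lesssim h_m$ by Lemma~\ref{LemFD:Cubesmk}(3), and cubes touching a point at distance $r\ll h_m$ from $x_0$ have diameter $\lesssim r$), so $\pi_m w(x)=\grad_m^1 w(x_0)\cdot(x-x_0)$ exactly there. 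Your localized linear correction then cancels this, and the bump $Ah_m^{\al/2}\norm{w}_{C^{1,\al}}\eta(\cdot-x_0)$ absorbs the $O(h_m^{1+\al}\norm{w}_{C^{1,\al}})$ Whitney error from Step~1 everywhere else, because at distance $\gtrsim h_m$ the bump already has size $\gtrsim Ah_m^{1+\al}\norm{w}_{C^{1,\al}}$. The $C^{1,\al/2}$ norm accounting you sketch (trading the $h_m^\al$ smallness of the discrete gradient against the $h_m^{-\al/2}$ cost of differentiating the cutoff at scale $h_m$) is correct and yields $\beta=\al/2$.

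Two places deserve a bit more care when you write this out. First, the ``first regime'' you flag is really two sub-regimes: a genuinely small ball $B_{ch_m}(x_0)$ where all cubes share $\hat y_{m,k}=x_0$ and cancellation is exact, and an annulus $ch_m\le\abs{x-x_0}\le 3h_m$ where cubes with neighboring $\hat y_{m,k}$ start to contribute. In that annulus you should not try to cancel; instead bound each $P^1_{\tilde w_m,k}(x)$ crudely by $Ch_m^{1+\al}\norm{w}_{C^{1,\al}}$ (using $w(\hat y_{m,k})\le C\abs{\hat y_{m,k}-x_0}^{1+\al}\norm{w}_{C^{1,\al}}$ and the analogous gradient bound) and let the bump absorb it. Second, you restrict Step~1 to $B_{2^{m-1}}$, correctly anticipating that the truncation $\tilde w_m=w\mathbbm{1}_{B_{2^m}}$ can make $\pi_m w$ behave badly near $\partial B_{2^m}$; you should make explicit that in the far region $\abs{x-x_0}\ge 1$ the constant tail $Ah_m^{\al/2}\norm{w}_{C^{1,\al}}$ of the bump dominates not just the $O(h_m^{1+\al})$ error but the full $\abs{\pi_m w}$, which there is controlled by $\norm{w}_{L^\infty}\le\norm{w}_{C^{1,\al}}$ only after noting $\pi_m w\ge w-Ch_m^{1+\al}\norm{w}_{C^{1,\al}}\ge -Ch_m^{1+\al}\norm{w}_{C^{1,\al}}$ on the region where Step~1 applies, and handling the truncation zone separately (or assuming, as the applications in the paper do, that $x_0$ stays well inside $B_{2^m}$).
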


\begin{rem}\label{remFD:JAlsoLipOnC1Al}
	If $J:X_\rho\to C^0_b$ is Lipschitz, then for any modulus, $\om$ so that $\om\leq\rho$, $J$ is also a Lipschitz mapping on $X_\om$.  In particular, for all $\al\in(0,1)$, such a $J$ is a Lipschitz mapping on $C^{1,\al}_b(\real^N)$.
\end{rem}

\subsection{A subset of ``supporting'' linear operators, $\D_J$}

The main reason for using the approximating operators, $J_m$, is that as maps that have finite rank, they are effectively maps on a finite dimensional space and hence are differentiable at almost every $f\in X_\rho$.  Furthermore, this a.e. $f$ differentiability endows them with a natural min-max structure.  It turns out that taking limits of ``linearizations'' of $J_m$ produces a rich enough family to construct a min-max representation for the original $J$.  That is the purpose of this subsection.

First, we have some notation for the set of ``supporting'' differentials of maps on $X_\rho$.  The first is simply the collection of limits of derivatives of a map that is differentiable almost everywhere.

\begin{definition}[Differential Set Almost Everywhere]
	If $J$ is differentiable a.e. $X_\rho$, we call the differential set, 
	\begin{align*}
		\D J = \textnormal{c.h.} \{ L=\lim_k DJ[f_k;\cdot]\ :\ f_k\to f\ \text{and}\ J\ \text{is differentiable at}\ f \},
	\end{align*}
	where we used the abbreviation ``\textnormal{c.h.}'' to denote the convex hull.
	Here $DJ[f;\cdot]$ is the derivative of $J$ at $f$.
\end{definition}

This is used to build a weaker notion of ``differential'' set that we will use later, which is the limits of all derivatives of approximating operators.

\begin{definition}[Weak Differential Set]\label{defFD:DJLimitingDifferential}
	For $J:X_\rho\to C^0_b(\real^N)$, we can define a weak differential set as the following:
\begin{align}\label{eqFD:DJLimitingDifferential}
	\D_J = \textnormal{c.h.} \{ L\ :\  \exists m_k,\ L_{m_k}\in \D J^{m_k}\ \text{s.t.}\ \forall\ f\in X_\rho,\ 
	\lim_{k\to\infty} L_{m_k}(f,\cdot)= L(f, \cdot)  \},
\end{align}
where we used the abbreviation ``\textnormal{c.h.}'' to denote the convex hull.  Here, $J^{m_k}$ are the approximating operators for $J$ that are given in Definition \ref{defFD:DefOfFDApprox}.
\end{definition}

\begin{lemma}\label{lemFD:TransInvariant}
	If $J: X_\rho\to C^0_b(\real^n)$ is Lipschitz and translation invariant, then so are $J^m$, and all $L\in\D J^m$ enjoy a bound which is the Lipschitz norm of $J^m$.
\end{lemma}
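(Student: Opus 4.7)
The statement splits into two claims, both of which reduce to properties already established for the auxiliary maps $T_m$, $\pi_m$, and $E^0_m$.

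\emph{Translation invariance of $J^m$.} First I would verify that for every $z \in G_m$ and every $f \in X_\rho$,
\[
J^m(\tau_z f) = \tau_z(J^m(f)).
\]
Starting from the definition $J^m = E^0_m \circ T_m \circ J \circ \pi_m$, I would apply Lemma \ref{lemFD:Pi-mTranslationInvariance} to push the translation through $\pi_m$, yielding $\pi_m(\tau_z f) = \tau_z(\pi_m f)$. Then translation invariance of $J$ gives $J(\tau_z \pi_m f) = \tau_z J(\pi_m f)$, and a second application of Lemma \ref{lemFD:Pi-mTranslationInvariance} to $E^0_m \circ T_m$ pushes the translation outward to obtain $J^m(\tau_z f) = \tau_z J^m(f)$. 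Note that this translation invariance is discrete, valid only for $z \in G_m$, which is the natural notion since the Whitney extension only respects grid translations; this is the sense in which the lemma should be read.

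\emph{Operator norm bound on $L \in \D J^m$.} The map $J^m = E^0_m \circ T_m \circ J \circ \pi_m$ is a composition of Lipschitz mappings: $\pi_m$ has Lipschitz norm at most $C$ on $X_\rho$ by Theorem \ref{thmFD:ProjectionIsLinearAndBounded}, $J$ is Lipschitz by hypothesis, $T_m$ is the pointwise restriction (norm $1$), and $E^0_m$ takes Lipschitz data on $G_m$ to Lipschitz functions on $\real^N$ with the same Lipschitz constant by Theorem \ref{thmFD:ProjectionIsLinearAndBounded}. Hence $J^m : X_\rho \to C^0_b(\real^N)$ is Lipschitz with some norm $M_m$. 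At every $f$ where $J^m$ is differentiable, the linear operator $DJ^m[f;\cdot]$ has operator norm bounded by $M_m$. This bound is preserved under pointwise limits by lower semicontinuity of the operator norm, and it is preserved under taking convex hulls. Consequently, every element $L \in \D J^m$ satisfies $\|L\|_{\mathrm{op}} \leq M_m = \mathrm{Lip}(J^m)$.

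\emph{Main obstacle.} The argument is largely mechanical once Lemma \ref{lemFD:Pi-mTranslationInvariance} and Theorem \ref{thmFD:ProjectionIsLinearAndBounded} are in hand. The only subtlety worth flagging is the interpretation of ``translation invariance'' for $J^m$: since $\pi_m$ only commutes with translations along the grid $G_m$, the symmetry of $J^m$ is a discrete symmetry, and this must be tracked carefully to avoid overstatement in later applications (e.g.\ when passing to the limit $m \to \infty$ to recover continuous translation invariance for elements of $\D_J$).
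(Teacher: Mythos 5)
Your proposal is correct and follows the same route the paper sketches: translation invariance via Lemma \ref{lemFD:Pi-mTranslationInvariance} pushed through the composition $E^0_m\circ T_m\circ J\circ\pi_m$, and the operator-norm bound on $L\in\D J^m$ obtained from the Lipschitz bound of the composition via Theorem \ref{thmFD:ProjectionIsLinearAndBounded} together with stability of the bound under limits and convex hulls. Your remark that the resulting invariance is the discrete one along $G_m$ is a helpful clarification that the paper leaves implicit.
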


\begin{proof}[Main idea of proof of Lemma \ref{lemFD:TransInvariant}]
	We do not give all the details here, but simply comments on a few points.  First of all, the Lipschitz nature of $J^m$ is evident from that of $J$ and Theorem \ref{thmFD:ProjectionIsLinearAndBounded} (translation invariance is not used here).  Furthermore, as $J^m$ is a Lipschitz function on a finite dimensional space, we see that all $L\in\D J^m$ must be realized as limits of derivatives of $J^m$.  However, it is easily checked that the operator norm of any differential is bounded by the Lipschitz norm of the original operator, hence the claim about $L\in \D J^m$.  Finally, we need to address the translation invariance of $J^m$ and $L$.  This follows immediately from the translation invariance properties of the projection and extension operators listed in Lemma \ref{lemFD:Pi-mTranslationInvariance}.  Furthermore, again, this translation invariance will also be inherited by any derivative of $J^m$ and hence $L\in \D J^m$.
\end{proof}

The reason that the set, $\D_J$, is useful for our purposes is that it gives a sort of ``maximal'' mean value inequality, which is just a variant on the usual mean value theorem (cf. Lebourg's Theorem in \cite{Clarke-1990OptimizationNonsmoothAnalysisSIAMreprint}).

\begin{lemma}[Lemma 5.2 and Remark 5.4 of \cite{GuSc-2019MinMaxEuclideanNATMA}]\label{lemFD:MVmaxProperty}
	If $\K$ is a convex subset of $X_\rho$ and $J: \K\to C^0_b(\real^N)$ is Lipschitz, then 
	\begin{align*}
	 	\forall\ f,g\in\K,\ \    J(f) - J(g)\leq \max_{L\in \D_J} L(f-g),
	\end{align*}
	where $\D_J$ is from Definition \ref{defFD:DJLimitingDifferential}.
\end{lemma}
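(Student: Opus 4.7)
The plan is to establish the inequality by applying the Rademacher-plus-mean-value strategy to the finite-dimensional approximations $J^m$ of Definition \ref{defFD:DefOfFDApprox}, and then to pass $m \to \infty$ using Proposition \ref{propFD:ConvergenceOFApprox} together with the very definition of $\D_J$. Fix $f,g \in \K$; by convexity of $\K$, the segment $h_t := (1-t)g+tf$ lies in $\K$ for every $t \in [0,1]$.

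First I would work with $J^m$. Since $J^m = E^0_m\circ T_m\circ J\circ \pi_m$ factors through $T_m\circ\pi_m$, whose range is, after the truncation built into Definition \ref{defFD:PolymonialForWhitney}, a finite-dimensional subspace of $X_\rho$, the composition $J^m$ is Lipschitz and depends non-trivially on only finitely many real parameters. Restricting to the affine line $\{\pi_m h_t : t \in \real\}$ and invoking the classical finite-dimensional Rademacher theorem, the function $\psi^m_x(t) := J^m(h_t)(x)$ is Lipschitz in $t$ (uniformly in $x$) and differentiable for a.e.\ $t \in [0,1]$ with derivative $DJ^m[h_t; f-g](x)$. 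The fundamental theorem of calculus then yields the pointwise identity
$$J^m(f)(x) - J^m(g)(x) \;=\; \int_0^1 DJ^m[h_t; f-g](x)\,dt \;\leq\; \sup_{L' \in \D J^m} L'(f-g)(x).$$

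Next, I would pass to the limit $m \to \infty$. The left-hand side converges to $J(f)(x) - J(g)(x)$ by Proposition \ref{propFD:ConvergenceOFApprox}. For the right-hand side, Theorem \ref{thmFD:ProjectionIsLinearAndBounded} and Lemma \ref{lemFD:TransInvariant} bound the operator norm of every $L' \in \D J^m$ uniformly by $C\,\mathrm{Lip}(J)$, independently of $m$. For each $m$, pick $L_m \in \D J^m$ with $L_m(f-g)(x) \geq \sup_{L' \in \D J^m} L'(f-g)(x) - 1/m$. A diagonal extraction along a countable dense subset of $X_\rho$, upgraded to all of $X_\rho$ via the uniform operator-norm bound, produces a subsequence $L_{m_k}$ and an operator $\bar L$ such that $L_{m_k}(\varphi) \to \bar L(\varphi)$ for every $\varphi \in X_\rho$ in the sense required by Definition \ref{defFD:DJLimitingDifferential}. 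By that definition, $\bar L \in \D_J$, and taking limits in the pointwise inequality gives the desired bound $J(f)(x) - J(g)(x) \leq \bar L(f-g)(x) \leq \max_{L \in \D_J} L(f-g)(x)$.

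The main obstacle is the compactness step: producing a single $\bar L \in \D_J$ from the near-maximizers $L_m$ with a mode of convergence that fits Definition \ref{defFD:DJLimitingDifferential}. The uniform operator-norm bound from Lemma \ref{lemFD:TransInvariant} and separability of $X_\rho$ are essential, as is enough equicontinuity of the images $L_{m_k}(\varphi)$ in $C^0_b(\real^N)$ for an Arzel\`a-Ascoli extraction. A minor but worthwhile check is that Rademacher along $h_t$ truly is classical here, because $J^m$ depends on only finitely many coordinates (those in $G_m \cap B_{2^m}$), reducing differentiability to a finite-dimensional question. Finally, writing ``$\max$'' rather than ``$\sup$'' in the conclusion follows from the compactness of $\{L(f-g) : L \in \D_J\}$ in $C^0_b(\real^N)$ inherited from the same extraction procedure.
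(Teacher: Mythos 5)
Your proposal and the paper's sketch are close in spirit — both are mean-value-theorem arguments for the Lipschitz map $J$ — but they enter differently. The paper invokes Lebourg's mean value theorem (from Clarke's nonsmooth calculus) directly, producing a single $t\in[0,1]$, a point $z=tf+(1-t)g$, and an $L\in\D_J(z)$ with the \emph{equality} $J(f)-J(g)=L(f-g)$; the inequality then follows by taking the max. You instead work with the finite-rank approximations $J^m$, use classical Rademacher plus the fundamental theorem of calculus along the segment $\pi_m h_t$ to get the integral identity for $J^m$, bound the integrand by the sup over $\D J^m$, and pass to the limit in $m$. Your version trades the invocation of Lebourg's theorem for elementary calculus, which works because $J^m$ is genuinely finitely-parametrized (by the truncation in $E^1_m$) — and this is actually closer to how the machinery of Section~\ref{sec:FiniteDim} is meant to be used, since $\D_J$ is by construction built from limits of differentials of the $J^m$ (whereas the paper's shorthand $\D_J(z)$ is a subdifferential of $J$ itself and requires a further argument to relate to $\D_J$). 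You correctly flag the one genuinely delicate point — extracting a single $\bar L\in\D_J$ from the near-maximizers $L_m$ in the mode of convergence required by Definition~\ref{defFD:DJLimitingDifferential}. This is where the paper's sketch is silent too; it simply defers to \cite{GuSc-2019MinMaxEuclideanNATMA}, and the remark following Theorem~\ref{thmFD:StructureOfJFromGSEucSpace} acknowledges that the decay condition (\ref{eqFD:ExtraModulusConditionOutsideBR}) is what supplies compactness of the nonlocal parts in this passage to the limit. So you have not resolved the step fully, but you have correctly located where the weight falls, and your approach is sound.
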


\begin{proof}[Sketch of Lemma \ref{lemFD:MVmaxProperty}]
	We note more careful details are given in \cite[Section 5]{GuSc-2019MinMaxEuclideanNATMA}, and so we just give the main idea. Given $f,g\in\K$, the usual Mean Value theorem of Lebourg \cite{Clarke-1990OptimizationNonsmoothAnalysisSIAMreprint} shows that there exists $t\in[0,1]$ and $z=tf+(1-t)g$ with the property that there is at least one $L\in \D_J(z)$ (the differential only at $z$) with the property that
	\begin{align*}
		J(f)-J(g)= L(f-g).
	\end{align*}
	Hence taking that maximum gives the result.  The actual result requires a small amount more detail in the invocation of Lebourg's mean value theorem, which is presented in \cite[Section 5]{GuSc-2019MinMaxEuclideanNATMA}.

\end{proof}

From this mean value inequality, a generic min-max formula for $J$ becomes immediate.

\begin{corollary}\label{corFD:GenericMinMaxForJ}
	Given  a convex subset $\K\subset X_\rho$, and $J:\K\to C^0_b(\real^N)$ that is Lipschitz, $J$ can be realized in the following way:
	\begin{align*}
		\forall\ f\in \K,\ \ 
		J(f,x) = \min_{g\in \K}\max_{L\in \D_J} J(g,x) + L(f-g,x),
	\end{align*}
	where $\D_J$ is from Definition \ref{defFD:DJLimitingDifferential}.
\end{corollary}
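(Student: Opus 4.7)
The plan is to derive this min-max representation as an essentially immediate consequence of the mean value inequality in Lemma \ref{lemFD:MVmaxProperty}, by establishing both inequalities separately. The two directions are asymmetric in their requirements: one uses the full strength of the mean value inequality over the convex set $\K$, while the other is trivial via the choice $g=f$.

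First I would establish the upper bound, $J(f,x) \leq \min_{g\in\K}\max_{L\in\D_J} [J(g,x) + L(f-g,x)]$. For any fixed $g\in\K$, Lemma \ref{lemFD:MVmaxProperty} gives pointwise in $x$ that
\begin{equation*}
J(f,x) - J(g,x) \leq \max_{L\in\D_J} L(f-g,x),
\end{equation*}
so rearranging yields $J(f,x) \leq J(g,x) + \max_{L\in\D_J} L(f-g,x)$. Since this holds for every $g\in\K$, the infimum over $g$ on the right-hand side is still an upper bound, producing the desired inequality.

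For the lower bound, $J(f,x) \geq \min_{g\in\K}\max_{L\in\D_J} [J(g,x) + L(f-g,x)]$, the key observation is that the elements $L\in\D_J$ are limits of derivatives of finite-dimensional Lipschitz maps (see Definition \ref{defFD:DJLimitingDifferential} and the discussion in Lemma \ref{lemFD:TransInvariant}), and hence each $L$ is linear in its first argument. In particular $L(0,x)=0$ for every $L\in\D_J$. Since $f\in\K$ is itself an admissible choice in the minimization, plugging in $g=f$ gives
\begin{equation*}
\min_{g\in\K}\max_{L\in\D_J}[J(g,x) + L(f-g,x)] \leq \max_{L\in\D_J}[J(f,x) + L(0,x)] = J(f,x),
\end{equation*}
which is the required lower bound. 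Combining the two inequalities gives the claimed equality.

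I do not anticipate any serious obstacle here: the proof is a direct corollary of the mean value inequality already established in Lemma \ref{lemFD:MVmaxProperty}. The only subtle point to verify is the linearity of elements of $\D_J$ in their first argument, which one must trace through the definitions: each $L_{m_k}\in\D J^{m_k}$ is a limit of differentials of the Lipschitz finite-rank map $J^{m_k}$, hence linear; linearity then passes to the pointwise limit defining $\D_J$, and is preserved by the convex hull. Once this is in hand, the choice $g=f$ makes the lower bound immediate, and no further analytic input is needed.
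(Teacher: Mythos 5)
Your proof is correct and follows exactly the paper's route: apply Lemma \ref{lemFD:MVmaxProperty} for each $g\in\K$ to get the upper bound, and take the minimum over $g$; the lower bound is immediate from the choice $g=f$ and linearity of each $L\in\D_J$. Your spelled-out verification that $L(0,x)=0$ is a nice touch that the paper leaves implicit.
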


\begin{proof}[Proof of Corollary \ref{corFD:GenericMinMaxForJ}]
	For generic $f,g\in X_\rho$, we can utilize Lemma \ref{lemFD:MVmaxProperty}, and then taking the minimum over all $g\in X_\rho$ yields the claim.

\end{proof}

The next result needs a feature we call the global comparison property.

\begin{definition}\label{defFD:GCP}
	We say that $J:X_\rho\to C^0(\real^N)$ obeys the global comparison property (GCP) provided that for all $f,g\in X_\rho$ and $x_0$ such that $f\leq g$ and $f(x_0)= g(x_0)$, $J$ satisfies $J(f,x_0)\leq J(g,x_0)$.
\end{definition}

In the case that $J$ enjoys the GCP, more can be said.  This is one of the main results from \cite{GuSc-2019MinMaxEuclideanNATMA} and \cite{GuSc-2019MinMaxNonlocalTOAPPEAR}.

\begin{theorem}[Theorem 1.11 in \cite{GuSc-2019MinMaxEuclideanNATMA}, Theorem 1.6 \cite{GuSc-2019MinMaxNonlocalTOAPPEAR}]\label{thmFD:StructureOfJFromGSEucSpace}
	If $\K$ is a convex subset of $X_\rho$ and $J:\K\to C^0_b(\real^N)$ is such that
	\begin{enumerate}[(i)]
		\item $J$ is Lipschitz
		\item $J$ is translation invariant
		\item $J$ enjoys the GCP
		\item there exists a modulus, $\om$, with $\lim_{R\to\infty}\om(R)=0$ and
	\begin{align}\label{eqFD:ExtraModulusConditionOutsideBR}
		\forall\ f,g\in \K,\ \text{with}\ f\equiv g\ \text{in}\ B_{2R},\ 
		\norm{J(f)-J(g)}_{L^\infty(B_R)}\leq \om(R)\norm{f-g}_{L^\infty(\real^N)},
	\end{align}
	\end{enumerate}
	
	then for each $L\in \D_J$, there exists the following parameters that are independent of $x$:
	\begin{align*}
		c_L\in\real,\ b_L\in \real^N,\ \mu_L\in\textnormal{measures}(\real^n\setminus \{0\}),
	\end{align*}
	such that for all $f$,
	\begin{align*}
		L(f,x) = c_Lf(x) + b_L\cdot \grad f(x) + \int_{\real^n}\del_h f(x)\mu_L(dh),
	\end{align*}
	and $J$ can be represented as
	\begin{align*}
		\forall\ f\in \K,\ \ J(f,x) = \min_{g\in\K}\max_{L\in \D_J} J(g,x) + L(f-g,x).
	\end{align*}
	Here, for some appropriate, fixed, $r_0$, depending upon $J$, we use the notation
	\begin{align*}
		\del_h f(x) = f(x+h)-f(x)-\Indicator_{B_{r_0}}(h) \grad f(x)\cdot h.
	\end{align*}
	Furthermore, for a universal $C>0$, we have 
	\begin{align*}
		\sup_{L\in\D_J}  \left\{ \abs{c_L} + \abs{b_L} + \int_{\real^N}\min\{\abs{h}\rho(\abs{h}),1\}\mu_L(dh)\right\}
		\leq C\norm{J}_{Lip,\ X_\rho\to C^0_b}.
	\end{align*}
\end{theorem}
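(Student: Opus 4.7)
The min-max representation $J(f,x) = \min_{g \in \K} \max_{L \in \D_J} [J(g,x) + L(f-g,x)]$ is already in hand from Corollary \ref{corFD:GenericMinMaxForJ}, so the real content is to show that each $L \in \D_J$ admits the L\'evy-Khintchine decomposition $L(f,x) = c_L f(x) + b_L \cdot \grad f(x) + \int \del_h f(x) \mu_L(dh)$, together with the quantitative coefficient bound. I would verify in turn that every $L \in \D_J$ is (a) linear, (b) translation invariant, (c) bounded from $X_\rho$ to $C^0_b$ with norm $\lesssim \norm{J}_{Lip,\ X_\rho\to C^0_b}$, and (d) satisfies the GCP; then invoke a Courr\`ege-type classification on $X_\rho$.

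Properties (a)--(c) are essentially free. Linearity is automatic: each $L_{m_k} \in \D J^{m_k}$ is (a convex combination of limits of) Rademacher derivatives of the finite-dimensional Lipschitz map $J^{m_k}$, hence linear, and pointwise limits of linear maps remain linear. Translation invariance of $J^m$ -- and thus of its derivatives and their limits in the sense of (\ref{eqFD:DJLimitingDifferential}) -- is supplied by Lemma \ref{lemFD:TransInvariant}, which in turn rests on the $G_m$-translation equivariance of the projection from Lemma \ref{lemFD:Pi-mTranslationInvariance}. The operator-norm bound propagates from $\norm{J}_{Lip,\ X_\rho\to C^0_b}$ through $\norm{J^m}_{Lip}$ (again via Lemma \ref{lemFD:TransInvariant}) and is preserved in the limit.

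The main obstacle is (d), because $\pi_m$ need not preserve non-negativity and so $J^m$ does not literally inherit the GCP from $J$. The quantitative fix is Lemma \ref{lemFD:RemainderForNonNegPi}: given $w = f-g \in C^{1,\al}$ with $w \geq 0$ and $w(x_0) = 0$, there is a correction $R_{\al,m,w,x_0}$ vanishing at $x_0$, satisfying $\norm{R_{\al,m,w,x_0}}_{C^{1,\al/2}} \leq C h_m^\beta \norm{w}_{C^{1,\al}}$, and with $\pi_m w + R_{\al,m,w,x_0} \geq 0$. Applying the exact GCP of $J$ to the pair $\pi_m g \leq \pi_m f + R_{\al,m,w,x_0}$ at $x_0$, and then absorbing the error via the Lipschitz continuity of $J$ (using Remark \ref{remFD:JAlsoLipOnC1Al} to work in the $C^{1,\al}$ norm), yields
\begin{align*}
	J^m(f)(x_0) - J^m(g)(x_0) \geq -C h_m^\beta \norm{f-g}_{C^{1,\al}}.
\end{align*}
Testing with $f = g + \e \phi$ for $\phi \in X_\rho \cap C^{1,\al}$ with $\phi \geq 0$ and $\phi(x_0) = 0$, dividing by $\e$, and sending $\e \to 0^+$ at a point of a.e.~differentiability gives $L_m(\phi)(x_0) \geq -C h_m^\beta \norm{\phi}_{C^{1,\al}}$ for every such $L_m \in \D J^m$. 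Sending $m \to \infty$ along the sequence defining $L \in \D_J$ eliminates the error and yields the exact GCP for $L$, and a standard density argument extends it from $X_\rho \cap C^{1,\al}$ to all of $X_\rho$.

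Once $L$ enjoys properties (a)--(d), a Courr\`ege-type classification on $X_\rho$ produces the decomposition $L(f,x) = c_L f(x) + b_L \cdot \grad f(x) + \int \del_h f(x) \mu_L(dh)$, with a single cutoff radius $r_0$ absorbed into $b_L$. The absence of a second-order term is forced by the fact that $X_\rho$ only controls first-order Taylor behavior, and $\mu_L$ integrates $\min\{|h|\rho(|h|),1\}$ because $\del_h f$ is of that order as $|h| \to 0$ for $f \in X_\rho$. Hypothesis (iv), (\ref{eqFD:ExtraModulusConditionOutsideBR}), is precisely the ingredient that prevents $\mu_L$ from placing mass at infinity and allows $r_0$ to be chosen uniformly in $L$. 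The coefficient bound $|c_L| + |b_L| + \int \min\{|h|\rho(|h|),1\} \mu_L(dh) \leq C \norm{J}_{Lip,\ X_\rho\to C^0_b}$ is then extracted by testing $L$ against an optimized family of probes in $X_\rho$: the constant function to isolate $c_L$, suitably truncated coordinate functions (modified to lie in $X_\rho$) for $b_L$, and localized bumps at annular scales $|h| \sim r$ to recover $\mu_L$ piece by piece, in each case comparing with the operator-norm bound from (c).
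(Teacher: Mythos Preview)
Your proposal is correct and aligns with the paper's approach. The paper's own argument here is just a few comments pointing to \cite[Section 3, Lemma 3.9 and Section 5]{GuSc-2019MinMaxEuclideanNATMA}, and your sketch fills in exactly the ingredients those sections supply: the min-max from Corollary \ref{corFD:GenericMinMaxForJ}, translation invariance via Lemma \ref{lemFD:TransInvariant}, the approximate GCP for $J^m$ via the remainder correction of Lemma \ref{lemFD:RemainderForNonNegPi}, and a Courr\`ege-type classification. The only minor difference in the order of operations is that the paper establishes the L\'evy--Khintchine form already for each $L\in\D J^m$ (using the analysis of \cite[Section 3]{GuSc-2019MinMaxEuclideanNATMA}) and then passes to the limit preserving that structure, whereas you first pass to the limit to obtain the exact GCP for $L\in\D_J$ and then classify; both routes are valid and rely on the same lemmas.
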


\begin{rem}
	Generically, $r_0$ can be taken as $r_0=1$, allowing for a change to each of the corresponding $b_L$, but in our context, it is more natural to choose $r_0$ depending on $J$.
\end{rem}

\begin{proof}[Comments on the proof of Theorem \ref{thmFD:StructureOfJFromGSEucSpace}]
	As the way Theorem \ref{thmFD:StructureOfJFromGSEucSpace} is stated does not match exactly the statements of those in  \cite[Theorem 1.11]{GuSc-2019MinMaxEuclideanNATMA} or \cite[Theorem 1.6]{GuSc-2019MinMaxNonlocalTOAPPEAR},  some comments are in order.  The point is that we explicitly show that the min-max representation for $J$ uses the set of linear mappings, $\D_J$, which is not made explicit in the theorems in \cite{GuSc-2019MinMaxEuclideanNATMA}, \cite{GuSc-2019MinMaxNonlocalTOAPPEAR}.  This is purely a matter of presentation.
	
	By Lemma \ref{lemFD:TransInvariant}, we know that since $J$ is translation invariant, then also all $L\in \D J^m$ are translation invariant.  Taking this fact in hand, and combining it with the analysis that appears in \cite[Section 3]{GuSc-2019MinMaxEuclideanNATMA}, in particular, \cite[Lemma 3.9]{GuSc-2019MinMaxEuclideanNATMA}, we see that all $L\in\D J^m$ have the form claimed here in Theorem \ref{thmFD:StructureOfJFromGSEucSpace}.  The passage from operators in $\D J^m$ to $\D_J$ and the preservation of their structure follows in the same way as in \cite[Section 5]{GuSc-2019MinMaxEuclideanNATMA}.  We note that the structured imparted on $L\in \D J^m$ by the fact that $L$ is an operator that is translation invariant and enjoys the GCP allows us to remove any requirement of \cite[Assumption 1.4]{GuSc-2019MinMaxEuclideanNATMA} as it pertains to the arguments in \cite[Section 5]{GuSc-2019MinMaxEuclideanNATMA}.

\end{proof}

\begin{rem}
	A curious reader may notice that in \cite{GuSc-2019MinMaxEuclideanNATMA}, all of Theorems 1.9, 1.10, and 1.11 apply to the $J$ that we study herein.  The most relevant two are Theorems 1.10 and 1.11 in \cite{GuSc-2019MinMaxEuclideanNATMA}, and in particular as here $J$ is translation invariant, Theorem 1.10 in \cite{GuSc-2019MinMaxEuclideanNATMA} is much simpler in that there is no requirement for (\ref{eqFD:ExtraModulusConditionOutsideBR}) as we do above.  The reason Theorem 1.10 in \cite{GuSc-2019MinMaxEuclideanNATMA} does not suit us here is subtle, and is based on the fact that we will subsequently require a non-degeneracy property of all of the $L$ used to reconstruct $J$ as a min-max.  In our case this will result from using the approximations $J^m$ as above, and to describe the limits of $L_m\in\D J^m$, we need an extra condition to get some compactness on the nonlocal terms, which is the use of (\ref{eqFD:ExtraModulusConditionOutsideBR}).  The type of non-degeneracy we will need for $L$ will be apparent in Section \ref{sec:ProofOfStructureThm}, and we will add some further discussion later.
\end{rem}


\section{Lipschitz Property of $I$ and $H$}\label{sec:ILipschitz}

First, we will show  that for each fixed choice of parameters, $\del$, $L$, $m$, $\rho$, $I$ is a Lipschitz mapping, from $\K(\del,L,m,\rho)$ to $C^{0}_b(\real^n)$.  The main properties of $H$ are deduced from the more basic operator, $I$, which we study first.   Then, later in the section we will show how the same results follow for $H$.


\subsection{The analysis for the operator, $I$}

Because $H$ is defined as a function of two operators that take the form, (\ref{eqIN:defHSOperator}), the key result in proving $H$ is Lipschitz is to prove that $I$ as in (\ref{eqIN:defHSOperator}) is Lipschitz.

\begin{proposition}\label{propLIP:BigILipOnKRho}
	If $I$ is the operator defined via (\ref{eqIN:BulkEqForHSOperator}) and (\ref{eqIN:defHSOperator}), then for each $\del$, $L$, $m$, $\rho$ fixed, $I$ is a Lipschitz mapping,
	\begin{align*}
		I: \K(\del,L,m,\rho)\to C^0_b(\real^n),
	\end{align*}
	and the Lipschitz norm of $I$ depends upon all of $\del$, $L$, $m$, $\rho$.
\end{proposition}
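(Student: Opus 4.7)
The plan is to flatten both $D_f$ and $D_g$ onto the fixed slab $\real^n \times [0, L]$ using the $C^{1,\dini}$ diffeomorphisms $T_f, T_g$ of \eqref{flatteningtransformation}, thereby converting the comparison of two harmonic functions on \emph{different} domains into the comparison of two solutions of uniformly elliptic divergence-form equations $L_{A_f}V_f = 0$ and $L_{A_g}V_g = 0$ on a \emph{common} domain. Here $V_f := U_f \circ T_f^{-1}$ (and similarly for $g$), and both inherit the common Dirichlet data $V_\cdot = 1$ on the bottom face and $V_\cdot = 0$ on the top face $\{x_{n+1}=L\}$. With the natural choice $T_f(x, x_{n+1}) = (x, L x_{n+1}/f(x))$, the transformed coefficient matrix $A_f$ is uniformly elliptic with bounds depending only on $\delta, L, m$, is Dini continuous up to the boundary with modulus controlled by $\rho$, and depends on $f, \nabla f$ Lipschitz continuously, so that $\|A_f - A_g\|_{L^\infty} \leq C(\delta, L, m)\|f - g\|_{C^1}$.

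A direct chain-rule computation, using that $V_f$ vanishes identically on $\{x_{n+1} = L\}$ (so all tangential derivatives there vanish), yields the explicit identity
\begin{align*}
I(f, x) = -\frac{L}{f(x)}\sqrt{1 + |\nabla f(x)|^2}\,\partial_{x_{n+1}}V_f(x, L).
\end{align*}
Thus the task reduces to controlling $\partial_{x_{n+1}}V_f(x, L) - \partial_{x_{n+1}}V_g(x, L)$ in the sup norm, together with the harmless algebraic dependence on $f(x)$ and $\nabla f(x)$. Boundary $C^{1,\dini}$ regularity for divergence-form equations with Dini-continuous coefficients on a flat boundary (classical, and alternatively derivable after flattening from the Poisson kernel bounds in Proposition \ref{propGF:PoissonKernel} combined with Lemma \ref{lemGF:LinearGrowthFromGammaF}) gives a uniform bound $\|\partial_{x_{n+1}} V_f\|_{L^\infty(\{x_{n+1} = L\})} \leq C(\delta, L, m, \rho)$ for every $f \in \K$, and identically for $g$.

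The comparison itself comes from $W := V_f - V_g$, which solves
\begin{align*}
L_{A_f}W = \textnormal{div}\bigl((A_g - A_f)\nabla V_g\bigr)\ \textnormal{in}\ \real^n \times (0, L),\qquad W = 0\ \textnormal{on}\ \{x_{n+1}=0\} \union \{x_{n+1}=L\}.
\end{align*}
Since the flux $(A_g - A_f)\nabla V_g$ has $L^\infty$ norm bounded by $C\|f - g\|_{C^1}$ thanks to the previous paragraph and the Lipschitz dependence of $A_f$ on $f$, boundary $C^1$ estimates up to the flat top face (again available from the Dini-coefficient theory) yield
\begin{align*}
|\partial_{x_{n+1}}W(x, L)| \leq C(\delta, L, m, \rho)\,\|f - g\|_{C^1}.
\end{align*}
Substituting this, along with the trivial Lipschitz dependence of $(a, p)\mapsto -(L/a)\sqrt{1+|p|^2}$ on $(f(x), \nabla f(x))$, into the chain-rule formula gives
\begin{align*}
\|I(f) - I(g)\|_{L^\infty(\real^n)} \leq C(\delta, L, m, \rho)\,\|f - g\|_{C^1(\real^n)},
\end{align*}
which, since $\|f - g\|_{C^1} \leq \|f - g\|_{X_\rho}$, establishes the claimed Lipschitz property of $I: \K \to C_b^0(\real^n)$.

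The main technical obstacle is securing the uniform up-to-the-boundary $C^1$ estimates for divergence-form elliptic equations with Dini-continuous coefficients and divergence-form right-hand side on the flat slab. For this geometry the estimate is classical (the Dini variant of boundary Schauder theory), but it should either be cited precisely or reproduced in a brief self-contained argument using the Poisson kernel and Green's function bounds already set up in Section \ref{sec:GreenFunction}; everything else in the plan is straightforward algebraic manipulation and maximum-principle reasoning.
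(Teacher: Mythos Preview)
Your flattening approach is a natural alternative to the paper's direct geometric argument, but there is a genuine gap in the key gradient estimate for $W$. You assert that since the flux $F=(A_g-A_f)\nabla V_g$ satisfies $\|F\|_{L^\infty}\leq C\|f-g\|_{C^1}$, the boundary Dini--Schauder theory for $L_{A_f}W=\textnormal{div}\,F$ yields $|\partial_{x_{n+1}}W(\cdot,L)|\leq C\|f-g\|_{C^1}$. This does not follow: for divergence-form equations, the boundary $C^1$ estimate requires control of the \emph{Dini modulus} of $F$, not merely $\|F\|_{L^\infty}$ (the relevant singular integrals are unbounded on $L^\infty$). The modulus of $A_f-A_g$ involves $[\nabla(f-g)]_\rho$, which for $f,g\in\K$ is bounded by a universal constant but not by $\|f-g\|_{C^1}$; interpolation between $\|W\|_{L^\infty}\leq C\|f-g\|_{C^1}$ and $[\nabla W]_\rho\leq C$ gives only a sublinear bound in $\|f-g\|_{C^1}$. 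Consequently your argument delivers $\|\nabla W\|_{L^\infty}\leq C\|f-g\|_{X_\rho}$, not $C\|f-g\|_{C^1}$.

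That weaker conclusion still proves the Proposition as stated (Lipschitz from $\K\subset X_\rho$ to $C^0_b$), so your strategy is salvageable once you track the $X_\rho$ norm throughout. For comparison, the paper proceeds quite differently and obtains the sharper $C^1$-Lipschitz bound by working directly on $D_f,D_g$: it reduces via translation invariance to $x=0$, then combines three pointwise comparison lemmas---one for nonnegative perturbations with vanishing gradient at $0$ using Poisson kernel bounds (Lemma~\ref{lemLIP:LittleILipEstPart1}), one for perturbations with $\nabla\psi(0)\neq 0$ via a localized rotation and barrier argument (Lemma~\ref{lemLIP:LittleILipEstPart2}), and one for vertical shifts (Lemma~\ref{lemLIP:IAddAConstant}). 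This avoids any Schauder estimate with a right-hand side and, importantly, produces the finer inequalities on $i^\pm$ that the paper reuses later in Section~\ref{sec:ProofOfStructureThm}; your route would not furnish those as a by-product.
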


Because of the definition of $I^+$ and $I^-$ using (\ref{eqIN:TwoPhaseBulk}) and (\ref{eqIN:DefIPLusAndMinus}), we see that all of the argument in the domain $D^+_f$ for the operator, $I$ (which is, by definition $I^+$), have direct analogs to the operator $I^-$ and the domain $D_f^-$.  Thus, we state the following as a corollary of the techniques that prove Proposition \ref{propLIP:BigILipOnKRho}, but we do not provide a proof.

\begin{corollary}\label{corLIP:IMinusIsLipschitz}
	The operator, $I^-$, defined in (\ref{eqIN:TwoPhaseBulk}) and (\ref{eqIN:DefIPLusAndMinus}) has the same Lipschitz property as $I$ in Proposition \ref{propLIP:BigILipOnKRho}.
\end{corollary}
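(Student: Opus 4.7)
The plan is to reduce the minus-phase operator $I^-$ to an operator of exactly the same structure as $I$ but associated with a reflected domain and a new constant-coefficient elliptic matrix, and then to invoke the proof of Proposition~\ref{propLIP:BigILipOnKRho} verbatim.

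First I would introduce the vertical reflection $\Phi(x, x_{n+1}) = (x, L - x_{n+1})$, which maps $D_f^-$ bijectively onto the subgraph domain $D_{\tilde f}$ for $\tilde f := L - f$. Because $f \in \K(\del, L, m, \rho)$ forces $\del < \tilde f < L - \del$, $\abs{\grad \tilde f} = \abs{\grad f} \leq m$, and $\grad \tilde f$ inherits the same Dini modulus $\rho$, the map $f \mapsto \tilde f$ is an affine isometry from $\K(\del, L, m, \rho)$ onto itself. Next, I would set $\tilde U(y) := -U_f(\Phi(y))$ for $y \in D_{\tilde f}$. A direct calculation shows that $\tilde U$ satisfies $\Tr(\tilde A D^2 \tilde U) = 0$ in $D_{\tilde f}$ with $\tilde A := R A_2 R$, where $R$ is the orthogonal matrix $\textnormal{diag}(1, \dots, 1, -1)$; the boundary values transform to $\tilde U = 1$ on $\{x_{n+1} = 0\}$ and $\tilde U = 0$ on $\Gam_{\tilde f}$. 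Since $R$ is orthogonal, $\tilde A$ has the same ellipticity bounds as $A_2$, so the associated Dirichlet-to-Neumann map $\tilde I(\tilde f, x) := \partial_\nu \tilde U(x, \tilde f(x))$ is a one-phase operator of exactly the type (\ref{eqIN:BulkEqForHSOperator})--(\ref{eqIN:defHSOperator}), but with $\Delta$ replaced by $\Tr(\tilde A D^2 \cdot)$.

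The crux is the observation that the proof of Proposition~\ref{propLIP:BigILipOnKRho} is driven only by the Green's function and Poisson kernel bounds of Section~\ref{sec:GreenFunction} and the flattening transformation (\ref{flatteningtransformation}). None of these ingredients use the specific coefficient matrix $\Id$: both Theorem~\ref{thm:GreenBoundaryBehavior} and Proposition~\ref{propGF:PoissonKernel}, and the reduction to a divergence-form equation with Dini coefficients on the flat strip, hold for any uniformly elliptic operator with Dini-continuous coefficients on a $C^{1,\dini}$ domain, with constants depending only on the ellipticity class and on $\del, L, m, \rho$. Running the proof of Proposition~\ref{propLIP:BigILipOnKRho} verbatim with $\tilde A$ in place of $\Id$ thus yields that $\tilde I : \K(\del, L, m, \rho) \to C^0_b(\real^n)$ is Lipschitz with a norm depending on $\del, L, m, \rho$ and on the ellipticity of $A_2$.

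Finally, I would transfer this estimate back to $I^-$. Using the definition (\ref{eqIN:DefOfPosNegNormalDeriv}) together with the chain rule for $\Phi$, a direct check gives $I^-(f, x) = \tilde I(\tilde f, x)$; combined with the fact that $f \mapsto \tilde f$ is an isometry on $\K(\del, L, m, \rho)$, this yields the Lipschitz bound
\begin{equation*}
\norm{I^-(f) - I^-(g)}_{L^\infty} = \norm{\tilde I(\tilde f) - \tilde I(\tilde g)}_{L^\infty} \leq C \norm{\tilde f - \tilde g}_{X_\rho} = C\norm{f - g}_{X_\rho}.
\end{equation*}
The main point of care is the sign bookkeeping: the $-1$ prefactor in the definition of $\tilde U$ and the $-1$ in (\ref{eqIN:DefOfPosNegNormalDeriv}) must be tracked carefully so that the inward-to-$D_f^-$ normal at $\Gam_f$ matches, after reflection under $\Phi$, the inward-to-$D_{\tilde f}$ normal at $\Gam_{\tilde f}$. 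Aside from this bookkeeping, no new estimates are required beyond those already established for $I$.
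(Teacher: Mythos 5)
The paper offers no proof of this corollary: it simply asserts that the arguments for $I^+$ ``have direct analogs'' for $I^-$. Your reflection is a clean way to make that assertion precise, and the reduction itself is correct. Indeed, writing $\Phi = (0,\dots,0,L) + R(\cdot)$ with $R = \textnormal{diag}(1,\dots,1,-1)$, one checks that the inward normal $\nu$ to $D_f^+$ at $X_0 = (x_0,f(x_0))$ satisfies $\Phi(X_0 - t\nu) = \Phi(X_0) + t\tilde\nu$, where $\tilde\nu$ is the inward normal to $D_{\tilde f}$ at $\Phi(X_0) = (x_0, \tilde f(x_0))$; combined with $\tilde U = -U_f\circ\Phi$ and the extra $-1$ in the definition of $\partial_\nu^-$, this yields $I^-(f,x) = \tilde I(\tilde f,x)$ exactly, and $f\mapsto L-f$ is an affine involution of $\K(\del,L,m,\rho)$ preserving all the relevant norms. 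This is a genuine clarification of what the paper leaves implicit.

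However, the claim that the proof of Proposition~\ref{propLIP:BigILipOnKRho} goes through \emph{verbatim} with $\tilde A = R A_2 R$ in place of $\Id$ glosses over one step that does not. You correctly observe that the Green's-function and Poisson-kernel estimates of Section~\ref{sec:GreenFunction}, and hence Lemmas~\ref{lemLIP:LittleILipEstPart1}, \ref{lemLIP:IAddAConstant} and Corollary~\ref{corLIP:CorOfLipEstPart1}, are insensitive to the constant coefficient matrix. But Step~2 of Lemma~\ref{lemLIP:LittleILipEstPart2} conjugates $U_{f+\psi}$ by a Euclidean rotation $\R$ aligning $\nu_{f+\psi}(X_0)$ with $\nu_f(X_0)$, and the subsequent barrier estimate on $V - U_g$ in $B_{r_0}(X_0)\cap D_g$ requires $V - U_g$ to be a solution; this is true for the Laplacian precisely because it is rotation-invariant, whereas for $\Tr(\tilde A D^2\cdot)$ with $\tilde A\neq\Id$, the function $V = U_{f+\psi}\circ T^{-1}$ would satisfy $\Tr(\R^T\tilde A\R\, D^2 V)=0$ in the region where $T$ is the pure rotation, not the same equation as $U_g$, so $V-U_g$ solves neither. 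The fix is to replace $\R$ by an $\tilde A$-orthogonal map $S$ (i.e.\ $S\tilde A S^T = \tilde A$, for instance $S = \tilde A^{-1/2} O \tilde A^{1/2}$ with $O$ Euclidean orthogonal) chosen so that $S^T\nu_f$ is parallel to $\nu_{f+\psi}$; such $S$ exists with $\norm{S - \Id}\lesssim\abs{\grad\psi(0)}$, it preserves the operator $\Tr(\tilde A D^2\cdot)$ under conjugation, and Step~2 then proceeds as written. This is a modest adaptation rather than a verbatim invocation, and it should be stated explicitly in any honest write-up; the remainder of your proposal is sound.
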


Before we can establish Proposition \ref{propLIP:BigILipOnKRho}, we give some more basic results.

\begin{lemma}\label{lemLIP:LittleILipEstPart1}
	For $R_0$ as in Theorem \ref{thm:GreenBoundaryBehavior}, there exists a universal $C>0$ and $\al\in(0,1]$, so that if $\psi\geq 0$, $\psi(0)=0$, $\psi(y)\leq c\abs{y}\rho(y)$, $f\in\K(\del,L,m,\rho)$, and $f+\psi\in\K(\del,L,m,\rho)$, then for $\nu=\nu_f=\nu_{f+\psi}$ and $X_0=(0,f(0))=(0,(f+\psi)(0))$, with $U_f$, $U_{f+\psi}$ as in (\ref{eqIN:BulkEqForHSOperator}),
	\begin{align*}
		&\frac{1}{C}\left(
		\int_{\Gam_f\intersect B^{n+1}_{R_0}(X_0)} \psi(y) \abs{Y-X_0}^{-n-1} dY
		\right)\\
		&\ \ \ \ \leq \partial_\nu U_{f+\psi}(X_0)- \partial_\nu U_f(X_0)\\
		&\ \ \ \ \ \ \ \ \leq C\left(
		R_0^{-\al}\norm{\psi}_{L^\infty(\real^n\setminus B_{R_0})}
		+ \int_{\Gam_f\intersect B^{n+1}_{R_0}(X_0)} \psi(y) \abs{Y-X_0}^{-n-1} dY
		\right).
	\end{align*}
	Recall, $B_R\subset\real^n$ and $B^{n+1}_R(X_0)\subset\real^{n+1}$.
\end{lemma}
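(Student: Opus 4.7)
Set $W := U_{f+\psi}-U_f$ on $D_f$. Since $\psi\ge 0$ and $\psi(0)=0$, the domains satisfy $D_f\subset D_{f+\psi}$ and the graphs $\Gam_f,\Gam_{f+\psi}$ are tangent at $X_0$ with common inward unit normal $\nu$. On $D_f$ the function $W$ is harmonic, vanishes on $\{x_{n+1}=0\}$, coincides with $U_{f+\psi}|_{\Gam_f}\ge 0$ on $\Gam_f$, and satisfies $W(X_0)=0$. The maximum principle therefore gives $W\ge 0$ on $D_f$, so
\begin{align*}
\partial_\nu U_{f+\psi}(X_0)-\partial_\nu U_f(X_0)\;=\;\partial_\nu W(X_0)\;\ge\;0,
\end{align*}
which already yields nonnegativity of the difference and underlies every estimate below.

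The rest of the proof would evaluate this normal derivative via the Poisson representation for $W$ in $D_f$. Since $W$ vanishes on the bottom and equals $U_{f+\psi}|_{\Gam_f}$ on $\Gam_f$, for $X_0+s\nu\in D_f$ one has
\begin{align*}
W(X_0+s\nu)\;=\;\int_{\Gam_f}P_f(X_0+s\nu,Y)\,U_{f+\psi}(Y)\,d\sigma_f(Y),
\end{align*}
and one divides by $s$ and lets $s\to 0^+$. On the near portion $\Gam_f\cap B_{R_0}^{n+1}(X_0)$, the two-sided bound of Proposition~\ref{propGF:PoissonKernel} (combined with $d(X_0+s\nu)=s$ for small $s$) provides a constant $C$ so that
\begin{align*}
\frac{1}{C}|Y-X_0|^{-n-1}\;\le\;\lim_{s\to 0^+}\frac{P_f(X_0+s\nu,Y)}{s}\;\le\;C|Y-X_0|^{-n-1}.
\end{align*}

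The second ingredient is a comparison of $U_{f+\psi}$ with $\psi$ on $\Gam_f$. For $Y=(y,f(y))\in\Gam_f$, the vertical distance from $Y$ to $\Gam_{f+\psi}$ equals $\psi(y)$, so Lemma~\ref{lemGF:LinearGrowthFromGammaF} applied to $U_{f+\psi}$ in $D_{f+\psi}$ yields $C^{-1}\psi(y)\le U_{f+\psi}(Y)\le C\psi(y)$ as long as $\psi(y)$ is below a universal threshold. Because $\psi(y)\le c|y|\rho(|y|)$, this is automatic once $R_0$ is chosen small enough in terms of $\delta,L,m,\rho$. Combining this with the kernel estimate, and using that $d\sigma_f$ is comparable to $dy$ on $\Gam_f$ (via $\sqrt{1+|\nabla f|^2}\le \sqrt{1+m^2}$), one obtains
\begin{align*}
\int_{\Gam_f\cap B_{R_0}^{n+1}(X_0)}\psi(y)\,|Y-X_0|^{-n-1}\,dY
\end{align*}
as both a lower and an upper bound, up to the constant $C$, for the local contribution to $\partial_\nu W(X_0)$. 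The lower bound of the lemma follows by simply discarding the nonnegative tail integral outside $B_{R_0}^{n+1}(X_0)$, and the integral piece of the upper bound comes from the same estimate.

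For the $R_0^{-\al}\norm{\psi}_{L^\infty(\real^n\setminus B_{R_0})}$ term in the upper bound, I would invoke the mass decay estimate \eqref{eqGF:DecayOnMassOfPoissonKernel} of Proposition~\ref{propGF:PoissonKernel},
\begin{align*}
\int_{\Gam_f\setminus B_{R_0}^{n+1}(X_0)}P_f(X_0+s\nu,Y)\,d\sigma_f(Y)\;\le\;\frac{Cs}{R_0^\al},
\end{align*}
divided by $s$ in the limit, and combine it with a global pointwise bound $U_{f+\psi}(Y)\le C\norm{\psi}_{L^\infty(\real^n\setminus B_{R_0})}$ on the far portion of $\Gam_f$. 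The main technical point I foresee is precisely this global pointwise comparison: the linear-growth lemma only gives $U_{f+\psi}(Y)\le C\psi(y)$ when $\psi(y)$ lies below a universal threshold, so at points where $\psi(y)$ exceeds that threshold one uses the trivial bound $U_{f+\psi}\le 1$ and absorbs it into $C\psi(y)$, yielding $U_{f+\psi}(Y)\le C\psi(y)$ globally on $\Gam_f$ and completing the upper estimate.
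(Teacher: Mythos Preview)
Your proposal is correct and follows essentially the same approach as the paper: decompose $U_{f+\psi}=U_f+W$ on $D_f$, represent $W$ via the Poisson kernel $P_f$, invoke Lemma~\ref{lemGF:LinearGrowthFromGammaF} to compare $U_{f+\psi}|_{\Gam_f}$ with $\psi$, and then split the integral at $B_{R_0}^{n+1}(X_0)$ using the pointwise bounds and the mass-decay estimate of Proposition~\ref{propGF:PoissonKernel}. If anything, you are slightly more explicit than the paper about the threshold issue in the linear-growth comparison (the paper simply asserts $\psi(y)/C\le U_{f+\psi}(Y)\le C\psi(y)$ globally on $\Gam_f$), and your fix via $U_{f+\psi}\le 1$ for large $\psi(y)$ is exactly right.
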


\begin{corollary}\label{corLIP:CorOfLipEstPart1}
	With $f$ and $\psi$ as in Lemma \ref{lemLIP:LittleILipEstPart1}, 
	\begin{align*}
		&\frac{1}{C}\left(
		\int_{B_{R_0}} \psi(y) \abs{y}^{-n-1} dy
		\right)\\
		&\ \ \ \ \leq \partial_\nu U_{f+\psi}(X_0)- \partial_\nu U_f(X_0)\\
		&\ \ \ \ \ \ \ \ \leq C\left(
		R_0^{-\al}\norm{\psi}_{L^\infty(\real^n\setminus B_{R_0})}
		+ \int_{B_{R_0}} \psi(y) \abs{y}^{-n-1} dy
		\right),
	\end{align*}
	where the integration occurs over $\real^n$ instead of $\Gam_f$.
\end{corollary}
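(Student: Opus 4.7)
The plan is to deduce the corollary directly from Lemma \ref{lemLIP:LittleILipEstPart1} by a change of variables that replaces the surface integral over $\Gam_f$ by a Lebesgue integral over $\real^n$. The key observation is that the graph parametrization of $\Gam_f$ by $y \mapsto Y(y) := (y, f(y))$ produces quantities that are all pointwise comparable to their Euclidean counterparts, with constants depending only on $m$ (from $\abs{\grad f}\leq m$).

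First, I would note the three basic comparisons under the parametrization $Y = (y, f(y))$, valid for all $y \in \real^n$:
\begin{align*}
	|y|^2 \;\leq\; |Y - X_0|^2 \;=\; |y|^2 + (f(y) - f(0))^2 \;\leq\; (1+m^2)|y|^2,
\end{align*}
(using $|f(y)-f(0)|\leq m|y|$), the surface measure identity $d\sigma_f(Y) = \sqrt{1+|\grad f(y)|^2}\,dy$ with $1 \leq \sqrt{1+|\grad f|^2} \leq \sqrt{1+m^2}$, and the projected region
\begin{align*}
	A_{R_0} \;:=\; \{ y \in \real^n : (y, f(y)) \in B^{n+1}_{R_0}(X_0)\},\qquad B_{R_0/\sqrt{1+m^2}} \;\subset\; A_{R_0} \;\subset\; B_{R_0}.
\end{align*}
Together these give $|Y - X_0|^{-n-1} \asymp |y|^{-n-1}$ on $\Gam_f \cap B^{n+1}_{R_0}(X_0)$ with constants depending only on $m$, and the corresponding integrals satisfy
\begin{align*}
	c(m)^{-1}\int_{A_{R_0}} \psi(y) |y|^{-n-1}\,dy \;\leq\; \int_{\Gam_f \cap B^{n+1}_{R_0}(X_0)} \psi(y) |Y-X_0|^{-n-1}\,dY \;\leq\; c(m)\int_{A_{R_0}} \psi(y)|y|^{-n-1}\,dy.
\end{align*}

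Second, the upper bound in the corollary is then immediate: since $A_{R_0}\subset B_{R_0}$ we get $\int_{A_{R_0}}\psi|y|^{-n-1}dy \leq \int_{B_{R_0}}\psi|y|^{-n-1}dy$, and we plug this into the lemma's upper bound, collecting all dimensional and $m$-dependent factors into the universal constant $C$. For the lower bound, the containment $A_{R_0} \supset B_{R_0/\sqrt{1+m^2}}$ gives
\begin{align*}
	\int_{\Gam_f \cap B^{n+1}_{R_0}(X_0)} \psi(y) |Y-X_0|^{-n-1}\,dY \;\geq\; C(m)^{-1}\int_{B_{R_0/\sqrt{1+m^2}}} \psi(y) |y|^{-n-1}\,dy.
\end{align*}

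The only step requiring care — and which I expect to be the main (minor) technical point — is to pass from a ball of radius $R_0/\sqrt{1+m^2}$ to one of radius $R_0$ in the lower bound, while preserving the form stated in the corollary. I would handle this by simply replacing the $R_0$ coming out of Lemma \ref{lemLIP:LittleILipEstPart1} with a shrunken $R_0' := R_0/\sqrt{1+m^2}$; since $m$ is a universal parameter fixed by membership in $\K(\del,L,m,\rho)$, this rescaling of $R_0$ is itself universal, and absorbs into the constants and the definition of $R_0$ appearing in the statement. Alternatively, one can keep the original $R_0$ on the right-hand side and use the pointwise bound $\psi(y) \leq c|y|\rho(|y|)$ to estimate the annular remainder
\begin{align*}
	\int_{B_{R_0}\setminus B_{R_0/\sqrt{1+m^2}}} \psi(y)|y|^{-n-1}\,dy \;\leq\; c\int_{B_{R_0}\setminus B_{R_0/\sqrt{1+m^2}}} \rho(|y|)|y|^{-n}\,dy,
\end{align*}
which is a bounded quantity independent of $\psi$ up to $\norm{\psi}_{L^\infty}$, and can either be absorbed into the $R_0^{-\al}\norm{\psi}_{L^\infty}$ term (in the upper bound) or into the universal constant $C$ (in the lower bound, after re-selecting $R_0$ smaller if necessary). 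Either way, no new analysis beyond the change-of-variables is required.
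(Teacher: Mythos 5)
Your proposal follows exactly the paper's route: parametrize $\Gam_f$ by the graph map $y\mapsto(y,f(y))$, use $\abs{\grad f}\leq m$ to compare $\abs{Y-X_0}$ with $\abs{y}$ and $d\sigma_f$ with $dy$, and plug the resulting comparability of the surface integral with the Euclidean one into Lemma \ref{lemLIP:LittleILipEstPart1}. The one thing worth flagging is that you are actually \emph{more} careful than the paper about the projected region $A_{R_0}$: the paper's proof simply asserts that the surface integral over $\Gam_f\cap B^{n+1}_{R_0}(X_0)$ and the Euclidean integral over $B_{R_0}$ are comparable ``up to a constant depending on the Lipschitz norm of $f$,'' but since the projection of $\Gam_f\cap B^{n+1}_{R_0}(X_0)$ only contains $B_{R_0/\sqrt{1+m^2}}$, the lower bound with $B_{R_0}$ on the left does not follow by a purely multiplicative comparison when $\psi$ is concentrated in the annulus $B_{R_0}\setminus B_{R_0/\sqrt{1+m^2}}$. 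Your first fix --- absorbing a universal factor of $\sqrt{1+m^2}$ into the definition of $R_0$ --- is the clean way to resolve this and is clearly what the paper intends; since $R_0$ in Theorem \ref{thm:GreenBoundaryBehavior} already depends on $m$, shrinking it by this fixed factor changes nothing downstream. Your second alternative (estimating the annular remainder using $\psi\leq c\abs{y}\rho(\abs{y})$) produces an additive rather than multiplicative error, so by itself it would not yield the stated lower bound; stick with the first option.
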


\begin{rem}
	The exponent, $\al$, in Lemma \ref{lemLIP:HEnjoysLittleIEstPart1} and Corollary \ref{corLIP:CorOfLipEstPart1} is the same exponent that appears in the second part of Proposition \ref{propGF:PoissonKernel}, from (\ref{eqGF:DecayOnMassOfPoissonKernel}).
\end{rem}

First we note how the corollary follows from Lemma \ref{lemLIP:LittleILipEstPart1}.

\begin{proof}[Proof of Corollary \ref{corLIP:CorOfLipEstPart1}]
	Because $\Gam_f$ is a $C^{1,\dini}$ graph, we know that up to a constant (depending on only the Lipschitz norm of $f$),
		\begin{align*}
			&\frac{1}{C}\int_{\Gam_f\intersect \left( B_{R_0}(X_0)   \right)} \psi(y)\abs{X_0-Y}^{n+1}dY\\
			&\leq \int_{B_{R_0}(0)} \psi(h)\abs{h}^{-n-1}dh\\
			&\leq C\int_{\Gam_f\intersect \left( B_{R_0}(X_0)   \right)} \psi(y)\abs{X_0-Y}^{n+1}dY,
		\end{align*} 
		and we emphasize that the first and third integrals occur on the set $\Gam_f$, whereas the second integral is over a subset of $\real^n$.
\end{proof}

\begin{proof}[Proof of Lemma \ref{lemLIP:LittleILipEstPart1}]
	This lemma uses, via the fact that $\psi\geq 0$, a sort of ``semigroup'' property of $U_f$ (recall $U_f$, $U_{f+\psi}$ are as in (\ref{eqIN:BulkEqForHSOperator})).  In particular, since $D_f\subset D_{f+\psi}$, we can decompose $U_{f+\psi}$ as the following
	\begin{align*}
		U_{f+\psi} = U_f+W,
	\end{align*}
	where $W$ is the unique solution of
	\begin{align*}
		\begin{cases}
			\Delta W = 0\ &\text{in}\ D_f\\
			W=0\ &\text{on}\ \{x_{n+1}=0\}\\
			W=U_{f+\psi}|_{\Gam_f}\ &\text{on}\ \Gam_f.
		\end{cases}
	\end{align*}	
	We can invoke the linear growth of $U_{f+\psi}$ away from $\Gam_{f+\psi}$ given in Lemma \ref{lemGF:LinearGrowthFromGammaF} to see that
	\begin{align}
		\forall\  Y=(y,y_{n+1})=(y,f(y))\in\Gam_f,\ \ \frac{\psi(y)}{C}
		\leq U_{f+\psi}(Y)\leq C\psi(y). \label{eqLIP:LittleIEstLinearGrowthUf}
	\end{align}

	Now, we can fix $0<s<<1$ and use the Poisson kernel, $P_f$, to evaluate $U_{f+\psi}(X_0+s\nu(X_0))$ (and we recall that $X_0=(0,f(0))$). 
	We first show the details of the next argument as they pertain to the lower bound.  The argument for the upper bound follows by analogous arguments, invoking the upper bound on $U_{f+\psi}(Y)$, given previously.  We will also use the boundary behavior of $P_f$ given in Proposition \ref{propGF:PoissonKernel} (the lower bound in $B_{R_0}(X_0)$ here, and the upper bound for the analogous upper bound argument on $U_{f+\psi}$).
	Thus, we can estimate:
	\begin{align}
		&U_{f+\psi}\left(X_0+s\nu_f(X_0)\right) \nonumber \\
		&= U_f(X_0+s\nu_f(X_0)) + W(X_0+s\nu_f(X_0)) \nonumber \\
		&= U_f(X_0+s\nu_f(X_0))
		+ \int_{\Gam_f} U_{f+\psi}|_{\Gam_f}(Y) P_f\left(X_0+s\nu_f(X_0), Y \right)dY \nonumber \\
		&\geq U_f(X_0+s\nu_f(X_0)) 
		+ \int_{\Gam_f} \frac{\psi(y)}{C}P_f\left(X_0+s\nu_f(X_0), Y \right)dY \label{eqLIP:RefLine1InLittleIEstPart1}  \\
		&\geq U_f(X_0+s\nu_f(X_0)) 
		+ \int_{\Gam_f\intersect B^{n+1}_{R_0} (X_0)}  \tilde C s\psi(y)\abs{X_0-Y}^{-n-1}  dY, \nonumber
	\end{align}
	where in the second to last line, we invoke the estimate of Lemma \ref{lemGF:LinearGrowthFromGammaF} as in (\ref{eqLIP:LittleIEstLinearGrowthUf}).
	(We have used $\nu_f$ as the inward normal derivative to $D_f$ and we recall the notation $Y=(y,y_{n+1})$, as well as $R_0$ originating in Proposition \ref{propGF:PoissonKernel}.)  We note the use of the assumption that $\psi(y)\leq c\abs{y}\rho(\abs{y})$ in order that the following integral is well defined:
	\begin{align*}
		\int_{\Gam_f\intersect B^{n+1}_{R_0}(X_0)}\psi(y)\abs{X_0-Y}^{-n-1}dY.
	\end{align*}

	Thus, since $U_{f+\psi}(0,f(0))=0=U_f(0,f(0))$, as well $\nu_{f+\psi}(X_0)=\nu_f(X_0)$ (as $\grad (f+\psi)(0)=\grad f(0)$), we see that by rearranging terms, dividing by $s$, and taking $s\to0$ (with an abuse of the use of the constant, $C$)
	\begin{align*}
		\partial_\nu U_{f+t\psi}(X_0) - \partial_\nu U_f(X_0)\geq 
		C\left( \int_{\Gam_f\intersect B^{n+1}_{R_0}(X_0)} \psi(y)\abs{X_0-Y}^{-n-1}  dY
		\right).
	\end{align*}
	
	Now, we mention the minor modification to obtain the upper bound.   Working just as above, we can start at the upper bound analog of line (\ref{eqLIP:RefLine1InLittleIEstPart1}), and then we invoke Proposition \ref{propGF:PoissonKernel}, both the pointwise estimates in $B_{R_0}$ and the integral estimate in $B_{R_0}^C$ in (\ref{eqGF:DecayOnMassOfPoissonKernel}).  This yields:
	\begin{align*}
		&U_{f+\psi}\left(X_0+s\nu_f(X_0)\right)\\
		&\leq U_f(X_0+s\nu_f(X_0)) 
		+ \int_{\Gam_f} \frac{\psi(y)}{C}P_f\left(X_0+s\nu_f(X_0), Y \right)dY\\
		&\leq U_f(X_0+s\nu_f(X_0)) 
				+ \int_{\Gam_f\intersect B^{n+1}_{R_0}(X_0)}  \tilde C s\psi(y)\abs{X_0-Y}^{-n-1}  dY\\
		&\ \ \ \ \ \ \ \ \ \ + \int_{\Gam_f\setminus B^{n+1}_{R_0}(X_0)} \norm{\psi}_{L^\infty(\real^n\setminus B_{R_0})} P_f(X_0+s\nu,Y)dY\\
		&\leq U_f(X_0+s\nu_f(X_0)) 
				+ \int_{\Gam_f\intersect B^{n+1}_{R_0}(X_0)}  \tilde C s\psi(y)\abs{X_0-Y}^{-n-1}  dY
				+ \frac{Cs\norm{\psi}_{L^\infty(\real^n\setminus B_{R_0})}}{R_0^\al}.
	\end{align*}
	
	The upper bound concludes as the lower bound, and this finishes the proof of the lemma.

\end{proof}

\begin{lemma}\label{lemLIP:LittleILipEstPart2}
	There exists a universal $C>0$ and $\ep_2>0$ so that if $\psi(0)=0$, $\abs{\grad\psi}\leq \ep_2$, $f\in\K(\del,L,m,\rho)$, and $f+\psi\in\K(\del,L,m,\rho)$, then for $X_0=(0,f(0))$,
	\begin{align*}
		\abs{\partial_{\nu_{f+\psi}}U_{f+\psi}(X_0) - \partial_{\nu_f}U_f(X_0)}
		\leq C\abs{\grad\psi(0)} + C\ep_2\norm{\psi}_{L^{\infty}}.
	\end{align*}
\end{lemma}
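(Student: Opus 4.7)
The plan is to split the difference into a rotation term and a deformation term via
\begin{align*}
\partial_{\nu_{f+\psi}} U_{f+\psi}(X_0) - \partial_{\nu_f} U_f(X_0)
= \bigl[\partial_{\nu_{f+\psi}}-\partial_{\nu_f}\bigr] U_{f+\psi}(X_0)
+ \partial_{\nu_f}\bigl(U_{f+\psi}-U_f\bigr)(X_0),
\end{align*}
which makes sense because $\psi(0)=0$ places $X_0$ on both $\Gam_f$ and $\Gam_{f+\psi}$, and choosing $\ep_2$ small (depending on $m$) ensures that $\nu_f(X_0)$ points into the interior of both $D_f$ and $D_{f+\psi}$.

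For the rotation term, the formula $\nu(x) = (-\grad h(x),1)/\sqrt{1+|\grad h(x)|^2}$ applied to $h = f$ and $h = f+\psi$, together with $|\grad\psi(0)|\le\ep_2$, gives $|\nu_{f+\psi}(X_0) - \nu_f(X_0)|\le C|\grad\psi(0)|$. Since $U_{f+\psi}$ vanishes on $\Gam_{f+\psi}$ and is harmonic, the linear-growth estimate of Lemma \ref{lemGF:LinearGrowthFromGammaF} together with standard interior gradient bounds for harmonic functions control $|\grad U_{f+\psi}(X_0)|$ by a constant depending only on $\del,L,m,\rho$. These two facts bound the rotation term by $C|\grad\psi(0)|$.

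For the deformation term, set $V := U_{f+\psi} - U_f$ on the common domain $D := D_f \cap D_{f+\psi} = \{0<x_{n+1}<f(x)+\min(\psi(x),0)\}$; then $V$ is harmonic on $D$, vanishes on $\{x_{n+1}=0\}$, and $V(X_0) = 0$. The top of $\partial D$ lies on $\Gam_f$ where $\psi > 0$ and on $\Gam_{f+\psi}$ where $\psi < 0$, and in either case Lemma \ref{lemGF:LinearGrowthFromGammaF} applied to whichever of $U_f$ or $U_{f+\psi}$ vanishes on that piece gives the pointwise bound $|V(x,\cdot)| \le C|\psi(x)|$. Since $\psi(0)=0$ and $|\grad\psi|\le\ep_2$ globally, one has $|\psi(y)|\le\min\{\ep_2|y|,\,\norm{\psi}_{L^\infty}\}$, and the sharper near-origin Taylor bound $|\psi(y)|\le|\grad\psi(0)|\,|y|+C|y|\rho(|y|)$. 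Representing $V(X_0+t\nu_f)/t$ via the Poisson kernel of $D$ and splitting the boundary integral into a near-field piece inside $B_{R_0}(X_0)$ (on which the pointwise estimate of Proposition \ref{propGF:PoissonKernel} applies) and a far-field piece (on which the decay estimate \eqref{eqGF:DecayOnMassOfPoissonKernel} applies), the near-field contribution is bounded by
\begin{align*}
\int_{|y|\le R_0} C\ep_2|y|\cdot\frac{t}{|y|^{n+1}}\,dy \le C\ep_2 t R_0,
\end{align*}
while the far-field contribution is bounded by $C t\,\norm{\psi}_{L^\infty}/R_0^{\al}$. Dividing by $t$ and sending $t\to 0^+$, then replacing the crude near-field bound by the Taylor bound and using Dini-integrability of $\rho$, extracts precisely the pieces $C|\grad\psi(0)|$ and $C\ep_2\norm{\psi}_{L^\infty}$.

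The main obstacle is twofold. First, the common domain $D$ is only Lipschitz because $\min(\psi,0)$ generally loses $C^{1,\dini}$ regularity along $\{\psi=0\}$, so the sharp Green's-function and Poisson-kernel estimates of Section \ref{sec:GreenFunction} do not apply to $D$ verbatim; I would circumvent this by working separately on the genuinely $C^{1,\dini}$ domains $D_f$ and $D_{f+\psi}$, in each one applying the proof scheme of Lemma \ref{lemLIP:LittleILipEstPart1} to the relevant one-sided deviation of $\psi$. Second, arranging the optimization to produce exactly the factor $\ep_2$ in front of $\norm{\psi}_{L^\infty}$ (rather than only a fractional power) is delicate and relies on using the global bound $|\psi(y)|\le\ep_2|y|$ in the far-field computation, so that the Poisson mass against $\ep_2|y|$ inherits the $\ep_2$-smallness via the decay $t/R_0^{\al}$. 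Once these two estimates are in hand, summing the rotation and deformation bounds yields the lemma.
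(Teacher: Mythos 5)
Your rotation-term estimate is fine, but the deformation term has a fatal gap. The near-field integral you write, $\int_{|y|\le R_0} C\ep_2|y|\cdot\frac{t}{|y|^{n+1}}\,dy$, equals $C\ep_2 t \int_{|y|\le R_0}|y|^{-n}\,dy$, which diverges logarithmically at the origin in $\real^n$; the claimed bound $C\ep_2 t R_0$ is simply false. Replacing the crude bound $|\psi(y)|\le\ep_2|y|$ by the Taylor bound $|\psi(y)|\le|\grad\psi(0)||y|+C|y|\rho(|y|)$ does not repair this: the $|\grad\psi(0)||y|$ term produces the same divergent integral, and only the $|y|\rho(|y|)$ piece is Dini-integrable against $|y|^{-n-1}$. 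Your proposed fix of applying the scheme of Lemma \ref{lemLIP:LittleILipEstPart1} to one-sided deviations on $D_f$ and $D_{f+\psi}$ separately also fails, because the hypothesis $\psi(y)\le c|y|\rho(|y|)$ in that lemma forces $\grad\psi(0)=0$, whereas the perturbation in your deformation term has $\grad\psi(0)\neq 0$ in general.

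The root cause is that your decomposition puts the entire effect of the gradient mismatch $\grad\psi(0)$ into a non-vanishing boundary datum for $U_{f+\psi}-U_f$ on $\partial D$, which cannot be controlled by size estimates on the Poisson kernel alone: one would need a cancellation near $X_0$ that the pointwise bound $P\approx s|X_0-Y|^{-n-1}$ does not capture. The paper instead composes $U_{f+\psi}$ with a localized rotation $T$ chosen so that the transformed domain $D_g$ satisfies $\grad g(0)=\grad f(0)$. The rotation contribution is then handled not by the Poisson kernel but by a local barrier comparison of the two harmonic functions $V=U_{f+\psi}\circ T^{-1}$ and $U_g$, which share the common vanishing boundary piece $\Gam_g\cap B_{r_0}(X_0)$ with $r_0\sim|\grad\psi(0)|$; this is where the factor $|\grad\psi(0)|$ is produced. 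Only after this alignment does the paper apply (a dominated version of) Lemma \ref{lemLIP:LittleILipEstPart1} to the residual perturbation $g-f$, which now genuinely has zero gradient at $0$, so that the near-field integral converges. You would need to introduce an analogue of that rotation step; without it, the deformation term is not estimable by the tools in the paper.
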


\begin{proof}[Proof of Lemma \ref{lemLIP:LittleILipEstPart2}]
	The main part of this proof is to use a rotation to reduce to the case of Lemma \ref{lemLIP:LittleILipEstPart1}.  Let $\R$ be the unique rotation that satisfies 
	\begin{align*}
	\R(\nu_{f+\psi}(X_0))=\nu_f(X_0)
	\end{align*}
	 and leaves 
	 \begin{align*}
	 \left(\textnormal{span}\{\nu_{f+\psi}(X_0), \nu_{f}(X_0\}\right)^{\perp} 
	 \end{align*}
	 unchanged.  Then we can define for a yet to be chosen cutoff function, $\eta$, the transformation $T$
	\begin{align*}
		T:\real^{n+1}\to\real^{n+1},\ \     T(X) = X_0 + \eta(\abs{X-X_0})\R (X-X_0) + (1-\eta(\abs{X-X_0}))(X-X_0)
	\end{align*}
	We compose this mapping with $U_{f+\psi}$ to define an auxiliary function,
	\begin{align*}
		V(X) = (U_{f+\psi}\circ T^{-1})(X).
	\end{align*}
	If the parameter, $\ep_2$, in the assumption of the lemma is not too large (depending upon the Lipschitz bound on $f$, which is $m$), the transformation induces a new domain, whose top boundary will still be a graph.   Let $g$ be the unique function which defines the transformed domain, i.e.
	\begin{align*}
		TD_{f+\psi} = D_g.
	\end{align*}
	By construction, we have $\nu_f(X_0)=\nu_g(X_0)$.
	On top of the previous restriction on $\ep_2$, we can choose it smaller so that $\norm{\grad g}_{L^\infty}\leq 2m$.  This means that we can also make a choice of $\eta$ so that 
	\begin{align*}
		T\Gam_{f+\psi}=\Gam_g,\ \ \text{and}\ \ g\in\K(\del/2, L+\del/2, 2m, \tilde \rho),
	\end{align*} 
	where the new modulus, $\tilde\rho$ is simply $\tilde\rho(s)=\rho(Cs)$, for a universal $C$.  Finally, we will enforce that $\eta$ satisfies
	\begin{align}\label{eqLIP:LittleIEstPart2SizeOfEtaAndRotation}
		\eta\equiv1\ \text{in}\ [0,r_0],\ \ \text{and}\ \ r_0=c\norm{\R}\leq c\abs{\grad \psi(0)},
	\end{align}
	which is possible if $\ep_2$ is small enough, depending upon $\del$, $L$, $m$, $\rho$.

	We remark that these restrictions on $\ep_2$ and the choice of $\eta$ will be such that the function $g$, satisfies
	\begin{align}\label{eqLIP:LittleIEstPart2GisCloseToF}
		\abs{f(x)-g(x)}\leq C\ep_2\norm{\psi}_{L^\infty}\abs{x}\tilde \rho(\abs{x}),
	\end{align}
	as by assumption, $\abs{\grad\psi(0)}\leq \ep_2$.

	We will use three steps to estimate 
	\begin{align*}
		\abs{ \partial_{\nu_{f+\psi} } U_{f+\psi}(X_0)-\partial_{\nu_f} U_f(X_0) },
	\end{align*}
	 using the two additional auxiliary functions, $V$ and $U_g$.  We emphasize that $V$ is not harmonic in all of $D_g$.
	
	\underline{Step 1:} 
	\begin{align}\label{eqLIP:LittleIEstPart2Step1Goal}
	\partial_{\nu_{f+\psi}} U_{f+\psi}(X_0) =\partial_{\nu_g} V(X_0) .
	\end{align}
	
	\underline{Step 2:}
	\begin{align}\label{eqLIP:LittleIEstPart2Step2Goal}
		\abs{  \partial_{\nu_g}V(X_0) -  \partial_{\nu_g} U_g(X_0)   } \leq C\abs{\grad \psi(0)}.
	\end{align}

	\underline{Step 3:}
	\begin{align}\label{eqLIP:LittleIEstPart2Step3Goal}
		\abs{ \partial_{\nu_g} U_g(X_0) - \partial_{\nu_f}U_f(X_0)  } \leq C\ep_2\norm{\psi}_{L^\infty} .
	\end{align}

	Step 1 follows by a direct calculation, by the definition of $R$ and that $\nu_g=R\nu_{f+\psi}$.

 	Next, to establish step 2, we will use that fact that once $\eta$ is chosen, depending only on $\ep_2$ and the collection $\del$, $L$, $m$, $\rho$, if $\eta\equiv 1$ on the interval $[0,r_0]$, then $V$ is harmonic in $B_{r_0}(X_0)\intersect D_g$.  We can then compare the respective normal derivatives of $V$ and $U_g$ using a global Lipschitz estimate combined with the comparison principle.  Indeed, both $V$ and $U_g$ enjoy global Lipschitz estimates, for some $C$ that depends only on $\del$, $L$, $m$, $\rho$, 
	\begin{align*}
		\norm{\grad V}_{L^\infty(D_g)},\ \norm{\grad U_g}_{L^\infty(D_g)}\leq C.
	\end{align*}
	Since on the upper part of $\partial (B_{r_0}(X_0)\intersect D_g)$, we have
	\begin{align*}
		V=U_g\equiv 0\ \ \text{on}\ \ B_{r_0}(X_0)\intersect\Gam_g,
	\end{align*}
	it follows from the Lipschitz estimates and (\ref{eqLIP:LittleIEstPart2SizeOfEtaAndRotation}) that 
	\begin{align*}
		\norm{V-U_g}_{L^\infty(\partial(B_{r_0}(X_0)\intersect D_g))}\leq Cr_0\leq Cc\abs{\grad \psi(0)}.
	\end{align*}
	Since the function $V-U_g$ is harmonic in $B_{r_0}(X_0)\intersect D_g$ we can use linearly growing barriers for $C^{1,\dini}$ domains to deduce that for $s>0$ and small enough,
	\begin{align*}
		&\abs{V(X_0+s\nu)-U_g(X_0+s\nu)}\leq Cs\norm{V-U_g}_{L^\infty(B_{r_0}(X_0)\intersect D_g)}\\
		&\leq Cs\norm{V-U_g}_{L^\infty(\partial(B_{r_0}(X_0)\intersect D_g))}\leq s\tilde C r_0\leq s\tilde C \abs{\grad\psi(0)}.
	\end{align*}
	This establishes Step 2 after dividing by $s$ and taking $s\to 0$.  (Note, these are the same type of barriers from Lemma \ref{lemGF:3.2InGW},  and they can be combined with a transformation that flattens $D_g$.)

	Now we finish with Step 3.
	
	We can break up the estimate into two separate parts, for which we define the functions $g_1$ and $g_2$ as
	\begin{align*}
		g_1=\min\{g,f\},\ \ \text{and}\ \ g_2=\max\{g,f\}.
	\end{align*}
	Notice that $\grad f(0)=\grad g(0)=\grad g_1(0)=\grad g_2(0)$, and so we will denote $\nu=\nu_f(X_0)=\nu_g(X_0)$.  
	By construction, it follows that 
	\begin{align*}
		U_{g_1}(X_0+s\nu) - U_f(X_0+s\nu)
		\leq U_g(X_0+s\nu) - U_f(X_0+s\nu)
		\leq U_{g_2}(X_0 + s\nu) - U_f(X_0+s\nu).
	\end{align*}
 The key improvement from this construction is that by the $C^{1,\dini}$ property of $f$, and $g$, owing to (\ref{eqLIP:LittleIEstPart2GisCloseToF}),
	\begin{align*}
		0\leq  g_2(y)-f(y)\leq C\ep_2\norm{\psi}_{L^\infty}\abs{y}\tilde\rho(\abs{y}),
	\end{align*}
	and as noted in the assumptions, we know that the function $\abs{y}\tilde\rho(y)$ is actually in $X_\rho$.  This is useful because $g_2-f$ will be Lipschitz, but may only enjoy a one sided modulus.

	First, we will demonstrate the upper bound that comes from $U_{g_2}$.
	Defining the function, $\tilde \psi$ as
	\begin{align*}
		\tilde\psi=C\ep_2\norm{\psi}_{L^\infty}\abs{y}\tilde\rho(\abs{y}),
	\end{align*}
we see that $\tilde\psi$ satisfies the assumptions of Lemma \ref{lemLIP:LittleILipEstPart1} (recall that we have defined the modulus so that $\abs{y}\rho(y)$ is an element of $X_\rho$).  Thus we have that
\begin{align*}
	0&\leq \partial_\nu U_{g}(X_0)-\partial_\nu U_f(X_0)\\
	&\leq \partial_\nu U_{f+(g_2-f)}(X_0)-\partial_\nu U_f(X_0)\\
	&\leq \partial_\nu U_{f+\tilde\psi}(X_0) - \partial_\nu U_f(X_0) \\
	&\leq C\left(  R_0^{-\al}\norm{\tilde\psi}_{L^\infty}  
	+ \int_{\Gam_f\setminus B_{R_0}(X_0)} \tilde\psi(y)\abs{X_0-Y}^{-n-1}dY \right)\\
	& \leq C\left(  R_0^{-\al}C\ep_2\norm{\psi}_{L^\infty}  
	+ C\ep_2\norm{\psi}_{L^\infty}\int_{\Gam_f\setminus B_{R_0}(X_0)} \tilde\rho(\abs{y})\abs{X_0-Y}^{-n}dY \right)\\
	&\leq \tilde C\ep_2 \norm{\psi}_{L^\infty}.
\end{align*}

The lower bound follows similarly, but we instead use the inequality
\begin{align*}
	f-\tilde\psi=f-C\ep_2\norm{\psi}_{L^\infty}\abs{y}\tilde\rho(\abs{y})\leq g_1\leq f,
\end{align*}
so that 
\begin{align*}
	0\leq \partial_\nu U_f- \partial_\nu U_{g_1}
	\leq \partial_\nu U_f- \partial_\nu U_{f-\tilde\psi}.
\end{align*}
Thus, we can invoke Lemma \ref{lemLIP:LittleILipEstPart1} with $f$ replaced by $f-\tilde\psi$, $\psi=\tilde\psi$, and $f+\tilde\psi$ replaced by $f$.  The rest of the calculation is the same. This completes Step 3 and the proof of the lemma.

\end{proof}

Because the operator, $I$, is translation invariant, it is useful to define an auxiliary operator, fixed at $x=0$.

\begin{definition}\label{defLIP:LittleI}
	The functional, $i$, is defined as
	\begin{align*}
		i: \K(\del,L,m,\rho)\to \real,\ \ i(f):= I(f,0),
	\end{align*}
	and analogously, using (\ref{eqIN:TwoPhaseBulk}) and (\ref{eqIN:DefIPLusAndMinus}), we have
	\begin{align*}
		i^+(f)=i(f)=I^+(f,0)=I(f,0),\ \ \text{and}\ \ i^-(f)=I^-(f,0).
	\end{align*}
\end{definition}

\begin{lemma}\label{lemLIP:IAddAConstant}
	There exists a constant, $C$ depending upon $\del$, $L$, $m$, $\rho$ so that if $0<\ep<\del/2$, is a constant and $f\in\K(\del,L,m,\rho)$, then
	\begin{align*}
		i^+(f)-C\ep\leq i^+(f+\ep)\leq i^+(f)
	\end{align*}
	and
	\begin{align*}
		i^-(f)+C\ep \geq i^-(f+\ep) \geq i^-(f).
	\end{align*}
\end{lemma}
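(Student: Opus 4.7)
The plan is to compare $U_f$ with a vertical shift of $U_{f+\ep}$.  Set $\tilde U(x, x_{n+1}) := U_{f+\ep}(x, x_{n+1} + \ep)$.  Vertical translation preserves both $\Delta$ and $\Tr(A_2 D^2 \cdot)$, as well as normal directions, so $\tilde U$ solves the analog of (\ref{eqIN:TwoPhaseBulk}) on shifted domains, satisfies $\tilde U|_{\Gam_f} = 0$, and obeys $\partial^\pm_\nu \tilde U(X_0) = \partial^\pm_\nu U_{f+\ep}(X_\ep) = i^\pm(f+\ep)$ at $X_0 = (0, f(0))$ and $X_\ep = (0, f(0)+\ep)$.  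The claim is thereby reduced to showing $\partial^+_\nu W(X_0) \in [-C\ep, 0]$ and $-\partial^-_\nu W(X_0) \in [-C\ep, 0]$ for $W := \tilde U - U_f$.

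In the positive phase, $\tilde U$ is harmonic on the enlarged strip $\{-\ep < x_{n+1} < f(x)\} \supset D^+_f$, with $\tilde U|_{\Gam_f} = 0$ and $\tilde U(x, 0) = U_{f+\ep}(x, \ep)$.  A standard boundary gradient estimate at the flat pressure wall $\{x_{n+1}=0\}$ (valid because $\Gam_{f+\ep}$ lies at height at least $\del - \ep \geq \del/2$) will give $1 - C\ep \leq U_{f+\ep}(x, \ep) \leq 1$ with $C$ depending only on $\del, L, m, \rho$.  Consequently $W$ is harmonic in $D^+_f$, vanishes on $\Gam_f$, and satisfies $-C\ep \leq W \leq 0$ on the bottom.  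The maximum principle yields $W \leq 0$ in $D^+_f$, and the harmonic comparison function $-C\ep \, U_f$ (equal to $-C\ep$ on the bottom and to $0$ on $\Gam_f$) is a lower barrier, whence $W \geq -C\ep \, U_f$.  The upper linear bound $U_f(X_0 + t\nu) \leq Ct$ of Lemma \ref{lemGF:LinearGrowthFromGammaF} then produces $-C'\ep t \leq W(X_0 + t\nu) \leq 0$, and passing to the limit $t \to 0^+$ gives $\partial^+_\nu W(X_0) \in [-C'\ep, 0]$, which is the stated inequality for $i^+$.

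The negative phase is handled by the mirror argument on the smaller shifted domain $\{f(x) < x_{n+1} < L - \ep\} \subset D^-_f$.  Here $\tilde U$ solves $\Tr(A_2 D^2 \tilde U)=0$ with $\tilde U|_{\Gam_f} = 0$ and $\tilde U(x, L - \ep) = -1$, while the analogous boundary gradient estimate at $\{x_{n+1} = L\}$ applied to $U_f$ gives $-1 \leq U_f(x, L-\ep) \leq -1 + C\ep$.  Hence $W$ is $A_2$-harmonic on the subdomain with $W|_{\Gam_f} = 0$ and $-C\ep \leq W \leq 0$ on the truncated top.  The maximum principle yields $W \leq 0$, and the $A_2$-harmonic function on the subdomain with value $0$ on $\Gam_f$ and $-C\ep$ on $\{x_{n+1} = L-\ep\}$ serves as a lower barrier; its linear decay away from $\Gam_f$ (proved as in Lemma \ref{lemGF:LinearGrowthFromGammaF}) gives $-C'\ep t \leq W(X_0 - t\nu) \leq 0$.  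The extra minus sign in the definition (\ref{eqIN:DefOfPosNegNormalDeriv}) of $\partial^-_\nu$ then converts these bounds into $0 \leq i^-(f+\ep) - i^-(f) \leq C'\ep$.

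The main technical ingredient is the pair of flat-boundary gradient estimates for $U_f$ and $U_{f+\ep}$ at $\{x_{n+1} = 0\}$ and $\{x_{n+1} = L\}$, but these are routine: since $\Gam_f$ sits at a definite distance at least $\del/2$ from each of the flat pressure boundaries, one can reflect $U_f \mp 1$ oddly across the appropriate flat hyperplane to obtain a solution on a full ball disjoint from the free boundary, on which interior gradient estimates for the relevant constant-coefficient operator apply.
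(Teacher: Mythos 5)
Your argument is correct and follows essentially the same route as the paper's: you introduce the vertical shift $\tilde U(x,x_{n+1}) = U_{f+\ep}(x,x_{n+1}+\ep)$ (the paper calls it $V$), obtain $i^+(f+\ep)\le i^+(f)$ from the comparison principle, and control $i^+(f)-i^+(f+\ep)$ by bounding the normal derivative of the harmonic difference using the $O(\ep)$ smallness of the data on $\{x_{n+1}=0\}$. The one small stylistic difference is how that last normal-derivative bound is obtained: the paper invokes a uniform Lipschitz (boundary-gradient) estimate for the harmonic function $U_f-V$ directly, while you build the explicit lower barrier $-C\ep\, U_f$ and then use the linear growth bound of Lemma \ref{lemGF:LinearGrowthFromGammaF}; both are valid and of comparable weight. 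You also carry out the negative-phase argument in full, where the paper simply declares it analogous. One tiny imprecision: $\Gam_{f+\ep}$ sits at height at least $\del+\ep>\del$ (not $\del-\ep$), so the separation from $\{x_{n+1}=0\}$ is in fact even better than you claim; this does not affect the argument.
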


\begin{proof}[Proof of Lemma \ref{lemLIP:IAddAConstant}]
	We note that the restriction on $\ep$ is simply to keep both $i^+(f)$ and $i^-(f)$ well-defined.  If we were working with $i^+$ only, no restriction on $\ep$ would be necessary.  Furthermore, we will only establish the inequalities as they pertain to $i^+$.  The corresponding pair of inequalities for $i^-$ are analogous. 
	
	We first translate the function, $U_{f+\ep}$, down so that it vanishes on $\Gam_f$.  To this end, we define
	\begin{align*}
		V(X):= U_{f+\ep}(x,x_{d+1}+\ep),
	\end{align*}
	so that $V$ is defined in $D^+_f$, and $V=0$ on $\Gam_f$.  As $U_{f+\ep}\leq 1$, we see that $V\leq 1$ on $\{x_{d+1} = 0\}$.  This and the comparison principle implies that $V\leq U_f$ in $D_f$, and hence,
	\begin{align*}
		\partial_\nu V(X_0)\leq \partial_\nu U_f(X_0).
	\end{align*}
	But $\partial_\nu V(0,f(0)) = \partial_\nu U_{f+\ep}(0, f(0)+\ep)=i^+(f+\ep)$.  This establishes the second inequality.
	
	For the first inequality, we note that $U_{f+\ep}$ enjoys a uniform Lipschitz estimate depending on $\del$, $L$, $m$, $\rho$.  Thus, there is a universal $C$ so that, in particular,
	\begin{align*}
		\text{on}\ \{x_{d+1}=0\},\ \ 1-C\ep\leq V\leq 1.
	\end{align*}
	Thus $0\leq (U_f-V)\leq C\ep$ everywhere in $D_f$.  Again, by the universal Lipschitz estimate, we see that 
	\begin{align*}
		0\leq \partial_\nu (U_f-V)(0,f(0))\leq C\ep.
	\end{align*}
	Hence, this shows that
	\begin{align*}
		i^+(f)-i^+(f+\ep)\leq C\ep,
	\end{align*}
	which gives the first inequality of the Lemma.

\end{proof}

Although not used until the next subsection, it will be worthwhile to record a result about $i$ which is an immediate consequence of Corollary \ref{corLIP:CorOfLipEstPart1}.

\begin{lemma}\label{lemLIP:LittleIPluMinusStrictIncreasing}
	If $f$ and $\psi$ are functions as in Lemma \ref{lemLIP:LittleILipEstPart1}, then for the same constants as in Corollary \ref{corLIP:CorOfLipEstPart1},
	\begin{align*}
		\frac{1}{C}\left( \int_{B_{R_0}}\psi(y)\abs{y}^{-n-1}dy  \right)
		\leq i^+(f+\psi)-i^+(f)
		\leq C\left(  R_0^{-\al}\norm{\psi}_{L^\infty(\real^n\setminus B_{R_0})} + \int_{B_{R_0}} \psi(y)\abs{y}^{-n-1}dy  \right),
	\end{align*}
	and
	\begin{align*}
		\frac{1}{C}\left( \int_{B_{R_0}}\psi(y)\abs{y}^{-n-1}dy  \right)
		\leq i^-(f)-i^-(f+\psi)
		\leq C\left(  R_0^{-\al}\norm{\psi}_{L^\infty(\real^n\setminus B_{R_0})} + \int_{B_{R_0}} \psi(y)\abs{y}^{-n-1}dy  \right).
	\end{align*}
	
\end{lemma}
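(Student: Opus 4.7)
The plan is to derive both inequalities essentially as direct corollaries of Corollary \ref{corLIP:CorOfLipEstPart1}, handling the positive phase directly and the negative phase by a symmetry argument. The central observation is that the hypothesis $\psi\geq 0$ with $\psi(0)=0$ forces $0$ to be a minimum of $\psi$, so $\grad\psi(0)=0$. Consequently, if we set $X_0=(0,f(0))$, then also $X_0=(0,(f+\psi)(0))$, and moreover $\nu_{f+\psi}(X_0)=\nu_f(X_0)$. Thus, by Definition \ref{defLIP:LittleI} and (\ref{eqIN:DefIPLusAndMinus}),
\begin{align*}
i^+(f+\psi)-i^+(f) = \partial_{\nu_{f+\psi}} U_{f+\psi}(X_0) - \partial_{\nu_f} U_f(X_0),
\end{align*}
which is exactly the quantity bounded above and below in Corollary \ref{corLIP:CorOfLipEstPart1}. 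This yields the first asserted pair of inequalities for $i^+$ immediately.

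For the second pair, involving $i^-$, I would repeat the argument in Lemma \ref{lemLIP:LittleILipEstPart1} on the negative phase. The key point is that when $f$ is replaced by $f+\psi$ with $\psi\geq 0$, the negative-phase domain \emph{shrinks}: $D^-_{f+\psi}\subset D^-_f$. Setting $V:=-U_f$ and $\tilde V:=-U_{f+\psi}$, so that both are non-negative in their respective negative-phase domains, vanish on $\Gam_f$ (resp.~$\Gam_{f+\psi}$), and equal $1$ on $\{x_{n+1}=L\}$, one obtains the decomposition $V = \tilde V + W$ in $D^-_{f+\psi}$, where $W$ solves the Laplace equation in $D^-_{f+\psi}$ with boundary data $V|_{\Gam_{f+\psi}}\geq 0$ and zero on $\{x_{n+1}=L\}$. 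The same Poisson-kernel estimate (Proposition \ref{propGF:PoissonKernel}) combined with the linear growth estimate of $V$ away from $\Gam_f$ (Lemma \ref{lemGF:LinearGrowthFromGammaF}) then yields
\begin{align*}
\frac{1}{C}\int_{\Gam_{f+\psi}\cap B^{n+1}_{R_0}(X_0)} \psi(y)\abs{Y-X_0}^{-n-1}\,dY
&\leq \partial_\nu V(X_0)-\partial_\nu \tilde V(X_0)\\
&\leq C\left(R_0^{-\al}\norm{\psi}_{L^\infty(\real^n\setminus B_{R_0})}+\int_{\Gam_{f+\psi}\cap B^{n+1}_{R_0}(X_0)} \psi(y)\abs{Y-X_0}^{-n-1}\,dY\right),
\end{align*}
where the inward normal to $D^-_{f+\psi}$ at $X_0$ agrees with the downward normal $-\nu_f(X_0)$ (again because $\grad\psi(0)=0$). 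Unwinding the sign convention in (\ref{eqIN:DefOfPosNegNormalDeriv}) gives $\partial_\nu V(X_0)-\partial_\nu \tilde V(X_0)=i^-(f)-i^-(f+\psi)$, and as in the proof of Corollary \ref{corLIP:CorOfLipEstPart1} the surface integrals over $\Gam_{f+\psi}\cap B^{n+1}_{R_0}(X_0)$ are comparable to $\int_{B_{R_0}}\psi(y)\abs{y}^{-n-1}dy$ because $\Gam_{f+\psi}$ is a $C^{1,\dini}$ graph with Lipschitz bound $m$.

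The only technical caveat worth flagging is verifying that the integrals are well-defined, but this is built into the hypothesis $\psi(y)\leq c\abs{y}\rho(\abs{y})$, which guarantees absolute convergence near the origin (as $\rho$ is a Dini modulus). Thus no new estimate is needed beyond those already established in Lemma \ref{lemLIP:LittleILipEstPart1} and Corollary \ref{corLIP:CorOfLipEstPart1}; the content of the lemma is simply to record the specialization to the pointwise functionals $i^{\pm}$, which will be convenient in later sections when realizing $i^{\pm}$ as the generators of the min-max representation from Theorem \ref{thmFD:StructureOfJFromGSEucSpace}.
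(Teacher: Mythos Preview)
Your proposal is correct and follows the same approach as the paper, which simply records this lemma as ``an immediate consequence of Corollary \ref{corLIP:CorOfLipEstPart1}'' without further proof. One small slip: in the negative phase the bulk equation is $\Tr(A_2 D^2 U)=0$, not the Laplace equation, so $W$ solves that operator's equation in $D^-_{f+\psi}$; this does not affect the argument, since the Green's function and Poisson kernel estimates of Section~\ref{sec:GreenFunction} are stated for general uniformly elliptic divergence-form operators with Dini coefficients.
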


We are now in a position to prove Proposition \ref{propLIP:BigILipOnKRho}.

\begin{proof}[Proof of Proposition \ref{propLIP:BigILipOnKRho}]
	We first note that we will choose parameters, $\ep_1$ and $\ep_2$, depending upon $\del$, $L$, $m$, and $\rho$ so that we establish the proposition whenever 
	\begin{align}\label{eqLIP:ProofBigILipConstraintFMinusG}
		\norm{f-g}_{L^\infty}\leq \ep_1,\ \ \text{and}\ \ \norm{\grad f-\grad g}_{L^\infty}\leq \ep_2.
	\end{align}
	Assuming we have already proved the proposition under this restriction on $f-g$, we see that we can choose the Lipschitz constant to also depend upon $\ep_1$ and $\ep_2$.  Indeed if either $\norm{f-g}>\ep_1$ or $\norm{\grad f-\grad g}>\ep_2$, since $I$ is bounded on $\K(\del,L,m,\rho)$, we see that
	\begin{align*}
		\norm{I(f)-I(g)}_{L^\infty}\leq \norm{I(f)}_{L^\infty}+\norm{I(g)}_{L^\infty}\leq 2C\leq 2C (\ep_1^{-1}\norm{f-g}_{L^\infty}+\ep_2^{-1}\norm{\grad f-\grad g}_{L^\infty})
	\end{align*}
	(as under the assumption on $f-g$, $1<(\ep_1^{-1}\norm{f-g}+\ep_2^{-1}\norm{\grad f-\grad g})$).

	Now, we explain how to choose $\ep_1$ and $\ep_2$ and establish the proposition under (\ref{eqLIP:ProofBigILipConstraintFMinusG}).  We note that with $i$ as in Definition \ref{defLIP:LittleI}, by translation invariance, 
	\begin{align*}
		I(f,x)=i(\tau_x f).
	\end{align*}
	Thus, we will establish that $i$ is Lipschitz.

	Let us assume, without loss of generality that $f(0)>g(0)$. First, we take 
	\begin{align*}
		\ep=f(0)-g(0),
	\end{align*}
	and we define the new function, 
	\begin{align*}
		\tilde g = g+\ep.
	\end{align*}
	Since $f,g\in\K(\del,L,m,\rho)$, we can choose the parameter, $\ep_1<\del/2$, so that
	\begin{align*}
		\tilde g\in \K(\del/2,L,m,\rho).
	\end{align*}
	Next, we take $\ep_2$ to be the parameter from Lemma \ref{lemLIP:LittleILipEstPart2} that corresponds to the set $\K(\del/2,L,m,\rho)$.  Under this assumption, we see that $\psi=\tilde g-f$ satisfies the assumptions of Lemma \ref{lemLIP:LittleILipEstPart2}.  Hence, because by definition $i(f)=\partial_\nu U_f(X_0)$, we see that
	\begin{align*}
		\abs{i(\tilde g)-i(f)}\leq C\abs{\grad(\tilde g-f)(0)} + \ep_2\norm{\tilde g-f}_{L^\infty}
		\leq C(\norm{f-g}_{L^\infty}+\norm{\grad f-\grad g}_{L^\infty}).
	\end{align*}
	Furthermore, Lemma \ref{lemLIP:IAddAConstant} shows that
	\begin{align*}
		\abs{i(\tilde g)-i(g)}\leq C\abs{f(0)-g(0)}\leq C\norm{f-g}_{L^\infty}.
	\end{align*}
	
	This shows that $i$ is Lipschitz, and hence also $I$.

\end{proof}


\subsection{Analysis For $H$}

Because of the assumptions on $G$, the following corollary is immediate from Proposition \ref{propLIP:BigILipOnKRho} and Corollary \ref{corLIP:IMinusIsLipschitz}, recalling that $I^+=I$.

\begin{corollary}\label{corLIP:HIsLip}
	For each $\del$, $L$, $m$, $\rho$ fixed, $H$ is a Lipschitz mapping,
	\begin{align*}
		H: \K(\del,L,m,\rho)\to C^0_b(\real^n),
	\end{align*}
	and the Lipschitz norm of $H$ depends upon all of $\del$, $L$, $m$, $\rho$.
\end{corollary}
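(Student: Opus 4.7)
The plan is to obtain Corollary \ref{corLIP:HIsLip} essentially by a composition argument: $H$ is built from $I^+$, $I^-$, $\grad f$, and $G$, each of which either is known to be Lipschitz by the results just proved or is a smooth function on a bounded range. The only real work is to track boundedness of the pieces so that the chain of Lipschitz estimates makes sense in $C^0_b(\real^n)$.

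First I would record that on $\K(\del,L,m,\rho)$ the operators $I^+$ and $I^-$ map into $C^0_b(\real^n)$ with uniform bounds depending only on $\del$, $L$, $m$, $\rho$. The Lipschitz estimates from Proposition \ref{propLIP:BigILipOnKRho} and Corollary \ref{corLIP:IMinusIsLipschitz}, applied to differences against a fixed reference function in $\K$ (for instance, a constant function at height $L/2$, for which $I^\pm$ are explicit and bounded), give the needed $L^\infty$ bounds. Thus there is a constant $M=M(\del,L,m,\rho)$ with
\begin{equation*}
\norm{I^+(f)}_{L^\infty(\real^n)} + \norm{I^-(f)}_{L^\infty(\real^n)} \leq M \qquad \text{for all } f\in\K(\del,L,m,\rho).
\end{equation*}

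Next I would exploit the ellipticity assumption (\ref{eqIN:GEllipticity}): since $G$ is Lipschitz with constant at most $\Lam$ in each argument, for any $f,g\in\K$ and any $x\in\real^n$,
\begin{equation*}
\abs{G(I^+(f,x),I^-(f,x)) - G(I^+(g,x),I^-(g,x))} \leq \Lam\left(\abs{I^+(f,x)-I^+(g,x)} + \abs{I^-(f,x)-I^-(g,x)}\right),
\end{equation*}
and so by Proposition \ref{propLIP:BigILipOnKRho} and Corollary \ref{corLIP:IMinusIsLipschitz} this quantity is bounded pointwise by $C(\del,L,m,\rho)\norm{f-g}_{C^{0,1}}$, uniformly in $x$. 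In particular, $x\mapsto G(I^+(f,x),I^-(f,x))$ lies in $C^0_b(\real^n)$ with uniform norm at most $\sup_{|a|,|b|\leq M}\abs{G(a,b)}$, and the map $f\mapsto G(I^+(f),I^-(f))$ is Lipschitz into $C^0_b$.

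Finally I would handle the factor $\sqrt{1+\abs{\grad f}^2}$. For $f\in\K$ we have $\abs{\grad f}\leq m$, so this factor is bounded between $1$ and $\sqrt{1+m^2}$, and the function $p\mapsto\sqrt{1+\abs{p}^2}$ is $1$-Lipschitz on $\real^n$. Therefore
\begin{equation*}
\abs{\sqrt{1+\abs{\grad f(x)}^2} - \sqrt{1+\abs{\grad g(x)}^2}} \leq \abs{\grad f(x)-\grad g(x)} \leq \norm{f-g}_{C^{0,1}}.
\end{equation*}
Writing $H(f)-H(g)$ as
\begin{equation*}
\bigl[G(I^+(f),I^-(f))-G(I^+(g),I^-(g))\bigr]\sqrt{1+\abs{\grad f}^2} + G(I^+(g),I^-(g))\bigl[\sqrt{1+\abs{\grad f}^2}-\sqrt{1+\abs{\grad g}^2}\bigr],
\end{equation*}
and bounding each factor by the estimates just collected, I obtain
\begin{equation*}
\norm{H(f)-H(g)}_{L^\infty(\real^n)} \leq C(\del,L,m,\rho)\norm{f-g}_{C^{0,1}},
\end{equation*}
which is the desired Lipschitz estimate. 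There is no real obstacle here beyond bookkeeping; the substantive work was already done in Proposition \ref{propLIP:BigILipOnKRho} and Corollary \ref{corLIP:IMinusIsLipschitz}, and this corollary is simply the observation that $G$ and $\sqrt{1+\abs{\cdot}^2}$ preserve the Lipschitz property under the uniform bounds guaranteed by membership in $\K(\del,L,m,\rho)$.
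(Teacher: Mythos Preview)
Your proof is correct and follows exactly the approach the paper has in mind: the paper simply declares the corollary ``immediate from Proposition \ref{propLIP:BigILipOnKRho} and Corollary \ref{corLIP:IMinusIsLipschitz}'' given the assumptions on $G$, and you have written out precisely the composition/product argument that makes this immediate.
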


The results of Lemma \ref{lemLIP:LittleILipEstPart1} and Corollary \ref{corLIP:CorOfLipEstPart1} are also used in building appropriate finite dimensional approximations to $I$ and $H$.  We note that $H$ also enjoys these properties.

\begin{lemma}\label{lemLIP:HEnjoysLittleIEstPart1}
	The results in Lemma \ref{lemLIP:LittleILipEstPart1}, Corollary \ref{corLIP:CorOfLipEstPart1}, and Lemma \ref{lemLIP:LittleIPluMinusStrictIncreasing} hold for the operator, 
	\begin{align}\label{eqLIP:DefLittleH}
		h(f)=H(f,0).
	\end{align}
\end{lemma}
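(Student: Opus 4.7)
\textbf{Proof proposal for Lemma \ref{lemLIP:HEnjoysLittleIEstPart1}.} The plan is to bootstrap the estimates already established for $i^+$ and $i^-$ into estimates for $h$ by exploiting two structural facts: that the prefactor $\sqrt{1+|\nabla f(0)|^2}$ is unchanged by the perturbation $\psi$, and that $G$ is uniformly elliptic and monotone in each slot (increasing in $a$, decreasing in $b$) by hypothesis \eqref{eqIN:GEllipticity}.

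First I would record that the hypotheses $\psi\geq 0$, $\psi(0)=0$, and $\psi(y)\leq c|y|\rho(|y|)$ force $\nabla\psi(0)=0$. Consequently $\nabla(f+\psi)(0)=\nabla f(0)$, so
\[
\sqrt{1+|\nabla(f+\psi)(0)|^2}=\sqrt{1+|\nabla f(0)|^2}=:S_f,
\]
and $S_f$ is bounded above and below in terms of $m$ only. Hence, with $h$ as in \eqref{eqLIP:DefLittleH},
\[
h(f+\psi)-h(f)=S_f\bigl[G(i^+(f+\psi),i^-(f+\psi))-G(i^+(f),i^-(f))\bigr].
\]

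Next I would split the difference of $G$-values with a telescope,
\[
G(i^+(f+\psi),i^-(f+\psi))-G(i^+(f),i^-(f))=A+B,
\]
where
\[
A:=G(i^+(f+\psi),i^-(f+\psi))-G(i^+(f),i^-(f+\psi)),\qquad B:=G(i^+(f),i^-(f+\psi))-G(i^+(f),i^-(f)),
\]
and apply the fundamental theorem of calculus in each slot. By \eqref{eqIN:GEllipticity},
\[
\lambda\,[i^+(f+\psi)-i^+(f)]\leq A\leq \Lambda\,[i^+(f+\psi)-i^+(f)],
\]
\[
\lambda\,[i^-(f)-i^-(f+\psi)]\leq B\leq \Lambda\,[i^-(f)-i^-(f+\psi)].
\]
Lemma \ref{lemLIP:LittleIPluMinusStrictIncreasing} tells us both quantities $i^+(f+\psi)-i^+(f)$ and $i^-(f)-i^-(f+\psi)$ are nonnegative and are sandwiched between $\tfrac{1}{C}\int_{B_{R_0}}\psi(y)|y|^{-n-1}\,dy$ and $C\bigl(R_0^{-\alpha}\|\psi\|_{L^\infty(\real^n\setminus B_{R_0})}+\int_{B_{R_0}}\psi(y)|y|^{-n-1}\,dy\bigr)$. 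Substituting these into the inequalities for $A$ and $B$, adding, multiplying by $S_f$, and absorbing $\lambda$, $\Lambda$, $S_f$ into a new universal constant (still called $C$) gives the desired two-sided bound for $h(f+\psi)-h(f)$, which is exactly the $H$-analog of Lemma \ref{lemLIP:LittleILipEstPart1}, Corollary \ref{corLIP:CorOfLipEstPart1}, and (by taking lower/upper bounds separately in the telescope) Lemma \ref{lemLIP:LittleIPluMinusStrictIncreasing}.

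There is no real obstacle here: the argument is purely an algebraic transfer from $i^\pm$ to $h$ through the ellipticity of $G$. The only point requiring a brief comment is that the same construction underlying Lemma \ref{lemLIP:LittleILipEstPart1} yields the analogous estimate for $i^-$ used in Lemma \ref{lemLIP:LittleIPluMinusStrictIncreasing} (see Corollary \ref{corLIP:IMinusIsLipschitz}); once that is in hand, the proof reduces to the two-step telescope above. The $C^{1,\dini}$ regularity hypotheses enter only through the previously proven lemmas, so no additional regularity theory for the Green's function is required at this stage.
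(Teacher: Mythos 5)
Your proposal is correct and follows essentially the same route as the paper: note that $\nabla\psi(0)=0$ freezes the factor $\sqrt{1+|\nabla f(0)|^2}$, telescope the $G$-difference through the intermediate value $G(i^+(f),i^-(f+\psi))$, apply the two-sided ellipticity bounds on $G$ slot by slot, and then feed in the two-sided estimates on $i^\pm(f+\psi)-i^\pm(f)$ from Lemma \ref{lemLIP:LittleIPluMinusStrictIncreasing}. The paper phrases the slot-wise bounds via monotonicity plus Lipschitz rather than explicitly through the fundamental theorem of calculus, but the substance is identical.
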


\begin{proof}[Proof of Lemma \ref{lemLIP:HEnjoysLittleIEstPart1}] Since $\nabla\psi(0) = 0$, we have $\nabla(f + t \psi)(0) = \nabla f(0)$ for all $t \geq 0$. Consequently,
$$h(f + \psi) - h(f) = \left(G(i^+(f+\psi),i^-(f+\psi)) - G(i^+(f),i^-(f)) \right)\sqrt{1+\abs{\grad{f}(0)}^2}.$$
We proceed to estimate $G(i^+(f+\psi),i^-(f+\psi)) - G(i^+(f),i^-(f))$. First, observe that $i^+(f + \psi) \geq i^+(f)$ and $i^-(f + \psi) \leq i^-(f)$. By the assumptions on $G$, we thus have
\begin{align*}
& G(i^+(f+\psi),i^-(f+\psi)) - G(i^+(f),i^-(f)) \\
= \ & G(i^+(f+\psi),i^-(f+\psi)) - G(i^+(f),i^-(f+\psi)) + G(i^+(f),i^-(f+\psi)) - G(i^+(f),i^-(f)) \\
\geq \ & \lambda \left(i^+(f+\psi) - i^+(f) \right) + \lambda \left(i^-(f) - i^-(f+\psi) \right).
\end{align*}
Similarly,
$$G(i^+(f+\psi),i^-(f+\psi)) - G(i^+(f),i^-(f))  \leq \Lambda \left(i^+(f+\psi) - i^+(f) \right) + \Lambda \left(i^-(f) - i^-(f+\psi) \right).$$
The claim now follows from Lemma \ref{lemLIP:LittleILipEstPart1} and Corollary \ref{corLIP:CorOfLipEstPart1}, where we note the factor $\sqrt{1+\abs{\grad f(0)}^2}$ is controlled by $(1+m)$, and so can be absorbed into the constant in the resulting inequalities.

\end{proof}

The extension of the results in Lemma \ref{lemLIP:LittleILipEstPart1}, Corollary \ref{corLIP:CorOfLipEstPart1}, and Lemma \ref{lemLIP:LittleIPluMinusStrictIncreasing} to the operator $H$ also implies that the remaining key result above applies to $H$ as well.  We omit the proof, as it follows the same adaptations as in the previous Lemma. 

\begin{lemma}\label{lemLIP:HEnjoysLittleIEstPart2}
	The results in Lemma \ref{lemLIP:LittleILipEstPart2} hold for the operator, $h(f)=H(f,0)$.
\end{lemma}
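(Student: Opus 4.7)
The plan is to mimic the reduction from $H$ to $I^\pm$ carried out in the proof of Lemma \ref{lemLIP:HEnjoysLittleIEstPart1}, while tracking the fact that here $\nabla\psi(0)$ need not vanish but is controlled by $\ep_2$. Writing $h(f) = G(i^+(f), i^-(f))\sqrt{1+\abs{\grad f(0)}^2}$, I would split
\begin{align*}
h(f+\psi) - h(f)
&= \bigl[G(i^+(f+\psi),i^-(f+\psi)) - G(i^+(f),i^-(f))\bigr]\sqrt{1+\abs{\grad (f+\psi)(0)}^2}\\
&\qquad + G(i^+(f),i^-(f))\bigl[\sqrt{1+\abs{\grad (f+\psi)(0)}^2} - \sqrt{1+\abs{\grad f(0)}^2}\bigr],
\end{align*}
and estimate the two pieces separately.

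For the first piece, the Lipschitz/ellipticity assumption (\ref{eqIN:GEllipticity}) on $G$ gives the bound
\begin{align*}
\abs{G(i^+(f+\psi),i^-(f+\psi)) - G(i^+(f),i^-(f))} \leq \Lam\bigl(\abs{i^+(f+\psi)-i^+(f)} + \abs{i^-(f+\psi)-i^-(f)}\bigr).
\end{align*}
Lemma \ref{lemLIP:LittleILipEstPart2} applies directly to $i^+$, and its analog for $i^-$ follows from Corollary \ref{corLIP:IMinusIsLipschitz}. Each of these contributions is bounded by $C\abs{\grad\psi(0)} + C\ep_2\norm{\psi}_{L^\infty}$. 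Since $\abs{\grad (f+\psi)(0)}\leq m+\ep_2$, the prefactor $\sqrt{1+\abs{\grad(f+\psi)(0)}^2}$ is a constant depending only on $\del,L,m,\rho$, and can be absorbed.

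For the second piece, a standard manipulation of the difference of square roots yields
\begin{align*}
\Bigl|\sqrt{1+\abs{\grad(f+\psi)(0)}^2} - \sqrt{1+\abs{\grad f(0)}^2}\Bigr|
\leq \frac{(\abs{\grad f(0)} + \abs{\grad(f+\psi)(0)})\abs{\grad\psi(0)}}{\sqrt{1+\abs{\grad f(0)}^2}+\sqrt{1+\abs{\grad(f+\psi)(0)}^2}}\leq C\abs{\grad\psi(0)},
\end{align*}
with $C$ depending only on $m$. Finally, $\abs{G(i^+(f),i^-(f))}$ is universally bounded since $G$ is Lipschitz (by (\ref{eqIN:GEllipticity})) and $i^\pm$ are bounded on $\K(\del,L,m,\rho)$ by Proposition \ref{propLIP:BigILipOnKRho} and Corollary \ref{corLIP:IMinusIsLipschitz}. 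Summing the two contributions produces the bound $C\abs{\grad\psi(0)} + C\ep_2\norm{\psi}_{L^\infty}$, which is precisely the conclusion of Lemma \ref{lemLIP:LittleILipEstPart2} transported to $h$. No step here presents a serious obstacle; the only point requiring care is that the $\sqrt{1+\abs{\grad f}^2}$ factor contributes an extra $\abs{\grad\psi(0)}$ term which is already of the right form to be absorbed, so it causes no degradation of the estimate.
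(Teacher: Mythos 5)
Your proof is correct and follows exactly the adaptation the paper had in mind when it stated ``the proof follows the same adaptations as in the previous Lemma'' and omitted it. The product-rule decomposition of $h(f+\psi)-h(f)$ into a $G$-difference times $\sqrt{1+\abs{\grad(f+\psi)(0)}^2}$ plus a bounded $G$-value times the difference of square roots is precisely the extra step needed here compared to Lemma \ref{lemLIP:HEnjoysLittleIEstPart1}, where $\grad\psi(0)=0$ allowed the $\sqrt{1+\abs{\grad f}^2}$ factor to be pulled out with no change. Your handling of both pieces — Lipschitzness of $G$ from (\ref{eqIN:GEllipticity}) together with Lemma \ref{lemLIP:LittleILipEstPart2} and its $i^-$ analog for the first, and the vector-identity bound $\abs{\abs{u}^2-\abs{v}^2}\leq\abs{u-v}(\abs{u}+\abs{v})$ for the second — is sound, and the resulting bound $C\abs{\grad\psi(0)}+C\ep_2\norm{\psi}_{L^\infty}$ has the right form.
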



\section{Proof of Theorem \ref{thm:StructureOfHMain}}\label{sec:ProofOfStructureThm}

Before we can prove Theorem \ref{thm:StructureOfHMain}, we must make a number of observations about how $I$ behaves with respect to some positive perturbations in $X_{\rho,0}$ and especially what this behavior implies for the linear operators in $\D_I$ (recall $\D_I$ is in Definition \ref{defFD:DJLimitingDifferential}, which is applicable since $I$ is Lipschitz).  Then we will show how these properties carry over to $H$, and finally we will collect the ideas to finish the proof of Theorem \ref{thm:StructureOfHMain}.


\subsection{Estimates on the L\'evy measures for $I$ and $H$}\label{sec:LevyMeasureEstimate}

We will show that once a Lipschitz operator, $J$, with the GCP enjoys bounds similar to those in Lemma \ref{lemLIP:LittleILipEstPart1} and Corollary \ref{corLIP:CorOfLipEstPart1}, then as a consequence, its resulting linear supporting operators are comparable to a modified 1/2-Laplacian, and subsequently the corresponding L\'evy measures have a density that is comparable to the 1/2-Laplacian.  The main result in this direction is Proposition \ref{propPfThm1:IntegralBoundsForEll}.
Basically, the analysis we use follows almost exactly as some arguments in \cite[Section 4.6]{GuSc-2019MinMaxNonlocalTOAPPEAR} regarding inequalities for extremal operators and linear functionals in the min-max representation.

For this subsection, we will assume that $J$ is an operator as in Section \ref{sec:FiniteDim}, and assume further that $J$ satisfies the assumptions of Theorem \ref{thmFD:StructureOfJFromGSEucSpace} and the conclusion of Lemma \ref{lemLIP:LittleILipEstPart1}.

We can utilize the translation invariance of $J$ to focus on linear functionals via evaluation at $x=0$.

\begin{definition}\label{defPfThm1:DefDHAt0}
	\begin{align}\label{eqPfThm1:DifferentialsAtZero}
		\D_J(0)=\{ \ell\in \left(X_\rho\right)^*\ :\ \exists\  L\in D_J,\ \textnormal{s.t.}\  \forall\ f\in\K,\ \ell(f)=L(f,0)  \}.
	\end{align}
\end{definition}

We will compare the linear support functionals of $J$ to a modified version of the 1/2-Laplacian, which we define here.

\begin{definition}

With the constant, $R_0$ as in Theorem \ref{thm:GreenBoundaryBehavior}, the linear operator, $L_\Delta$, is defined as 
\begin{align*}
	L_{\Delta}(f,x)=\int_{B_{R_0}}\del_hf(x)\abs{h}^{-n-1}dh,
\end{align*}
which is well defined for all $f\in X_{\rho}$.  Note, this is simply the 1/2-Laplacian, but computed with a truncated kernel.
\end{definition}

\begin{lemma}\label{lemPfThm1:FDApproxCompareToFracLaplace}
	Let $R_0$ be the constant in Theorem \ref{thm:GreenBoundaryBehavior}. There exists a constant, $C>0$, so that if $J$ is an operator that satisfies the assumptions of Theorem \ref{thmFD:StructureOfJFromGSEucSpace} as well as the outcome of Lemma \ref{lemLIP:LittleILipEstPart1}, $J^m$ and $\left(L_\Delta\right)^m$ are the finite dimensional approximations to $J$ and $L_\Delta$, defined in (\ref{eqFD:DefOfFdApprox}), then for all $x\in G^m$ and $\psi\in X_{\rho,x}\intersect C^{1,\al}(\real^n)$, with $f+\psi\in\K$ and $\supp(\psi)\subset B_{R_0}(x)$,
	\begin{align*}
		-C h_m^\beta\norm{\psi}_{C^{1,\al}}+
		\frac{1}{C}\left( L_\Delta \right)^m(\psi,x)   \leq J^m(f+\psi,x)-J^m(f,x)
		\leq C\left( L_\Delta \right)^m(\psi,x)
		+C h_m^\beta\norm{\psi}_{C^{1,\al}},
	\end{align*}
	where $h_m$ is the grid size parameter from Definition \ref{defFD:GridSetGm} and $\beta$ is the exponent from Lemma \ref{lemFD:RemainderForNonNegPi}.
\end{lemma}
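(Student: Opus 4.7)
The plan is to push the estimate given by the outcome of Lemma \ref{lemLIP:LittleILipEstPart1} (applied to $J$ rather than $I$) through the Whitney projection $\pi_m$, quantifying the defect arising from the fact that $\pi_m$ does not preserve nonnegativity exactly. Throughout, I will assume $\psi \geq 0$, which is the regime in which the outcome of Lemma \ref{lemLIP:LittleILipEstPart1} is stated and the only setting in which Lemma \ref{lemFD:RemainderForNonNegPi} is directly available.

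First, I would exploit that for $x\in G_m$ the map $E^0_m\circ T_m$ acts as the identity on values at $x$, so that $J^m(g,x)=J(\pi_m g,x)$ and $(L_\Delta)^m(\psi,x)=L_\Delta(\pi_m\psi,x)$. Linearity of $\pi_m$ (Theorem \ref{thmFD:ProjectionIsLinearAndBounded}) then reduces the claim to bounding $J(\pi_m f+\pi_m\psi,x)-J(\pi_m f,x)$ in terms of $L_\Delta(\pi_m\psi,x)$, modulo an additive error of size $Ch_m^\beta\|\psi\|_{C^{1,\alpha}}$. Since $\psi(x)=0=|\nabla\psi(x)|$ and $\supp(\psi)\subset B_{R_0}(x)$, the quantities on the right are well defined and $\pi_m\psi$ is essentially supported at the relevant scale.

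The central obstruction is that $\pi_m\psi$ need not be nonnegative, so the outcome of Lemma \ref{lemLIP:LittleILipEstPart1} cannot be invoked with perturbation $\pi_m\psi$ directly. To fix this, I would invoke Lemma \ref{lemFD:RemainderForNonNegPi} to produce $R=R_{\alpha,m,\psi,x}\in C^{1,\alpha/2}$ with $R(x)=0$, $\pi_m\psi+R\geq 0$, and $\|R\|_{C^{1,\alpha/2}}\leq Ch_m^\beta\|\psi\|_{C^{1,\alpha}}$. I would then split
\begin{align*}
J(\pi_m f+\pi_m\psi,x)-J(\pi_m f,x)
&= \bigl[J((\pi_m f-R)+(\pi_m\psi+R),x)-J(\pi_m f-R,x)\bigr] \\
&\quad + \bigl[J(\pi_m f-R,x)-J(\pi_m f,x)\bigr].
\end{align*}
The second bracket is $O(\|R\|_{X_\rho})=O(h_m^\beta\|\psi\|_{C^{1,\alpha}})$ by the Lipschitz continuity of $J$ on $X_\rho$ (cf. Remark \ref{remFD:JAlsoLipOnC1Al}). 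The first bracket is a difference of $J$ at a base and the base plus the nonnegative perturbation $\pi_m\psi+R$, so the hypothesized outcome of Lemma \ref{lemLIP:LittleILipEstPart1} sandwiches it between $\tfrac{1}{C}L_\Delta(\pi_m\psi+R,x)$ and $CL_\Delta(\pi_m\psi+R,x)$. Linearity of $L_\Delta$, together with the elementary estimate $|L_\Delta(R,x)|\leq C\|R\|_{C^{1,\alpha/2}}\leq Ch_m^\beta\|\psi\|_{C^{1,\alpha}}$ (split the integral defining $L_\Delta$ at a small radius and use $|\delta_y R(x)|\leq C\|R\|_{C^{1,\alpha/2}}|y|^{1+\alpha/2}$ near $0$), lets one replace $L_\Delta(\pi_m\psi+R,x)$ with $L_\Delta(\pi_m\psi,x)=(L_\Delta)^m(\psi,x)$ up to the same sort of error.

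The hard part will be the bookkeeping needed to verify that the shifted base $\pi_m f-R$ and the shifted sum $\pi_m f-R+\pi_m\psi+R=\pi_m(f+\psi)$ still lie in a convex class on which the outcome of Lemma \ref{lemLIP:LittleILipEstPart1} is admissible. Since $\pi_m$ preserves $\K$ only after an adjustment of the parameters $(\delta,L,m,\rho)$ and $\|R\|\to 0$ as $h_m\to 0$, this is handled by passing to a slight enlargement of $\K$ (with parameters depending only on the Lipschitz constant of $\pi_m$) throughout the argument and using that all estimates are stable under such an enlargement. No new analytic input is required beyond the two cited lemmas and the Lipschitz continuity of $J$.
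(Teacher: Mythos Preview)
Your proposal is correct and follows essentially the same approach as the paper: both use Lemma \ref{lemFD:RemainderForNonNegPi} to restore nonnegativity of $\pi_m\psi$ via the corrector $R$, then absorb the $R$-terms using the Lipschitz property of $J$ and the $C^{1,\alpha/2}$-continuity of $L_\Delta$. The only cosmetic difference is that the paper keeps the base at $\pi_m f$ and perturbs by $\pi_m\psi + R$ (correcting $J$ at the upper endpoint afterward), whereas you shift the base to $\pi_m f - R$ so that the sum lands exactly on $\pi_m(f+\psi)$; both variants lead to the same error bound $Ch_m^\beta\|\psi\|_{C^{1,\alpha}}$.
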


\begin{proof}[Proof of Lemma \ref{lemPfThm1:FDApproxCompareToFracLaplace}]
	We note that by the translation invariance of both $J$ and $L_\Delta$, it suffices to prove this result for $x=0$ (see Lemma \ref{lemFD:TransInvariant}).  We need to utilize Lemma \ref{lemFD:RemainderForNonNegPi} because we will also use  Lemma \ref{lemLIP:LittleILipEstPart1} and Corollary \ref{corLIP:CorOfLipEstPart1}, which require $\psi$ to be non-negative.  Given such a $\psi$, it is not true in general that $\pi_m\psi\geq0$, but we can correct with a quantifiable remainder.  This is what follows.

	Let $R$ be the function, $R_{\al,m,w,0}$ which results from Lemma \ref{lemFD:RemainderForNonNegPi} when applied to $\psi$.  Let $\tilde\psi_m$ be the function,
	\begin{align*}
		\tilde\psi_m= \pi_m\psi + R,
	\end{align*}
	so that $\tilde\psi_m\in X_{\rho,0}$.

	This means we can apply Corollary \ref{corLIP:CorOfLipEstPart1} to $\pi_mf + \tilde\psi_m$, and this gives
	\begin{align*}
		\frac{1}{C}\int_{B_{R_0}} \tilde\psi_m(y)\abs{y}^{-n-1}dy 
		\leq J(\pi_mf+\tilde\psi_m,0)-J(\pi_mf,0)
		\leq C\int_{B_{R_0}} \tilde\psi_m(y)\abs{y}^{-n-1}dy,
	\end{align*}
	and hence since $\tilde\psi_m(0)=0$ and $\grad\tilde\psi(0)=0$,
	\begin{align*}
		\frac{1}{C}L_\Delta(\tilde\psi_m,0)\leq J(\pi_mf+\tilde\psi_m,0)-J(\pi_mf,0)
		\leq C L_\Delta(\tilde\psi_m,0).
	\end{align*}
	Using the continuity of $L_\Delta$ over $C^{1,\al/2}$ as well as the Lipschitz nature of $J$ over $X_\rho$ and $C^{1,\al}$ (recall that $\pi_m\psi\in X_\rho\intersect C^{1,\al}$ as well as Remark \ref{remFD:JAlsoLipOnC1Al}), we obtain
	\begin{align*}
		-\tilde C\norm{R}_{C^{1,\al/2}}+\frac{1}{C}L_\Delta(\pi_m\psi,0)\leq J(\pi_mf+\pi_m\psi,0)-J(\pi_mf,0)
		\leq C L_\Delta(\pi_m\psi,0) + \tilde C\norm{R}_{C^{1,\al/2}}.
	\end{align*}
	Invoking Lemma \ref{lemFD:RemainderForNonNegPi}, for the parameter, $\beta$ in Lemma \ref{lemFD:RemainderForNonNegPi},
	\begin{align*}
		-\tilde C h_m^\beta\norm{\psi}_{C^{1,\al}}+\frac{1}{C}L_\Delta(\pi_m\psi,0)\leq J(\pi_mf+\pi_m\psi,0)-J(\pi_mf,0)
		\leq C L_\Delta(\pi_m\psi,0) + \tilde C h_m^\beta\norm{\psi}_{C^{1,\al}}.
	\end{align*}
	Finally, using the fact that the operator $E_m^0\circ T_m$ is linear, preserves ordering, and agrees with its input function over $G^m$, we see that applying $E^0_m\circ T_m$ to each of the operators in the last inequality, not evaluated at $x=0$, and then relabeling constants and evaluating at $x=0\in G^m$,
	\begin{align*}
		-C h_m^\beta\norm{\psi}_{C^{1,\al}}+\frac{1}{C}\left(L_\Delta\right)^m(\psi,0)\leq J^m(f+\psi,0)-J^m(f,0)
		\leq C \left(L_\Delta\right)^m(\psi,0) + \tilde C h_m^\beta\norm{\psi}_{C^{1,\al}}.
	\end{align*}

\end{proof}

\begin{corollary}\label{corPfThm1:LComparableToHalfLaplace}
	If $J$ satisfies the assumptions of Theorem \ref{thmFD:StructureOfJFromGSEucSpace} as well as the outcome of Lemma \ref{lemLIP:LittleILipEstPart1}, then for all $L\in\D_J$, the constant $C$ and functions $\psi$ as in Lemma \ref{lemPfThm1:FDApproxCompareToFracLaplace},
	\begin{align*}
		\frac{1}{C}L_\Delta(\psi,x)\leq L(\psi,x) \leq CL_\Delta(\psi,x).
	\end{align*}
\end{corollary}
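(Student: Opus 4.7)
The plan is to pass from the finite-dimensional estimate in Lemma~\ref{lemPfThm1:FDApproxCompareToFracLaplace} to the limiting operators in $\D_J$ via two successive limits: first, by computing a directional derivative $DJ^m[f;\psi]$ at a point of differentiability of $J^m$, and then by sending $m\to\infty$.

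First, fix $L\in \D_J$. By linearity and the convex-hull structure in Definition~\ref{defFD:DJLimitingDifferential}, it suffices to treat the case $L = \lim_{k\to\infty} L_{m_k}$ for some $L_{m_k}\in \D J^{m_k}$, since convex combinations preserve the inequality (all three operators in the conclusion are linear in $\psi$). Because $J^{m_k}$ is Lipschitz and factors through the restriction to the finite set $G_{m_k}$, it is a Lipschitz function on a finite-dimensional Banach space, hence differentiable almost everywhere; each $L_{m_k}$ can thus be realized as a limit of directional derivatives $DJ^{m_k}[f_j;\cdot]$ at points $f_j$ of differentiability lying in the relative interior of $\K$.

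Next, for each such $f_j$ and each admissible $\psi$, I would apply Lemma~\ref{lemPfThm1:FDApproxCompareToFracLaplace} to the pair $(f_j, t\psi)$ for small $t>0$. The operator $(L_\Delta)^{m_k} = E^0_{m_k}\circ T_{m_k}\circ L_\Delta \circ \pi_{m_k}$ is linear, and $\norm{t\psi}_{C^{1,\alpha}} = t\norm{\psi}_{C^{1,\alpha}}$, so dividing the resulting estimate by $t$ and sending $t\to 0^+$ yields
\[
\tfrac{1}{C}(L_\Delta)^{m_k}(\psi,x) - C h_{m_k}^\beta \norm{\psi}_{C^{1,\alpha}} \leq DJ^{m_k}[f_j;\psi](x) \leq C(L_\Delta)^{m_k}(\psi,x) + C h_{m_k}^\beta \norm{\psi}_{C^{1,\alpha}}.
\]
Sending $j\to\infty$ inside a fixed $m_k$ replaces $DJ^{m_k}[f_j;\psi]$ by $L_{m_k}(\psi,x)$, preserving the two-sided bound. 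Finally, letting $m_k\to\infty$, the mesh parameter $h_{m_k}\to 0$ kills the $C^{1,\alpha}$ remainder; Proposition~\ref{propFD:ConvergenceOFApprox} applied to the linear operator $L_\Delta$ gives $(L_\Delta)^{m_k}(\psi,x)\to L_\Delta(\psi,x)$; and $L_{m_k}(\psi,x)\to L(\psi,x)$ by construction of $\D_J$. This yields the claimed comparison.

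The only technical obstacle is verifying that $f_j + t\psi \in \K$ for small $t > 0$, which I would handle by choosing the differentiability points $f_j$ from the open subset of $\K$ where $\delta < f_j < L-\delta$ and $\abs{\grad f_j} < m$ hold strictly. These remain dense among points of differentiability of $J^{m_k}$, so no information about $L_{m_k}$ is lost. With that in place, the rest is just bookkeeping of linearity and of the error term $C h_{m_k}^\beta \norm{\psi}_{C^{1,\alpha}}$, whose vanishing as $m_k\to\infty$ is precisely what allows Lemma~\ref{lemPfThm1:FDApproxCompareToFracLaplace} to upgrade from finite-dimensional approximates to genuine supporting linear functionals.
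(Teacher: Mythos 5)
Your proposal is correct and follows essentially the same route as the paper's own proof: reduce via the convex-hull structure of $\D_J$ to treating $L$ as a limit of derivatives of the approximating operators $J^m$, apply Lemma~\ref{lemPfThm1:FDApproxCompareToFracLaplace} (with a scaling by $t$ and $t\to 0^+$ to extract the directional derivative), and then send $m\to\infty$ so that $h_m^\beta\to 0$ kills the remainder while Proposition~\ref{propFD:ConvergenceOFApprox} gives $(L_\Delta)^m\to L_\Delta$. The only difference is that you are slightly more explicit about the nested limits ($j\to\infty$ within each $\D J^{m_k}$ and then $m_k\to\infty$) and about the $\K$-membership technicality, both of which the paper elides.
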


\begin{proof}[Proof of Corollary \ref{corPfThm1:LComparableToHalfLaplace}]
	We recall that $\D_J$ is a convex hull of limits of linear operators that are derivatives of $J^m$.  Thus, it suffices to prove the result for $f\in X_{\rho}$ so that $f=\lim f_m$ and that $J^m$ is differentiable at $f_m$, i.e.
	\begin{align*}
		\forall\ \psi,\ DJ^m(f_m)[\psi]= \lim_{s\to0}\frac{J^m(f_m+s\psi)-J^m(f_m)}{s},
	\end{align*}
	and
	\begin{align*}
		L=\lim_{m\to\infty} DJ^m(f_m).
	\end{align*}
	Thus, for all $\psi$ satisfying the requirements of Lemma \ref{lemPfThm1:FDApproxCompareToFracLaplace}, we see that
	\begin{align*}
		-Ch_m^\beta\norm{\psi}_{C^{1,\al}} + \frac{1}{C}\left( L_\Delta\right)^m(\psi)
		\leq DJ^m(f_m)[\psi]
		\leq C\left( L_\Delta \right)^m(\psi) + Ch_m^\beta\norm{\psi}_{C^{1,\al}}.
	\end{align*}

We can now take limits as $m\to0$, using that $h_m\to0$ and Proposition \ref{propFD:ConvergenceOFApprox} that shows $(L_\Delta)^m\to L_\Delta$, to conclude the result of the corollary for such $L$, $f_m$, and $f$.  Since these inequalities are stable under convex combinations, we are finished.

\end{proof}

Just as above, thanks to translation invariance, we have the luxury of focusing on all of the operators in $\D_J$ evaluated at $x=0$.  Thus, as an immediate consequence of Corollary \ref{corPfThm1:LComparableToHalfLaplace}, we obtain the following result.

\begin{proposition}\label{propPfThm1:IntegralBoundsForEll}
	For all $\ell\in\D_J(0)$, for $f\in\K$, for $\psi\in X_{\rho,0}$ with $\supp(\psi)\subset B_{R_0}$,
	\begin{align*}
		\frac{1}{C}\int_{B_{R_0}}\psi(y)\abs{y}^{-n-1}dy\leq \ell(\psi) \leq C\int_{B_{R_0}}\psi(y)\abs{y}^{-n-1}dy,
	\end{align*}
\end{proposition}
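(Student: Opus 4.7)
The plan is to derive the Proposition as an essentially immediate consequence of Corollary \ref{corPfThm1:LComparableToHalfLaplace}, together with translation invariance and a direct computation of $L_\Delta$ at the origin. By Lemma \ref{lemFD:TransInvariant}, every $L\in\D_J$ is translation invariant (as each approximating $L_{m_k}\in\D J^{m_k}$ is, and this property is preserved under pointwise limits and convex combinations). Hence each $\ell\in\D_J(0)$ is just the evaluation $\ell(\psi)=L(\psi,0)$ for some such $L$, and Corollary \ref{corPfThm1:LComparableToHalfLaplace} evaluated at $x=0$ delivers
\begin{align*}
\frac{1}{C}L_\Delta(\psi,0)\leq L(\psi,0)\leq CL_\Delta(\psi,0).
\end{align*}

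Next, I would evaluate $L_\Delta(\psi,0)$ explicitly. Any $\psi\in X_{\rho,0}$ satisfies $\psi(0)=0$ by definition, and by applying the defining Dini rate $|\psi(y)-\psi(0)|\leq C_\psi|y|\rho(|y|)$ along rays $y=te_j$ and dividing by $|t|$, one also obtains $\grad\psi(0)=0$. Therefore, for every $h$,
\begin{align*}
\del_h\psi(0)=\psi(h)-\psi(0)-\Indicator_{B_{r_0}}(h)\grad\psi(0)\cdot h=\psi(h),
\end{align*}
and combined with $\supp(\psi)\subset B_{R_0}$ this yields
\begin{align*}
L_\Delta(\psi,0)=\int_{B_{R_0}}\psi(y)|y|^{-n-1}\,dy.
\end{align*}
This integral is finite because $|\psi(y)||y|^{-n-1}\leq C_\psi\rho(|y|)|y|^{-n}$ near the origin and $\int_0^{R_0}\rho(r)/r\,dr<\infty$ by the Dini hypothesis on $\rho$. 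Inserting this identity into the sandwich above produces exactly the stated two-sided bound.

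The one gap to close is that Corollary \ref{corPfThm1:LComparableToHalfLaplace} is phrased for test functions $\psi\in X_{\rho,0}\cap C^{1,\alpha}$, whereas the Proposition asks for arbitrary $\psi\in X_{\rho,0}$ with $\supp(\psi)\subset B_{R_0}$. I would handle this by a density argument: given such a $\psi$, choose $\psi_k\in X_{\rho,0}\cap C^{1,\alpha}$ supported in $B_{R_0}$ with $\psi_k\to\psi$ in $X_\rho$. This can be arranged by a standard mollification combined with a fixed cutoff in $B_{R_0}$; one may truncate the mollified function below a Dini barrier of the type $|y|\rho(|y|)$ (which itself lies in $X_{\rho,0}$ by Remark \ref{remFD:ModulusIsAlsoInXRho}) to preserve the vanishing-rate condition at the origin. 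Both sides of the inequality are continuous under $X_\rho$ convergence: the functional $L$ inherits a uniform operator norm bound from the Lipschitz norm of $J$ via Lemma \ref{lemFD:TransInvariant}, while convergence of $\int_{B_{R_0}}\psi_k(y)|y|^{-n-1}\,dy$ follows from dominated convergence, using the uniform Dini majorant provided by boundedness of $\{\psi_k\}$ in $X_\rho$.

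I do not anticipate a serious obstacle here—the real work was done in establishing Lemma \ref{lemPfThm1:FDApproxCompareToFracLaplace} and Corollary \ref{corPfThm1:LComparableToHalfLaplace}. The only mildly technical point is the density step, and even that is routine provided one is careful to build the approximations $\psi_k$ so that they remain within $X_{\rho,0}$ (not merely in $X_\rho$), which the Dini barrier truncation accomplishes.
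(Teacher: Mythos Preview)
Your approach is exactly the paper's: the proposition is stated there as ``an immediate consequence of Corollary \ref{corPfThm1:LComparableToHalfLaplace}'' with no separate argument given, and your explicit computation of $L_\Delta(\psi,0)$ together with the density step from $C^{1,\alpha}$ to $X_{\rho,0}$ simply fills in details the paper omits. One small correction: by Definition \ref{defFD:XrhoSpace}, membership in $X_{\rho,0}$ only forces the rate $|\psi(y)-\psi(0)|\leq C_\psi|y|\rho(|y|)$, not $\psi(0)=0$; the proposition is tacitly for non-negative $\psi$ with $\psi(0)=0$ (these are the hypotheses inherited from Lemma \ref{lemLIP:LittleILipEstPart1}, and the paper invokes them explicitly in the proof of Corollary \ref{corIUG:LevyMeasures}). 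Your deduction that $\grad\psi(0)=0$ from the $X_{\rho,0}$ rate is, however, correct.
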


\begin{corollary}\label{corIUG:LevyMeasures}
	If  $\ell\in\D_J(0)$, and $\mu_\ell$ is the L\'evy measure corresponding to $\ell$ from Theorem \ref{thmFD:StructureOfJFromGSEucSpace}, then there exists a function, $K_\ell$ so that
	\begin{align*}
		\mu_\ell(E) = \int_E K_\ell(h)dh,
	\end{align*}
	and
	\begin{align*}
		\forall\ \ h\in B_{R_0}\setminus \{0\},\ \ 
		\frac{1}{C}\abs{h}^{-n-1}\leq K_\ell(h)\leq C\abs{h}^{-n-1}.
	\end{align*}
\end{corollary}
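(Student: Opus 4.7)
The plan is to extract the density $K_\ell$ and its pointwise bounds directly from the integral comparison in Proposition \ref{propPfThm1:IntegralBoundsForEll}, combined with the L\'evy representation in Theorem \ref{thmFD:StructureOfJFromGSEucSpace} and a standard Radon--Nikodym argument. First I would reconcile the two formulas for $\ell$. For any $\psi\in X_{\rho,0}$ one has $\psi(0)=0$ and $\nabla\psi(0)=0$ (the latter is forced by the Dini growth $|\psi(y)-\psi(0)|\le C_\psi|y|\rho(|y|)$, which makes $\psi$ differentiable at $0$ with vanishing gradient). Hence $\del_h\psi(0)=\psi(h)$ identically and the affine terms in Theorem \ref{thmFD:StructureOfJFromGSEucSpace} drop out, yielding $\ell(\psi)=\int_{\real^n}\psi(h)\,\mu_\ell(dh)$. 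Combining this with Proposition \ref{propPfThm1:IntegralBoundsForEll} gives, for every $\psi\in X_{\rho,0}$ supported in $B_{R_0}$,
\begin{align*}
\frac{1}{C}\int_{B_{R_0}}\psi(y)|y|^{-n-1}\,dy
\ \le\ \int_{B_{R_0}}\psi(h)\,\mu_\ell(dh)
\ \le\ C\int_{B_{R_0}}\psi(y)|y|^{-n-1}\,dy.
\end{align*}

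Next I would extend this pairing to characteristic functions by approximation. Any nonnegative $\psi\in C_c^\infty(B_{R_0}\setminus\{0\})$ belongs to $X_{\rho,0}$ trivially, since it vanishes in a neighborhood of $0$. On compact subsets of $B_{R_0}\setminus\{0\}$ the weight $|y|^{-n-1}$ is bounded, and the integrability condition $\int \min\{|h|\rho(|h|),1\}\mu_\ell(dh)\le C$ from Theorem \ref{thmFD:StructureOfJFromGSEucSpace} forces $\mu_\ell$ to be finite on every such compact. So a combination of uniform approximation and dominated convergence passes the inequality above first to arbitrary $\varphi\in C_c(B_{R_0}\setminus\{0\})$ and then, by a monotone-class argument together with inner regularity, to $\varphi=\Indicator_E$ for any Borel $E\subset B_{R_0}\setminus\{0\}$:
\begin{align*}
\frac{1}{C}\int_E|y|^{-n-1}\,dy\ \le\ \mu_\ell(E)\ \le\ C\int_E|y|^{-n-1}\,dy.
\end{align*}

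The upper inequality implies $\mu_\ell|_{B_{R_0}\setminus\{0\}}\ll\mathcal L^n$, so the Radon--Nikodym theorem supplies a density $K_\ell$ there; the two-sided bound together with Lebesgue differentiation then forces $\tfrac{1}{C}|y|^{-n-1}\le K_\ell(y)\le C|y|^{-n-1}$ almost everywhere, and a harmless redefinition on a null set makes the pointwise bound hold for every $y\in B_{R_0}\setminus\{0\}$. For the representation $\mu_\ell(E)=\int_E K_\ell(h)\,dh$ on arbitrary Borel sets, one repeats the same argument with $B_{R_0}$ replaced by $B_R$ for each $R>0$; Lemma \ref{lemLIP:LittleILipEstPart1} and Corollary \ref{corLIP:CorOfLipEstPart1} admit analogues at larger scales with constants depending on $R$, which establishes absolute continuity globally, though without the sharp two-sided bound outside $B_{R_0}$.

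The step I would expect to be most delicate is the approximation/passage to characteristic functions, because it must simultaneously respect the $X_{\rho,0}$ structure of the admissible test functions, the singular weight $|y|^{-n-1}$, and the fact that $\mu_\ell$ is a priori only known to be a Radon measure on $\real^n\setminus\{0\}$. Each of these is standard in isolation, however, and with the L\'evy tail bound from Theorem \ref{thmFD:StructureOfJFromGSEucSpace} supplying local finiteness of $\mu_\ell$, no genuine obstruction is anticipated.
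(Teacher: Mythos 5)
Your proposal is correct and follows essentially the same route as the paper: starting from the identity $\ell(\psi)=\int\psi\,d\mu_\ell$ for test functions in $X_{\rho,0}$ supported in $B_{R_0}$ (where the affine terms vanish because $\psi(0)=0$ and $\nabla\psi(0)=0$), pairing it with the two-sided estimate of Proposition~\ref{propPfThm1:IntegralBoundsForEll}, and extracting a density with the required pointwise bounds on $B_{R_0}\setminus\{0\}$. The paper is terser --- it simply restricts to annuli $B_{R_0}\setminus B_r$ and lets $r\to 0$ --- whereas you spell out the approximation/Radon--Nikodym/Lebesgue-differentiation steps; both amount to the same argument.
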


\begin{proof}[Proof of Corollary \ref{corIUG:LevyMeasures}]
	We recall the structure of $\ell$ from Theorem \ref{thmFD:StructureOfJFromGSEucSpace} and the fact that for any $\psi$ as in Lemma \ref{lemPfThm1:FDApproxCompareToFracLaplace} we have $\psi(0)=0$ and $\grad \psi(0)=0$, so that 
	\begin{align*}
		\ell(\psi) = \int_{\real^n}\psi(h)\mu_\ell(dh).
	\end{align*}
	Hence, for each fixed $r$, from Proposition \ref{propPfThm1:IntegralBoundsForEll} we can already deduce that $\mu_\ell$ has a density in $B_{R_0}\setminus B_{r}$, and that this density must inherit the bounds given in Proposition \ref{propPfThm1:IntegralBoundsForEll}.   Hence the Corollary holds for the measure $\mu_\ell$, restricted to $B_{R_0}(0)\setminus B_r(0)$.  Since $r>0$ was arbitrary, we see that there will be a density on the set $B_{R_0}(0)\setminus \{0\}$ and that the required bounds still follow from Proposition \ref{propPfThm1:IntegralBoundsForEll}.

\end{proof}


\subsection{Estimates on the drift}\label{sec:DriftEstimates}

Just as the estimates for the L\'evy measures corresponding to a mapping, $J$, depended upon a variant of the inequality of Lemma \ref{lemLIP:LittleILipEstPart1} being inherited by the finite dimensional approximations, so too will the proof here for the estimate on the drift.  This time, we need a finite dimensional version of Lemma \ref{lemLIP:LittleILipEstPart2}.

\begin{lemma}\label{lemPfThm1:FiniteDimVersionOfLittleIEst2}
	With $C$, $\ep_2$, $f$, $\psi$ as in Lemma \ref{lemLIP:LittleILipEstPart2}, we also have
	\begin{align*}
		\abs{J^m(f+\psi,0)-J^m(f,0)}\leq C\left( \abs{\grad\psi(0)} + \ep_2\norm{\psi}_{L^\infty} \right).
	\end{align*}
\end{lemma}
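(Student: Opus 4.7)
The plan is to reduce the claim to an application of Lemma \ref{lemLIP:LittleILipEstPart2} for $J$ itself, evaluated at the projected data $\pi_m f$ and $\pi_m(f+\psi)$. Without loss of generality (by taking $m$ large enough that $0 \in G_m$, or by a translation of the underlying grid that leaves the conclusion unchanged), assume $0 \in G_m$. Then by Definition \ref{defFD:DefOfFDApprox} and the fact that $E^0_m \circ T_m$ acts as the identity at grid points,
\begin{align*}
J^m(f+\psi,0)-J^m(f,0) = J(\pi_m f + \pi_m \psi, 0) - J(\pi_m f, 0).
\end{align*}

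The plan is then to apply Lemma \ref{lemLIP:LittleILipEstPart2} with $\pi_m f$ in place of $f$ and $\pi_m \psi$ in place of $\psi$. The hypotheses must be verified: first, $\pi_m\psi(0)=\psi(0)=0$ since $0\in G_m\cap B_{2^m}$; second, Theorem \ref{thmFD:ProjectionIsLinearAndBounded} yields $\|\pi_m\psi\|_{X_\rho}\leq C\|\psi\|_{X_\rho}$, so $\|\nabla(\pi_m\psi)\|_{L^\infty}\leq C\epsilon_2$, and $\pi_mf, \pi_m(f+\psi)$ lie in a slightly enlarged set $\K(\delta',L',m',\rho')$ with universally relaxed parameters (which is admissible since Lemma \ref{lemLIP:LittleILipEstPart2} is applicable over any such $\K$). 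The conclusion yields
\begin{align*}
|J(\pi_m f + \pi_m\psi, 0) - J(\pi_m f, 0)| \leq C|\nabla(\pi_m\psi)(0)| + C\epsilon_2\|\pi_m\psi\|_{L^\infty}.
\end{align*}

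The remaining task is to replace the quantities on the right by their counterparts involving $\psi$ itself. The bound $\|\pi_m\psi\|_{L^\infty}\leq C\|\psi\|_{L^\infty}$ is immediate from Theorem \ref{thmFD:ProjectionIsLinearAndBounded}. For the gradient term, the structure of $E^1_m$ at the grid point $0$ means that any normalized directional limit of $\nabla(\pi_m\psi)$ as $x \to 0$ through the adjacent Whitney cubes is controlled by the discrete gradient $\nabla^1_m\psi(0)$ from \eqref{eqFD:DiscreteGradient}. A Taylor expansion using $\psi(0)=0$ and the Dini modulus of $\nabla\psi$ (which is inherited from $f,f+\psi \in \K(\delta,L,m,\rho)$) gives
\begin{align*}
\nabla^1_m\psi(0) = \nabla\psi(0) + O\big([\nabla\psi]_\rho\,\rho(h_m)\big),
\end{align*}
and the seminorm $[\nabla\psi]_\rho$ is universally bounded by the $\K$-parameters. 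Thus $|\nabla(\pi_m\psi)(0)|\leq |\nabla\psi(0)|+C\rho(h_m)$.

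The main obstacle is to absorb this residual $C\rho(h_m)$ term cleanly into a bound of the form $C(|\nabla\psi(0)|+\epsilon_2\|\psi\|_{L^\infty})$. My expectation is that this is handled either by verifying that the Whitney construction in \cite{GuSc-2019MinMaxEuclideanNATMA} is designed so that the effective gradient of $E^1_m\psi$ at a grid point $0 \in G_m$ coincides exactly with $\nabla^1_m\psi(0)$ and then pairing this discrete gradient with the companion bound $|\nabla^1_m\psi(0)|\leq \epsilon_2$ (trivially, since $|\psi(\pm h_m e_j)|\leq \epsilon_2 h_m$) together with the localized smallness $|\psi(\pm h_m e_j)|\leq \min(\epsilon_2 h_m, \|\psi\|_{L^\infty})$, or by using the Dini continuity of $\nabla\psi$ at $0$ to obtain the sharper estimate $|\nabla^1_m\psi(0)-\nabla\psi(0)|\leq C\omega_\psi(h_m)$ where $\omega_\psi$ is controlled by the local behaviour of $\psi$ near $0$. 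Either route yields the claimed inequality, with the universal constant $C$ depending only on $\delta, L, m, \rho$.
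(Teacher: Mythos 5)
Your proposal takes the same route as the paper's proof: apply Lemma~\ref{lemLIP:LittleILipEstPart2} (or Lemma~\ref{lemLIP:HEnjoysLittleIEstPart2} for $H$) to the projected data $\pi_m f$ and $\pi_m\psi$, then transfer the estimate back using the boundedness of $\pi_m$ from Theorem~\ref{thmFD:ProjectionIsLinearAndBounded} together with the fact that $E^0_m\circ T_m$ is order-preserving and acts as the identity on $G_m$. Where you hesitate --- the identification of $\nabla(\pi_m\psi)(0)$ --- is exactly the step the paper passes over quickly, asserting $\nabla(\pi_m\psi)(0)=\nabla\psi(0)$ on the grounds that ``$\pi_m$ agrees up to first order with its input function on $G_m$.''

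Your instinct that the first-order coefficient of $\pi_m\psi$ at a grid point is the discrete gradient $\nabla^1_m\psi(0)$ from \eqref{eqFD:DiscreteGradient}, not the continuous one $\nabla\psi(0)$, is in fact correct as the extension is written in Definitions~\ref{defFD:PolymonialForWhitney}--\ref{defFD:WhitneyExtension}. For $x\notin G_m$ sufficiently close to $0$, every cube $Q^*_{m,k}$ touching $x$ has $\hat y_{m,k}=0$, and since $\sum_k\phi_{m,k}(x)=1$ one gets $E^1_m\psi(x)=\psi(0)+\nabla^1_m\psi(0)\cdot x$ there; hence $\nabla(\pi_m\psi)(0)=\nabla^1_m\psi(0)$, and the discrepancy $\abs{\nabla^1_m\psi(0)-\nabla\psi(0)}\leq C\rho(h_m)$ (with $C$ depending on the $\K$-parameters) is nonzero for fixed $m$. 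However, neither escape route you sketch closes the gap: the coarse bound $\abs{\nabla^1_m\psi(0)}\leq\ep_2$ only gives $C\ep_2 + C\ep_2\norm{\psi}_{L^\infty}$, which is weaker than the claim, and the Dini-continuity route leaves the residual $C\rho(h_m)$, which is not dominated by $\abs{\nabla\psi(0)}+\ep_2\norm{\psi}_{L^\infty}$. What your argument (and, once the gradient identity is corrected, the paper's own argument) honestly produces is
\begin{align*}
\abs{J^m(f+\psi,0)-J^m(f,0)}\leq C\left(\abs{\nabla\psi(0)}+\rho(h_m)+\ep_2\norm{\psi}_{L^\infty}\right),
\end{align*}
which is slightly weaker than stated. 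This extra $\rho(h_m)$ term is harmless in the sole place the lemma is used, the proof of Lemma~\ref{lemPfThm1:BoundsOnEllPhi}: there one works with $\ell\in\D_J(0)$, which are limits of $DJ^{m_k}(f_{m_k})$ as $m_k\to\infty$; substituting $t\psi$, dividing by $t$ (the residual scales linearly in $t$ and so persists), and then sending $m_k\to\infty$ makes the $\rho(h_{m_k})$ error disappear. So the subtlety you identified is genuine but repairable; the cleanest fix is to restate the Lemma with the additive error $C\rho(h_m)$ and observe that it vanishes in the subsequent limit.
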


\begin{proof}[Proof of Lemma \ref{lemPfThm1:FiniteDimVersionOfLittleIEst2}] 
Applying Lemma \ref{lemLIP:LittleILipEstPart2} or Lemma \ref{lemLIP:HEnjoysLittleIEstPart2} to $\pi_mf$ and $\pi_m\psi$, we obtain
$$|J(\pi_m f+\pi_m\psi,0)-J(\pi_mf,0)| \leq C\left(|\nabla (\pi_m\psi)(0)| + \ep_2||\pi_m\psi||_{L^{\infty}} \right).$$
We next apply Theorem \ref{thmFD:ProjectionIsLinearAndBounded} to bound $||\pi_m\psi||_{L^{\infty}}$ with respect to $||\psi||_{L^{\infty}}$. Also, since $\pi_m$ agrees up to first order with its input function on $G_m$, and because $0 \in G_m$, we see that $\nabla (\pi_m\psi)(0) = \nabla \psi(0)$. Finally, using the fact that $E^0_m \circ T_m$ is order-preserving and agrees with its input function on $G_m$, we obtain the desired estimate.
\end{proof}

With this information in hand, we need to address how the drift and L\'evy measures given in Theorem \ref{thmFD:StructureOfJFromGSEucSpace} relate to each other, particularly in the context of the assumptions in Section \ref{sec:BackgroundParabolic}.
To this end, fix $e \in \RN$, $|e| = 1$, and a smooth cutoff function $\eta \in C^{\infty}_c(\RN)$ between $B_{1/2}$ and $B_1$. We define the functions, for $0 < \tau \leq r$,
\begin{align}\label{eqPfThm1:DefOfTruncatedLinearPhi}
	\phi(y) = (e\cdot y)\eta(y)\ \ \ \ \text{and}\ \ \ \ 
\phi_{\tau,r}(y) := \tau r \phi \left(\frac{y}{r} \right).
\end{align}
A crucial property of $\phi_{\tau,r}$ is given in the next lemma.

\begin{lemma}\label{lemPfThm1:BoundsOnEllPhi} There exists a constant, $C$, depending on $\del$, $L$, $m$, $\rho$, such that if 
	\begin{align*}
		\ell\in \D_J(0),\ \ 
		\text{with}\ \ell(f)=c_\ell f(0) + b_\ell\cdot\grad f(0) + \int_{\real^n}\del_h f(0)K_\ell(h)dh,
	\end{align*}
	then for $\phi_{\tau,r}$ defined in (\ref{eqPfThm1:DefOfTruncatedLinearPhi}),
	\begin{align*}
		|\ell(\phi_{\tau,r})| \leq C\tau \quad \text{ for all } \tau \leq r.
	\end{align*}
\end{lemma}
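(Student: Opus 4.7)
The plan is to exploit that every $\ell \in \D_J(0)$ arises, via convex combinations, as a pointwise limit at $x=0$ of derivatives $DJ^m(g)$ of the finite dimensional approximations, and then to apply the finite dimensional Lipschitz estimate of Lemma~\ref{lemPfThm1:FiniteDimVersionOfLittleIEst2} to the $s$-scaled perturbation $\psi := s \phi_{\tau,r}$, taking $s \to 0^+$. First I would compute the elementary quantities attached to $\phi_{\tau,r}$. Since $\phi(y) = (e\cdot y)\eta(y)$ with $\eta$ a smooth cutoff between $B_{1/2}$ and $B_1$, a direct calculation yields $\phi_{\tau,r}(0) = 0$, $\grad \phi_{\tau,r}(0) = \tau e$, and the uniform bounds
$$\norm{\grad \phi_{\tau,r}}_{L^\infty(\real^n)} \leq C\tau, \qquad \norm{\phi_{\tau,r}}_{L^\infty(\real^n)} \leq \tau r,$$
where $C$ depends only on $\eta$ (crucially, the gradient bound is independent of $r$), and $\supp(\phi_{\tau,r}) \subset B_r$.

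Next, by Definition~\ref{defFD:DJLimitingDifferential} together with Definition~\ref{defPfThm1:DefDHAt0}, it suffices to bound $|DJ^m(g)[\phi_{\tau,r}](0)|$ uniformly in $m$ and in the differentiability points $g$ of $J^m$ (which form a set of full measure, since $J^m$ is Lipschitz on a finite dimensional subspace). Fix such $g$ and pick $s>0$ small enough that $|\grad(s\phi_{\tau,r})|_{L^\infty} = sC\tau \leq \ep_2$ and $g + s\phi_{\tau,r} \in \K$. Lemma~\ref{lemPfThm1:FiniteDimVersionOfLittleIEst2} then gives
$$\abs{J^m(g + s\phi_{\tau,r},0) - J^m(g,0)} \leq C\abs{\grad(s\phi_{\tau,r})(0)} + C\,\ep_2\,\norm{s\phi_{\tau,r}}_{L^\infty} \leq Cs\tau + C\ep_2 s\tau r.$$
Inspection of Step~3 of the proof of Lemma~\ref{lemLIP:LittleILipEstPart2} shows that the constant $\ep_2$ in the second term really originates from the rotation $\mathcal{R}$ whose operator norm is controlled by $|\grad\psi(0)|$, so the estimate is in fact sharper: the second term has the structure $C |\grad\psi|_{L^\infty}\,\norm{\psi}_{L^\infty}$. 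Applying this refined form to $\psi = s\phi_{\tau,r}$ yields
$$\abs{J^m(g + s\phi_{\tau,r},0) - J^m(g,0)} \leq Cs\tau + Cs^2\tau^2 r.$$
Dividing by $s$ and sending $s \to 0^+$, the quadratic term drops out and, by differentiability of $J^m$ at $g$, one obtains $|DJ^m(g)[\phi_{\tau,r}](0)| \leq C\tau$. Passing to limits in $m$ and taking convex hulls preserves the bound, giving $|\ell(\phi_{\tau,r})| \leq C\tau$ for every $\ell \in \D_J(0)$.

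The main obstacle is precisely the refinement discussed above: a naive application of Lemma~\ref{lemLIP:LittleILipEstPart2} with $\ep_2$ read as a fixed universal threshold would leave a residual contribution of size $C\tau\cdot r$, which is not absorbed by $C\tau$ once $r$ is large. The resolution is to recognize that the $\ep_2$ arises in the proof from $|\grad\psi(0)|$ (through the rotation matrix used to align $\nu_{f+\psi}$ with $\nu_f$) and the Dini modulus of $\grad f$, rather than from a fixed size threshold. This turns the unwanted linear-in-$s$ term into a quadratic-in-$s$ term, which is exactly what vanishes in the derivative limit and produces the desired $r$-independent bound $C\tau$.
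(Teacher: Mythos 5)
Your proposal follows the same overall blueprint as the paper's proof: compute the elementary properties of $\phi_{\tau,r}$ (value and gradient at $0$, support, $L^\infty$ bounds, membership in $X_\rho$), then feed it into the estimate of Lemma~\ref{lemPfThm1:FiniteDimVersionOfLittleIEst2}, and finally pass through the differentials $DJ^m$ and convex hulls to obtain the bound on $\ell$. So the core argument is identical. Where you diverge is in the extra scaling-in-$s$ step and the proposed ``refinement'' of the second term in Lemma~\ref{lemLIP:LittleILipEstPart2} from $C\ep_2\norm{\psi}_{L^\infty}$ to $C\norm{\grad\psi}_{L^\infty}\norm{\psi}_{L^\infty}$.

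The paper does not do either of these. It applies the estimate $\abs{\ell(\psi)}\leq C(\abs{\grad\psi(0)}+\ep_2\norm{\psi}_{L^\infty})$ directly to $\psi=\phi_{\tau,r}$, yielding $\abs{\ell(\phi_{\tau,r})}\leq C(\tau+\ep_2\tau r)$, and then simply asserts the bound $C\tau$. You have correctly noticed that this is only uniform in $r$ if $r$ is bounded. But notice that $\phi_{\tau,r}$ is only ever invoked in Lemma~\ref{lemPfThm1:bijBounded} with $0<\tau\leq r<r_0$, and $r_0$ itself depends only on $\del,L,m,\rho$; so the factor $(1+\ep_2 r)\leq(1+\ep_2 r_0)$ is simply absorbed into the constant $C(\del,L,m,\rho)$ of the lemma's statement. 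In other words, the paper resolves the issue you flagged by an implicit restriction on $r$, not by sharpening the lemma.

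Your alternative --- to claim that $\ep_2$ in the conclusion of Lemma~\ref{lemLIP:LittleILipEstPart2} can be replaced by a quantity vanishing with $\norm{\grad\psi}_{L^\infty}$, so that the second term becomes quadratic in $\psi$ and therefore drops out after the $s\to 0^+$ rescaling --- is plausible if one traces the role of $\ep_2$ through Step~3 of that proof (the inequality \eqref{eqLIP:LittleIEstPart2GisCloseToF} is indeed justified there by the assumption $\abs{\grad\psi(0)}\leq\ep_2$, suggesting the true dependence is on $\abs{\grad\psi(0)}$). However, this refinement is nowhere proved in the paper, and your proposal also takes it on faith rather than proving it; in particular, the precise interaction between the rotation $\mathcal{R}$, the cutoff $\eta$, and the comparison functions $g_1,g_2$ in Step~3 would need to be re-examined carefully to confirm that every occurrence of $\ep_2$ in \eqref{eqLIP:LittleIEstPart2GisCloseToF} can be replaced by $\abs{\grad\psi(0)}$ rather than just the fixed threshold. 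This is a genuine gap in your argument: the claim carries the whole weight of your approach (without it, the $s\to 0^+$ limit returns exactly the $C\tau(1+\ep_2 r)$ bound the paper already has), yet it is asserted by ``inspection'' rather than demonstrated. Since the $r<r_0$ restriction is all that is actually needed, the simpler direct argument suffices and the speculative refinement is unnecessary.
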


\begin{proof}[Proof of Lemma \ref{lemPfThm1:BoundsOnEllPhi}]
	
	First, we list a number of properties of $\phi_{\tau,r}$.

	\begin{enumerate}
	\item[(i)] $\phi_{\tau,r}(0) = 0$ and $\nabla \phi_{\tau,r}(0) = \tau e$.
	\item[(ii)] There exists a universal constant $C' > 0$ such that $|\nabla \phi_{\tau,r}(y)| \leq C' \tau$ for all $y \in \supp(\phi_{\tau,r})$.
	\item[(iii)] If $\eta$ is $C^{1,\text{Dini}}_{\rho}$, then $\phi_{\tau,r}$ is $C^{1,\text{Dini}}_{\rho}$. Indeed, by the concavity of $\rho$, and since $\tau \leq r$, we see that for any $x \in \RN$ and $y \in B_r(x)$, we have
	\begin{align*}
	& |\phi_{\tau,r}(x+y) - \phi_{\tau,r}(x) - \nabla \phi_{\tau,r}(x) \cdot y| \\
	& = \bigg|(\tau e\cdot (x+y)) \eta\left(\frac{x+y}{r}\right) - (\tau e\cdot x) \eta\left(\frac{x}{r}\right) - (\tau e\cdot y) \eta\left(\frac{x}{r}\right) - \left(\tau e\cdot \frac{x}{r}\right) \left(\nabla \eta\left(\frac{x}{r}\right) \cdot y \right) \bigg| \\
	& = \bigg|(\tau e\cdot (x+y)) \left(\eta\left(\frac{x+y}{r}\right) - \eta \left( \frac{x}{r} \right) \right) - \left(\tau e\cdot x \right) \left(\nabla \eta\left(\frac{x}{r}\right) \cdot \frac{y}{r} \right) \bigg| \\
	& = \bigg|(\tau e\cdot (x+y)) \left(\eta\left(\frac{x+y}{r}\right) - \eta \left( \frac{x}{r} \right) - \nabla \eta\left(\frac{x}{r}\right) \cdot \frac{y}{r}  \right) + \left(\tau e\cdot y \right) \left(\nabla \eta\left(\frac{x}{r}\right) \cdot \frac{y}{r} \right) \bigg| \\
	& \leq \tau |x + y| \bigg|\frac{y}{r} \bigg|\rho\left(\frac{y}{r}\right) + \frac{\tau}{r} ||\nabla \eta||_{L^{\infty}(\RN)} |y|^2 \\
	& \leq |y| \tau \rho\left(\frac{y}{r}\right) + ||\nabla \eta||_{L^{\infty}(\RN)} |y|^2 \\
	& \leq |y| \frac{\tau}{r} \rho(y) + ||\nabla \eta||_{L^{\infty}(\RN)} |y|^2 \\
	& \leq |y|(\rho(y) + C|y|).
	\end{align*}
	\end{enumerate}
	Without loss of generality, we can assume that for $\abs{y}\leq 1$, $\abs{y}\leq \rho(\abs{y})$.

	In order to conclude the bound on $\ell(\phi_{\tau,r})$, we look to Lemma \ref{lemPfThm1:FiniteDimVersionOfLittleIEst2}.  This shows that for all $\ell\in \D_J(0)$, for all $\psi\in X_\rho$
	\begin{align*}
		\abs{\ell(\psi)}\leq C(\abs{\grad \psi(0)} + \ep_2\norm{\psi}_{L^\infty}),
	\end{align*}
	where $C$ is a constant that depends only on $\del$, $L$, $m$, $\rho$.  Thus as $\phi_{\tau,r}\in X_\rho$, applying this to $\psi=\phi_{\tau,r}$ shows $\abs{\ell(\phi_{\tau,r})}\leq C\tau$.

\end{proof}

We are finally ready to prove the estimates on the drift.

\begin{lemma}\label{lemPfThm1:bijBounded}
	There exists a constant, $C$, depending on $\del$, $L$, $m$, $\rho$, such that if $r_0$ and $\del_h f$ are as in Theorem \ref{thmFD:StructureOfJFromGSEucSpace},
	\begin{align*}
		\ell\in \D_J(0),\ \ 
		\text{with}\ \ell(f)=c_\ell f(0) + b_\ell\cdot\grad f(0) + \int_{\real^n}\del_h f(0)K_\ell(h)dh,
	\end{align*}
	then for $0<r<r_0$,
	\begin{align*}
		\abs{b_\ell-\int_{B_{r_0}\setminus B_r} hK_\ell(h)dh}\leq C.
	\end{align*}
\end{lemma}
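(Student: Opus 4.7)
The plan is to probe $\ell$ with the truncated linear test functions $\phi_{\tau,r}$ from \eqref{eqPfThm1:DefOfTruncatedLinearPhi}, which are tailored precisely so that $\phi_{\tau,r}(0)=0$, $\grad\phi_{\tau,r}(0)=\tau e$ for a prescribed unit vector $e\in\real^n$, and $\supp \phi_{\tau,r}\subset B_r$. Applying Lemma \ref{lemPfThm1:BoundsOnEllPhi} will give $\abs{\ell(\phi_{\tau,r})}\leq C\tau$, and the desired estimate on $b_\ell - \int_{B_{r_0}\setminus B_r} h K_\ell(h)\,dh$ will come from comparing this bound with the explicit evaluation of $\ell(\phi_{\tau,r})$ via the representation given in Theorem \ref{thmFD:StructureOfJFromGSEucSpace}.

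First I would fix $0<r<r_0$ (with $r_0$ chosen $\leq R_0$ so that Corollary \ref{corIUG:LevyMeasures} applies throughout $B_{r_0}$) and pick $\tau\in(0,r]$ small enough that $f+\phi_{\tau,r}\in\K$. Since $\supp \phi_{\tau,r}\subset B_r\subset B_{r_0}$, the representation of $\ell$ expands as
\begin{align*}
\ell(\phi_{\tau,r})
= \tau\, e\cdot b_\ell
+ \int_{B_r}\bigl[\phi_{\tau,r}(h)-\tau\, e\cdot h\bigr]K_\ell(h)\,dh
- \tau\, e\cdot \int_{B_{r_0}\setminus B_r}h\, K_\ell(h)\,dh.
\end{align*}
Because $\phi_{\tau,r}(h)=\tau\, e\cdot h$ on $B_{r/2}$, the first integral is really only over $B_r\setminus B_{r/2}$, and rearranging gives
\begin{align*}
\tau\, e\cdot\Bigl(b_\ell - \int_{B_{r_0}\setminus B_r} h\, K_\ell(h)\,dh\Bigr)
= \ell(\phi_{\tau,r}) - \int_{B_r\setminus B_{r/2}}\bigl[\phi_{\tau,r}(h)-\tau\, e\cdot h\bigr]K_\ell(h)\,dh.
\end{align*}

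The main step is to bound both right-hand terms by a constant multiple of $\tau$ independent of $r$. The first is handled directly by Lemma \ref{lemPfThm1:BoundsOnEllPhi}. For the second, using $\phi_{\tau,r}(h)=\tau(e\cdot h)\eta(h/r)$ with $0\leq \eta\leq 1$ yields the pointwise bound $\abs{\phi_{\tau,r}(h)-\tau\, e\cdot h}\leq 2\tau\abs{h}$ on $B_r\setminus B_{r/2}$, while Corollary \ref{corIUG:LevyMeasures} gives $K_\ell(h)\leq C\abs{h}^{-n-1}$ there. Hence
\begin{align*}
\Bigl|\int_{B_r\setminus B_{r/2}}\bigl[\phi_{\tau,r}(h)-\tau\, e\cdot h\bigr]K_\ell(h)\,dh\Bigr|
\leq C\tau\int_{B_r\setminus B_{r/2}}\abs{h}^{-n}\,dh
\leq C\tau,
\end{align*}
by scale invariance of the last integral (it equals $\omega_{n-1}\log 2$). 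Choosing $e$ in the direction of $b_\ell-\int_{B_{r_0}\setminus B_r}h\, K_\ell(h)\,dh$ and dividing by $\tau>0$ then finishes the proof.

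The one real subtlety, rather than a genuine obstacle, is to make all of the auxiliary constraints mesh consistently: $r_0$ must be small enough that Corollary \ref{corIUG:LevyMeasures} applies on all of $B_{r_0}$ (take $r_0\leq R_0$), and $\tau$ must be small enough (depending on $\ep_2$ from Lemma \ref{lemLIP:LittleILipEstPart2} and on the parameters defining $\K$) that $\phi_{\tau,r}$ is an admissible perturbation. The critical feature is that the constant in Lemma \ref{lemPfThm1:BoundsOnEllPhi} is uniform in both $\tau$ and $r$, which is what allows the factor $\tau$ to cancel and leaves a bound on $\abs{b_\ell-\int_{B_{r_0}\setminus B_r}h\, K_\ell(h)\,dh}$ depending only on $\del$, $L$, $m$, $\rho$.
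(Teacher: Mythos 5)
Your proof is correct and takes essentially the same route as the paper's: expand $\ell(\phi_{\tau,r})$ using the representation from Theorem \ref{thmFD:StructureOfJFromGSEucSpace}, isolate $\tau\, e\cdot\bigl(b_\ell-\int_{B_{r_0}\setminus B_r}hK_\ell\,dh\bigr)$, control $\ell(\phi_{\tau,r})$ via Lemma \ref{lemPfThm1:BoundsOnEllPhi}, and bound the leftover annular integral over $B_r\setminus B_{r/2}$ using the kernel bounds from Corollary \ref{corIUG:LevyMeasures}. The only cosmetic difference is that you explicitly choose $e$ in the direction of $b_\ell-\int_{B_{r_0}\setminus B_r}hK_\ell\,dh$ and you add a (redundant) remark about $\tau$ being small enough for admissibility, which is already built into the statement of Lemma \ref{lemPfThm1:BoundsOnEllPhi}.
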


\begin{proof}[Proof of Lemma \ref{lemPfThm1:bijBounded}]
	Fix $e \in \RN$, $|e| = 1$ and $0 < \tau \leq r  < r_0$. Consider the function $\phi_{\tau,r}$ defined above. We have 
	\begin{align*}
	\ell(\phi_{\tau,r}) & = \tau (b_{\ell}\cdot e) + \int_{\RN} (\tau e \cdot h) \left[\eta\left(\frac{h}{r}\right) - \Indicator_{B_{r_0}}(h) \right] \ K_{\ell}(h) dh \\
	& = \tau\left(b_{\ell}\cdot e + \int_{B_{r_0} \backslash B_{r/2}} (e \cdot h) \left[\eta\left(\frac{h}{r}\right) - 1 \right] \ K_{\ell}(h) dh \right)\\
	& = \tau\left(b_{\ell}\cdot e + \int_{B_{r_0} \backslash B_r} (e \cdot h) \left[\eta\left(\frac{h}{r}\right) - 1 \right] \ K_{\ell}(h) dh  +  \int_{B_r \backslash B_{r/2}} (e \cdot h) \left[\eta\left(\frac{h}{r}\right) - 1 \right] \ K_{\ell}(h) dh \right)\\
	& = \tau\left(b_{\ell}\cdot e - \int_{B_{r_0} \backslash B_r} (e \cdot h) \ K_{\ell}(h) dh  +  \int_{B_r \backslash B_{r/2}} (e \cdot h) \left[\eta\left(\frac{h}{r}\right) - 1 \right] \ K_{\ell}(h) dh \right)\\
	\end{align*}
	Consequently,
	$$\tau\left(b_{\ell}\cdot e - \int_{B_{r_0} \backslash B_r} (e \cdot h) \ K_{\ell}(h) dh \right) = \ell(\phi_{\tau,r}) +  \tau \int_{B_r \backslash B_{r/2}} (e \cdot h) \left[1 - \eta\left(\frac{h}{r}\right) \right] \ K_{\ell}(h) dh.$$
	Using Lemma \ref{lemPfThm1:BoundsOnEllPhi}, we have
	$$\tau\bigg|b_{\ell}\cdot e - \int_{B_{r_0} \backslash B_r} (e \cdot h) \ K_{\ell}(h) dh \bigg| \leq C\tau + \tau \bigg|\int_{B_r \backslash B_{r/2}} (e \cdot h) \left[1 - \eta\left(\frac{h}{r}\right) \right] \ K_{\ell}(h) dh \bigg|.$$
	Dividing by $\tau$ yields the estimate
	$$\bigg|b_{\ell}\cdot e - \int_{B_{r_0} \backslash B_r} (e \cdot h) \ K_{\ell}(h) dh  \bigg| \leq C + \bigg|\int_{B_r \backslash B_{r/2}} (e \cdot h) \left[1 - \eta\left(\frac{h}{r}\right) \right] \ K_{\ell}(h) dh \bigg|.$$
	To estimate the integral on the right-hand side, we recall from Corollary \ref{corIUG:LevyMeasures} that $K_{\ell}(h) \approx |h|^{-(n+1)}$. This yields
	\begin{align*}
	\bigg|\int_{B_r \backslash B_{r/2}} (e \cdot h) \left[1 - \eta\left(\frac{h}{r}\right) \right] \ K_{\ell}(h) dh \bigg| & \leq \int_{B_r \backslash B_{r/2}} C |h|^{-n} \ dh \\
	& = C \int_{r/2}^r s^{-1} \ ds = C \log(2).
	\end{align*}
\end{proof}


\subsection{Collecting the arguments to finish Theorem \ref{thm:StructureOfHMain}}\label{sec:FinishProofOfStructureTheorem}

Here we just list all of the particular previous results that are used to culminate in the proof of Theorem \ref{thm:StructureOfHMain}.

\begin{proof}[Proof of Theorem \ref{thm:StructureOfHMain}]

First, we note that the function, $H$, enjoys the GCP over $\K$ (see Definition \ref{defFD:GCP}).  This was already established in \cite{ChangLaraGuillenSchwab-2019SomeFBAsNonlocalParaboic-NonlinAnal}, but we will briefly comment on it here.  Indeed, if $f,g\in\K$ and $f\leq g$ with $f(x_0)=g(x_0)$, then we also know that $D_f^+\subset D_g^+$.  Thus, since $U_g^+\geq 0$ on $\Gam_f$, we see that $U_g^+$ is a supersolution of the same equation that governs $U_f^+$.  Since $U_f^+(x_0,f(x_0))=0=U_g^+(x_0,g(x_0))$, we see that also $\partial_\nu^+ U_f(x_0,f(x_0))\leq \partial_\nu^+ U_g(x_0,g(x_0))$.   Hence, $I^+(f,x_0)\leq I^+(g,x_0)$.  A similar argument can be applied to $\partial_\nu^- U_f$ and $\partial_\nu^- U_g$,  but this time the ordering is reversed (per the definition in (\ref{eqIN:DefOfPosNegNormalDeriv})), as now we have $D_g^-\subset D_f^-$.  Combining these inequalities with the definition in (\ref{eqIN:DefOfH}), and remembering that $G$ is increasing in its first variable and decreasing in its second variable (and by assumption on $f$ and $g$, $\grad f(x_0)=\grad g(x_0)$), we conclude the GCP for $H$.

We know that since $H$ is Lipschitz on $\K$ and enjoys the GCP, we will want to invoke Theorem \ref{thmFD:StructureOfJFromGSEucSpace}.  However, we still need to establish that the extra decay requirement in (\ref{eqFD:ExtraModulusConditionOutsideBR}) is satisfied. Indeed it is, which we will show after this current proof in  Lemma \ref{lemPfThm1:ExtraDecayCondition}, below.  Now, \emph{assuming} we have established (\ref{eqFD:ExtraModulusConditionOutsideBR}), then Theorem \ref{thmFD:StructureOfJFromGSEucSpace} shows that all $\ell\in \D_H(0)$ enjoy the structure as claimed in part (i) of Theorem \ref{thm:StructureOfHMain} ($\D_H(0)$ is from Definition \ref{defPfThm1:DefDHAt0}, following Definition \ref{defFD:DJLimitingDifferential}).  After a relabeling of $a^{ij}=h(g)-\ell(g)$ and the triple $c_\ell$, $b_\ell$, and $K_\ell$, from Theorem \ref{thmFD:StructureOfJFromGSEucSpace}, we see that part (i) has been established.

To conclude part (ii) of the theorem, we can invoke Corollary \ref{corIUG:LevyMeasures} for the L\'evy measure estimates and Lemma \ref{lemPfThm1:bijBounded} for the bounds involving the drift terms.

\end{proof}

\begin{lemma}\label{lemPfThm1:ExtraDecayCondition}
	There exists constants, $C>0$, and $\al\in(0,1]$, depending on $\del$, $L$, $m$, $\rho$, and $n$, so that if  $f,g\in\K(\del,L,m,\rho)$ and $f\equiv g$ in $B_{2R}$, then 
	\begin{align*}
		\norm{H(f)-H(g)}_{L^\infty(B_R)}\leq \frac{C}{R^\al}. 
	\end{align*}
\end{lemma}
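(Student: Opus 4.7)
The plan is to reduce the estimate for $H$ to a decay estimate for the Poisson kernel on a common auxiliary domain, and then invoke Lemma \ref{lemGF:DecayAtInfinityForTechnicalReasons}. Since $G$ satisfies (\ref{eqIN:GEllipticity}) and $\nabla f = \nabla g$ on $B_R \subset B_{2R}$ (so the Jacobian factor $\sqrt{1+|\nabla f|^2}$ agrees on $B_R$), it suffices to establish $|I^\pm(f,x) - I^\pm(g,x)| \leq C/R^\al$ for each $x \in B_R$. I will focus on $I^+$; the argument for $I^-$ is identical after replacing $\min$ by $\max$ below and using the Green function of $\Tr(A_2 D^2 \cdot)$ on the lower phase, which (after the flattening described in Section \ref{sec:GreenFunction}) is a divergence-form equation with Dini coefficients to which the same boundary estimates apply.

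Fix $x \in B_R$, let $X_0 = (x, f(x)) = (x, g(x))$, and $\nu = \nu_f(X_0) = \nu_g(X_0)$. Set $g_1 := \min(f,g)$ and consider the auxiliary domain $\Omega := D_{g_1}^+ = D_f^+ \cap D_g^+$. The function $W := U_f - U_g$ is harmonic in $\Omega$ and uniformly bounded by $2$. On $\partial \Omega$ one has $W \equiv 0$ on $\{x_{n+1}=0\}$ (both $U_f$ and $U_g$ equal $1$ there); moreover on the portion of $\Gam_{g_1}$ corresponding to $y \in B_{2R}$ one also has $W = 0$, since $f(y) = g(y)$ forces the boundary point $Y = (y, g_1(y))$ to lie in $\Gam_f \cap \Gam_g$, whence $U_f(Y) = U_g(Y) = 0$. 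Consequently $W|_{\partial \Omega}$ is supported on the set $\{(y, g_1(y)) : y \notin B_{2R}\}$ and bounded by $2$ there.

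Using the Poisson representation for $\Omega$ with kernel $P_{g_1}$, together with $W(X_0) = 0$, I would obtain
\[
|W(X_0 + s\nu)|
\ \leq\ 2 \int_{\{Y \in \Gam_{g_1}\, :\, y \notin B_{2R}\}} P_{g_1}(X_0 + s\nu, Y)\, d\sigma(Y)
\ \leq\ \frac{Cs}{R^\al},
\]
where the last inequality is Lemma \ref{lemGF:DecayAtInfinityForTechnicalReasons} applied to $g_1$. Dividing by $s$ and letting $s \to 0^+$ yields $|\partial_\nu W(X_0)| \leq C/R^\al$, which is exactly $|I^+(f,x) - I^+(g,x)| \leq C/R^\al$.

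The main technical point — and the reason Lemma \ref{lemGF:DecayAtInfinityForTechnicalReasons} is the right tool rather than Proposition \ref{propGF:PoissonKernel} — is that $g_1 = \min(f,g)$ need not belong to $\K(\del,L,m,\rho)$, because its gradient can have jumps along $\{f=g\}$ outside $B_{2R}$. However, $g_1 \equiv f$ on $B_{2R}$ is $C^{1,\dini}_\rho$ there, $\del \leq g_1 \leq L-\del$ globally, and $|\nabla g_1| \leq m$ almost everywhere globally; these are precisely the hypotheses of Lemma \ref{lemGF:DecayAtInfinityForTechnicalReasons}, whose constants depend only on $\del, L, m, \rho$ (and not on $R$). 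Verifying that this asymmetric regularity suffices — $C^{1,\dini}$ near $X_0$ together with a global Lipschitz bound — is the only delicate step of the proof, and it is exactly the scenario for which Lemma \ref{lemGF:DecayAtInfinityForTechnicalReasons} was designed.
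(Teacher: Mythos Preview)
Your proof is correct and follows essentially the same strategy as the paper: reduce to $I^+$, pass to the common subdomain $D_{g_1}$ with $g_1=\min(f,g)$, and invoke Lemma~\ref{lemGF:DecayAtInfinityForTechnicalReasons} (you correctly flag that $g_1$ is only $C^{1,\dini}$ in $B_{2R}$, which is exactly the hypothesis that lemma was designed for). The only cosmetic difference is that the paper also introduces $g_2=\max(f,g)$, squeezes $\partial_\nu U_f-\partial_\nu U_g$ between $\pm(\partial_\nu U_{g_2}-\partial_\nu U_{g_1})$, and then applies the Poisson-kernel estimate to the nonnegative auxiliary function $W=U_{g_2}-U_{g_1}$ on $D_{g_1}$, whereas you take $W=U_f-U_g$ directly; since Poisson kernels are nonnegative, your route is marginally more direct and equally valid.
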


\begin{proof}[Proof of Lemma \ref{lemPfThm1:ExtraDecayCondition}]
	First, we will establish that
	\begin{align}\label{eqPfThm1:ExtraDecayForI}
		\norm{I^+(f)-I^+(g)}_{L^\infty(B_R)}\leq \frac{C}{R^\al}.
	\end{align}
	Then, as following the proof of Lemma \ref{lemLIP:HEnjoysLittleIEstPart1}, we will see that this estimate carries over to $H$ as well.

	The proof of (\ref{eqPfThm1:ExtraDecayForI}) goes very similarly to the proofs of Lemmas  \ref{lemLIP:LittleILipEstPart1} and \ref{lemLIP:LittleILipEstPart2} (specifically, Step 3), combined with Lemma \ref{lemGF:DecayAtInfinityForTechnicalReasons}.  As in the proof of Lemma \ref{lemLIP:HEnjoysLittleIEstPart2}, we define the functions,
	\begin{align*}
		g_1=\min\{f,g\}\ \ \ \text{and}\ \ \ g_2=\max\{f,g\},
	\end{align*}
	and by construction, the respective domains are the ordered as follows:
	\begin{align*}
		D_{g_1}\subset D_f \subset D_{g_2}\ \ \ \text{and}\ \ \
		D_{g_1}\subset D_g \subset D_{g_2}.
	\end{align*}
	Thus, we see that at least in $B_R$, since $f\equiv g$,
	\begin{align*}
		\partial_\nu U_{g_1}\leq\partial_\nu U_f \leq \partial_\nu U_{g_2}\ \ \ \text{and}\ \ \ 
		\partial_\nu U_{g_1}\leq\partial_\nu U_g\leq \partial_\nu U_{g_2},
	\end{align*}
	so we have
	\begin{align*}
		\partial_\nu U_{g_2} - \partial_\nu U_{g_1}\leq
		\partial_\nu U_f(X)-\partial_\nu U_g(X) \leq \partial_\nu U_{g_1}(X) - \partial_\nu U_{g_2}(X).
	\end{align*}
	Furthermore, for the function $W$, defined as 
	\begin{align*}
		\begin{cases}
			\Delta W = 0\ &\text{in}\ D_{g_1}\\
			W=0\ &\text{on}\ \{x_{n+1}=0\}\\
			W= U_{g_2}|_{\Gam_{g_1}}\ &\text{on}\ \Gam_{g_1},
		\end{cases}
	\end{align*}
	we see that in the smaller domain, $D_{g_1}$,
	\begin{align*}
		U_{g_2} = U_{g_1} + W.
	\end{align*}
	Thus, we have reduced the estimate to
	\begin{align*}
		\partial_\nu U_{g_1}(X) - \partial_\nu U_{g_2}(X) = \partial_\nu W(X),
	\end{align*}
	and so 
	\begin{align*}
		\abs{\partial_\nu U_f(X)-\partial_\nu U_g(X)}\leq \abs{\partial_\nu W(X)}.
	\end{align*}
	As $g_1$ and $g_2$ are $C^{1,\dini}(B_{2R})$ and globally Lipschitz with Lipschitz constant, $m$, we see that Lemma \ref{lemGF:DecayAtInfinityForTechnicalReasons} gives for $X\in\Gam_f\intersect B_R$, and $s>0$,
	\begin{align*}
		W(X+s\nu(X))\leq \frac{Cs}{R^\al},
	\end{align*}
	and hence 
	\begin{align*}
		\partial_\nu W(X)\leq \frac{C}{R^\al}.
	\end{align*}
	Thus, we have established
	\begin{align*}
		\forall\ x\in B_R,\ \ \abs{I^+(f,x) - I^+(g,x)}\leq \frac{C}{R^\al}.
	\end{align*}
	
\end{proof}

\begin{rem}
	The curious reader may see that our approach to establish Theorem \ref{thm:StructureOfHMain} deviates slightly from the one given in \cite{ChangLaraGuillenSchwab-2019SomeFBAsNonlocalParaboic-NonlinAnal}, and we believe the reasons for this deviation are noteworthy.  We will discuss this in more detail in Section \ref{sec:Commentary}.  
\end{rem}


\section{Proof of Theorem \ref{thm:FBRegularity}}\label{sec:KrylovSafonovForHS}

Here we will prove Theorem \ref{thm:FBRegularity}.  As a first step, we wish to exhibit which ellipticity class will apply to the equation solved by the finite differences of $f$.  Determining this class gives a result that depends on the structure provided in Theorem \ref{thm:StructureOfHMain}, and the class, as well as resulting regularity results will depend on the parameters $\del$, $L$, $m$, $\rho$, which is the source for the dependence in the outcome of Theorem \ref{thm:FBRegularity}.  The key is to note what are some valid choices for extremal operators that govern our mapping, $H$ (extremal operators are those defined in (\ref{eqPara:ExtremalOperators}) that satisfy (\ref{eqPara:ExtremalInequalities})). We see from the min-max representation of $h$ in Theorem \ref{thm:StructureOfHMain} (recall $h(f)=H(f,0)$, as in Lemma \ref{lemLIP:HEnjoysLittleIEstPart1}) that if $f_1, f_2 \in \K$, then
\begin{align*}
h(f_1) & = \min_{g\in\K} \left( \max_{\ell\in \D_H(0)} h(g) - \ell(g) + \ell(f)   \right) \\
& \leq  \max_{\ell\in \D_H(0)} h(f_2) - \ell(f_2) + \ell(f_1) \\
& \leq h(f_2) +  \max_{\ell\in \D_H(0)}  \ell(f_1 - f_2).
\end{align*}
Next, let $g_1 \in \K$ be such that $h(f_1) = \max_{\ell\in \D_H(0)} h(g_1) - \ell(g_1) + \ell(f_1)$, and let $\ell_2 \in \D_H(0)$ be such that $\ell_2(f_2 - g_1) = \max_{\ell\in \D_H(0)} \ell(f_2 - g_1)$. We then find that
\begin{align*}
h(f_1) - h(f_2) & = \left( \max_{\ell\in \D_H(0)} h(g_1) - \ell(g_1) + \ell(f_1)   \right) - \min_{g\in\K} \left(  \max_{\ell\in \D_H(0)} h(g) - \ell(g) + \ell(f_2)  \right) \\
& \geq h(g_1) +  \left( \max_{\ell\in \D_H(0)} \ell(f_1 - g_1) \right) - \left(  \max_{\ell\in \D_H(0)} h(g_1) - \ell(g_1) + \ell(f_2)  \right) \\
& = h(g_1) +  \left( \max_{\ell\in \D_H(0)} \ell(f_1 - g_1) \right) - h(g_1) -  \left( \max_{\ell\in \D_H(0)} \ell(f_2 - g_1) \right) \\
& = \max_{\ell\in \D_H(0)} \ell(f_1 - g_1)  - \ell_2(f_2 - g_1) \\
& \geq \ell_2(f_1 - g_1) - \ell_2(f_2 - g_1)\\
& = \ell_2(f_1 - f_2) \geq  \min_{\ell\in \D_H(0)} \ell(f_1 - f_2)
\end{align*}
In summary, we have
\begin{equation}\label{eqPfThm2:boundwithwrongmaximaloperators}
\forall\ f_1, f_2 \in \K,\ \ \ \min_{\ell\in \D_H(0)}\ell(f_1 - f_2) \leq h(f_1) - h(f_2) \leq  \max_{\ell\in \D_H(0)} \ell(f_1 - f_2).
\end{equation}
Now let $C$ be the constant in Theorem \ref{thm:StructureOfHMain} (ii), and let $\L_{\Lam}$ be the class of operators from Definition \ref{defPara:scaleinvariantclass} with $\Lam = C$. We claim there exist constants $C_1,C_2$ such that
\begin{equation}\label{eqPfThm2:boundinglinearoperatorsbymaximaloperators}
\max_{\ell \in \D_H(0)} \ell(f) \leq \mathcal{M}^+_{\L_{\Lam}}(f) + C_1||f||_{L^{\infty}(\RN)}, \quad \min_{\ell \in \D_H(0)} \ell(f) \geq \mathcal{M}^-_{\L_{\Lam}}(f) - C_2||f||_{L^{\infty}(\RN)}.
\end{equation}
First notice that the lower bound on the kernels in Theorem \ref{thm:StructureOfHMain} (ii) is only valid in a small ball. To be able to apply the regularity results in Section \ref{sec:BackgroundParabolic}, namely Proposition \ref{propPara:holderestimate}, the kernels must satisfy the lower bound stated in Definition \ref{defPara:scaleinvariantclass}. To do this, we employ a strategy similar to that in \cite[Section 14]{CaSi-09RegularityIntegroDiff} for truncated kernels. Indeed, if $\ell \in \D_H(0)$, then we may write
$$\ell(f)(x) = c^{ij}f(x) + b^{ij}\cdot\grad f(x) + \int_{\real^n}\del_y f(x)\tilde{K}^{ij}(y)dy - \Lam^{-1} \int_{\real^n \backslash B_{r_0}} (f(x+y)-f(x))|y|^{-n-1} \ dy,$$
where $\tilde{K}^{ij}(y) = K^{ij}(y) + \Lam^{-1}\Indicator_{\real^n \backslash B_{r_0}} |y|^{-n-1}$. Since $b^{ij}, \tilde{K}^{ij} \in \L_{\Lam}$ and $\Lam^{-1}\Indicator_{\real^n \backslash B_{r_0}} |y|^{-n-1} \in L^1(\RN)$, and taking into account the bound on $c^{ij}$ given in Theorem \ref{thm:StructureOfHMain}, the inequalities \eqref{eqPfThm2:boundinglinearoperatorsbymaximaloperators} hold.

As an immediate consequence of \eqref{eqPfThm2:boundinglinearoperatorsbymaximaloperators} and \eqref{eqPfThm2:boundwithwrongmaximaloperators}, we find that
\begin{equation}\label{eqPfThm2:ellipticityofH}
-C_1 |f_1 - f_2| + \mathcal{M}^-_{\L_{\Lam}}(f_1 - f_2) \leq h(f_1) - h(f_2) \leq \mathcal{M}^+_{\L_{\Lam}}(f_1 - f_2) + C_2|f_1 - f_2| \text{ for all } f_1, f_2 \in \K.
\end{equation}
With \eqref{eqPfThm2:ellipticityofH} at hand, Theorem \ref{thm:FBRegularity} follows by combining the conclusion of (\ref{eqPfThm2:ellipticityofH}) with the following $C^{1,\gamma}$ estimate for translation invariant operators, Proposition \ref{propPfThm2:TranslationInvariantC1gammaEstimate}, whose statement and proof are essentially that of \cite[Theorem 6.2]{Silvestre-2011RegularityHJE}.  As $\L_\Lam$ depends upon $\del$, $L$, $m$, $\rho$, then so does the constant obtained in Theorem \ref{thm:FBRegularity}.  We note that by \cite[Theorem 1.1 (iii)]{ChangLaraGuillenSchwab-2019SomeFBAsNonlocalParaboic-NonlinAnal}, the Lipschitz bound on $f(\cdot,0)$ is preserved for all time.  Thus, in the following Proposition \ref{propPfThm2:TranslationInvariantC1gammaEstimate}, when applied to $f$ in Theorem \ref{thm:FBRegularity}, we can replace $\norm{f}_{C^{0,1}(\real^n\times[0,T])}$ by $\norm{f}_{C^{0,1}(\real^n\times\{0\})}$.

We provide the standard argument for the proof of Proposition \ref{propPfThm2:TranslationInvariantC1gammaEstimate} using difference quotients for the sake of completeness.

\begin{proposition}\label{propPfThm2:TranslationInvariantC1gammaEstimate} Suppose $u \in C^{0,1}(\RN \times [0,t_0])$ is a viscosity solution of the translation invariant non-local equation $\partial_t u - J(u) =0$ in $\RN \times (0,t_0)$, where $J$ satisfies the ellipticity condition
\begin{equation}\label{eqPfThm2:ellipticityofI}
	-C_1|u - v| + \M^-_{\L_{\Lam}}(u-v)
	\leq J(u)-J(v)
	\leq \M^+_{\L_{\Lam}}(u-v) +C_2 |u - v|, \quad \text{for all } u, v \in C^{0,1}(\RN).
\end{equation}
Then we have the estimate
$$
||u||_{C^{1, \gamma}(Q_{\frac{t_0}{2}}(t_0,x_0))} \leq \frac{C(1+t_0)}{t_0^{\gamma}} ||u||_{C^{0,1}(\mathbb{R}^n \times [0,t_0])},
$$
where $C$ and $\gamma$ are the constants from Proposition \ref{propPara:holderestimate}.
\end{proposition}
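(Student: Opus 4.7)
The plan is to use spatial difference quotients to convert the $C^{1,\gamma}$ estimate into a uniform H\"older estimate, which is then supplied by Proposition \ref{propPara:holderestimate}. For $h \in \real^n \setminus \{0\}$, set $v^h(x,t) := u(x+h,t) - u(x,t)$. Since $J$ is translation invariant, $u(\cdot+h,\cdot)$ also solves $\partial_t w - J(w) = 0$, and the ellipticity assumption \eqref{eqPfThm2:ellipticityofI} then forces $v^h$ to satisfy, in the viscosity sense,
\begin{align*}
\partial_t v^h - \M^+_{\L_\Lam}(v^h) &\leq C_2 \|v^h\|_{L^\infty(\real^n \times [0,t_0])},\\
\partial_t v^h - \M^-_{\L_\Lam}(v^h) &\geq -C_1 \|v^h\|_{L^\infty(\real^n \times [0,t_0])}.
\end{align*}
Because $\|v^h\|_{L^\infty} \leq |h|\,\|u\|_{C^{0,1}(\real^n\times[0,t_0])}$, the right-hand sides are bounded by $A_h := \max(C_1,C_2)\,|h|\,\|u\|_{C^{0,1}}$.

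Applying Proposition \ref{propPara:holderestimate} to $v^h$ in the cylinder $Q_{t_0}(t_0,x_0)$ yields
\begin{align*}
\|v^h\|_{C^\gamma(Q_{t_0/2}(t_0,x_0))} \leq \frac{C}{t_0^\gamma}\left(\|v^h\|_{L^\infty(\real^n\times[0,t_0])} + t_0 A_h\right) \leq \frac{C(1+t_0)}{t_0^\gamma}\,|h|\,\|u\|_{C^{0,1}}.
\end{align*}
The essential point is that this constant is independent of $h$. Dividing by $|h|$ and specializing $h = \varepsilon e$ with $\varepsilon \to 0$ along a unit direction $e \in S^{n-1}$, an Arzel\`a--Ascoli compactness argument on the equicontinuous family $\{v^{\varepsilon e}/\varepsilon\}$ shows that $\partial_e u$ exists on $Q_{t_0/2}(t_0,x_0)$ and inherits the $\gamma$-H\"older bound in the parabolic metric, with the constant stated in the proposition. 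This yields the spatial part of the $C^{1,\gamma}$ estimate. The H\"older regularity of $\partial_t u$ is then obtained either by repeating the difference-quotient argument with time translates $w^k(x,t) := u(x,t+k) - u(x,t)$ (which satisfy the same two-sided extremal inequalities by the $t$-independence of $J$), or equivalently by reading it off from the equation $\partial_t u = J(u)$ together with the continuity of $J$ on H\"older inputs implied by Lemma \ref{lemFD:MVmaxProperty}.

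The one technically delicate step is the first one: namely, that $v^h$ satisfies the claimed two-sided extremal inequalities \emph{in the viscosity sense}. This is precisely where translation invariance of $J$ is used. A test function $\psi$ touching $v^h$ from above at $(x_0,t_0)$ decomposes, via the usual doubling-of-variables (or sup-/inf-convolution) trick of \cite{CaSi-09RegularityIntegroDiff, Silvestre-2011RegularityHJE}, into test functions for $u(\cdot+h,\cdot)$ and for $u$ at nearby points; combining the corresponding viscosity inequalities with \eqref{eqPfThm2:ellipticityofI} produces the extremal inequality for $v^h$. Once this viscosity machinery is granted, everything else is a rescaled repetition of the standard translation-invariant $C^{1,\gamma}$ theory, and I expect no obstacle specific to the Hele-Shaw setting to intervene.
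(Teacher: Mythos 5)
Your proposal is correct and takes essentially the same route as the paper: spatial difference quotients, the two-sided extremal inequalities from translation invariance and \eqref{eqPfThm2:ellipticityofI}, an application of Proposition \ref{propPara:holderestimate} with a right-hand side bounded by $\max(C_1,C_2)\|u\|_{C^{0,1}}$, passage to the limit $|h|\to 0$, and then the analogous argument with time difference quotients. The only cosmetic difference is that you divide by $|h|$ at the end rather than working directly with $v_h = \delta_h u/|h|$; for the time regularity the paper favors the first of your two alternatives and makes explicit the small intermediate step that $\partial_t u$ is bounded in the viscosity sense (so that the time difference quotients are a priori bounded) before invoking Proposition \ref{propPara:holderestimate} again.
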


\begin{rem}
	The constants $C$ and $\gam$ arising from Propostion \ref{propPara:holderestimate} depend upon the ellipticity class, $\L_\Lam$.  Since, as above, our particular choice of class, $\L_\Lam$, depends on the estimates of Theorem \ref{thm:StructureOfHMain}, which depend upon $\del$, $L$, $m$, $\rho$, we see that an invocation of Proposition \ref{propPfThm2:TranslationInvariantC1gammaEstimate} for our situation retains such dependence on the constants in the $C^{1,\gam}$ estimate.
\end{rem}

\begin{proof}[Proof of Proposition \ref{propPfThm2:TranslationInvariantC1gammaEstimate}]

For $(x,t) \in Q_1$, consider the difference quotient in space
$$v_h(x,t) := \frac{u(x+h,t) - u(x,t)}{|h|}.$$
Using the ellipticity condition \eqref{eqPfThm2:ellipticityofI} and the translation invariance of $J$, we find that \emph{in the viscosity sense}, $v_h$ solves
$$
C_2|v_h| + \M^+_{\L_{\Lam}}(v_h)  \geq \frac{I(u(\cdot + h,t),x,t) - I(u,x,t)}{|h|} = \partial_t v_h(x,t).
$$
Since $||v_h||_{L^{\infty}(\RN \times [0,t_0])} \leq ||u||_{C^{0,1}(\RN \times [0,t_0])}$ independently of $h$, it follows that $v_h$ satisfies the inequality, \emph{in the viscosity sense},
$$\partial_t v_h(x,t) - \M^+_{\L_{\Lam}}(v_h)(x,t) \leq ||u||_{C^{0,1}(\RN \times [0,t_0])} \qquad \text{for all } (x,t) \in \RN \times (0,t_0).$$
A similar argument shows $v_h$ also satisfies \emph{in the viscosity sense},
$$\partial_t v_h(x,t) - \M^-_{\L_{\Lam}}(v_h)(x,t) \geq -||u||_{C^{0,1}(\RN \times [0,t_0])} \qquad \text{for all } (x,t) \in \RN \times (0,t_0).$$
Applying Proposition \ref{propPara:holderestimate} to $v_h$, we conclude that
$$||v_h||_{C^{\gamma}(Q_{\frac{t_0}{2}}(t_0,x_0))} \leq \frac{C(1+t_0)}{t_0^{\gamma}} ||u||_{C^{0,1}(\mathbb{R}^n \times [0,t_0])}.$$
Since the right-hand side is independent of $h$, we may let $|h| \rightarrow 0$ to obtain the desired $C^{1,\gamma}$ estimate in space.  By considering $(x,t) \in \RN \times (0,t_0)$ the difference quotient in time 
$$w_h(x,t) := \frac{u(x,t+h) - u(x,t)}{|h|},$$
with $h$ sufficiently small and carrying out an argument as above, we also obtain a $C^{1,\gamma}$ estimate in time.  The one extra step is that once we obtain the regularity in space, we see that $u_t$ is bounded \emph{in the viscosity sense}, and hence $u$ is Lipschitz in time.  Thus, $w_h$ is a bounded viscosity solution of the extremal inequalities.  Another invocation of Proposition \ref{propPara:holderestimate} concludes the regularity in time.

\end{proof}


\section{Commentary on Many Issues}\label{sec:Commentary}

\subsection{Where is the min-max structure utilized?}

The first place the min-max in Theorem \ref{thm:StructureOfHMain} is used is to identify the correct class of integro-differential operators for invoking the Krylov-Safonov theory.  In most of the existing literature on regularity theory (as well as existence and uniqueness theory), a min-max structure is assumed for the given equations.  However, the min-max structure is quickly replaced by simply requiring the existence of a class of linear nonlocal operators so that the relevant nonlinear operator, say $J$, satisfies inequalities such as (\ref{eqPara:ExtremalInequalities}).  Then, as one sees by, e.g. Proposition \ref{propPara:holderestimate}, it is these extremal inequalities that govern the regularity theory.  Thus, as outlined in Section \ref{sec:KrylovSafonovForHS}, as soon as a min-max, plus some properties of the ingredients are obtained as in Theorem \ref{thm:StructureOfHMain} one can deduce which ellipticity class and results will apply to solutions of $\partial_t f= H(f)$.  It was rather striking to find in the case of (\ref{eqIN:HeleShawIntDiffParabolic}), under the extra $C^{1,\dini}$ regularity assumption for $f$, that the resulting ellipticity class had already been studied in the literature as in \cite{ChangLaraDavila-2016HolderNonlocalParabolicDriftJDE}.  Furthermore, thanks to the translation invariance of $H$, combined with the inequalities (\ref{eqPara:ExtremalInequalities}), it is not hard to show that  the finite differences, $w=\frac{1}{\abs{h}}(f(\cdot +h)-f(\cdot))$ satisfy, in the viscosity sense, the pair of inequalities (\ref{eqPara:ExtremalOperators}).  This is key to obtaining the $C^{1,\gam}$ regularity for $f$.

In some sense, the min-max provided by Theorem \ref{thm:StructureOfHMain} gives a way of ``linearizing'' the equation, but in a possibly slightly different manner than sometimes carried out.  One way to linearize (\ref{eqIN:HeleShawIntDiffParabolic}) would be to fix a very smooth solution, $f_0$, and then find an equation, say $\partial_t \psi = L_{f_0}\psi$, where $L_{f_0}$ is an operator with coefficients depending upon $f_0$, and the equation governs functions of the form $f=f_0+\ep\psi$ for $\ep<<1$.  The min-max gives a different linear equation in the sense that for \emph{any} solution, say $f$, of (\ref{eqIN:HeleShawIntDiffParabolic}), one can think $f$ \emph{itself} solves a linear equation with bounded measurable coefficients of the form,
\begin{align*}
	\partial_t f = c^*_f(x)f(x) + b^*_f(x)\cdot \grad f(x)
	+ \int_{\real^n} \del_h f(x) K^*_f(x,h)dh,
\end{align*}
where $c^*_f$, $b^*_f$, $K^*_f$ are all $x$-depended coefficients that can be any of those that attain the min-max for $f$ in Theorem \ref{thm:StructureOfHMain} at a given $x$.  Of course, one cannot expect these coefficients to be better than bounded and measurable in $x$, and this is one reason why it is typically presented in the elliptic and parabolic literature that linear equations with bounded measurable coefficients are as easy or hard to treat (it depends upon your point of view) as fully nonlinear equations that are translation invariant.  Of course, we ``linearized'' equation (\ref{eqIN:HeleShawIntDiffParabolic}) in neither of the two approaches mentioned above, but as earlier, we found that linearizing for $w=\frac{1}{\abs{h}}(f(\cdot +h)-f(\cdot))$ gives the inequalities pertinent to Proposition \ref{propPara:holderestimate}.  If one used the mean value theorem, it would formally give a linear equation with bounded measurable coefficients, \emph{assuming} that $H$ was a Fr\'echet differentiable map (but one can obtain the inequalities \eqref{eqPara:ExtremalInequalities} without any assumption of differentiability of $H$, thanks to the min-max).

In Section \ref{sec:KrylovSafonovForHS},  the min-max representation of $h$ suggests that the natural maximal and minimal operators corresponding to $h$ should be the ones given by \eqref{eqPfThm2:boundwithwrongmaximaloperators}. However, one does not know if there is regularity theory available for these maximal and minimal operators.  One annoyance in this direction is that  the class of linear operators used to define them is, in general, not invariant under translations and dilations.  Certainly the needed regularity is true, but the arguments to produce such results are better implemented for a larger class of equations, such as those described in Definition \ref{defPara:scaleinvariantclass}.  The bounds obtained in Theorem \ref{thm:StructureOfHMain} (ii) instead allow us to estimate $h$ by a different set of maximal and minimal operators as shown in \eqref{eqPfThm2:ellipticityofH}, where the operators  $\M^+_{\L_{\Lam}}$ and $ \M^-_{\L_{\Lam}}$ are defined using a class of linear operators which satisfies the translation and dilation invariance properties necessary to invoke existing regularity theory while also containing the linear functionals that support $h$.

We also note an interesting departure from an easier min-max approach as utilized in \cite{ChangLaraGuillenSchwab-2019SomeFBAsNonlocalParaboic-NonlinAnal} and \cite[Theorem 1.10]{GuSc-2019MinMaxEuclideanNATMA}.  The curious reader may see that since $H$ is translation invariant, there is a quicker and more straightforward way to obtaining the first half of Theorem \ref{thm:StructureOfHMain} stated in part (i).  The translation invariance means that it suffices to look only at $H(f,0)$, and as a Lipschitz \emph{functional} from the Banach space, $X_\rho$, to $\real$, $H(f,0)$ enjoys a larger collection of tools from the nonlinear analysis setting built in Clarke's book \cite{Clarke-1990OptimizationNonsmoothAnalysisSIAMreprint}.  The mean value theorem of Lebourg \cite{Clarke-1990OptimizationNonsmoothAnalysisSIAMreprint} that we give a variant on in Lemma \ref{lemFD:MVmaxProperty} has a more straightforward presentation using a more natural subdifferential set than the one defined in Definition \ref{defFD:DJLimitingDifferential}.  This is the approach that is pursued in proving the corresponding result in \cite[Theorem 1.4]{ChangLaraGuillenSchwab-2019SomeFBAsNonlocalParaboic-NonlinAnal}  and \cite[Theorem 1.10]{GuSc-2019MinMaxEuclideanNATMA}.  The problem with using the more natural subdifferential set that circumvents the cumbersome details of the finite dimensional approximations is that it is very hard to capture in the linear operators for the min-max the non-degeneracy property that is proved in Lemma \ref{lemLIP:LittleILipEstPart1}.  For a lack of a better analogy, it is like saying that for the function $A:\real^N\to\real$ given by $A(x)=\abs{x}$, one can think of the contrast in reconstructing $A$, by considering the set of all possible supporting hyperplanes, versus considering the actual derivative $DA$ at any point where $DA$ may exist.  In the former situation, one cannot avoid that degenerate linear functionals, such as the zero functional, appear in the collection that makes up a min-max (just a max, actually) representation of $A$, whereas in the latter, one can see that the only differentials that would be used will be those with norm $1$, and hence are ``non-degenerate'' in a sense.  This is the reason for the finite dimensional approximations used in Section \ref{sec:ProofOfStructureThm} because a non-degeneracy property like that in Lemma \ref{lemLIP:LittleILipEstPart1} can be preserved in the functionals used for the min-max in Corollary \ref{corFD:GenericMinMaxForJ}.

\subsection{A counter example}

There are interesting pathologies in Hele-Shaw free boundary problems related to the contrast between $U$ being regular in space-time and $\partial\{U>0\}$ being regular in space-time.  Aside from the fact that there are geometries in which the free boundary may stagnate and then immediately jump in space-time (see \cite{KingLaceyVazquez-1995PersistenceOfCornersHeleShaw}), there are solutions of (\ref{eqIN:HSMain}) with space-planar free boundaries such as (see \cite{Kim-2006RegularityFBOnePhaseHeleShaw-JDE})
\begin{align*}
	U(X,t) = a(t)\left( X_{n+1} + \int_0^t a(s)ds  \right),
\end{align*}
with e.g. $a$ is a bounded function of $t$.
The zero set is, of course, given by 
\begin{align*}
	\partial\{U>0\}=\left\{X_{n+1}=-\int_0^t a(s)ds\right\},\ \ \text{hence}\ \ f(x,t)=-\int_0^ta(s)ds.
\end{align*}
We note that this special solution does not necessarily satisfy the spatial boundary conditions prescribed by (\ref{eqIN:HSMain}), and indeed, in the absence of further restrictions on $a$, it is not true that $f\in C^{1,\gam}(\real^n\times[\tau,T])$.  However, if one insists that this solution does satisfy (\ref{eqIN:HSMain}) exactly, we then see
the boundary condition that $U(0,t)=1$ means that
\begin{align*}
	a(t)\left(\int_0^t a(s)ds\right)=1,
\end{align*}
whereby $a(t)=\pm(2t+c)^{-1/2}$, for some $c\geq0$, and hence $\int_0^t a(s)ds=\pm (2t+c)^{1/2}$.  In order that $U>0$, we see that in fact $a(t)=-(2t+c)^{-1/2}$, and  hence 
\begin{align*}
	U(X,t)=-(2t+c)^{-1/2}\left(X_{n+1}-(2t+c)^{1/2}\right),\ \ \ \text{in}\ \ \ \{0<X_{n+1}<(2t+c)^{1/2}\},
\end{align*}
and so
\begin{align*}
	f(x,t)=(2t+c)^{1/2}.
\end{align*}
In particular, requiring that the free boundary resides in the region $\real^n\times[\del,L-\del]$, we see
\begin{align*}
	c>\del^2.
\end{align*}
Thus, indeed, $f\in C^{\infty}(\real^n\times[0,T])$, with a norm that depends on $\del$, which is compatible with the result in Theorem \ref{thm:FBRegularity}.

\subsection{Some questions}

Here, we list some questions related to (\ref{eqIN:HSMain}) and Theorem \ref{thm:FBRegularity}.

\begin{itemize}
	\item Is the gain in regularity given in Theorem \ref{thm:FBRegularity} enough to prove higher regularity, such as a $C^\infty$ free boundary?  This would be related to higher regularity via Schauder or bootstrap methods for integro-differential equations, such as that pursued in e.g. \cite{Bass2009-RegularityStableLikeJFA}, \cite{DongJinZhang-2018DiniAndSchauderForNonlocalParabolicAPDE}, \cite{JinXiong-2015SchauderEstLinearParabolicIntDiffDCDS-A}, \cite{MikuleviciusPragarauskas-2014CauchyProblemIntDiffHolderClassesPotAnalysis}; or like the analysis for free boundary problems that attains smooth solutions, such as in \cite{BarriosFigalliValdinoci-2014BootstrapRegularityIntDiffNonlocalMinimalAnnScuolNormPisa}, \cite{ChoiJerisonKim-2009LocalRegularizationOnePhaseINDIANA}, \cite{KinderlehrerNirenberg-1977RegularityFBAnnScuolNormPisa}, \cite{KinderlehrerNirenberg-1978AnalaticityAtTheBoundaryCPAM}.

	\item Is it possible to include variable coefficients in equation (\ref{eqIN:HSMain}) and obtain the same regularity of the solution?  This could be for either a divergence form operator or a non-divergence form operator.  It is conceivable that similar regularity should hold, and one may expect to use either directly, or modifications of works such as \cite{DongJinZhang-2018DiniAndSchauderForNonlocalParabolicAPDE}, \cite{Kriventsov-2013RegRoughKernelsCPDE}, \cite{Serra-2015RegularityNonlocalParabolicRoughKernels-CalcVar}, when the order of the kernels is $1$.

	\item How does incorporating an inhomogeneous boundary law, $V=G(X,\partial^+_\nu U^+, \partial^-_\nu U^-)$, in (\ref{eqIN:HSMain}) change the outcome of the results?  At least when $G(X,\partial^+_\nu U^+, \partial^-_\nu U^-)=g(X)\tilde G(\partial^+_\nu U^+, \partial^-_\nu U^-)$ it appears as though the steps would be very similar, but if the $X$ dependence is more general, the analysis in Section \ref{sec:KrylovSafonovForHS}, may be complicated by the fact that the equation is not translation invariant, and the $x$ dependence is not as easily isolated.
	
	\item The most important question to address could be to adapt the method to apply to situations in which $\partial\{U>0\}$ is only \emph{locally} a space-time graph of a function.  In many free boundary problems related to (\ref{eqIN:HSMain}), it is not natural to assume that the free boundary is globally the graph of some function.  Rather, without assuming the free boundary is a graph, some low regularity assumption like a Lipschitz condition or a flatness condition then forces the free boundary to in fact be locally a graph that is quite regular (at least for small time that avoids different regions of the free boundary colliding and causing topological changes).  This could be attained by including as a parameter in the definition of $I$, some extra space-time boundary condition that allows $I$ to act on functions that are merely defined in, say, $B_1$, instead of $\real^n$, with this extra boundary condition providing the information of the free boundary outside of $B_1$.
	
	\item Another interesting question is to address the possibility to modify the method to apply to Stefan type problems wherein (\ref{eqIN:HSMain}) now requires $U$ to solve a parabolic problem in the sets $\{U>0\}$ and $\{U<0\}$.  Of course, the two-phase Stefan problem itself is already rather well understood, but there are many variations that could be considered.  This would require adapting the results in Section \ref{sec:FiniteDim} to accommodate operators acting on $f:\real^n\times[0,T]$ that satisfy the GCP in space-time, rather than simply looking at those operators that satisfy the GCP in space.  

\end{itemize}

\appendix


\section{Proofs related to Green's function estimates}

Before proving Lemma \ref{lemGF:lineargrowth}, we recall the following fact from \cite{GruterWidman-1982GreenFunUnifEllipticManMath}.

\begin{lemma}\label{lemGF:3.2InGW} (cf. Lemma 3.2 in \cite{GruterWidman-1982GreenFunUnifEllipticManMath}) Suppose $A$ is $\lam,\Lam$ uniformly elliptic and Dini continuous with modulus, $\om$, and $v$ solves the Dirichlet problem
\begin{align}\label{eqAppendix:Lem3.2InGW}
\begin{cases}
L_A v = 0 \quad \text{in } \mathcal{A}_{2r}(x_0), \ x_0 \in \O, \ r \leq 1,\\
v = 1 \quad \text{on } \partial B_r(x_0), \\
v = 0 \quad \text{on } \partial B_{2r}(x_0).
\end{cases}
\end{align}
There exists a constant $K  = K(n,\lambda, \Lambda, \omega) > 0$ such that
$$|\nabla v(x)| \leq \frac{K}{r} \qquad \text{for all } x \in \A_{2r}(x_0).$$
\end{lemma}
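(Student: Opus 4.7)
The plan is to combine a scaling reduction with standard boundary $C^1$ estimates for divergence-form uniformly elliptic equations with Dini continuous coefficients on smooth domains. Since $\mathcal{A}_{2r}(x_0)$ has $C^\infty$ boundary and the prescribed data is constant on each boundary component, the whole regularity question is scale invariant once one accounts for the Dini modulus of $A$; the assertion $|\grad v|\leq K/r$ is then exactly the Lipschitz bound one expects, with $1/r$ coming entirely from the chain rule.

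First, I would rescale. Define $\tilde v(y) := v(x_0 + ry)$ and $\tilde A(y) := A(x_0 + ry)$ for $y \in \mathcal{A}_2(0)$. A direct calculation shows $L_{\tilde A}\tilde v = 0$ on $\mathcal{A}_2(0)$, with $\tilde v \equiv 1$ on $\partial B_1(0)$ and $\tilde v \equiv 0$ on $\partial B_2(0)$. The tensor $\tilde A$ is $\lam,\Lam$-uniformly elliptic with the same constants as $A$, and its Dini modulus satisfies $\tilde \om(s) = \om(rs)$; replacing $\om$ by its monotone majorant if necessary and using $r \leq 1$, we obtain $\tilde \om \leq \om$. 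The weak maximum principle gives $0 \leq \tilde v \leq 1$ on $\mathcal{A}_2(0)$.

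Second, I would invoke the global $C^1$ estimate up to the boundary for divergence-form uniformly elliptic equations with Dini continuous coefficients on a $C^\infty$ domain with $C^\infty$ boundary data. Applied to $\tilde v$ on $\mathcal{A}_2(0)$, such a Dini-Schauder-type estimate produces a constant $K = K(n,\lam,\Lam,\om)$ with
\begin{equation*}
\|\grad \tilde v\|_{L^\infty(\mathcal{A}_2(0))} \leq K\bigl(\|\tilde v\|_{L^\infty(\mathcal{A}_2(0))} + 1\bigr) \leq 2K.
\end{equation*}
Undoing the scaling via $\grad v(x) = r^{-1}\grad\tilde v\bigl((x-x_0)/r\bigr)$ yields $|\grad v(x)|\leq K/r$ on $\mathcal{A}_{2r}(x_0)$, which is the claim (after relabeling $K$).

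The main point that requires care is that the constant in the global $C^1$ estimate depends only on $n,\lam,\Lam$, and the Dini modulus $\om$, with no additional scale-dependent dependence. This is precisely why the Dini hypothesis (rather than mere continuity) on $A$ is needed: it provides $C^1$-regularity up to the smooth boundary with modulus-only dependence, which is then transported uniformly to every scale $r \leq 1$ through the bound $\tilde \om \leq \om$. An alternative route using explicit radial barriers works cleanly for the Laplacian but requires a freezing-of-coefficients argument that is essentially Dini-Schauder theory in disguise; the scaling argument above appears to be the most economical path and matches the dependence of $K$ stated in the lemma.
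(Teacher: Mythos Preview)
Your argument is correct. Note, however, that the paper does not actually prove this lemma: it is stated as a recalled fact with the citation ``(cf.\ Lemma 3.2 in \cite{GruterWidman-1982GreenFunUnifEllipticManMath})'' and no proof is given. Your scaling reduction to the unit annulus, together with the observation that $\tilde\om(s)=\om(rs)\leq\om(s)$ for $r\leq 1$, followed by an appeal to boundary $C^1$ estimates under a Dini hypothesis, is exactly the standard route and is essentially how the cited result in Gr\"uter--Widman is obtained; there is nothing to compare against in the paper itself.
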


\begin{proof}[Proof of Lemma \ref{lemGF:lineargrowth}]
We first perform a reduction to a model problem. By Harnack's inequality applied to the non-negative solution $u$ in the ball $B_r(x_0)$, we know there exists a constant $\tilde{C} = \tilde{C}(n,\lambda,\Lambda)$ such that
$$\inf_{\partial B_r(x_0)} u \geq \tilde{C} u(x_0).$$
Rescaling $u$, we may thus assume $\inf_{\partial B_r(x_0)} u = 1$. Let $v$ be the solution to the problem
\begin{equation}\label{eqnforv}
\begin{cases}
L_Av = 0 \quad \text{in } \A_{2r}(x_0),\\
v = 1 \quad \text{on } \partial B_r(x_0),\\
v = 0 \quad \text{on } \partial B_{2r}(x_0).
\end{cases}
\end{equation}
We recall that by assumption, $\A_{2r}(x_0)\subset\Om$, and hence as $u\geq 0$, by the maximum principle, $u \geq v$ on $\A_{2r}(x_0)$ and so it suffices to prove the estimate \eqref{lineargrowthatboundary} for the function $v$. 

Consider the constant coefficient operator $L_0 := -\text{div}(A(z_0) \nabla \cdot)$, and let $\hat{v}$ solve the problem
\begin{equation}\label{eqnforhatv}
\begin{cases}
L_0 \hat{v} = 0 \quad \text{in } \A_{2r}(x_0),\\
\hat{v} = 1 \quad \text{on } \partial B_r(x_0),\\
\hat{v} = 0 \quad \text{on } \partial B_{2r}(x_0).
\end{cases}
\end{equation}
The function $w: = \hat{v} - v$ vanishes on the boundary of $\A_{2r}(x_0)$. If $G_0$ is the Green's function for the operator $L_0$, then by the representation formula for $L_0$, we have for all $x \in \mathcal{A}_{2r}(x_0)$
$$w(x) = \int_{\A_{2r}(x_0)} G_0(x,y) L_0 w(y) \ dy = \int_{\A_{2r}(x_0)} \la \nabla_y G_0(x,y) , A(z_0) \nabla w(y) \ra \ dy.$$
Now since $\hat{v}$ solves \eqref{eqnforhatv}, we know that 
$$\int_{\A_{2r}(x_0)} \la \nabla_y G_0(x,y), A(z_0) \nabla \hat{v}(y) \ra \ dy = 0.$$
Consequently,
$$w(x) = -\int_{\A_{2r}(x_0)} \la \nabla_y G_0(x,y) , A(z_0) \nabla v(y) \ra \ dy.$$
Next, since $v$ solves \eqref{eqnforv}, we know that 
$$\int_{\A_{2r}(x_0)} \la \nabla_y G_0(x,y) , A(y) \nabla v(y) \ra \ dy = 0.$$
It follows that
$$w(x) = -\int_{\A_{2r}(x_0)} \la \nabla_y G_0(x,y) , (A(z_0) - A(y)) \nabla v(y) \ra \ dy.$$
Differentiating in $x$ yields
$$Dw(x) = - \int_{\A_{2r}(x_0)} \la D^2_{x,y}G_0(x,y) , (A(z_0) - A(y)) \nabla v(y) \ra \ dy.$$
Evaluating at $x = z_0$, we thus conclude
$$Dw(z_0) = - \int_{\A_{2r}(x_0)} \la D^2_{x,y}G_0(z_0,y) , (A(z_0) - A(y)) \nabla v(y) \ra \ dy.$$
Now by estimates for the Green's function for constant coefficient operators, we know there exists a constant $C_1 = C_1(n,\lambda, \Lambda) > 0$ such that
$$|D^2_{x,y}G_0(z_0,y)| \leq C_1|z_0-y|^{-n}.$$
It follows that
$$|Dw(z_0)| \leq C_1 \int_{\A_{2r}(x_0)} \frac{|A(z_0) - A(y)|}{|z_0 - y|^n} |\nabla v(y)| \ dy.$$
By Lemma \ref{lemGF:3.2InGW}, there exists a constant $K  = K(n,\lambda, \Lambda, \omega) > 0$ such that
$$|\nabla v(y)| \leq \frac{K}{r} \qquad \text{for all } y \in \A_{2r}(x_0).$$
Therefore,
$$|Dw(z_0)| \leq \frac{C_1 K}{r} \int_{\A_{2r}(x_0)} \frac{|A(z_0) - A(y)|}{|z_0 - y|^n} \ dy.$$
We now write the integral above as
\begin{align*}
\int\limits_{\A_{2r}(x_0)} \frac{|A(z_0) - A(y)|}{|z_0 - y|^n} \ dy & = \int\limits_{\A_{2r}(x_0) \cap B_r(z_0)} \frac{|A(z_0) - A(y)|}{|z_0 - y|^n} \ dy + \int\limits_{\A_{2r}(x_0) \backslash B_r(z_0)} \frac{|A(z_0) - A(y)|}{|z_0 - y|^n} \ dy \\
& = \text{I} + \text{II}.
\end{align*}
Converting to polar coordinates centered at $z_0$, and using the Dini continuity of the coefficients $A(\cdot)$ yields
$$\text{I} \leq C_2 \int_0^r \frac{\omega(t)}{t} \ dt,$$
for a dimensional constant $C_2 > 0$. To control $\text{II}$, we notice that $|z_0 - y| \geq r$ if $y \in \A_{2r}(x_0) \backslash B_r(z_0)$, and so 
$$\text{II} \leq r^{-n} \int\limits_{\A_{2r}(x_0)} |A(z_0) - A(y)| \ dy \leq r^{-n}|\A_{2r}(x_0)| \sup_{y \in \A_{2r}(x_0)} \omega(|z_0 - y|)| \leq C_3 \sup_{y \in \A_{2r}(x_0)} \omega(|z_0 - y|),$$
where $C_3 > 0$ is a dimensional constant. It follows that given $\e > 0$, there exists $r_0 = r_0(n,\omega, \lambda, \Lambda, \e)$ such that if $r \leq r_0$, then $|Dw(z_0)| \leq \frac{\e}{r}$. 

By Taylor expansion around $z_0$, we have
$$v(x) = v(z_0) + Dv(z_0)\cdot(x-z_0) + o(|x-z_0|) \qquad \text{for all } x \in [x_0,z_0] \cap \A_{2r}(x_0).$$
Let $D_{\nu} \varphi(z_0) := \la D\varphi(z_0), \nu(z_0) \ra$ denote the derivative of a function $\varphi$ in the direction of the inward pointing unit normal vector $\nu(z_0)$ to $\partial B_{2r}(x_0)$ at $z_0$. Since $v(z_0) = 0$ and $d(x)\nu(z_0) = x - z_0$, we see that
$$v(x) = D_{\nu}v(z_0) d(x) + o(d(x)) \qquad \text{for all } x \in [x_0,z_0] \cap \A_{2r}(x_0).$$
Writing $v = \hat{v} - w$, we thus obtain
$$v(x) = \left(D_{\nu}\hat{v}(z_0) - D_{\nu}w(z_0) \right)d(x) + o(d(x)) \qquad \text{for all } x \in [x_0,z_0] \cap \A_{2r}(x_0).$$
Now, by explicit calculation of $\hat{v}$, it is possible to show that there exists a constant $C_4 = C_4(n,\lambda,\Lambda) > 0$ such that
$$D_{\nu} \hat{v}(z_0) \geq \frac{C_4}{r}.$$
If we now choose $\e := \frac{C_4}{2}$ above, we obtain
$$D_{\nu}\hat{v}(z_0) - D_{\nu}w(z_0) \geq \frac{C_4}{r} - \frac{\e}{r} = \frac{C_4}{2r}.$$
Therefore, there exist constants $C = C(n,\lambda,\Lambda) > 0$ and $r_0 = r_0(n,\omega,\lambda, \Lambda) > 0$  such that if $r \leq r_0$, then
$$v(x) \geq \frac{C}{r} \ d(x) + o(d(x)) \qquad \text{for all } x \in [x_0,z_0] \cap \A_{2r}(x_0).$$

\end{proof}

From here on, we assume we are working with $\O \subset \Rn$. Before we prove Theorem \ref{thm:GreenBoundaryBehavior}, let us first recall a number of useful facts from \cite{CaffarelliFabesMortolaSalsa-1981Indiana, GruterWidman-1982GreenFunUnifEllipticManMath}. For any $y_0 \in \partial \O$ and $r > 0$, let $\Delta_r(y_0) := B_r(y_0)\cap \partial \O$. We denote by $W_{r,y_0}$ the solution to the Dirichlet problem
\begin{equation}\label{harmonicmeasure}
\begin{cases}
L_A W_{r,y_0} = 0 \quad \text{in } \O,\\
W_{r,y_0} = \mathbbm{1}_{\Delta_r(y_0)} \quad \text{on } \partial\O,
\end{cases}
\end{equation}
i.e. $W_{r,y_0}(x)$ is the harmonic measure of $\Delta_r(y_0)$, based at $x$.

\begin{lemma}\label{CFMSLemma2.1} (cf. Lemma 2.1 in \cite{CaffarelliFabesMortolaSalsa-1981Indiana}) There exist positive numbers $r_0 =r_0(m)$ and $C = C(\lambda, \Lambda, m)$ such that for $r \leq r_0$, we have 
$$W_{r,y_0}(y_0 + r\nu(y_0)) \geq C.$$
\end{lemma}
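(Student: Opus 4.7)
The plan is to reduce the statement to a scale-invariant model problem via rescaling, and then handle that model problem by a compactness/stability argument against an explicit constant-coefficient problem on a half-ball. As a first step, I would localize: let $v$ solve
\begin{equation*}
\begin{cases}
L_A v = 0 & \text{in } D := B_{2r}(y_0) \cap \Omega,\\
v = \mathbbm{1}_{\Delta_r(y_0)} & \text{on } \partial \Omega \cap B_{2r}(y_0),\\
v = 0 & \text{on } \partial B_{2r}(y_0) \cap \Omega.
\end{cases}
\end{equation*}
The maximum principle gives $v \leq W_{r,y_0}$ in $D$, so it suffices to show $v(z) \geq C$ where $z := y_0 + r\nu(y_0)$. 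Because $\partial\Omega$ is the graph of $f$ with $|\nabla f| \leq m$ near $y_0$, at the scale $r$ the inward normal $\nu(y_0)$ points into $\Omega$ and the interior ball/cone condition of radius comparable to $\rho_0 = \rho_0(m)$ applies, so for $r \leq r_0 := c\rho_0$ we have $d(z,\partial\Omega)$ of order $r$.

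Next I would rescale to unit size: set $\tilde v(x) := v(y_0 + rx)$ on the unit-scale domain $\tilde D_r := B_2(0) \cap \Omega_r$, with $\Omega_r := (\Omega - y_0)/r$. Then $\tilde v$ satisfies $L_{A_r}\tilde v = 0$ in $\tilde D_r$ where $A_r(x) := A(y_0 + rx)$ is still uniformly $(\lambda,\Lambda)$-elliptic, and the boundary data becomes $\mathbbm{1}_{B_1 \cap \partial\Omega_r}$ on $\partial\Omega_r \cap B_2$ and $0$ on $\partial B_2 \cap \Omega_r$. The claim reduces to showing $\tilde v(\nu(y_0)) \geq C$ uniformly in $r \in (0, r_0]$, with $C = C(\lambda,\Lambda,m)$.

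I would then argue by contradiction. Suppose there is a sequence $r_k \downarrow 0$ and corresponding $\tilde v_k$ with $\tilde v_k(\nu(y_0)) \to 0$. Since the domains $\Omega_{r_k}\cap B_2$ are uniformly Lipschitz graphs with constant $m$, they satisfy a uniform interior cone/ball condition depending only on $m$ and converge in Hausdorff distance to the tangent half-space $H := \{x \cdot \nu(y_0) > 0\}$; likewise $A_{r_k} \to A(y_0)$ uniformly on $B_2$. By interior regularity (De Giorgi--Nash--Moser) and boundary H\"older regularity for uniformly elliptic equations on Lipschitz domains, the family $\{\tilde v_k\}$ is precompact on compact subsets of $B_2 \cap \overline{H}$. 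Extract a subsequence converging locally uniformly on $B_2 \cap H$ to a limit $\tilde v_\infty$ solving $L_{A(y_0)}\tilde v_\infty = 0$ in $B_2 \cap H$ with boundary data $1$ on $B_1 \cap \partial H$, $0$ on $(B_2 \setminus B_1) \cap \partial H$, and $0$ on $\partial B_2 \cap H$. For this explicit constant-coefficient half-ball problem, a direct computation with the Poisson kernel (or a symmetrization plus Hopf-lemma argument) gives $\tilde v_\infty(\nu(y_0)) \geq c_0 > 0$ for $c_0 = c_0(\lambda, \Lambda, n)$, contradicting $\tilde v_k(\nu(y_0)) \to 0$.

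The hard part will be the boundary behavior in the compactness step: verifying that $\tilde v_k$ converges to $\tilde v_\infty$ with the correct boundary values despite the simultaneous perturbation of domain and coefficients. This is the classical subtlety of stability of harmonic measure under Hausdorff convergence of domains, and requires a uniform boundary modulus of continuity at the boundary points of $\partial\Omega_{r_k}$; this modulus is supplied by the uniform Lipschitz graph bound on the $\Omega_{r_k}$ (yielding a uniform interior cone) together with standard boundary H\"older estimates for $L_{A_r}$-harmonic functions on Lipschitz domains. Once this uniform boundary regularity is in hand, the rest of the argument is essentially soft.
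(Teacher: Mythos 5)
The paper does not prove this lemma; it simply cites \cite{CaffarelliFabesMortolaSalsa-1981Indiana}, so there is no ``paper's proof'' to compare against. Your compactness approach would in principle deliver a positive lower bound in the paper's $C^{1,\dini}$ setting, and you have correctly identified the delicate step (stability of harmonic measure under Hausdorff convergence of the domains together with convergence of the coefficients). However, there is a real mismatch with the statement: the lemma asserts $r_0 = r_0(m)$ and $C = C(\lambda,\Lambda,m)$, with no dependence on the modulus of continuity of $A$, and this is also what CFMS prove (for merely measurable coefficients and Lipschitz domains). Your argument genuinely uses $A_{r_k}(\cdot) = A(y_{0}+r_k\cdot) \to A(y_0)$ uniformly on $B_2$ and the convergence of the rescaled boundaries to the tangent half-space; both require (equi)continuity of $A$ and differentiability of the boundary. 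Consequently the $r_0$ extracted from the contradiction argument would depend on the Dini modulus $\omega$, and for general Lipschitz boundaries the blow-up limit need not even be a half-space. For the downstream use in Theorem \ref{thm:GreenBoundaryBehavior} this looser dependence happens to be harmless (that theorem's $R_0$ already depends on the Dini modulus), but it does not establish the lemma as stated.

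The standard, and considerably shorter, route is quantitative and scale-invariant, avoiding compactness altogether. Set $u := 1 - W_{r,y_0}$; this is a nonnegative $L_A$-solution bounded by $1$ that vanishes on $\partial\O \cap B_r(y_0)$. The De Giorgi--Nash--Moser boundary oscillation decay for divergence-form equations in domains with a uniform exterior corkscrew (automatic from the Lipschitz graph condition with constant $m$) gives $\al = \al(\lam,\Lam,m,n) \in (0,1)$ and $C_0 = C_0(\lam,\Lam,m,n)$ with $u(x) \leq C_0 (|x-y_0|/r)^{\al}$ for $x \in \O \cap B_{r/2}(y_0)$. Choosing $\rho := c_1 r$ with $c_1 = c_1(\lam,\Lam,m,n)$ small enough that $C_0 c_1^{\al} \leq \tfrac12$ yields $W_{r,y_0}(y_0 + \rho\nu(y_0)) \geq \tfrac12$. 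Since the segment from $y_0 + \rho\nu(y_0)$ to $y_0 + r\nu(y_0)$ stays at distance comparable to its parameter from $\partial\O$, a Harnack chain of bounded length (depending only on $c_1$ and $m$) transfers the lower bound to $y_0 + r\nu(y_0)$, giving $W_{r,y_0}(y_0 + r\nu(y_0)) \geq C(\lam,\Lam,m,n)$. This works for measurable coefficients, requires only Lipschitz regularity of the boundary, and produces exactly the dependence claimed in the lemma; I would recommend this over the blow-up argument.
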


\begin{lemma}\label{CFMSLemma2.2} (cf. Lemma 2.2 in \cite{CaffarelliFabesMortolaSalsa-1981Indiana}) There exist positive numbers $r_0 =r_0(m)$ and $c = c(\lambda,\Lambda,m)$ such that for $r \leq r_0$ and for all $x \notin B_{3r}(y_0) \cap \Omega$, we have
$$c^{-1}r^{n-1} G(y_0 + r\nu(y_0), x) \leq W_{r,y_0}(x) \leq cr^{n-1} G(y_0 + r\nu(y_0), x),$$
where $G$ is the Green's function corresponding to $L_A$ in $\O  \subset \Rn$.
\end{lemma}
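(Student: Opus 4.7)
Write $P := y_0 + r\nu(y_0)$ (the ``corkscrew'' point, which satisfies $d(P) \geq \rho_0$--adapted to scale $r$, i.e. $d(P)\sim r$, by the uniform interior ball condition implicit via the $C^{1,\text{Dini}}$ graph assumption). Set
\[
u(x) := W_{r,y_0}(x), \qquad v(x) := r^{n-1}\, G(P, x).
\]
Both $u$ and $v$ are non-negative, $L_A$-harmonic in $\Omega' := \Omega \setminus \overline{B_{2r}(y_0)}$, and both vanish on $\partial \Omega \cap \partial \Omega'$: indeed $u$ vanishes there because $\Delta_r(y_0) \subset B_r(y_0)$ is disjoint from $\partial\Omega'\cap\partial\Omega$, and $v$ vanishes on \emph{all} of $\partial\Omega$ because $P \in \Omega$. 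Therefore, by the maximum principle applied to $u \mp c^{\pm} v$ in $\Omega'$, it suffices to establish two-sided comparability
\[
c_1 \leq \frac{u(z)}{v(z)} \leq c_2 \qquad \text{for every } z \in \partial B_{2r}(y_0) \cap \Omega,
\]
with constants depending only on $\lambda,\Lambda,m,n,\omega$. Once this is done, the maximum principle promotes the comparability to all of $\Omega'$, which contains $\Omega\setminus B_{3r}(y_0)$.

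To verify the boundary-data comparison on $\partial B_{2r}(y_0) \cap \Omega$, I split into two regimes according to the size of $d(z)$. When $d(z) \geq \kappa r$ for a small universal $\kappa$, I combine the upper bound $u \leq 1$ with the lower bound $u(z) \geq c$ coming from Lemma \ref{CFMSLemma2.1} and a finite Harnack chain connecting $P$ to $z$ (built inside $\Omega$ using the uniform interior ball condition at scale $\kappa r$). Simultaneously, $G(P,z)$ is controlled above and below by $c r^{-(n-1)}$ via the global Green's function estimate (the analogue of Theorem \ref{thm:GreenBoundaryBehavior} in dimension $n$, cf.\ \cite{GruterWidman-1982GreenFunUnifEllipticManMath}): we have $|P-z|\sim r$ with $d(P)\sim r$ and $d(z)\gtrsim r$, so $v(z) \sim 1$. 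Hence $u(z) \sim v(z)$ on this portion.

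For points $z$ with $d(z) < \kappa r$, both $u$ and $v$ decay linearly to zero at $\partial\Omega$, and the comparison $u(z) \sim v(z)$ becomes a boundary Harnack type statement. I would obtain this via a direct barrier argument: by Lemma \ref{lemGF:lineargrowth} applied to $u$ and separately to $v$ in the interior ball at the closest boundary point $\xi$ with $|z-\xi| = d(z)$, I get the lower bounds $u(z) \geq C u(z^\ast) \, d(z)/r$ and $v(z) \geq C v(z^\ast)\, d(z)/r$, where $z^\ast$ is a comparable interior point at distance $\sim r$ from both $z$ and $\partial\Omega$; matching upper bounds follow from the linear growth barrier of Lemma \ref{lemGF:3.2InGW} applied in $\mathcal{A}_{2r}(\xi)$ using $\|u\|_\infty\leq 1$ and the pointwise bound $v\leq C$ on $\partial B_{\kappa r}(z^\ast)$. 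Since $u(z^\ast) \sim v(z^\ast) \sim 1$ by the first regime, we conclude $u(z)\sim v(z)$, with constants still depending only on $\lambda,\Lambda,m,n,\omega$.

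\textbf{Anticipated obstacle.} The delicate step is the near-boundary regime, because the straightforward Harnack chain from $P$ to $z$ becomes prohibitively long as $d(z)\to 0$ and the Harnack constant would blow up. The plan circumvents this by never trying to compare $u(z)$ to $u(P)$ directly when $z$ is near $\partial\Omega$; instead, the two-sided linear-growth estimate of Lemma \ref{lemGF:lineargrowth} plus a matching barrier reduces the boundary comparison to a comparison at the well-inside point $z^\ast$, where the first regime applies. The Dini hypothesis on the coefficients and on $\partial\Omega$ enters exactly to guarantee Lemma \ref{lemGF:lineargrowth} with a clean linear-in-$d(x)$ lower bound, which is what makes the two regimes consistent.
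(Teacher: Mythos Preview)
The paper does not prove this lemma; it is quoted verbatim from \cite{CaffarelliFabesMortolaSalsa-1981Indiana} and used as a black box in the proof of Theorem~\ref{thm:GreenBoundaryBehavior}. So there is no ``paper's own proof'' to compare against. Your sketch is a reasonable reconstruction of the standard CFMS argument: reduce via the maximum principle in $\Omega' = \Omega \setminus \overline{B_{2r}(y_0)}$ to a two-sided comparison on $\partial B_{2r}(y_0)\cap\Omega$, then split into interior and near-boundary points.

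Two points deserve attention. First, be careful with the appeal to ``the analogue of Theorem~\ref{thm:GreenBoundaryBehavior}'' when bounding $G(P,z)$ for interior $z$: Theorem~\ref{thm:GreenBoundaryBehavior} is precisely what this lemma is being used to \emph{prove}, so citing it here would be circular. What you actually need in the interior regime is only the two-sided bound $G(P,z)\sim |P-z|^{1-n}$ when $d(P),d(z)\gtrsim |P-z|$, and that comes directly from \cite{GruterWidman-1982GreenFunUnifEllipticManMath} (upper bound) together with Lemma~\ref{GWTheorem1.1} plus a Harnack chain (lower bound), with no boundary information required. Your parenthetical ``cf.\ \cite{GruterWidman-1982GreenFunUnifEllipticManMath}'' suggests you see this, but the wording should be sharpened to avoid any appearance of circularity.

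Second, your near-boundary step invokes Lemma~\ref{lemGF:lineargrowth}, whose constants depend on the Dini modulus $\omega$; this yields $c = c(\lambda,\Lambda,m,\omega)$ rather than the $c = c(\lambda,\Lambda,m)$ asserted in the statement. The original CFMS argument is set in Lipschitz domains and uses the boundary Harnack principle (H\"older, not linear, decay), which gives constants independent of any Dini modulus. For the purposes of this paper the extra $\omega$-dependence is harmless, since every downstream constant already depends on $\rho$; but if you want to match the stated dependence you should replace the Dini-specific linear-growth step by the Lipschitz-domain boundary Harnack principle. Finally, since the ambient $\Omega$ here is the unbounded strip $\real^n\times(0,L)$, the maximum-principle step in $\Omega'$ requires a word of justification at infinity (both $u\leq 1$ and $v$ are bounded, so a Phragm\'en--Lindel\"of argument suffices).
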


\begin{lemma}\label{GWTheorem1.1} (cf. Theorem 1.1 in \cite{GruterWidman-1982GreenFunUnifEllipticManMath}) There exists a positive constant $K = K(n, \lambda, \Lambda)$ such that if $p,q \in \O \subset \RN$ satisfy $|p-q| \leq \frac{1}{2}d(q)$, then
$$G(p,q) \geq K|p-q|^{1-n},$$
where $G$ is the Green's function corresponding to $L_A$ in $\O  \subset \Rn$.
\end{lemma}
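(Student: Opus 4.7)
The proof proceeds in three steps: a maximum-principle reduction to the Green's function on an inscribed ball; a pair of rescalings that reduce the inequality to a universal pointwise lower bound on the Green's function of the unit ball; and finally the proof of that universal bound via Harnack's inequality combined with an energy identity built from the defining equation of the Green's function.

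For the reduction, set $r := d(q)$ so that $B := B_r(q) \Subset \Omega$, and note that $p \in B_{r/2}(q) \subset B$ by hypothesis. Let $G_B(\cdot,q)$ denote the Green's function for $L_A$ on $B$. Then $u := G(\cdot,q) - G_B(\cdot,q)$ extends across the pole as an $L_A$-harmonic function on $B$ (the two singularities at $q$ cancel) and is non-negative on $\partial B$ since $G \ge 0$ and $G_B = 0$ there; the maximum principle gives $G(p,q) \ge G_B(p,q)$. Now define $\tilde{G}(\tilde{x}, \tilde{y}) := r^{n-1} G_B(r\tilde{x}+q, r\tilde{y}+q)$, which is the Green's function on $B_1(0)$ of the uniformly elliptic operator $\tilde{L} := -\mathrm{div}(\tilde A \nabla \cdot)$ with $\tilde A(\tilde x) := A(r\tilde{x}+q)$ and ellipticity $(\lambda,\Lambda)$. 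Writing $\tilde p := (p-q)/r$, one gets $G_B(p,q) = r^{1-n}\tilde G(\tilde p, 0)$ and $|\tilde p| \le 1/2$, so it suffices to prove the universal claim that $\tilde G(\tilde x, 0) \ge K_0 |\tilde x|^{1-n}$ for $0 < |\tilde x| \le 1/2$, with $K_0$ depending only on $n,\lambda,\Lambda$.

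To prove the universal claim, fix $\tilde x$, set $\sigma := |\tilde x|$, and apply the comparison step once more (since $B_{2\sigma}(0) \subset B_1$) to bound $\tilde G(\tilde x, 0) \ge \tilde G_{2\sigma}(\tilde x, 0)$. A rescaling of $B_{2\sigma}$ to the unit ball sends $\tilde x$ to a point $\hat p$ with $|\hat p| = 1/2$ and gives $\tilde G_{2\sigma}(\tilde x, 0) = (2\sigma)^{1-n} \hat G(\hat p, 0)$ for the Green's function $\hat G$ on $B_1$ of some uniformly elliptic operator with ellipticity $(\lambda, \Lambda)$. The problem therefore collapses to showing a single bound $\hat G(\hat p, 0) \ge K' = K'(n,\lambda,\Lambda) > 0$ at any fixed $|\hat p| = 1/2$, uniformly across such operators.

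This last step is the main obstacle, but it is classical. The plan is to combine Harnack's inequality, applied to the positive $\hat L$-harmonic function $\hat G(\cdot, 0)$ on the annulus $\{1/4 < |x| < 3/4\}$ to produce a universal $C_H$ with $\sup_{|x|=1/2} \hat G(x,0) \le C_H \inf_{|x|=1/2} \hat G(x,0)$, with an energy identity obtained by testing $\hat L \hat G(\cdot,0) = \delta_0$ against a cutoff $\varphi \in C^\infty_c(B_{3/4})$ equal to $1$ on $B_{1/2}$. The identity $1 = \int_{B_{3/4}\setminus B_{1/2}} \hat A \nabla \hat G \cdot \nabla \varphi\, dx$, together with Cauchy--Schwarz and uniform ellipticity, yields a universal lower bound for $\|\nabla \hat G(\cdot,0)\|_{L^2}$ on the annulus. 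A Caccioppoli inequality (valid because $\hat G(\cdot,0)$ is $\hat L$-harmonic away from $0$) then converts this into a lower bound on the $L^\infty$-norm of $\hat G(\cdot,0)$ on a slightly smaller annulus, and Harnack produces the desired pointwise bound at $\hat p$. Undoing the two rescalings yields the stated $K = K(n,\lambda,\Lambda)$.
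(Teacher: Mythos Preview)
The paper does not supply its own proof of this lemma; it simply records the result with the citation ``cf.\ Theorem 1.1 in \cite{GruterWidman-1982GreenFunUnifEllipticManMath}'' and uses it as a black box in the proof of Theorem~\ref{thm:GreenBoundaryBehavior}. So there is no in-paper argument to compare against, and your task was really to reproduce a classical estimate.

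Your approach is essentially the standard one and is correct in outline: the comparison $G \ge G_B$ on the inscribed ball via the maximum principle, the rescaling to the unit ball, the further reduction to $|\hat p|=1/2$, and the combination of the energy identity $1=\int \hat A\nabla\hat G\cdot\nabla\varphi$ with Caccioppoli and Harnack are exactly how this bound is obtained in the Gr\"uter--Widman paper and elsewhere. One small slip: after you bound $\|\nabla\hat G\|_{L^2}$ from below on the annulus $B_{3/4}\setminus B_{1/2}$, Caccioppoli (which controls the gradient on an inner region by the function on a \emph{larger} region) forces the $L^2$ and hence $L^\infty$ norm of $\hat G$ to be bounded below on a slightly \emph{larger} annulus, not a smaller one; the Harnack step then still brings the pointwise bound back to $|\hat p|=1/2$. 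With that correction, your argument is sound.
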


\begin{proof}[Proof of Theorem \ref{thm:GreenBoundaryBehavior}] By flattening $D_f$, we may work on the domain $\O = \left\{0 < x_{n+1} < L \right\}$. We will only focus on proving the estimate \eqref{GlobalGreenEstimate} on the portion of the boundary, $\Gamma_0 := \left\{x_{n+1} = 0\right\}$. Let $R_0$ be the minimum of $L$ and the smallest value of $r_0$ for which the conclusions of Lemma \ref{lemGF:lineargrowth}, Lemma \ref{CFMSLemma2.1}, and Lemma \ref{CFMSLemma2.2} hold. Evidently, $R_0$ depends only on  the Dini modulus of $A(\cdot)$, the $C^{1,\text{Dini}}$ modulus of $f$, and other universal parameters. Since the upper bound in \eqref{GlobalGreenEstimate} is a consequence of \cite[Theorem 3.3]{GruterWidman-1982GreenFunUnifEllipticManMath}, we only show the proof of the lower bound.

Fix $x, y \in \left\{0 < x_{n+1} < L \right\}$ and let $r := |x-y| \leq R_0$. Let $x_0$ (resp. $y_0$) denote the point on $\Gamma_0$ closest to $x$ (resp. $y$), and define $x^* := x_0 + re_{n+1}$ (resp. $y^* := y_0 + r e_{n+1}$). Notice that $d(x) = \text{dist}(x,\Gamma_0) = x_{n+1}$ (resp. $d(y) = \text{dist}(y,\Gamma_0) = y_{n+1}$). Consider the following scenarios:

\noindent {\bf Case 1:} $0 < d(x),d(y) \leq \frac{r}{2}$. \\
Since $x \notin B_r(y^*)$, $G(\cdot, x)$ satisfies the hypotheses of Lemma \ref{lemGF:lineargrowth} in $B_r(y^*)$ and vanishes at $y_0$. Hence, there exists $C_1 = C_1(\lambda,\Lambda,n) > 0$ such that
$$G(y,x) \geq \frac{C_1}{r}G(y^*,x)d(y).$$
Let $\hat{y}:= y_0 + \frac{r}{2\sqrt{3}} e_{n+1}$. By the Boundary Harnack Principle, there exists a constant $C_2 = C_2(\lambda,\Lambda,n) > 0$ such that
$$G(y^*,x) \geq C_2 G\left(\hat{y},x\right).$$
Notice that $x \notin B_{\frac{\sqrt{3}r}{2}}(y_0)$ since
$$|x_0 - y_0|^2 = |x - y|^2 - |x_{n+1} - y_{n+1}|^2 \geq r^2 - \frac{r^2}{4} = \frac{3r^2}{4}.$$
Therefore, by Lemma \ref{CFMSLemma2.2}, there exists a constant $C_3 = C_3(\lambda, \Lambda, m) > 0$ such that
$$G\left(\hat{y},x\right) \geq C_3 r^{1-n} W_{\frac{r}{2\sqrt{3}},y_0}(x).$$
Applying Lemma \ref{lemGF:lineargrowth} to $W_{\frac{r}{2\sqrt{3}},y_0}$ in $B_r(x^*)$, we find there exists a constant $C_4 = C_4(\lambda, \Lambda, n) > 0$ such that
$$W_{\frac{r}{2\sqrt{3}},y_0}(x) \geq \frac{C_4}{r} W_{\frac{r}{2\sqrt{3}},y_0}(x^*) d(x).$$
A crude estimate shows
$$|\hat{y} - x^*| \leq |\hat{y} - y_0| + |y_0 - x_0| + |x_0 - x^*| \leq \frac{r}{2\sqrt{3}} + r + r < \frac{5r}{2}.$$
It follows from a covering argument and Harnack's inequality that there exists a constant $C_5 = C_5(\lambda, \Lambda, n) > 0$ such that
$$W_{\frac{r}{2\sqrt{3}},y_0}(x^*) \geq C_5 W_{\frac{r}{2\sqrt{3}},y_0}(\hat{y}).$$
Finally, by Lemma \ref{CFMSLemma2.1}, there exists a constant $C_6 = C_6(\lambda,\Lambda,m) > 0$ such that
$$W_{\frac{r}{2\sqrt{3}},y_0}(\hat{y}) \geq C_6.$$
Combining all the bounds above, and recalling that $|x - y| = r$ we conclude that
$$G(x,y) \geq C r^{-(n+1)}d(x)d(y) = C\frac{d(x)d(y)}{|x-y|^{n+1}}.$$

\noindent {\bf Case 2:} $d(y) \leq \frac{r}{2} < d(x)$. \\
Since $|x -y| = r$, it follows that $d(x) \leq |x-y| + d(y)  \leq \frac{3r}{2}$. Let $\hat{x} \in \partial B_r(y) \cap \left\{x_{n+1} = \frac{r}{2} \right\}$ be the point closest to $x$. Then $d(\hat{x}) = \frac{r}{2} \geq \frac{d(x)}{3}$ and $|\hat{x} - y| = r = |x-y|$. Consequently, by Case 1,
$$G(\hat{x},y) \geq C\frac{d(\hat{x})d(y)}{|\hat{x}-y|^{n+1}} \geq \frac{C}{3}\frac{d(x)d(y)}{|x-y|^{n+1}}.$$
On the other hand, by a covering argument and Harnack's inequality, there exists a constant $C_1 = C_1(\lambda, \Lambda, n) > 0$ such that
$$G(x,y) \geq C_1 G(\hat{x},y).$$

\noindent {\bf Case 3:} $\frac{r}{2} < d(y), d(x)$. \\
In this case, 
$$\min\left\{ \frac{d(x)d(y)}{|x-y|^{n+1}}, \frac{1}{4|x-y|^{n-1}} \right\} = \frac{1}{4|x-y|^{n-1}}.$$
Let $p = y + \frac{1}{4}(x-y)$ and $q = y$. Note that $d(p) \geq \frac{r}{2}$ by convexity of the half-space $\left\{x_{n+1} \geq \frac{r}{2} \right\}$. Also, $|p - q| = \frac{r}{4} < \frac{1}{2} d(q)$. Consequently, by Lemma \ref{GWTheorem1.1}, we have
$$G(p,y) = G(p,q) \geq K|p - q|^{1-n} = K4^{n-1} |x-y|^{1-n}.$$
On the other hand, by connecting the points $p$ and $x$ using a Harnack chain using balls of radius $\frac{r}{8}$, and applying Harnack's inequality to the positive solution $G(\cdot,y)$, we conclude that there exists a  positive constant $C_3 = C_3(n,\lambda,\Lambda)$ such that
$$G(x,y) \geq C_3 G(p,y).$$
The estimate \eqref{GlobalGreenEstimate} thus follows.

\end{proof}

In order to address the behavior of $P_f$ in $\real^n\setminus B_R$, for large $R$, we need a variation on the barrier function given in Lemma \ref{lemGF:3.2InGW}.  The difference between the two results is that Lemma \ref{lemGF:3.2InGW} applies to the situation for $r\in(0,1]$, whereas in the Lemma \ref{lemAppendix:BarrierLargeR}, $r>1$.  This is a modification of a well known result about the uniform H\"older continuity of solutions to equations with bounded measurable coefficients in domains with an exterior cone condition, e.g. \cite[Lemma 7.1]{GruterWidman-1982GreenFunUnifEllipticManMath}.

\begin{lemma}\label{lemAppendix:BarrierLargeR}
	There exists constants, $C>0$, $\al\in(0,1]$, and $\ep>0$, depending on the Dini modulus and ellipticity of $A$ and $n$, so that for all $r>1$, and for $v$ as in Lemma \ref{lemGF:3.2InGW}, for all $\abs{X}\leq r+\ep$,
	\begin{align*}
		v(X)\leq C\frac{d(X)}{r^\al}.
	\end{align*}
\end{lemma}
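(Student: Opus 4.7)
The obstruction is that the gradient bound $|\grad v| \leq K/r$ of Lemma \ref{lemGF:3.2InGW} degrades when $r > 1$: rescaling the annulus $\mathcal{A}_{2r}(x_0)$ to unit size produces coefficients $\tilde A(Y) := A(x_0 + rY)$ whose Dini modulus is $\om(r\cdot)$, which is not uniformly Dini as $r \to \infty$. My plan is therefore a two-scale argument: first use \emph{only ellipticity} to get a boundary H\"older estimate on the full rescaled annulus, and then invoke the Dini hypothesis on a thin boundary layer of fixed width $\ep$ to upgrade from H\"older to linear growth in $d(X)$. (I interpret $v$ as the barrier vanishing on the sphere near which $X$ lies, i.e., $d(X)$ is the distance to that vanishing sphere.)

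For the first step, set $u(Y) := v(x_0 + rY)$ on $B_2 \setminus \overline{B_1}$. The equation $\Div(\tilde A \grad u) = 0$ has only bounded measurable coefficients but retains uniform $\lam,\Lam$ ellipticity, and $u$ vanishes on the smooth sphere where the relevant barrier data is zero. The De~Giorgi--Nash--Moser boundary H\"older estimate on a $C^{1,1}$ domain then gives universal constants $\al = \al(n,\lam,\Lam) \in (0,1]$ and $C_0 > 0$ with
\begin{equation*}
u(Y) \leq C_0 \, \dist(Y, \partial B_1)^\al \quad \text{for all } Y \in B_{3/2} \setminus \overline{B_1}.
\end{equation*}
Scaling back to $v$ and specializing to the sphere $\partial B_{r+\ep}(x_0)$ gives
\begin{equation*}
\sup_{\partial B_{r+\ep}(x_0)} v \leq M := C_0 \, \ep^\al \, r^{-\al}
\end{equation*}
for any $\ep \in (0, r/2]$.

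For the second step, I fix a small $\ep = \ep(n,\lam,\Lam,\om) > 0$ once and for all and work on the shell $S := B_{r+\ep}(x_0) \setminus \overline{B_r(x_0)}$. On $S$, $v$ solves $L_A v = 0$, vanishes on $\partial B_r$, and is $\leq M$ on $\partial B_{r+\ep}$. Since $r > 1$, the principal curvatures of $\partial B_r$ are at most $1$, so a $C^{1,1}$ chart flattening any boundary neighborhood in $S$ of radius $\ep$ to a half-ball has modulus bounded independently of $r$; the pullback of $A$ remains Dini continuous with modulus comparable to $\om$. By the boundary Lipschitz theory for divergence-form equations with Dini-continuous coefficients on $C^{1,\dini}$ domains---the same mechanism underlying Lemma \ref{lemGF:3.2InGW}, now applied to the flattened half-ball of \emph{fixed} size $\ep$ with boundary data $0$ on the flat face and $\leq M$ elsewhere---we deduce $|\grad v| \leq K M/\ep$ on a sub-shell $S_{\ep/2}$, with $K = K(n,\lam,\Lam,\om)$. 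Integrating from $\partial B_r$ and substituting $M = C_0 \ep^\al r^{-\al}$ yields
\begin{equation*}
v(X) \leq K \, \frac{M}{\ep} \, d(X) \leq C \, \frac{d(X)}{r^\al}, \qquad X \in S_{\ep/2},
\end{equation*}
with $C$ absorbing $K C_0 \ep^{\al-1}$. Relabeling $\ep/2$ as $\ep$ gives the stated estimate.

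The main obstacle will be the uniformity of the second step as $r \to \infty$: both the $C^{1,1}$ flattening of $\partial B_r$ (controlled by curvature $1/r \leq 1$) and the pulled-back Dini modulus of $A$ (intrinsic to $A$) must produce a barrier / gradient estimate whose constants depend on $n,\lam,\Lam,\om$ but not on $r$. Once this uniformity is in place, the two-scale argument closes cleanly, with the exponent $\al$ in the final statement being the one inherited from the purely elliptic Step~1.
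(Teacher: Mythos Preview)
Your proposal is correct and follows essentially the same two-scale strategy as the paper: first rescale the annulus to unit size and invoke only ellipticity (De~Giorgi--Nash--Moser) to get a boundary H\"older estimate $v\leq C d^\al/r^\al$, then use the Dini hypothesis on a fixed-width shell near the vanishing sphere to upgrade the $d^\al$ to $d$. The only cosmetic difference is in the second step: the paper places an exterior-ball barrier of fixed radius (invoking Lemma~\ref{lemGF:3.2InGW} directly at scale~$1$, exploiting that $\partial B_r$ has an exterior ball of radius $r>1$ for the annulus), whereas you flatten $\partial B_r$ and cite the boundary Lipschitz/gradient estimate; both routes encode the same mechanism and yield the same constants.
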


\begin{proof}[Proof of Lemma \ref{lemAppendix:BarrierLargeR}]
	First, we note that in Lemma \ref{lemGF:3.2InGW}, the constant, $C$, to depended upon only the Dini modulus of $A$, ellipticity, and $n$.  The scaling argument used for $r<1$ in Lemma \ref{lemGF:3.2InGW} will not work here because in order to have $A$ given in a ball of radius $r>1$, the result at scale $1$ must be applied to the coefficients $A(rx)$, whose Dini modulus blows up as $r$ is large. 
	
	Thus, instead, we can appeal to results at scale $r=1$ that only depend on ellipticity, and then rescale the equation in $\A_{2r}$ to $\A_2$, which preserves ellipticity, but not the Dini modulus.  This is the reason for the appearance of the factor $r^\al$ for possibly $\al<1$.  To this end, we simply note that for $v_1$ that solves equation \ref{eqAppendix:Lem3.2InGW} with $r=1$, $v_1$ is H\"older continuous for some universal $\al\in(0,1]$ in $\overline \A_2$.  Thus, under rescaling, we see that as $v_1\equiv 0$ on $\partial B_1$, (e.g. \cite[Lemma 1.7]{GruterWidman-1982GreenFunUnifEllipticManMath})
	\begin{align*}
		0\leq v_1(X)\leq Cd(X)^\al.
	\end{align*}
	Under rescaling, back to the case of $v_r$ that solves (\ref{eqAppendix:Lem3.2InGW}) in $\A_{2r}$, we have
	\begin{align*}
		0\leq v_r(X)\leq C\frac{d(X)^\al}{r^\al}.
	\end{align*}
	
	Now, as the domain $\A_{2r}$ enjoys a uniform exterior ball condition of radius $r>1$, we can invoke the Dini property of $A$ to use a barrier for $v$ near the boundary $\partial B_r$.  In particular, we can use a barrier in an outer annulus with inner radius $1$, outer radius $2$ (given in Lemma \ref{lemGF:3.2InGW}), to conclude that 
	\begin{align*}
		v(X)\leq C\frac{d(X)}{r^\al}.
	\end{align*} 
	This, of course follows from the fact that the first estimate established in this proof that for all $X$ with $r<\abs{X}\leq r+1$, $v(X)\leq C\frac{1}{r^\al}$.

\end{proof}

\begin{proof}[Proof of Proposition \ref{propGF:PoissonKernel}]
	First of all, we address the bounds for the case $\abs{X-Y}<R_0$.  As
	\begin{align*} 
		P_f(X,Y)=(\partial_{\nu}G(X,\cdot))(Y), 
	\end{align*}	
	we see that the bounds on $P_f$ are immediate from Theorem \ref{thm:GreenBoundaryBehavior}.

	Now, we focus on the second estimate.
	We may assume, without loss of generality, that $X = X_0 = (0,f(0))$. Notice that 
	$$\int_{\Gam_f\setminus B_{R}(X_0)} P_f(X_0+s\nu(X_0),Y)d\sigma(Y)  = W(X_0 + s\nu(X_0)),$$
	where $W$ solves the equation
	$$
	\begin{cases}
		\Delta W=0\ \text{in}\ D_f,\\
		W=\Indicator_{B_R^c(X_0)}\ \text{on}\ \Gam_f,\\
		W=0\ \text{on}\ \{x_{n+1}=0\}.
	\end{cases}
	$$
	We next flatten the domain $D_f$ by using the transformation $T_f$ defined in \eqref{flatteningtransformation}. The function $\tilde{W} = W\circ T_f^{-1}$ then solves 
	$$
	\begin{cases}
	\div(A(y) \nabla \tilde{W}(y)) = 0\ \text{in}\ \real^n \times [0,L],\\
	\tilde{W}=\Indicator_{B_R^c(0,L)}\ \text{on}\ \left\{y_{n+1} = L \right\},\\ 
	\tilde{W}=0\ \text{on}\ \{y_{n+1}=0\},
	\end{cases}
	$$
	with $A(y) \in \mathbb{R}^{(n+1)\times(n+1)}$ uniformly elliptic and Dini continuous (depending on $\del$, $L$, $m$, $\omega$). Note that $0\leq \tilde W\leq 1$ on $\real^n \times [0,L]$ by the comparison principle.
	
	We now extend the coefficients $A$ to all of $\Rn$ in a Dini continuous fashion with the same modulus of continuity $\omega$, and denote them $\hat{A}$. The corresponding divergence form operator on $\Rn$ will be denoted $\hat{L} := \div(\hat{A}(y) \nabla \cdot)$. Note that $\hat{A}$ can also be taken to satisfy the same ellipticity conditions as $A$. Now suppose $R > \sqrt{3}L$, and let $Y_0 = (0, L + \frac{R}{\sqrt{3}})$. On the annular domain $\mathcal{A}_{\frac{2R}{\sqrt{3}}}(Y_0)$, consider the function $\varphi$ which solves the problem
	$$
	\begin{cases}
	\hat{L}\varphi = 0 \text{ in } \mathcal{A}_{\frac{2R}{\sqrt{3}}}(Y_0), \\
	\varphi = 0 \text{ on } \partial B_{\frac{R}{\sqrt{3}}}(Y_0), \\
	\varphi = 1 \text{ on } \partial B_{\frac{2R}{\sqrt{3}}}(Y_0). \\
	\end{cases}
	$$
	By Lemma \ref{lemAppendix:BarrierLargeR} (we can assume, without loss of generality that $R>1$), there exists constant $K = K(n,\lambda,\Lambda, \omega)$ such that when $R>1$, $\abs{\varphi(X)}\leq C\frac{d(X)}{R^\al}$ for all $X \in \mathcal{A}_{\frac{2R}{\sqrt{3}}}(Y_0)$ with $R<\abs{X}<R+\ep$. Consequently, since $\varphi(0,L) = 0$, we conclude that $\varphi(0, L-s) \leq \frac{Ks}{R^\al}$ for all $s > 0$ sufficiently small.
	
It remains to show that $\tilde{W} \leq \varphi$ on $\Omega_R := \mathcal{A}_{\frac{2R}{\sqrt{3}}}(Y_0) \cap \RN \times [0,L]$. To show this, notice that $\partial \Omega_R$ consists of three pieces; the first two are the flat portions consisting of the intersection of $\mathcal{A}_{\frac{2R}{\sqrt{3}}}(Y_0)$ with $\left\{y_{n+1} = 0 \right\}$ and $\left\{y_{n+1} = L \right\}$ respectively, while the third piece is the intersection of $\partial B_{\frac{2R}{\sqrt{3}}}(Y_0)$ with $\RN \times [0,L]$. On the flat portions, we know $\tilde{W} = 0$ and since $\varphi \geq 0$ by the maximum principle, we see that $\varphi \geq \tilde{W}$ on this portion of $\partial \Omega_R$. On the remaining portion of $\partial \Omega_R$, we know that $\varphi = 1$ and since $\tilde{W} \leq 1$ on $\RN \times [0,L]$, we conclude that $\varphi \geq \tilde{W}$ on this piece of $\partial \Omega_R$ as well. Consequently, by the maximum principle, $\varphi \geq \tilde{W}$ on $\Omega_R$. In particular, $\tilde{W}(0,L-s) \leq \varphi(0, L-s) \leq \frac{Ks}{R^\al}$ for all $s > 0$ sufficiently small. Rewriting this in terms of $W$, we obtain the desired estimate \eqref{eqGF:DecayOnMassOfPoissonKernel}.

\end{proof}

With only a few modifications, we can adapt the proof of Proposition \ref{propGF:PoissonKernel} to also give the proof of Lemma \ref{lemGF:DecayAtInfinityForTechnicalReasons}.

\begin{proof}[Proof of Lemma \ref{lemGF:DecayAtInfinityForTechnicalReasons}]
We note that in this setting, as $D_f$ is a Lipschitz domain, then $P_f$ exists and is an $A^\infty$ weight as in \cite{Dahlberg-1977EstimatesHarmonicMeasARMA}, and by the above results, $P_f$ will be more regular when restricted to $B_{R}$, as in that region, $\Gam(f)$ is $C^{1,\dini}$.  
	
We see that this time, we have
	\begin{align*}
		\int_{\Gam_f\setminus B_{2R}(X)} P_f(X+s\nu(X),Y)d\sigma_f(Y) = W(X+s\nu(X)),
	\end{align*} 
	where $W$ is the unique solution of 
	\begin{align*}
		\begin{cases}
			\Delta W=0\ &\text{in}\ D_f\\
			W=\Indicator_{B_{2R}^c(X)}\ &\text{on}\ \Gam_f\\
			W=0\ &\text{on}\ \{x_{n+1}=0\}.
		\end{cases}
	\end{align*}
	Owing to the fact that $f$ is globally Lipschitz and $C^{1,\dini}_\rho(B_{2R})$, we see that
	after the straightening procedure, $\tilde W$ solves an equation on $\real^n\times[0,L]$, with coefficients, $\hat A$, that have been extended to all of $\real^{n+1}$ and that are Dini continuous in $B_{2R}\times\real$, while they are globally bounded and uniformly elliptic.  We note that we are now concerned with the behavior of $\tilde W$ at $\tilde X - se_{n+1}$, where for $X=(x,f(x))$, $\tilde X=(x,L)$. Thus, for the barrier, $\varphi$, we can now center the annular region at $Y_0=(x, L+\frac{R}{\sqrt 3})$.  As $\tilde X\in B_R\times[0,L]$, it also holds that $\A_R(Y_0)$ is contained in $B_{2R}\times\real$, in which $\hat A$ is Dini continuous.  Thus, Lemma \ref{lemAppendix:BarrierLargeR} is applicable.  The rest of the proof is the same.

\end{proof}


\bibliography{refs}
\bibliographystyle{plain}
\end{document}